\title{Fine compactified moduli of enriched structures on stable curves}
\numberwithin{equation}{subsection}
\let\oref\ref
\newcommand{\on}[1]{\operatorname{#1}}
\newcommand{\bb}[1]{{\mathbb{#1}}}
\newcommand{\cl}[1]{{\mathscr{#1}}}
\newcommand{\ca}[1]{{\mathcal{#1}}}
\newcommand{\Span}[1]{\left<#1\right>}
\newcommand{\op}{^{\mathrm{op}}}
\newcommand{\ra}{\rightarrow}
\newcommand{\lra}{\longrightarrow}
\newcommand{\hra}{\hookrightarrow}
\newcommand{\sub}{\subseteq}
\newcommand{\tra}{\rightarrowtail}
\newcommand{\iso}{\stackrel{\sim}{\lra}}
\theoremstyle{plain}
\theoremstyle{definition}
\newtheorem{definition}{Definition}[section]
\newtheorem{situation}[definition]{Situation}
\theoremstyle{plain}% default
\newtheorem{proposition}[definition]{Proposition}
\newtheorem{lemma}[definition]{Lemma}
\newtheorem{theorem}[definition]{Theorem}
\newtheorem{corollary}[definition]{Corollary}
\newtheorem{inclaim}{Claim}[definition]
\theoremstyle{remark}
\newtheorem{remark}[definition]{Remark}
\newtheorem{example}[definition]{Example}
\renewcommand{\phi}{\varphi}
\newcommand{\testleft}{\leftarrow\!\shortmid}
\newcounter{temp} 
\author{Owen Biesel and David Holmes}
\date{\today}
\newcounter{nootje}
\newcommand\todo[1]{[*\thenootje]\marginpar{\tiny\begin{minipage}
{20mm}\begin{flushleft}\thenootje : 
#1\end{flushleft}\end{minipage}}\addtocounter{nootje}{1}}
\renewcommand{\todo}[1]{}
\newcommand{\beq}{\begin{equation}}
\newcommand{\eeq}{\end{equation}}
\newcommand{\beqs}{\begin{equation*}}
\newcommand{\eeqs}{\end{equation*}}
\begin{document}
\newcommand{\Pictdz}{\on{Pic}^{[0]}}
\newcommand{\Picz}{\on{Pic}^{0}}
\newcommand{\M}{\ca{M}}
\newcommand{\Mbar}{\overline{\ca{M}}}
\newcommand{\Mtil}{\widetilde{\ca{M}}}
\newcommand{\MN}{{\ca{N}}}
\newcommand{\Menr}{\overline{\ca{M}}^{\varepsilon}}
\newcommand{\Mdenr}[1]{\overline{\ca{M}}^{\varepsilon(#1)}}
\newcommand{\cat}[1]{\mathbf{#1}}

\maketitle
\begin{abstract} 
Enriched structures on stable curves over fields were defined by Main\`o in the late 1990s, and have played an important role in the study of limit linear series and degenerating jacobians. In this paper we solve three main problems: we give a definition of enriched structures on stable curves over arbitrary base schemes, and show that the resulting fine moduli problem is representable; we show that the resulting object has a universal property in terms of N\'eron models; and we construct a compactification of our stack of enriched structures. %This last part answers a question of Caporaso and others. 
\end{abstract}
\tableofcontents
\section{Introduction}

%TODO:\\
%- mention Tyler Jarvis as Esteves suggested?\\
%- mention BKK paper, and Ana Maria?\\

If $C/k$ is a stable curve over an algebraically closed field with irreducible components $C_1, \dots, C_n$, and $\ca C / k[[t]]$ is a regular smoothing of $C$, then the $C_i$ can be thought of as divisors on $\ca C$, and one defines line bundles $\cl L_i$ on $C$ by the formula $\cl L_i = \ca O_{\ca C}(C_i)|_C$. In \cite{Maino1998Moduli-space-of}, Main\`o defined an \emph{enriched structure} on $C/k$ to be a collection of line bundles $\cl L_1, \dots, \cl L_n$ which arise in the above fashion from some regular smoothing of $C/k$. She gave a more intrinsic characterisation of these enriched structures, and also constructed a coarse moduli space (which we will discuss more later on). Main\`o's work has played an important role in a number of works on stable curves, such as in Osserman's construction of limit linear series on stable curves \cite{Osserman2014Limit-linear-se}, \cite{Osserman2014Dimension-count}, and work of Esteves, Nivaldo, Salehyan and others on related problems \cite{Esteves2002Limit-canonical}, \cite{Esteves2007Limit-Weierstra}. In \cite{Abreu2014Enriched-curves}, Abreu and Pacini study a tropical counterpart of enriched structures, with the explicit hope that it might lead to a modular compactification of the moduli of enriched structures. At the conclusion of her talk at the Summer Research Institute on Algebraic Geometry in Salt Lake City, Caporaso also asked for a compactification of the moduli of enriched structures. The construction of such a compactification is the third of the three main results of this article; we call its points \emph{compactified enriched structures}.

The definition of enriched structure given above only applies to a stable curve over an algebraically closed field, and so the notion of `moduli of enriched structures' is rather awkward. Writing $\overline{M}_{g}$ for the coarse moduli space of stable curves of genus $g$, Main\`o constructs a scheme $En_g \to \overline{M}_g$ and for each $k$-point $p$ of $\overline{M}_g$ (corresponding to a curve $C/k$) gives a bijection of sets between the set of enriched structures on $C/k$ and the $k$-points of $En_g$ lying over $p$. Noting that every stable curve over $k$ admits at least one enriched structure, it is clear that this $En_g/\overline{M}_g$ is highly non-unique - for example, the disjoint union of the fibres of $En_g \to \overline{M}_g$ has the same property. Now Main\`o's construction is much more reasonable than this --- it is connected and quasi-compact, given as an open subscheme of a certain blowup of $\overline{M}_g$ --- but with her definition of enriched structure it was hard to justify why this is a `good' moduli space. 

Our first main result is a definition of an enriched structure on a stable curve over an arbitrary base scheme (allowing marked points and more, see \ref{sec:defining_curves}). Unfortunately our definition is too involved to spell out in the introduction, though in \ref{sec:2-gons} we will describe the special case of the universal deformation of a 2-pointed 2-gon. We show that our definition reduces to Main\`o's when the base is a point. We also show that the resulting stack $\ca E$ over the fine moduli stack $\Mbar_{g,n}$ is algebraic; better, that $\ca E$ is relatively representable by an algebraic space over $\Mbar_{g,n}$. In \ref{sec:maino_blowups} we will adapt Main\'o's construction of her space $En_g$ slightly to work over the fine moduli stack $\Mbar_g$, and then show that it \emph{does} coincide naturally with our $\ca E$. 

Main\`o also proposes a definition of an enriched structure for a stable curve over an arbitrary reduced base scheme (in a way quite different from us), and conjectures that the resulting functor is coarsely represented by $En_g$. However, in \ref{eg:maino_not_rep} we show that her functor is not  representable, disproving the conjecture. In \ref{sec:our_version-of_maino} we will construct a variant on her definition, which is representable by $En_g$, but the result is somewhat clumsy. In this way, all of Main\'o's conjectures are resolved. 

% proposes two definitions of enriched structures over reduced base schemes (neither of which bears any resemblance to our definition), but does not establish whether they are equivalent. 

Our second main result is to give a universal property for the stack $\ca E \to \Mbar_{g,n}$ in terms of N\'eron models of jacobians. We show that the pullback to $\ca E$ of the universal stable curve $\overline{\ca C}_{g,n}$ is regular. We show that the jacobian of the universal smooth curve $\ca C_{g,n} \to \ca{M}_{g,n}$ admits a N\'eron model $N$ over $\ca E$ (in the sense of \cite{Holmes2014A-Neron-model-o}), and further that $\ca E$ is universal with respect to this property; more precisely we show that if $t\colon T \to \Mbar_{g,n}$ is a morphism such that $t^*\overline{\ca C}_{g,n}$ is regular and the jacobian of $t^*\ca C_{g,n}$ admits a N\'eron model over $T$, then $T \to \Mbar_{g,n}$ factors uniquely via $\ca E \to \Mbar_{g,n}$.

This relates $\ca E$ to the `universal N\'eron model admitting morphism' constructed in \cite{Holmes2014A-Neron-model-o} (more precisely, it shows that $\ca E$ is the `first level' in the tower of universal N\'eron model admitting morphisms, see \ref{sec:UNMA_morphisms}). From this we can deduce for free some nice properties of $\ca E$:
\begin{itemize}
\item
$\ca E$ is regular (non-singular);
\item $\ca E \to \Mbar_{g,n}$ is separated;
\item The pullback of Caporaso's balanced Picard stack (cf.\ \cite{Caporaso2008Neron-models-an}) $\ca{P}_{d,g}/\Mbar_{g,n}$ to $\ca E$ admits a canonical group/torsor structure (here we need conditions on $(d,g)$ for $\ca P_{d,g}$ to be defined);
\item Let $d = (d_1, \dots, d_n)$ be a vector of integers with sum zero. Writing $J_{g,n}\to \ca{M}_{g,n}$ for the universal jacobian, the $d$th Abel-Jacobi map is a section $\alpha\colon  \ca{M}_{g,n} \to J_{g,n}$. Then $\alpha$ extends uniquely to a section $\ca E \to N$ (recall $N / \ca E$ is the N\'eron model of $J$ over $\ca E$). In \ref{sec:DRC} we will briefly discuss an application of this to the double ramification cycle. 
%The N\'eron model is quasi-compact, so some finite multiple lands in the fibrewise connected component of the unit section, so after taking this multiple we get an $d$th Abel-Jacobi map from $\ca E$ to a semiabelian variety over $\ca E$. 
\end{itemize}

Alternatively, if one is initially more interested in the N\'eron models side, then we can view this result as giving an explicit description of the functor of points of $\ca E$, the first level in the tower of universal N\'eron model admitting morphisms. 

The second part of the paper is devoted to constructing a compactification of $\ca E$ over $\Mbar_{g,n}$. More precisely, we construct a proper relative algebraic space $\overline{\ca E} \to \Mbar_{g,n}$ and an open immersion $\ca E \to \overline{ \ca E}$ over $\Mbar_{g,n}$. In view of the `extension of the Abel-Jacobi map' property discussed just above, we plan to use such a compactification to define Gromov-Witten invariants of $B\bb G_m$ (cf.\ \cite{Frenkel2009Gromov-Witten-g}, where `admissibility' is required). Such a construction will need further work, for example showing that $\overline{\ca E}$ is regular, so that the intersection theory is well-behaved, and we postpone this to a future paper. 

The definition of $\overline{\ca E}$ is again too involved to describe in this introduction, but we can nonetheless outline the main differences from $\ca E$. As in Main\`o's original construction, our definition of an enriched structure includes certain invertible sheaves. When moving to $\overline{\ca E}$, these are unsurprisingly replaced by torsion free rank 1 sheaves (see \ref{def:TFR1}). However, there is a substantial complication. Main\`o's definition (spelled out in \ref{def:M_ES}) gives an invertible sheaf for each irreducible component of $C/k$ satisfying certain combinatorial conditions, and requires that the tensor product of all these invertible sheaves together be trivial. In our more general definition of an enriched structure a similar condition remains; at some point the tensor product of certain invertible sheaves is required to be trivial. When replacing the invertible sheaves by torsion free rank 1 sheaves this no longer makes much sense, since if a tensor product of sheaves is trivial then they were by definition invertible! We thus replace the `tensor product is trivial' condition by a rather more involved condition in terms of certain auxiliary data (see \ref{def:compatibility}). We must then work quite hard to verify that, in the case of invertible sheaves, this reduces to the original definition. 

\subsection{Future work on the closure of the double ramification cycle}\label{sec:DRC}
Given the connection between moduli of enriched structures and N\'eron models, it is perhaps unsurprising that the moduli of enriched structures is relevant in the study of the closure of the double ramification cycle. Given a tuple of $n$ integers summing to zero, one constructs a section $\alpha$ to the universal jacobian over ${\ca{M}}_{g,n}$ by taking the formal linear combination of the tautological sections with the given integers as coefficients, and viewing this as a divisor on the universal curve. The double ramification cycle on ${\ca{M}}_{g,n}$ is constructed by pulling back the unit section in the universal jacobian along the section $\alpha$ described above. One then wishes to extend this cycle to $\Mbar_{g,n}$ in a natural way, and to compute its class in the tautological ring. On the locus of curves with at most one non-separating node the above definition extends in a natural way, and the class was computed by Gruschevsky and Zakharov \cite{Grushevsky2012The-zero-sectio}, extending work of Hain \cite{Hain2013Normal-function} who treated the compact-type case. A different approach using virtual fundamental classes on a space of stable maps to a `rubber $\bb P^1$' allows  for the construction of an extension of the double ramification cycle to the whole of $\Mbar_{g,n}$, and its class was computed in \cite{Janda2016Double-ramifica}. Cavalieri, Marcus and Wise verified (\cite{Cavalieri2012Polynomial-fami}, \cite{Marcus2013Stable-maps-to-}) that on the locus of curves of compact type this definition in terms of stable maps to rubber $\bb P^1$ coincides with the original definition in terms of sections to the universal jacobian. In future work, we plan to extend this result to the whole of $\Mbar_{g,n}$ using a (slightly modified) moduli stack of enriched structures to provide a space where the section $\alpha$ will always extend. It was suggested in \cite[\S 1.5]{Cavalieri2012Polynomial-fami} that such a construction should be carried out, but to our knowledge this has not yet happened. 

%\subsubsection{Mumford-Lear extensions, height jumping, and $b$-divisors}
%Given a continuously metrised line bundle on $\ca{M}_{g,n}$, a \emph{Mumford Lear} extension is an extension to a line bundle on $\overline{\ca{M}}_{g,n}$ such that the metric has at-worst logarithmic poles along the boundary outside some codimension-2 subset. The formation of a Mumford-Lear does \emph{not} in general commute with non-flat base change, and this failure to commute is measured by Hain's \emph{height jump} \cite{Hain2013Normal-function}. 

\subsection{Special case of 2-gons}\label{sec:2-gons}
In this section we gently introduce the definition of enriched structure (and compactified enriched structure) by considering the case of a curve with two irreducible components, meeting in two points, whose dual graph is thus a 2-gon.

We work over a separably closed field $k$ --- no subtleties are missed by taking $k = \bb C$. Let $C_0$ be the curve over $k$ constructed by taking two copies of $\bb P^1$, glueing the points $(0:1)$ together, and the points $(1:0)$ together, and finally marking both points $(1:1)$. We have a stable 2-pointed curve of genus 1. Let $\ca C /\ca{M}$ be its universal deformation as a pointed curve, and choose an isomorphism $\ca{M} \cong \on{Spec} k[[x,y]]$ such that the locus in $\ca{M}$ where $ \ca C \to \ca{M}$ is not smooth is given by $xy = 0$. 

The curve $C_0$ is the fibre over the closed point of $\ca C \to \ca{M}$, and so its two irreducible components give closed subschemes of $\ca C$. If $\Gamma$ is the dual graph of $C_0/k$ (a 2-gon), let $u$ and $v$ be its vertices, corresponding to the two irreducible components of $C_0$. Let $Z(u)$ (resp. $Z(v)$) be the corresponding closed subschemes of $\ca C$, and write $\ca I_u$ (resp. $\ca I_v$) for their ideal sheaves on $\ca C$. 

Now let $s\colon S \to \ca{M}$ be any scheme over $\ca{M}$. If $\ca C_S$ is the pullback of $\ca C$ to a stable curve over $S$, then $s^* \ca I_u$ and $s^*\ca I_v$ are coherent sheaves on $\ca C_S$. We define an \emph{enriched structure on $\ca C_S/S$} to be a pair
\begin{equation*}
(q_u\colon s^* \ca I_u \twoheadrightarrow \cl L_u, q_v\colon s^* \ca I_v \twoheadrightarrow \cl L_v)
\end{equation*}
of invertible quotients of the $s^*\ca I_{-}$, satisfying the condition that $\cl L_u \otimes \cl L_v$ is $S$-locally isomorphic to the trivial bundle on $\ca C_S$. 

For example, suppose $S \to \ca{M}$ factors via the complement of the closed point. Then clearly $s^*\ca I_u$ and $s^*\ca I_v$ are both trivial, hence so are $\cl L_u$ and $\cl L_v$, so there is exactly one enriched structure on $\ca C_S/S$. On the other hand, if $S \to \ca{M}$ factors via the inclusion of the closed point, then there is a bijection between enriched structures on $\ca C_S/S$ and $\bb G_m(S)$. The reader will then not be surprised to know that the moduli space $\ca E / \ca{M}$ of enriched structures can be obtained by blowing up $\ca{M}$ at the closed point, then deleting the two points where the strict transforms of the coordinate axes meet the exceptional fibre. 

If the moduli space of enriched structures is open in the blowup of $\ca{M}$ at the closed point, one might hope that the moduli of compactified enriched structures is simply the blowup of $\ca{M}$ at the origin, and indeed this is the case. To define the functor of compactified enriched structures, one takes pairs
\begin{equation*}
(q_u\colon s^* \ca I_u \twoheadrightarrow \cl F_u, q_v\colon s^* \ca I_v \twoheadrightarrow \cl F_v)
\end{equation*}
of torsion free rank 1 (see \ref{def:TFR1}) quotients of the $s^*\ca I_{-}$, and imposes a compatibility condition. To define the latter, denote by $\sigma_u$ (resp. $\sigma_v$) the tautological section\todo{Is this well-defined? It's not clear to me where, say, the closed point of $\ca{M}$ is sent in $\ca C$. David: Yes, is well-defined, since it is defined as universal deformation as pointed curve, so it comes with sections built in. } of $\ca C/\ca{M}$ through $Z(u)$ (resp. $Z(v)$). We write $\bb K_u$ for the kernel of the map $\sigma_u^*s^* \ca I_u \to \sigma_u^*\cl F_u$ induced by pulling back $q_u$ along $\sigma_u$,\todo{So we're pulling back $\ca I_u$ from $\ca C/\ca{M}$ to $\ca C_S/S$, and then somehow also along $\sigma_u\colon \ca{M}\to \ca C$? David: in the second pullback, we pull back along the base change of $\sigma_u$ to $S$. Not sure if we want another layer of notation for this, or if it is reasonably clear as is...} and define $\bb K_v$ similarly. Note that $\sigma_u^*s^* \ca I_u = s^*(x,y) = \sigma_v^*s^* \ca I_v$ where $(x,y)$ is the defining ideal of the closed point in $\ca{M}$. We then say that the pair $(q_u\colon s^* \ca I_u \twoheadrightarrow \cl F_u, q_v\colon s^* \ca I_v \twoheadrightarrow \cl F_v)$ is compatible (i.e.\ form a compactified enriched structure) if the canonical closed immersion 
\begin{equation*}
\on{Supp}_{\on{Fitt}}\frac{s^*(x,y)}{\bb K_u + \bb K_v} \to S
\end{equation*}
is an isomorphism (here $\on{Supp}_{\on{Fitt}}$ denote the Fitting-support of a coherent sheaf --- this is a slight variation on the usual support designed to behave better with respect to non-flat base-change, see \ref{def:fitting_support}). Note that the formation of $\bb K_u$ and $\bb K_v$ is stable under base-change by the $S$-flatness of $\cl F_u$ and $\cl F_v$. 

Again, if $S \to \ca{M}$ factors via the complement of the closed point then the $s^* \ca I_-$ are both trivial, so $\bb K_u = \bb K_v = 0$ and the pair is automatically compatible, so there is exactly one compactified enriched structure. If $S \to \ca{M}$ is the inclusion of the closed point then $s^*(x,y)$ is a free $k$-module of rank 2, $\bb K_u$ and $\bb K_v$ are both submodules of rank 1, so that pair is compatible if and only if $\bb K_u = \bb K_v$ as submodules of $s^*(x,y)$. More generally we find that the moduli space of compactified enriched structures is naturally isomorphic to the blowup of $\ca{M}$ at the closed point.

\subsection{Some of the complications arising in the general case}
The story told above for the 2-gon was hopefully straightforward (aided by the omission of all the proofs). In this section we outline some of the complications which arise in treating the general case, and how we handle them. 

\subsubsection{Graphs make no sense}
The crucial ideal sheaves $\ca I_u$ and $\ca I_v$ were defined in terms of dual graphs and irreducible components, but for arbitrary stable curves neither of these are defined. We begin by defining enriched structures on stable curves $C/S$ such that the tautological map $S \to \Mbar_{g,n}$ factors via an \'etale chart $U \to \Mbar_{g,n}$ such that the tautological curve $C_U/U$ is `controlled', i.e.\ some fibre has a dual graph which dominates the dual graph of all the other fibres. We show the definition is independent of the choice of $U$. We show that such $S \to \Mbar_{g,n}$ form a base for the big \'etale site on $\Mbar_{g,n}$ and that the resulting notion of enriched structure forms a sheaf on that base, and hence extends uniquely to a sheaf on the whole of the big \'etale site. We define this latter sheaf to be the sheaf of enriched structures. In fact, we carry out this procedure over the stack of all prestable curves for maximal generality. 

\subsubsection{What if the graph is not circuit--connected?}
Having one $\ca I_v$ for each vertex $v$ does not work well if the graph is not circuit-connected (\ref{def:circuit_connected}). We work instead with `relative components' of the graph, namely pairs $(v,G)$ where $v$ is a vertex and $G$ is a connected component of the complement of $v$. For example, if the graph has only a single vertex then the set of relative components is empty, so there is exactly one enriched structure (the empty tuple), as one would expect for a smooth curve.

When defining compactified enriched structures this setup is not quite general enough, essentially because of how graphs can change under specialisation (see below). We thus replace relative components with `hemispheres' of the graph, namely connected subgraphs whose complements are also connected. If $(v,G)$ is a relative component then $G$ is a hemisphere, but not vice-versa in general. 

\subsubsection{How to pull back as the graph changes?}To define a functor of enriched structures we need a good notion of pullback. However, the dual graph can change dramatically under pullback, as contracting edges may delete loops or break circuit-connectedness. In the case of enriched structures we work around these problems by showing that the set of enriched structures is empty unless the only pullbacks are of a very simple combinatorial shape. In the case of compactified enriched structures our notion of hemispheres allows us to work around this. 

\subsection{Why do we bother with non-compactified enriched structures?}
We will prove that the open substack of compactified enriched structures where the torsion free rank 1 quotients are invertible is canonically isomorphic to the stack of enriched structures. One might reasonably then ask why we bother with a separate definition of enriched structures with its own notion of compatibility. There are several possible answers:
\begin{enumerate}
\item The definition of enriched structures is rather simpler and easier to work/compute with than that of compactified enriched structures;
\item The definition of enriched structures is much closer to Main\`o's original definition;
\item To show that invertible compactified enriched structures over fields are the same as those arising from Main\`o's definition we would have to do much of the same work as we do in any case comparing enriched structures and invertible compactified enriched structures;
\item For the universal property concerning N\'eron models, it is much easier to work with enriched structures. 
\end{enumerate}

\subsection{Acknowledgements}

OB would like to thank Raoul Wols for pointing out a valuable reference for the appendix. DH would like to thank Martin Bright for his help in coming up with the definition of compatibility for compactified enriched structures, David Rydh for suggestions on proving representability, and the organisers of the AMS Summer Institute in Algebraic Geometry in Utah for an inspiring conference which provided a lot of motivation and context for this work. Both authors would like to thank Bas Edixhoven for a number of helpful comments and corrections. 

\subsection{Table of Notation}\label{sec:table_of_notation}

%Owen: I'm going to add every type of notation I see. That way we can try to detect namespace collisions. I realise this list will look ridiculous at first; we can pare it down later.

%Notation of section 2.

For the convenience of the reader, we list here some of the notation most frequently used in the paper. 

Graph theory notation:
\begin{longtable}{c|p{12cm}}
$\Gamma$						& A graph.\\
 $\on{Vert}\Gamma, \on{Edge}\Gamma$										&	The sets of vertices and edges of a graph $\Gamma$.\\
% $E$									& An arbitrary subset of the edges of a graph.\\
% $V, W, G, H$										& Subsets of the set of vertices of a graph.\\
 $G, G^c$									& $G$ is a set of vertices of a graph $\Gamma$, and $G^c$ the set of vertices of $\Gamma$ which are not in $G$.\\
% $v, v', w, g, g'$						& A vertex of a graph.\\
 %$e,e'$									& An edge of a graph.\\
 $e: v-v'$							    & An edge $e$ between vertices $v$ and $v'$ of a graph.\\
 $\gamma$								& A circuit of a graph (see \ref{def:graphs}).\\
 $E(V)$									& The set of edges of a graph with exactly one endpoint in a subset $V$ of the vertices.\\
 $\pi_0(\Gamma)$ & The set of connected components of graph $\Gamma$.\\
 $\Upsilon$ & A circuit-connected component of a graph (see \ref{lem:circuit_conn_partiton}). 
\end{longtable}

Commutative algebra and algebraic geometry notation:
\begin{longtable}{c|p{12cm}}
 $R$ & A ring, assumed to be commutative with unity.\\
 $\widehat{R}$ & The completion of a local ring $R$ at its maximal ideal.\\
 $\on{Spec} R$						& The spectrum of a ring $R$.\\
% $S, T$ & A scheme (or algebraic stack).\\
 $\bar s \to S$ & A geometric point of a scheme (or more generally algebraic stack) $S$.\\
 $\ca O_S$ & The structure sheaf on $S$.\\
 $M$ & A module over a ring, or a quasicoherent sheaf of modules.\\
 $I$ & An ideal of a ring, or a quasicoherent sheaf of ideals.\\
 $V(I)$	& The closed subscheme cut out by an ideal (sheaf) $I$.\\
 $\on{Fitt}_k(M)$ & The $k$th Fitting ideal of module (sheaf) $M$ (see \ref{sec:fitting_ideals}).\\
 $\on{Supp}(M)$ & The set-theoretic support of sheaf of modules $M$.\\
 $\on{Supp}_{\on{Fitt}}(M)$ & The closed subscheme cut out by the $0$th Fitting ideal of $M$ (see \ref{sec:fitting_ideals}).\\
 $Z, Z(\cdot)$ & A closed subscheme (possibly one determined by some data ``$\cdot$'').\\
  $\ca{O}_{S,\bar s}^{et}$	& The \'etale stalk of the structure sheaf of scheme $S$ at a geometric point $\bar s$.\\
%  $\sigma,\sigma'$ & Strata in the partitition of an affine scheme given by an SNC closed immersion.\\
  $\on{Irr}(C)$ & The set of irreducible components of a reduced scheme $C$ (often a prestable curve).
%  $A, X_i$									& Formal variables in a blowup ring.\\
%  $f$										& An element of a formal power series ring $R[[x,y]]$.\\
%  $f_x$									& The coefficient of $x$ in power series $f$.\\
\end{longtable}

Prestable curve notation:
\begin{longtable}{c|p{12cm}}
 $C/S$ & A prestable curve $C$ over scheme (or algebraic stack) $S$ (see \ref{sec:defining_curves}).\\
 $\frak M$ & The algebraic stack of all prestable curves (see \ref{sec:defining_curves}).\\
 $\frak C/\frak M$ & The universal prestable curve (see \ref{sec:defining_curves}).\\
 $\Mbar_{g,n}$ or $\ca M^{stab}_{g,n}$ & The Deligne-Mumford stack of stable curves of genus $g$ with $n$ marked points. \\%\Ocomment{Do I have this right?}{Almost; changed semistable to stable. The terminology is not completely consistent in the literature, but generally `semistable' means reductive automorphism group, and `stable' means finite automorphism group. If we want a DM stack we want finite automorphism groups. }\\
% $C_T$ & The pullback of a prestable curve $C$ (often the universal one) to a scheme $T$ over $\frak M$.\\
 $\Gamma_x$							& The dual graph of a prestable curve $C/S$ over a point $x\in S$.\\
 $\Omega_{C/S}$					& The sheaf of relative differentials of a prestable curve $C/S$.\\
 $\on{Sing}(C/S)$				& A canonical closed subscheme of curve $C/S$ where $C \to S$ is not smooth.\\
 $\frak s$		& A controlling point of $S$ for a prestable curve $C/S$.\\
  $\frak U\to\frak M, C/\frak U$ & A simple chart (see \ref{def:simple_chart}).\\
%  $\frak U_S$ & The immediate neighborhood of a controlled curve $C/S$ with controlled neighborhood $S\to\frak U$.
 $\frak U_S$								& The immediate neighbourhood of stack $S$ in chart $U$, see \ref{def:immediate_nhd}\\
% $Z, Z'$							& A closed subscheme.\\
 $Z(D)$								& The closed subscheme of a simple chart $U$ cut out by labels of edges in $D$ (see \ref{def:ideal_sheaves}).\\
 $\ca J_W$						& A sheaf of ideals on $\frak U$, see \ref{def:ideal_sheaves}.\\
% $\ca C_u$						& The tautological curve over a point $u$ of a simple chart of $\frak M$.\\
% $\eta(v)$						& The generic point of the irreducible component $v$ of a curve over a field.\\
% $\sigma_e$						& The map $Z(V)\to C_{Z(V)}$ corresponding to node $e\in E(V)$.\\
% $\widetilde{C}_{Z(V)}$	& The blowup of $C_{Z(V)}$ along the union of the sections $\sigma_e$ for $e\in E(V)$.\\
% $\bar{V}$							& The union of the connected components of $\widetilde{C}_{Z(V)}$ containing at least one $\eta(v)$ with $v\in V$.\\
 $\ca I_V$							& A sheaf of ideals on $C_\frak U$ (see \ref{def:ideal_sheaves}). \\
\end{longtable}

Miscellaneous notation: 

\begin{longtable}{c|p{12cm}}
% $\ca O_{S,s}$					& The Zariski local ring of algebraic stack $S$ at point $s\in S$.\Dcomment{This does not exist - check what is intended! }\\
% $\widehat{\ca O_{S,\bar s}}$	& The completed local ring of stack $S$ at geometric point $\bar s$.\\
% $D$ 									& A divisor.\\
% $I$									& A finite index set.\Dcomment{So this clashes with the ideal above, but I'm not sure if this can/should be avoided?}\\
% $f$									& A morphism of schemes.\Dcomment{Above it was an element of a power series ring, but morally they're the same kind of object, so I think that's OK. }\\
% $f^\ast D$						& The pullback of divisor $D$ on $S$ along $f:S'\to S$.\\
% $B, B_0$									& A certain collection of closed subsets of scheme $U$.\Dcomment{Removed}\\
% $\Delta$							& The boundary of a prestable curve.\\
% $\frak U_u$								& The largest open subset of $U$ on which the graph over $u$ makes sense. \\

% $s, t$									& A controlled neighborhood $S\to U$ of a curve $S\to\frak M$.\Dcomment{The $t$ looks odd... }\\
% $s^\ast\ca I_V$	& The pullback under $s: S\to U$ of an ideal sheaf $\ca I_V$ on $C_U$ to an $\ca O_C$-module.\\
% $\cl{L}, \cl{L}'$			& An invertible sheaf of $\ca O_C$-modules.\\
% $q, q', \psi$				& A map of $\ca O_C$-modules.\\
% $\pi_0(\Gamma)$		& The set of connected components of a graph $\Gamma$.\\
 $\ca E(S)$					& The set of enriched structures on $S$.\\
% $\on{sp}$					& A specialization map of graphs contracting certain edges.\\
%  $\{T_i\}_I$					& A family of controlled curves $T_i$ for each $i\in I$.\\
 $\mathbf{Set}$			& The category of sets.\\
 $\mathbf{Sch}_S$	& The category of schemes over algebraic stack $S$.\\
$\# F$ &  the cardinality of a (finite) set $F$.\\
% $\ca I_{V}$					& Given an inclusion of simple charts $U\subseteq U'$ with graph contraction $\Gamma'\to \Gamma$ and a subset $V\subset \Gamma$, the pullback of $\ca I_{V'}$ on $\ca C_{U'}$ to $\ca C_U$, where $V'$ is the set of vertices of $\Gamma'$ contracted into $V$.\Dcomment{Is this not the same as the obvious other definition of $\ca I_V$? }\\
% $\ca E_\Gamma(T)$		& The set of $\Gamma$-enriched structures on $T\to S$, where $S'$ is a controlled curve with graph $\Gamma'$, $\Gamma$ is a given contraction of $\Gamma'$, and $S$ is the open substack of $S$ cut out by the labels of edges contracted in $\Gamma'$ to obtain $\Gamma$.
$\tra$ & a closed immersion. \\
$\hra$ & an open immersion.\\
\end{longtable}

 We use lowercase letters both for points of schemes and for morphisms of schemes, adopting the view that a point $x\in X$ is a type of morphism to $X$, part of the ``functor of points'' view that every morphism of schemes $T\to X$ is a ``$T$-point'' of $X$. We often use $\eta$ for a point of a scheme that is meant to be ``generic'' in some way, either specifically as the generic point of a given irreducible subscheme of $X$, or more generally as any point of $X$ that specializes to a given $x\in X$.

%\subsection{General notation}
%\label{sec:general_notation}

\subsection{Notation for pullbacks}\label{sec:pullback_notation}
Suppose we are given a pullback diagram
\begin{equation*}
 \begin{tikzcd}
  X \times_S T \arrow[r] \arrow[d]& X \arrow[d, "\pi"]\\
  T \arrow[r, "t"] & S, \\
\end{tikzcd}
\end{equation*}
and a quasicoherent sheaf $\ca F$ on $X$, which we can pull back to $X \times_S T$. Often one would write $X_T$ for $X \times_S T$, and $\ca F_T$ for the pullback of $\ca F$ to $X_T$. However, it is really the map $t$ which is important, not just the data of the scheme $T$ --- it will often happen that $T = \on{Spec} k$ for some field $k$, and there are many possibilities for $t$, so we want to emphasise our choice. Because of this, we write $t^*X \coloneqq X \times_S T$, $t^*\pi$ for the natural map $t^*X \to T$, and symmetrically $\pi^*t$ for the natural map $X \times_S T \to X$. With this arranged, the pullback of $\ca F$ to $t^*X$ should correctly be denoted $(\pi^*t)^*\ca F$, but this is clumsy; we denote it simply $t^* \ca F$, which is shorter and unambiguous. 

This notation makes it important to remember where various sheaves live. To aid in this, we make the following conventions: if $C/S$ is a curve, we use $\ca I$ for ideal sheaves on $C$, and $\ca J$ for ideal sheaves on $S$. Similarly, we use $K$ for kernels of maps on $C$, and $\bb K$ for kernels of maps on $S$.

\part{Fine moduli of enriched structures}

\section{Defining enriched structures}

%\subsection{Background}
We start by briefly recalling some standard definitions, and fixing notation. % from \cite{Holmes2014Neron-models-an} and \cite{Holmes2014A-Neron-model-o}, where more details can be found.

\subsection{Graphs}

\begin{definition}\label{def:graphs}
By a \emph{graph} $\Gamma$ we shall mean an \emph{undirected multigraph}, that is, $\Gamma$ has a set of vertices $\on{Vert}(\Gamma)$ and a set of edges $\on{Edge}(\Gamma)$, and to each edge is associated an unordered pair of vertices called its \emph{endpoints}. A \emph{loop} is an edge whose endpoints are the same (in particular, we allow loops), and a \emph{circuit} is a path of positive length which starts and ends at the same vertex, and which does not repeat any edges or any other vertices.

We will also use the notion of a \emph{contraction} of graphs $\Gamma \to \Gamma'$, which means that $\Gamma'$ has been obtained from $\Gamma$ by removing a subset $E$ of the edges of $\Gamma$ and identifying the endpoints of each edge in $E$. %This is the only kind of mapping between graphs that we will consider; we will therefore use the phrase \emph{contraction} interchangeably with \emph{morphism of graphs}, although many notions of graph morphism exist in the literature.
\end{definition}

We now introduce two notions of connectedness for a graph: ordinary path-connectedness as well as circuit-connectedness.

\begin{definition}
The \emph{connected components} of a graph $\Gamma$ are the equivalence classes of $\on{Vert(\Gamma)}$ under the equivalence relation generated by the edges of the graph. A graph is \emph{connected} if it has exactly one connected component, i.e. if it is non-empty and if every two vertices can be connected by a path. 
\end{definition}
\begin{definition}
\label{def:circuit_connected}
Let $\Gamma$ be a graph. To any subset $E \sub \on{Edge}(\Gamma)$ we associate the unique subgraph of $\Gamma$ with edges $E$ and with no isolated vertices --- we will often fail to distinguish between the set $E$ of edges and the subgraph of $\Gamma$ obtained in this way. We say $E$ is \emph{circuit-connected} if it is non-empty and, for every pair $e$, $e'$ of distinct edges in $E$, there is at least one circuit $\gamma \sub E$ such that $e \in\gamma$ and $e' \in \gamma$. 
\end{definition}
\begin{remark}
Note that circuit-connectedness is not equivalent to the notion of 2-vertex-connectivity found in the literature, for example a graph with one vertex and one edge (a loop) is circuit-connected, but not 2-vertex-connected. 
\end{remark}
\begin{lemma}[{\cite[lemma 7.2]{Holmes2014A-Neron-model-o}}]\label{lem:circuit_conn_partiton}
For each graph $\Gamma$, the maximal circuit-connected subsets (\emph{circuit-connected components}) of $\on{Edge}(\Gamma)$ form a partition of $\on{Edge}(\Gamma)$. 
\end{lemma}
This partition can be thought of as breaking the graph into loops and 2-vertex-connected components, cf.\ \cite[\S 7.1]{Holmes2014A-Neron-model-o}. 

\subsection{Fitting ideals}\label{sec:fitting_ideals}

Here we recall the definition and key properties of Fitting ideals for finite modules over schemes, which among other uses provide a well-behaved notion of scheme-theoretic support for a finite quasicoherent sheaf of modules.

\begin{definition}\label{def:fitting_ideal}
Let $R$ be a ring (commutative, with unity) and $M$ a finitely generated $R$-module. Choose a presentation of $M$ with a finite list of generators $m_1,\dots,m_n$ and a (possibly infinite) family of relations $\{a_{1j}m_1 + \dots a_{nj}m_n : j\in J\}$. The \emph{$k$th Fitting ideal} of $M$ is the ideal $\on{Fitt}_k(M)\subseteq R$ generated by the $(n-k)\times (n-k)$ minors of the $n\times J$ matrix with entries $a_{ij}$. It is independent of choice of presentation; see \cite[\href{https://stacks.math.columbia.edu/tag/07Z8}{Tag 07Z8}]{stacks-project}.
\end{definition}

We note the following elementary results about Fitting ideals:

\begin{lemma}\label{fitt_0_in_ann} Let $R$ be a ring and $M$, $M'$ finite $R$-modules.
\begin{enumerate}
\item $\on{Fitt}_0 M \subseteq \on{Ann}_R M$ as ideals of $R$.
\item $V(\on{Fitt}_0) = \on{Supp}(M)$ as subsets of $\on{Spec}(R)$.
\item Given an $R$-algebra $R'$, if $I$ is the $k$th Fitting ideal of $M$ then $R'I$ is the $k$th Fitting ideal of $M' = R'\otimes_R M$.
\item If $M\to M'$ is a surjection, $then \on{Fitt}_k(M) \sub \on{Fitt}_k(M')$. 
\end{enumerate}
\end{lemma}
\begin{proof}\cite[\href{https://stacks.math.columbia.edu/tag/07ZA}{Tag 07ZA}]{stacks-project}
\end{proof}

From here, it is a straightforward matter to generalize Fitting ideals to quasicoherent modules over a scheme:

\begin{lemma}
 Let $S$ be a scheme and $M$ a finite type %{finite type? i.e. as in \cite[\href{https://stacks.math.columbia.edu/tag/01B5}{Tag 01B5}]{stacks-project}}
 quasicoherent $\ca O_S$-module. For each natural number $k$, there is a unique quasicoherent $\ca O_S$-ideal sheaf that assigns to each affine subscheme $X\subseteq S$ the ideal $\on{Fitt}_k(M(X))$. We denote this sheaf of ideals by $\on{Fitt}_k(M)$ as well and call it the \emph{$k$th Fitting ideal sheaf} of $M$.
\end{lemma}

\begin{proof}
 \cite[\href{https://stacks.math.columbia.edu/tag/0CZ3}{Tag 0CZ3}]{stacks-project}
\end{proof}

\newcommand{\Fsupp}{\on{Supp_{Fitt}}}
\begin{definition}\label{def:fitting_support}
For $S$ a scheme and $M$ a finite quasicoherent $\ca O_S$-module, we define the \emph{Fitting support} $\Fsupp M$ to be the closed subscheme of $S$ cut out by the ideal sheaf $\on{Fitt}_0(M)$. 
\end{definition}

\begin{remark}From the above, we see that the Fitting support coincides set-theoretically with the usual support, and its formation commutes with arbitrary base-change.\end{remark}

%\begin{remark}
% In particular, the closed subscheme $Z$ of $\on{Spec}(R)$ cut out by $\on{Fitt}_0(M)$ has $\on{Supp}(M)$ as its set-theoretic support, and if we form the closed subscheme $Z'$ of $\on{Spec}(R')$ cut out by $\on{Fitt}_0(M')$, we have $Z' = Z \times_{\on{Spec}(R)} \on{Spec}(R')$. Because of this compatibility with base-change, we regard the closed subscheme cut out by $\on{Fitt}_0(M)$ as the canonical notion of scheme-theoretic support of $M$, rather than that cut out by $\on{Ann}(M)$.
%\end{remark}

%\begin{lemma}
%Over a field, $\on{Fitt}_0M = 0 \iff M \neq 0$. Or maybe just for free modules? \Ocomment{Yes, this is true for free modules over a general ring. This fact doesn't seem to be used later, at least not explicitly. Should we just remove it? I'm having a hard time seeing when we wouldn't just use $\on{Fitt}_0 M \subseteq \on{Ann}_R M$.}
%\end{lemma}

\subsection{Prestable curves and their moduli}\label{sec:defining_curves}

\newcommand{\upsC}{\frak C}

\begin{definition}
Let $S$ be an algebraic stack (in the sense of \cite[\href{http://stacks.math.columbia.edu/tag/026O}{Tag 026O}]{stacks-project}). A \emph{prestable curve} over $S$ is a proper, flat, finitely presented morphism to $S$ which is relatively representable by algebraic spaces and whose geometric fibres are reduced, connected, have all irreducible components of dimension 1, and which have at worst nodal singularities.
\end{definition}

By \emph{curve}, we will always mean `prestable curve'. We denote by $\frak M$ the stack of all prestable curves. In other words, $\frak M$ is a stack in groupoids over the category $\cat{Sch}_{\bb Z}$ of schemes over $\bb Z$, whose objects are pairs $(S, C/S)$ with $S$ a scheme and $C/S$ a prestable curve. This is an algebraic stack, smooth over $\bb Z$, and is the disjoint union of open substacks of prestable curves of genus $g$, see \cite[\href{https://stacks.math.columbia.edu/tag/0E6S}{Section 0E6S}]{stacks-project}. 

We write $\upsC/\frak M$ for the universal prestable curve. By definition, to give a morphism from a scheme $S \to \frak M$ is the same as to give a prestable curve $C/S$. On the other hand, we can view $S$ as a stack (namely, as the category of schemes over $S$ together with the forgetful functor to $\cat{Sch}_\bb Z$), and then the functor $\cat{Sch}_S \to \frak M$ sends $T \to S$ to the pair $(T, C\times_S T /T) \in \on{ob} \frak M$. Now, the universal curve $\upsC /\frak M$ is a stack whose objects are triples $(T, C/T, f)$ of a scheme, a prestable curve, and a section $f\in C(T)$. If $C/S$ is a prestable curve, then we can view $C$ as a stack (namely, as $\cat{Sch}_C$), and define a functor $\cat{Sch}_C \to \upsC$ by sending a scheme $T \to C$ to the triple $(T, C\times_S T, f)$ where $f\colon T \to C \times_S T$ is the section induced by the map $T \to C$. 

Given a morphism from a scheme $S$ to $\frak M$, we write $C_S/S$ for the tautological prestable curve over $S$ (the pullback of $\upsC$ from $\frak M$ to $S$). If we start with a prestable curve $C/S$, this induces a morphism $S \to \frak M$, and thus a prestable curve $C_S/S$; reassuringly, by the above discussion we have $C_S = C$; these curves are \emph{canonically} identified as schemes over $S$. 
For example, given a prestable curve $C/S$ and a geometric point $\bar s \to S$ we may compose $s\to S \to \frak M$ and obtain the fiber $C_{\bar s}$ of $C$ over $\bar s$; more generally, we can apply this process to any map $T\to S$ and obtain the pullback $C_T/T = C\times_S T/T$.

The stack $\frak M$ is not quasi-compact or Deligne-Mumford, but does come with a very pleasant smooth cover by a Deligne-Mumford stack:
\begin{equation}\label{eq:cover_of_stack_of_prestable}
\bigsqcup_{(g,n): 2g-2 + n >0} \Mbar_{g,n} \to \frak M
\end{equation}%\Ocomment{The $2g-g$ makes me think there is a typo here. I can never remember what the actual condition is, but I'd love to know if it's different for stable/semistable.}{Thanks, well-spotted! Changed to $2g-2$. For the stable vs semistable thing: let's write $\ca M^{stab} = \Mbar$ for stable, and $\ca M^{ss}$ for semistable. Then $\ca M^{stab}_{g,n}$ and $\ca M^{ss}_{g,n}$ both make sense for all $g \ge 0$ and $n \ge 0$. However, $\ca M^{stab}_{g,n}$ is nonempty exactly when $2g-2 +n > 0$, whereas $\ca M^{ss}_{g,n}$ is nonempty exactly when $2g-2 + n  > -1$. E.g. a genus 0 curve with 0, 1 or 2 marked points can never have finite automorphism group, and a genus 0 curve with 0 or 1 marked points can never have reductive automorphism group (to see these, just think about trees of $\bb P^1$s). So actually, the disjoint union above could perfectly well be taken over all $g \ge 0$ and $n \ge 0$, as the extra bits would be empty. }
where we simply forget the markings. This map is smooth since the map $\Mbar_{g,n} \to \frak M$ is relatively representable by an open substack of the $n$-fold fibre product of smooth locus the universal curve over $\frak M$, and it is surjective since every prestable curve admits many sections \'etale-locally on the base. 

For many applications, it seems more interesting to work with the universal stable curve over $\Mbar_{g,n}$ for some $g$, $n$ in place of the stack $\frak M$ of prestable curves. The formation of the stack of enriched structures (and its compactification) are largely carried out smooth-locally on the target, and their formation commutes with base-change. Because of this, the reader who is only interested in the case  of stable curves will lose nothing by taking $\frak M$ to \emph{be} $\Mbar_{g,n}$ (over $\bb Z$, or their favourite ground field).

%In summary, almost nothing will be lost by the reader who takes $\ca{M}$ to be the stack of stable curves (perhaps of genus $g$) over $\bb C$, and such a reader can read `curve over $S$' to mean `stable curve (of genus $g$) over $S$' for any scheme $S$ over $\bb C$, and let the map $S \to \ca{M}$ be the tautological map. 

\subsection{Boundaries and labelled graphs of curves}

 Given a prestable curve $C/S$ over a scheme, and a geometric point $\bar s \to S$, the \emph{graph} $\Gamma_{\bar s}$ of $C/S$ at $\bar s$ is defined as in \cite[definition 10.3.17]{Liu2002Algebraic-geome} (who writes `semistable' where we write `prestable'); there is one vertex for each irreducible component of $C_{\bar s}$ and for every nodal point an edge connecting the vertices corresponding to the components it lies on. If $e$ is an edge of $\Gamma_{\bar s}$, i.e.\ a nodal point of $C_{\bar s}$, then we label $e$ by the principal ideal $(a) \sub \ca{O}^{et}_{S,\bar s}$ such that 
\begin{equation}\label{eq:local_structure_label}
\widehat{\ca{O}}_{C,e}^{et} \cong \frac{\widehat{\ca{O}}_{S,\bar{s}}^{et}[[x,y]]}{(xy-a)}
\end{equation}
as $\widehat{\ca{O}}_{S,\bar{s}}^{et}$-algebras --- see \cite[proposition 2.5]{Holmes2014Neron-models-an}.

\todo{Do we still want this definition? Isn't it cleaner to talk about the map to $\frak M$ being smooth? }

\begin{definition}
Let $C/S$ be a prestable curve over a scheme, and $\bar s \to S$ a geometric point. We say $C/S$ has \emph{normal crossings singularities at $\bar {s}$} if for every subset $E\subseteq \on{Edge}\Gamma_{\bar s}$, the closed subscheme of $\on {Spec} \ca O_{S,\bar s}^{et}$ cut out by the labels of the edges in $E$ is regular as a scheme, and has codimension equal to the cardinality of $E$. 

We say $C/S$ has \emph{normal crossings singularities} if it has them at all geometric points of $S$. The notion of having normal crossings singularities is local on $S$ in the smooth topology, so the definition naturally extends to prestable curves $C/S$ where $S$ is an algebraic stack. 
\end{definition}

\begin{definition}
Let $C/S$ be a prestable curve over a scheme, with relative differentials $\Omega_{C/S}$. The first Fitting ideal (see \cite[Chapter 20]{Eisenbud1995Commutative-alg} or \ref{def:fitting_ideal}) of $\Omega_{C/S}$ is a sheaf of ideals on $C$, hence cuts out a closed subscheme $\on{Sing}(C/S)$ of $C$, whose complement is exactly the locus where the map $C\to S$ is smooth. The \emph{boundary} of $C/S$ is defined as the closed subscheme of $S$ cut out by the 0th Fitting ideal of the structure sheaf of $\on{Sing}(C/S)$. 
\end{definition}
%Let $C/S$ be a prestable curve over a scheme, with relative differentials $\Omega_{C/S}$. The first Fitting ideal (see \cite[Chapter 20]{Eisenbud1995Commutative-alg}) of $\Omega_{C/S}$ is a sheaf of ideals on $C$, hence cuts out a closed subscheme $\on{Sing}(C/S)$ of $C$, whose complement is exactly the locus where the map $C\to S$ is smooth. The \emph{boundary} of $C/S$ is defined as the schematic image of $\on{Sing}(C/S)$ in $S$; equivalently, it is the closed subscheme of $S$ cut out by the annihilator ideal sheaf of the structure sheaf of $\on{Sing}(C/S)$. 
%Owen: Should we use the Fitting ideal again for the boundary, instead of the annihilator? It's possible that they're the same in this case.

\begin{remark}\label{rem:boundary}
Set-theoretically the boundary consists of those points of $S$ over which $C$ is not smooth. The boundary is locally cut out by a single equation. Suppose $S$ is strictly hensellian local (in particular the graph over the closed point makes sense), and that $C/S$ has node $e$ of the graph of the closed fibre labelled by $(\alpha(e)) \sub \ca O_S(S)$. Then a simple computation from \ref{eq:local_structure_label} and the definition of the 1st Fitting ideal yields
\begin{equation*}
\on{Sing}(C/S) = \on{Spec}\left( \prod_{e \in nodes} \ca O_S(S)/(\alpha(e))\right). 
\end{equation*}
The 0th fitting ideal of this module is then simply $\left(\prod_{e \in nodes} \alpha(e)\right)$. %Note that as $S$ is strictly hensellian, the residue field of the closed point is separably closed, and hence the graph at this point makes sense (nodes, irreducible components etc are all defined over separable extensions). 

%\Ocomment{So far we have only discussed the graphs over each geometric point. I assume we are choosing an algebraic closure of the residue field over the closed point of $S$, but then how do we get the full singular locus?}{Since $S$ is strictly hensellian, the residue field of the closed point is separably closed, and this is enough to define the graph (nodes and irreducible components etc are all defined ver separable extensions). }

To specialise further, assume $S = \on{Spec} k[[t]]$ and $C/S$ is regular, with $n$ nodes on the special fibre. Then the boundary is cut out by $(t^n)$. This is of course \emph{not} in general equal to the schematic image of $\on{Sing}(C/S)$ in $S$ (the latter being cut out by $(t)$. However, we will mostly use this concept in the case where $C/S$ has normal crossings singularities, in which case the boundary is easily seen to coincide with the schematic image in $S$ of $\on{Sing}(C/S)$. The advantage of the definition of the boundary given here is that its formation commutes with arbitrary base-change. 
\end{remark}

%{Fun - they're not the same, for example for a regular curve over a DVR with more than one node. The annihilator is just $t$, the fitting ideal gives $t$ to the number of nodes. Which do we want? Well, we only talk about it in the context of a `normal crossings boundary', whose definition is independent of which we use, and which implies they are the same. It is rather clear that this is worthy of a little more explanation...}

%Set-theoretically the boundary consists of those points of $S$ over which $C$ is not smooth. The boundary is locally cut out by a single equation. If $C/S$ has normal crossings singularities then the boundary is a normal crossings divisor, but the converse is not true (think of any generically smooth curve over a trait with more than one singular point on the special fibre). 

\begin{definition}
A prestable curve $C/S$ is \emph{quasisplit} if every irreducible component of every fibre is geometrically irreducible, and if the structure morphism $\on{Sing}(C/S) \ra S$ of the non-smooth locus is an immersion Zariski-locally on the source. 
\end{definition}
A prestable curve over a strictly Henselian local scheme is always quasi-split, and by a limiting argument we find that every prestable curve becomes quasi-split after some \'etale cover. If $C/S$ is quasisplit and $s \in S$ is a point then the graph of $C/S$ at $s$ makes sense without making a choice of algebraic closure of the residue field, and its labels naturally lie in the Zariski local ring $\ca{O}_{S,s}$ (see \cite{Holmes2014A-Neron-model-o} for more details). 

Let $C/S$ be a quasisplit prestable curve. If $s$, $\eta$ are two points in $S$ with $s \in \overline {\{\eta\}}$ then we have a natural `specialisation map' $\Gamma_s \to \Gamma_\eta$ which contracts those edges whose labels become units at $\eta$, and replaces the labels on the other edges by their images in the local ring at $\eta$, see \cite{Holmes2014A-Neron-model-o}. 

\subsection{Controlled curves and charts}
\newcommand{\schart}{\frak U}

%\subsection{The largest open on which a graph makes sense}
\subsubsection{Definitions}
\begin{definition}
Let $C/S$ be a quasi-split curve. If $s$, $x$ are two points in $S$ we say \emph{$x$ is controlled by $s$} if there exists a point $\eta_x \in S$ such that $x$ and $s$ both lie in $\overline{\{\eta_x\}}$ and such that the specialisation map $\Gamma _x \ra \Gamma_{\eta_x}$ is an isomorphism. We say \emph{$s$ is a controlling point for $S$} if every $x \in S$ is controlled by $s$.
\end{definition}
 Roughly, this says that every graph of $C/S$ is canonically a contraction of $\Gamma_s$. In the case of the universal deformation $\ca C/\ca M$ of the 2-gon considered in \ref{sec:2-gons}, the unique controlling point is the closed point of $\ca{M}$. We usually denote controlling points by fraktur letters. 

Suppose $C/S$ has a controlling point $\frak{s} \in S$. Let $\frak{s}'$ be any other controlling point. Then the graphs $\Gamma_\frak{s}$ and $\Gamma_{\frak{s}'}$ are canonically identified, so we have a graph for the whole $C/S$, without reference to a specific controlling point, motivating 
\begin{definition}
If $C/S$ has a controlling point we call it \emph{weakly controlled}, and we call the graph a \emph{controlling graph}.
\end{definition} 
Every $C/S$ is weakly controlled $S$-\'etale locally, and if $C/S$ is weakly controlled then in particular $S$ is connected.

\begin{definition}\label{def:SNC}
We say a closed immersion to an affine scheme $Z \tra X$ is \emph{SNC}   if there exists a finite set $F \sub \ca O_X(X)$, such that 
\begin{enumerate}
\item
the ideal sheaf of $Z$ is the ideal generated by the product of all the elements of $F$; 
\item For every subset $F_0 \subseteq F$, the closed subscheme cut out by the ideal generated by $F_0$ is smooth over $\bb Z$, is connected, and is of codimension $\# F_0$ in $X$.
\end{enumerate}
\end{definition}
This concept is referred to in the literature both as (relative) \emph{strict normal crossings} and as \emph{simple normal crossings}, but the acronym is happily the same. 

By taking $F_0$ empty we see that $X$ itself must be smooth and connected, in particular it is non-empty, jacobson and equidimensional. The prototypical example of $Z \tra X$ is the inclusion of the union of the coordinate hyperplanes into affine space; and every other example is \'etale-locally isomorphic to this one. How unique is this $F$? If $F'$ is another set of elements satisfying the same conditions, then there is a unique bijection $\phi\colon F \to F'$ such that $f$ differs from $\phi(f)$ by multiplication by a unit in $\ca O_X(X)$. 

An SNC closed immersion $Z \tra X$ induces a partition of $X$ into smooth connected locally-closed subschemes (a \emph{stratification}), by taking intersections and complements of subschemes defined by the vanishing of the elements of $F$. By our connectedness assumptions, there is exactly one closed and one open stratum. We order the strata of $X$ by `inclusion in the closure', i.e. we set
\begin{equation*}
\sigma \le \sigma' \iff \sigma \sub \overline {\sigma'}, 
\end{equation*}
so the closed stratum is the minimal element, and the open stratum is the maximal element. More generally, the union of a downward-closed set of strata is closed, and the union of an upward-closed set of strata is open. 

\begin{definition}\label{def:simple_chart}
A \emph{simple chart} is a smooth morphism from an affine scheme $\schart \to \frak M$ such that the boundary of $C_\schart/\schart$ is  SNC, and such that $C_\schart/\schart$ is quasi-split. 
\end{definition}
 The symbol $\schart$ will be reserved for simple charts. We will soon prove that simple charts are weakly controlled. 
 
 \begin{definition}\label{def:contolled}
We say a curve $C/S$ is \emph{controlled} if
\begin{enumerate}
\item $C/S$ is weakly controlled; %  (i.e.\ has a controlling point $\frak{s}\in S$);
\item there exists a factorisation $S \ra \schart \ra \frak{M}$ where $\schart \ra \frak{M}$ is a simple chart (we call this a \emph{controlled neighbourhood}). 
\end{enumerate}
\end{definition}
We will soon prove that simple charts are controlled in the sense of definition \ref{def:contolled}, making the terminology reasonable. 

\subsubsection{Basic properties of simple charts} 
 
\begin{lemma}
Let $C/\schart$ be a simple chart, and $u \in \schart$ be a point with graph $\Gamma$. Let $F$ be as in \ref{def:SNC}, and let $F_u \sub F$ be the set of those elements vanishing at $u$. Then the graph over $u$ has exactly $\# F_u$ edges, and the labels of these edges are generated by the images of the elements of $ F_u$ in $\ca O_{\schart, u}$. 
\end{lemma}
\begin{proof}
We can check these claims after passing to the strict henselisation of $\schart$ at $u$, whereupon they follow from \ref{rem:boundary}. 
\end{proof}

Since no edges are contracted when we move within a stratum, we have
\begin{lemma}
Let $C/\schart$ be a simple chart, and $u$, $u' \in \schart$ points in the same stratum, with $u \in \overline{\{ u'\}}$. Then the contraction $\Gamma_{u'} \to \Gamma_u $ is an isomorphism. 
\end{lemma}
Since each stratum has a unique generic point, the graphs at all points within the same stratum are \emph{canonically} identified. In this way, each stratum $\sigma$ has an associated graph $\Gamma_\sigma$, and the edges can be seen as having labels taken from the elements of $F$. If a stratum $\sigma$ is contained in the closure of another $\sigma'$ (i.e. $\sigma \le \sigma'$, then we have a contraction $\Gamma_{\sigma} \to \Gamma_{\sigma'}$, contracting exactly those edges whose labels are units on the stratum $\sigma'$. Since the closed stratum lies in the closure of the generic point of every other stratum, we in particular obtain
\begin{lemma}
Let $C/\schart$ be a simple chart, and $u \in \schart$ a point lying in the closed stratum. Then $u$ is a controlling point for $C/\schart$. 
\end{lemma}
A simple chart $\schart \to \frak M$ is therefore itself controlled (as a simple chart for $\schart$, just take $\schart \stackrel{id}{\to}\schart$!).

%
%\begin{lemma}
%Let $C/\schart$ be a simple chart, and let $u$, $u'$ lie in the same stratum, with canonically identified graphs $\Gamma = \Gamma'$. Let $e $ be an edge of $\Gamma$, with label $\ell(e)$ in the local ring $\ca O_{\schart, u}$, and label $\ell'(e)$ in the local ring $\ca O_{\schart, u'}$. Write $Z(\ell(e))$ for the closed subscheme of $\on{Spec}\ca O_{\schart, u}$ cut out by $\ell(e)$, and $\overline{Z(\ell(e))}$ for its schematic closure in $\schart$. Similarly, define $Z(\ell'(e))$ to be the closed subscheme of $\on{Spec}\ca O_{\schart, u'}$ cut out by $\ell'(e)$, and $\overline{Z(\ell'(e))}$ for its closure in $\schart$. Then 
%\begin{equation*}
%\overline{Z(\ell(e))} = \overline{Z(\ell'(e))}. 
%\end{equation*}
%\end{lemma}
%\begin{proof}
%We may assume $u$ to lie in the closure of $u'$. Replacing $\schart$ by a neighbourhood of $u$, we may assume $\schart$ to be affine. 
%
%\end{proof}

%The key objects we will be interested in are \emph{controlled curves} (\ref{def:contolled}), which are themselves weakly controlled, \emph{and} which admit `controlled smooth neighbourhoods'; for example, to define enriched structures we will start by defining them for such curves, then extend formally to the general case (see \ref{sec:general_ES}). 

\begin{lemma}\label{lem:locally-controlled}
Let $S \ra \frak{M}$ be any curve, with $S$ a scheme. There exists an \'etale cover $\bigsqcup_{i \in I} S_i \ra S$ such that each $S_i \ra \frak{M}$ is controlled. %{DO we want this $S$ a stack or scheme? If a stack, need to replace etale by smooth. If a scheme, needs an argument (see \cite[\href{https://stacks.math.columbia.edu/tag/055S}{Tag 055S}]{stacks-project}). Also fix proof!Later: seems only for $S$ a scheme. }
\end{lemma}
\begin{proof}
Firstly, from (\oref{eq:cover_of_stack_of_prestable}) it is clear that we can find a smooth cover of $\frak{M}$ by simple charts. Moreover,  any family of curves has an \'etale cover by controlled families. The result follows by combining these statements. 
\end{proof}

\subsubsection{Immediate neighbourhoods of controlled curves}
\label{sec:largest_open_where_graph_makes_sense}\label{lem:factor-through-largest-open}\label{thm:univ_prop_U_u}\label{lem:largest-open-containment}

Let $C/S$ be a controlled curve with controlled neighbourhood $S \to \schart \to \frak M$. Since any two controlling points of $C/S$ lie in the closure of a third, and the graphs over these points are canonically identified via specialisation, we see
\begin{lemma}
There exists a stratum $\sigma_S$ of $\schart$ such that every controlling point of $S$ lands in $\sigma_S$. 
\end{lemma}
\begin{definition}\label{def:immediate_nhd}
Let $\schart_S \sub S$ be the union of those strata $\sigma \ge \sigma_S$. Then $\schart_S$ is affine open in $\schart$, and the morphism $S \to \schart$ factors via $\schart_S \hra \schart$; we call $\schart_S$ the \emph{immediate neighbourhood of $S$ in $\schart$}. 
\end{definition}

If $F \sub \ca O_\schart(\schart)$ is as in \ref{def:SNC}, we see that $\schart_S$ is obtained from $\schart$ by inverting those $f \in F$ which pull back to units on $S$; in particular, $\schart_S$ is itself a simple chart for $S$. 

If $S \to \schart$ is a controlled neighbourhood and $S \to \schart_S \hra \schart$ is the immediate neighbourhood of $S$ in $\schart$, then $\schart_S$ is also the immediate neighbourhood of $S$ in $\schart_S$.

\begin{example}
If $\schart \subseteq \on {Spec} k[x,y]$ is a deformation of a 2-gon with boundary $xy=0$, then
\begin{itemize}
\item
If $u$ is the generic point of $\schart$ then $\schart_u = \schart \setminus (xy=0)$;
\item If $u$ is the generic point of the divisor $(x=0)$ then $\schart_u = \schart \setminus (y=0)$;
\item If $u$ is the closed point of $\schart$ then $\schart_u = \schart$. 
\end{itemize}
\end{example}

\subsection{Enriched structures on controlled curves}\label{sec:ES_on_controlled_curves}

Let $\schart$ be a simple chart with controlling point $\frak u$, with graph $\Gamma$. Let $e$ be an edge in $\Gamma$, and let $\ell(e) \in \ca O_\schart(\schart)$ be the label of $e$, defined up to multiplication by a unit. Let $Z = Z(\ell(e))$ be the closed subscheme of $\schart$ cut out by $\ell(e)$. 
\begin{lemma}\label{lem:node_sections}
There exists a unique $\schart$-map $Z \to C_\schart$ landing in the non-smooth locus and which, on restriction to the fibre over $\frak u$, is the inclusion of the node corresponding to the edge $e$. 
\end{lemma}
\begin{proof}
Writing $C_Z = C_\schart \times_\schart Z$, we must construct an appropriate section of the projection $C_Z \to Z$. The node in $C_\frak u$ corresponding to the edge $e$ lies in $\on{Sing}(C_Z/Z)$; we write $T$ for the connected component of $\on{Sing}(C_Z/Z)$ containing it. It suffices to show that the map $T \to Z$ is an isomorphism. Now $T \to Z$ is finite unramified, and by looking \'etale-locally at the equations we can see that it is even \'etale. Thus $T \to Z$ is a finite \'etale morphism of connected schemes smooth over $\bb Z$ (in particular, geometrically unibranch), and by our quasi-splitness assumption it admits a section on some Zariski neighbourhood of $u$, thus by \cite[\href{https://stacks.math.columbia.edu/tag/0BQI}{Tag 0BQI}]{stacks-project} $T \to Z$ is an isomorphism. 
\end{proof}
%\Ocomment{I have no objection to this argument---I certainly can't think of any finite \'etale morphism of connected smooth-over-$\mathbb{Z}$ schemes with a section that isn't an isomorphism---but is there some theorem that says this or is it supposed to be obvious?}{Follows from \cite[\href{https://stacks.math.columbia.edu/tag/0BQI}{Tag 0BQI}]{stacks-project} - reference added. }

Let $D$ be a subset of the edges of $\Gamma$, and write $Z(D)$ for the subscheme of $\schart$ cut out by the labels of edges in $D$. Write $C_{Z(D)} = C_\schart \times_\schart Z(D)$, a prestable curve over $Z(D)$. Write $\Gamma \setminus D$ for the graph obtained from $\Gamma$ by deleting those edges in $D$, and write $\pi_0(\Gamma \setminus D)$ for its set of connected components. Write $\on{Irr}(C_{Z(D)})$ for the set of irreducible components of the (reduced) scheme $C_{Z(D)}$. 
\begin{lemma}\label{lem:connected_irred_components}
There is a unique bijection 
\begin{equation}
\phi\colon \pi_0(\Gamma \setminus D) \to \on{Irr}(C_{Z(D)})
\end{equation}
such that, on the fibre over the controlling point $\frak u$, vertices of a connected component $G$ correspond to irreducible components of $C_{\frak u}$ which lie in $\phi(G)$. 
\end{lemma}
%\Ocomment{This is a placeholder comment to remind myself where I am in the latest read-through.}
\begin{proof}
Write $\eta$ for the generic point of the integral scheme $Z(D)$. Then the graph $\Gamma_\eta$ over $\eta$ is obtained from $\Gamma$ by contracting exactly those edges \emph{not} in $D$. Thus the vertices of $\Gamma_\eta$ are in natural bijection with $\pi_0(\Gamma \setminus D)$. On the other hand, the vertices of $\Gamma_\eta$ are also in natural bijection with the irreducible components of $C_\eta$. It thus suffices to construct a suitable bijection $\on{Irr}(C_\eta) \to \on{Irr}(C_{Z(D)})$. 

Applying \ref{lem:node_sections} yields for every edge in $D$ a section of $C_{Z(D)} \to Z(D)$ landing in the non-smooth locus. Write $\tilde C_{Z(D)}$ for the blowup of $C_{Z(D)}$ at the union of these sections (this is the same as the normalisation of $C_{Z(D)}$, as can be checked by an \'etale-local computation). Each connected component of $\tilde C_{Z(D)}$ is a generically-smooth prestable curve over $Z(D)$, and so is irreducible, hence the connected components of $\tilde C_{Z(D)}$ are naturally in bijection with the irreducible components of $C_{Z(D)}$. These connected components also correspond bijectively to the irreducible components of $C_\eta$ (since blowups commute with flat base-change, and the claim is true for a curve over a field). 
\end{proof}

Now let $W$ be a subset of the vertices of $\Gamma$, and write $E(W)$ for the set of edges with exactly one endpoint in $W$. Then $Z(E(W))$ is the closed subscheme of $\schart$ cut out by the labels of edges in $E(W)$, and we have a bijection 
\begin{equation}
\phi\colon \pi_0(\Gamma \setminus E(W)) \to \on{Irr}(C_{Z(E(W))}). 
\end{equation}
Let $T_W$ be the disjoint union of the $\phi(G)$ as $G$ runs over connected components of $\Gamma \setminus E(W)$ which contain a vertex in $W$; symbolically, 
\begin{equation*}
T_W = \bigsqcup \{\phi(G) : G \in \pi_0(\Gamma \setminus E(W)),  G \cap W \neq \emptyset\}. 
\end{equation*}
Equipping $T_W$ with its reduced induced scheme structure, we have a natural map $T_W \to C_\schart$. 

\begin{lemma}
The natural map $T_W \to C_\schart$ is a closed immersion. 
\end{lemma}
\begin{proof}
It suffices to show that the natural map $T_W \to C_{Z(E(W))}$ is a closed immersion. Since the latter map is the inclusion of a disjoint union of irreducible components, it suffices to check that these irreducible components have no intersection. But this is clear from the definition of $E(W)$, since no two connected components of $\Gamma \setminus E(W)$ can be connected by an edge. 
\end{proof}

\begin{definition}\label{def:ideal_sheaves}
Given $W$ as above, we define $\ca J_W$ to be the sheaf of ideals on $\schart$ cutting out $Z(E(W))$, and $\ca I_W$ to be the sheaf of ideals on $C_\schart$ cutting out $T_W$. 
\end{definition}
The sheaf $\ca I_W$ will play a central role in everything that follows; $\ca J_W$ will only become important in \ref{part:compactifying}.

\begin{example}[The 2-gon]\label{example:2_gon_T_W}
With a deformation of the 2-gon as discussed in \ref{sec:2-gons}, the graph is a 2-gon with labels $(u)$ and $(v)$. We consider several possible sets $W$:
\begin{enumerate}
\item
If $W$ consists of a single vertex then $Z(E(W))$ is the closed point, and $T_W$ will be the corresponding irreducible component of the special fibre;
\item
If $W$ consists of both vertices then $Z(E(W))$ is the whole of $\schart$ and $T_{W}$ is the whole of $C$;
\item
If $W$ is empty then $Z(E(W))$ is again the whole of $\schart$, but $T_{W}$ is empty. 
\end{enumerate}

\end{example}
\subsubsection{Enriched structures}
\newcommand{\gdiff}{-}

If $\Gamma$ is a graph with vertex set $V$ and $W \subseteq V$, we write $\Gamma \gdiff W$ for the subgraph of $\Gamma$ induced by $V \setminus W$. If $W = \{ v \}$ is a singleton, we denote this simply $\Gamma \gdiff v$. 

\begin{definition}\label{def:relative_component}
Let $\Gamma$ be a connected graph. A \emph{relative component} of $\Gamma$ is a pair $(v,G)$ where $v$ is a vertex of $\Gamma$ and $G$ is a connected component of $\Gamma \gdiff v$. An edge from $v$ to $G$ is called a \emph{separating edge} of the relative component. We write $G^c$ (the `complement of $G$') for  the set of vertices of $\Gamma$ that are not in $G$. 
%Let $\schart \ra \ca{M}$ a  simple chart, $u \in \schart$, and $\schart_u \sub Q$ as above. Let $\Gamma$ be the graph over $u$. Given a connected set $V$ of vertices of $\Gamma$, a \emph{component of $\Gamma$ relative to $V$} is an element of the set $\pi_0(\Gamma \gdiff V)$. For this part of the paper we will be mainly interested in the case where $V$ contains a single vertex. We will then just talk about `connected components relative to $v$' (rather than to $\{v \}$); we will distinguish this by using lower-case $v$. We call a pair $(V,G)$ consisting of a connected set $V$ of vertices and a connected component of $\Gamma$ relative to $V$ a \emph{relative component} of $\Gamma$. 
%Given a relative component $(V,G)$ of $\Gamma$, we write $\ca{I}_{G^c}$ for the ideal sheaf associated to the set of vertices of $\Gamma$ \emph{not} contained in $G$ (so in particular this always includes the vertices in $V$, and is equal to $V$ if $\Gamma \gdiff V$ is connected). 
\end{definition}
Note that the set of relative components is empty if and only if $\Gamma$ has exactly one vertex. Note also that $G$ determines $v$ uniquely, but not vice-versa in general (though $v$ does determine $G$ if $\Gamma$ is circuit-connected). 

\begin{definition}[The set of enriched structures]\label{def:controlled_ES}%{Actually want something a bit more general: let $\Gamma' \ra \Gamma$ be a surjection on to the graph of $S$. Then we should define enriched structures with respect to this graph. Define an ES to be one where the surjection if identity. Then show that if the surjection satisfies (*) then it actually gives an equivalent definition. See p28 of red book. Later: This df should some later, and remark that we can recover this one. }
Let $C/S$ be a controlled curve with controlling point $\frak{s}$ and $S \stackrel{s}{\ra} \schart \ra \frak{M}$ a controlled neighbourhood. Write $u = s(\frak{s})$, and let $\Gamma \coloneqq \Gamma_\frak s$ be the graph of $\frak s$, and $\schart_S$ be the immediate neighborhood of $S$ in $\schart$.  For each relative component $(v,G)$ of $\Gamma$ we have an ideal sheaf $\ca I_{G^c}$ on $C_{\schart_S}$, which we can pull back along $s$ to $C$ (c.f. \ref{sec:pullback_notation}).

An \emph{enriched structure} on $C/S$  with respect to $\schart$ consists of, for each relative component $(v,G)$ of $\Gamma$, an invertible quotient 
\begin{equation}
q_{v,G}\colon s^*\ca{I}_{G^c} \twoheadrightarrow \cl{L}_{(v,G)}, 
\end{equation}
(i.e. a surjective map of sheaves on $C$ where $\cl{L}_{(v,G)}$ is invertible), such that $S$-locally there exists an isomorphism 
\begin{equation}\label{es:compatibility}
\bigotimes_{\stackrel{v \in \on{Vert} \Gamma}{G\in \pi_0(\Gamma \gdiff v)}} \cl{L}_{(v,G)} \cong \ca{O}_C.
\end{equation}
We say two enriched structures $(q_{v,G}\colon s^*\ca{I}_{G^c} \twoheadrightarrow \cl{L}_{(v,G)})_{v,G}$ and $(q'_{v,G}\colon s^*\ca{I}_{G^c} \twoheadrightarrow \cl{L}'_{(v,G)})_{v,G}$ (with respect to $\schart$) are \emph{equivalent} if for every vertex $v$ and every $G \in \pi_0(\Gamma\gdiff v)$ there exists an isomorphism $\psi_{v,G}\colon \cl{L}_{(v,G)} \ra \cl{L}'_{(v,G)}$ making the obvious triangle commute. 
We write $\ca E(C/S)$ for the set of equivalence classes of enriched structures on $C/S$.
\end{definition}
Later, we will extend the construction of $\ca E$ to a functor, and extend it to all curves (not just controlled ones). 

Note that the set $\ca E(C/S)$ can be empty. Indeed, in \ref{sec:maino_blowups} we will see that the moduli of enriched structures  on $C/S$ is sub-functor of a blowup of $S$, so we should not expect it to have $S$-points in general. 

%For example, in the setting of \ref{example:2_gon_T_W}, we will see in \ref{asd} that the moduli of enriched functors is a sub-functor of the blowup of $\schart$ at the ideal $(x,y)$; in particular, $\ca E(\schart)$ is empty. 
%Morphisms of enriched structures will be defined in \ref{subset:pullback_of_ES}. 

\begin{remark}[Changing the neighbourhood]\leavevmode\label{rem:changing_the_neighbourhood}
\begin{enumerate}
\item
If $S \ra \schart' \ra \frak{M}$ is another controlled neighbourhood of $S \ra \frak{M}$, then $\schart \times_\frak{M} \schart'$ is also a controlled neighbourhood. 
\item Let $S \ra \schart' \ra \schart \ra \frak{M}$ be a refinement of controlled neighbourhoods. 
%Let $\frak s \in S$ be a controlling point, mapping to $u' \in \schart'$ and $u \in \schart$. 
Then (see \ref{thm:univ_prop_U_u}) the map $\schart'_{S} \ra \schart$ factors via $\schart_S$. Write $\Gamma$ for the graph of $\frak s$ (equivalently the graph of $u$ or $u'$) and let $(v,G)$ be a relative component. Write $\ca{I}_{G^c}$ for the ideal sheaf over $\schart_S$ and $\ca{I}'_{G^c}$ for that over $\schart'_{S}$. Then it is clear that the pullback of $\ca{I}_{G^c}$ to the curve over $\schart'_{S}$ is exactly $\ca{I}'_{G^c}$. As such, we see that enriched structures on $C/S$ with respect to $\schart$ are canonically the same as those taken with respect to $\schart'$. 
\item Combining the above two points, we see that the definition of an enriched structure does not depend on the choice of controlled neighbourhood $\schart$. 
\end{enumerate}
\end{remark}

\begin{example}[Compact type]\label{eg:cpct_type}
Suppose that $C/S$ is of compact type, so that the graph $\Gamma$ is a tree. Then $C_{\schart_S}/\schart_S$ is also of compact type. If $(v,G)$ is a relative component, then there is exactly one edge between $G$ and $G^c$, from which it follows that $T_G^c$ is a Cartier divisor, so the ideal sheaf $\ca I_{G^c}$ on $C_{\schart_S}$ is invertible. The pullback to $C$ (denoted $s^*\ca I_{G^c}$ is therefore also invertible, so that it has a unique invertible quotient (namely, itself). This immediately implies that there is at most one enriched structure on a curve of compact type, and with a little more work one can check that the compatibility \ref{es:compatibility} holds, so that there is exactly one enriched structure. 
\end{example}

\subsection{Pulling back enriched structures}\label{subsec:pullback_of_ES}
In this section we will define the pullback of enriched structures. This will define the functor of enriched structures; it then makes sense to ask whether it is representable (indeed it is; see \ref{cor:ES_representable}). The definition is rather involved; we begin by introducing a condition `1-alignment' on curves. We show that any family of curves which is not 1-aligned does not admit any enriched structures. We then define the pullback when the target is 1-aligned, which suffices by the previous result. 

\begin{definition}\label{def:1_aligned}
Let $C/S$ be a curve and $\bar{s} \ra S$ a geometric point. We say $C/S$ is \emph{1-aligned at $\bar{s}$} if for every circuit $\gamma$ in the graph over $\bar{s}$, all the labels of edges in $\gamma$ are equal. We say $C/S$ is \emph{1-aligned} if it is so at every geometric point of $S$. 
\end{definition}

\begin{lemma}\label{lem:ES_implies_1aligned}
Let $C/S$ be a controlled curve, and suppose the set of enriched structures on $C/S$ is non-empty. Then $C/S$ is 1-aligned. 
\end{lemma}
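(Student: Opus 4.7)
My plan is to argue locally and extract from each enriched structure algebraic relations on the labels that, combined with the tensor-product-trivial condition, force equality of labels around each circuit.

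\textbf{Reduction to the local case.} Since 1-alignment is a property to check at each geometric point of $S$ and the data of an enriched structure is \'etale-local on $S$, I would replace $S$ by the strict henselization at a geometric point $\bar s \to S$ and assume $\bar s$ is a controlling point, whose graph $\Gamma = \Gamma_{\bar s}$ is identified with the controlling graph. Fix a circuit $\gamma = (e_1, \dots, e_k)$ in $\Gamma$ with labels $\alpha_1, \dots, \alpha_k \in \ca O := \ca O_{S,\bar s}$, and suppose for contradiction that the ideals $(\alpha_i) \subset \ca O$ are not all equal.

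\textbf{Local description of $\ca I_{G^c}$ near a separating edge.} For each $i$, choose a relative component $(v_i, G_i)$ of which $e_i$ is a separating edge: take $v_i$ to be one endpoint of $e_i$ and $G_i$ the connected component of $\Gamma \setminus v_i$ containing the other endpoint. In a suitable \'etale local chart $\ca O[[x,y]]/(xy - s^*\alpha_i)$ on $C_S$ near the node of $e_i$, I would compute, directly from the blowup construction defining $\bar{G_i^c}$, that $s^* \ca I_{G_i^c}$ locally has the form $(x) + s^* \ca J$, where $\ca J$ is the product (or sum) of the labels of the \emph{other} separating edges of $(v_i, G_i)$; the term $(x)$ picks out the branch through $v_i$ while the remaining labels enter through the defining ideal of $Z(G_i^c)$. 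This is the key technical step, and it mirrors what is done in \ref{def:ideal_sheaves}.

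\textbf{Invertibility yields divisibility of labels.} The pullback $s^*\ca I_{G_i^c}$ is generically trivial on $C_S$, so any invertible quotient $\cl L_{(v_i,G_i)}$ is generically trivial as well. Viewing $\cl L_{(v_i,G_i)}$ as an invertible subsheaf of the sheaf of rational functions on $C_S$ (well-defined after inverting the boundary), I can represent it as $\ca O(D_{(v_i,G_i)})$ for a divisor on $C_S$ supported on the components of degenerate fibres, and the fact that $\cl L_{(v_i,G_i)}$ is a \emph{quotient} of $s^* \ca I_{G_i^c}$ with the local form above constrains the coefficients of $D_{(v_i,G_i)}$: each separating edge of $(v_i, G_i)$ contributes a term whose coefficient along the boundary divisor $V(s^*\alpha_j)$ measures how $s^*\alpha_j$ compares to $s^*\alpha_i$. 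Then the triviality of $\bigotimes_{(v,G)} \cl L_{(v,G)}$ gives the global relation that $\sum_{(v,G)} D_{(v,G)}$ is a principal divisor. Restricting this relation to the chain of edges of $\gamma$, the contributions from relative components $(v_i, G_i)$ and $(v_{i+1}, G_{i+1})$ telescope, forcing $(s^*\alpha_i) = (s^*\alpha_{i+1})$ as ideals of $\ca O$ for each $i$. Going around the circuit yields $(\alpha_1) = (\alpha_2) = \cdots = (\alpha_k)$, contradicting our assumption.

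The main obstacle I anticipate is Step 2: pinning down the precise local form of $\ca I_{G^c}$ at a separating node, starting from the blowup definition of $\bar V$. Once this is done, the divisor-theoretic argument in Step 3 is essentially linear algebra on the labels; its virtue is that it never requires us to choose a controlling graph globally on $S$, and that the triviality of the tensor product translates straightforwardly into the telescoping relation needed for the circuit.
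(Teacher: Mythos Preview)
Your Step 3 has a genuine gap that I don't see how to close. You want to write each $\cl L_{(v_i,G_i)}$ as $\ca O_{C_S}(D)$ for a Cartier divisor $D$ on $C_S$ supported on fibral components, and then read off relations among the $\alpha_j$ from the coefficients of $D$. But the fibral components of $C_S$ are Cartier divisors on $C_S$ precisely when $C_S$ is regular, and regularity of $C_S$ over a local base is essentially equivalent to 1-alignment --- the very conclusion you are trying to prove. Over an arbitrary strictly henselian base there is no reason for $C_S$ to be integral or regular, so the divisor calculus you invoke is not available, and the ``telescoping'' relation has no substrate on which to live. Your Step 2 is in the right spirit, but by itself it only gives the local form of $s^*\ca I_{G^c}$; it is Step 3 that is meant to extract equalities of labels, and that step does not go through.

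The paper's argument is quite different and worth contrasting. It fixes a single relative component $(v,G)$, writes down an explicit presentation of $s^*\ca I_{G^c}$ at one node $p$ (with label $a$) in terms of generators $A,A_1,\dots,A_n,X$ and relations coming from the $2\times 2$ minors of a matrix whose second row is $(a,a_1,\dots,a_n,x)$, and then uses nothing more than the existence of a surjection from this module onto the local ring to force $a_i\in(a)$ for all $i$ (a short case analysis on which generator maps to a unit). By symmetry one gets $(a_i)=(a)$, so all separating edges of $(v,G)$ carry the same label; applying this at each vertex of a circuit gives 1-alignment. Two features of this are worth noting: the argument never invokes the tensor-product-trivial condition at all (only the existence of a single invertible quotient), and it stays entirely inside one completed local ring, avoiding any global divisor theory on $C_S$. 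The first point matters later in the paper, where the same argument is reused for compactified enriched structures that have no tensor-product condition.
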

\begin{proof}
%We may assume $S$ is strictly henselian local, write $R = \ca{O}_S(S)$. 
Let $\bar s\to S$ be a geometric point of $S$ with $\Gamma$ the graph of $C_{\bar s}/\bar s$. Write $R = \widehat{\ca O_{S,\bar s}}$. Suppose the set of enriched structures is non-empty. Given a vertex $v \in \Gamma$ and $G$ a connected component of $\Gamma \gdiff v$, we will show that all the edges from $v$ to $G$ have the same label.
This is vacuously true if there are no such edges; otherwise, write $p=p_0,p_1,\dots,p_n$ for the singular points of $C_{\bar s}$ corresponding to these edges, and $(a)=(a_0), (a_1), \dots, (a_n)$ for their labels. We will show that in fact every $(a_i)$ is equal to $(a)$. We identify the completed local ring $\widehat{\ca O_{C, p_i}}$ at the singular point $p_i$ corresponding to the edge labelled $(a_i)$ with $R[[x,y]]/(xy-a_i)$. 

%Was a note below saying: {If $u$ is the image of $\bar s$, then we should check or assume that $\schart_u$ makes sense for $u$ a geometric point of $\schart$.} This is OK, because $\schart$ is by definition quasi=split, so just take the image of the geometric point. I don't think this needs to be spelt out, but maybe I am wrong? 
Write $s\colon S \ra \schart_S \sub \schart$ for an immediate neighbourhood. Using that $\ca{M}$ has normal crossings singularities it is easy to write down a presentation of the ideal sheaf $\ca I_{G^c}$ restricted to the completed local ring in the tautological curve $C_{\schart_S}$ at the image of $p$. Pulling back this presentation to $C_R/R$ we find that the restriction of $s^*\ca I_{G^c}$ to $R[[x,y]]/(xy-a)$ is isomorphic to 
\begin{equation*}
\frac{R[[x,y]]}{(xy-a)}\frac{\Span{A, A_1, \cdots, A_n, X}}{J} 
\end{equation*}
where the submodule of relations $J$ is generated by the $2 \times 2$ minors of the matrix
\begin{equation}\label{relation_matrix}
\begin{bmatrix}A&A_1&\cdots & A_n & X\\
a&a_1&\cdots & a_n & x
\end{bmatrix}
\end{equation}
together with the element $yX-A$. 
%Deleted for new def: Moreover, if $G'$ is any other connected component of $\Gamma \gdiff v$ we find that the restriction of $s^*I_{G'^c}$ to $R[[x, y]]/(xy-a)$ is trivial. 
Thus our assumption that an enriched structure exists implies (after restriction) that there exists a surjective map of $R[[x,y]]/(xy-a)$-modules
\begin{equation*}
\frac{R[[x,y]]}{(xy-a)}\frac{\Span{A, A_1, \cdots, A_n, X}}{J}  \twoheadrightarrow R[[x,y]]/(xy-a). 
\end{equation*}
From this we will deduce that $a_i \in (a)$ for every $1 \le i \le n$, and by symmetry this proves the lemma. First, identifying $A$, the $A_i$ and $X$ with their images in $\frac{R[[x,y]]}{(xy-a)}$ we obtain elements $A$, $A_i$ and $X$ in $\frac{R[[x,y]]}{(xy-a)}$ satisfying the following conditions:
\begin{enumerate}
\item
at least one of $A$, $A_i$, $X$ is a unit (this uses that the target is local);
\item the minors of the matrix in (\oref{relation_matrix}) vanish; 
\item $yX = A$. 
\end{enumerate}
Now (3) implies that $A$ is not a unit, so at least one of $A_1, \cdots, A_n$, $X$ is a unit. 

\noindent \textbf{Case 1:} $A_i$ a unit for some $1 \le i \le n$. Then $x \in (a_i)$ in $\frac{R[[x,y]]}{(xy-a)}$. Thus there exists $g \in R[[x,y]]$ such that $x-ga_i \in (xy-a)$ in $R[[x,y]]$. So let $f \in R[[x,y]]$ be such that $x-ga_i = f\cdot (xy-a)$. Then $x - ga_i = fxy - fa$ in $R[[x,y]]$, so equating coefficients of $x$ (and writing $f_x$ for the coefficient of $x$ in $f$, similarly for $g$) we find that $1-g_x a_i = 0 - f_x a$. But then $1 = (a,a_i)$, which contradicts that fact that $a$ and $a_i$ must be contained in the maximal ideal of $R$. 

\noindent \textbf{Case 2:} $X$ a unit. Then for every $1 \le i \le n$ we have that $a_i \in (x)$ in $\frac{R[[x,y]]}{(xy-a)}$, in other words $a_i$ lies in the kernel of $R \ra \frac{R[[x,y]]}{(xy-a, x)}$. But this kernel is exactly $(a)$, so we deduce $a_i \in (a)$ as required. 
 \todo{Owen: this holds in the quotient of $R[[x,y]]$ by $(xy-a)$, but why does it hold in $R[[x,y]]$ too? David: a-priori it doesn't. Have slightly altered the proof (old version below (commented out) for comparison). } 
\end{proof}

% Old version\noindent \textbf{Case 2:} $X$ a unit. Then for every $1 \le i \le n$ we have that $a_i \in (x)$ in $R[[x,y]]$. \todo{Owen: this holds in the quotient of $R[[x,y]]$ by $(xy-a)$, but why does it hold in $R[[x,y]]$ too?} But $(x) \cap R$ is just the kernel of $R \ra \frac{R[[x,y]]}{(xy-a, x)}$ which is exactly $(a)$, so we deduce $a_i \in (a)$ as required. 

We will now define the pullback of enriched structures for a map of controlled curves. Let $C_S/S$ be a controlled curve, and $f \colon T \to S$ a morphism such that $C_T = C_S \times_S T$ is also controlled.  By definition there exists a factorisation $S \stackrel{s}\ra \schart \ra \frak{M}$ with $\schart$ a controlled neighbourhood; clearly this $\schart$ is also a controlled neighbourhood of $T$. Without loss of generality (see \ref{rem:changing_the_neighbourhood}) we can replace $\schart$ with $\schart_S$, the immediate neighbourhood of $S$ in $\schart$, and let $T \stackrel{t}\ra \schart_T\subset \schart_S$ then be the immediate neighbourhood of $T$ in $\schart_S$.

Write $\ca{E}(C_S/S)$ for the set of enriched structures on $S$ with respect to $\schart_S$, and $\ca{E}(C_T/T)$ the same for $T$. We will construct a map $\ca{E}(C_S/S) \ra \ca{E}(C_T/T)$ (we leave the verification that the construction of the map does not depend on the choice of $\schart$ to the reader). Note that by \ref{lem:ES_implies_1aligned} it is enough to treat the case where $C_S/S$ is 1-aligned---otherwise $\ca{E}(C_S/S)$ is empty and there is a unique map to $\ca{E}(C_T/T)$! 

Before constructing the map we need some preliminary lemmas. Write $\Gamma_S$ for the graph of $S$ and $\Gamma_T$ for the graph of $T$, so we get a specialisation map $\on{sp}\colon \Gamma_S \ra \Gamma_T$ which contracts exactly those edges whose labels become units on $T$ (equivalently, on $\schart_T$). Let $w$ be a vertex of $\Gamma_T$, and write $W = \on{sp}^{-1}(w)$ for the set of vertices of $\Gamma_S$ which map to $w$. So between any two vertices in $W$ we can find a path all of whose edges have labels which become units on $T$. 

\begin{lemma}
The specialisation map induces a bijection $\pi_0(\Gamma_S \gdiff W ) \ra \pi_0(\Gamma_T \gdiff w)$. 
\end{lemma}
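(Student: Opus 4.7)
The plan is to verify that sending a vertex to its image under $\on{sp}$ descends to a well-defined map $\pi_0(\Gamma_S\setminus W)\to\pi_0(\Gamma_T\setminus w)$, and then establish surjectivity and injectivity separately. The key underlying fact I will use throughout is the natural generalisation of the observation recorded just before the lemma: for \emph{any} vertex $v'$ of $\Gamma_T$, the fibre $\on{sp}^{-1}(v')$ is connected in $\Gamma_S$ by a subgraph whose edges all have labels becoming units on $T$. For $v'=w$ this is exactly what the text states for $W$.

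For well-definedness, let $e$ be an edge of $\Gamma_S\setminus W$, so that both of its endpoints lie outside $W$. Then $\on{sp}$ sends those endpoints to vertices of $\Gamma_T$ distinct from $w$, so $\on{sp}(e)$ is either one such vertex (in the contracted case) or an edge of $\Gamma_T\setminus w$. In particular a path in $\Gamma_S\setminus W$ descends to a walk in $\Gamma_T\setminus w$, and the map $a\mapsto\on{sp}(a)$ on vertices passes to $\pi_0$. Surjectivity then is immediate: every connected component of $\Gamma_T\setminus w$ contains some vertex $v'\neq w$, and any lift of $v'$ to $\Gamma_S$ lies outside $W$ by definition.

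The heart of the matter is injectivity. Suppose $a,b\in\on{Vert}(\Gamma_S)\setminus W$ with $\on{sp}(a),\on{sp}(b)$ lying in a common component of $\Gamma_T\setminus w$; I will construct a path from $a$ to $b$ in $\Gamma_S\setminus W$. Choose a path $\on{sp}(a)=v'_0,v'_1,\dots,v'_k=\on{sp}(b)$ in $\Gamma_T\setminus w$. Each edge $v'_{i-1}v'_i$ is not contracted, so lifts to an edge $e_i$ in $\Gamma_S$ with endpoints $\tilde v_{i-1}$ and $\tilde v_i$ mapping to $v'_{i-1}$ and $v'_i$ respectively; since no $v'_i$ equals $w$, these endpoints all lie outside $W$, so every $e_i$ lies in $\Gamma_S\setminus W$. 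To splice these edges into a connected path, at each $0<i<k$ I need to join the $\tilde v_i$ coming from $e_i$ to the $\tilde v_i$ coming from $e_{i+1}$; both lie in $\on{sp}^{-1}(v'_i)$, and by the connectedness statement quoted above they are joined by a path of contracted edges inside $\on{sp}^{-1}(v'_i)$. Because $v'_i\neq w$, this splicing subgraph is disjoint from $W$. The same argument connects $a$ to $\tilde v_0$ inside $\on{sp}^{-1}(v'_0)$ and $\tilde v_k$ to $b$ inside $\on{sp}^{-1}(v'_k)$, completing the lift.

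The one point I expect to need care with is the generalisation from $\on{sp}^{-1}(w)$ to $\on{sp}^{-1}(v')$ for arbitrary $v'$. This is really a formal feature of the specialisation morphism constructed earlier: $\on{sp}$ is by definition obtained by contracting those edges of $\Gamma_S$ whose labels become units on $T$, so the fibre over any vertex of $\Gamma_T$ is tautologically the connected subgraph of $\Gamma_S$ spanned by that collection of contracted edges. If the paper has not already recorded this in this generality, I would insert a one-line observation to this effect, or equivalently appeal to the fact that the preimage of an irreducible component of the generic fibre is a connected subscheme of the special fibre, whose irreducible components and connecting smoothable nodes are exactly the vertices and edges of $\on{sp}^{-1}(v')$.
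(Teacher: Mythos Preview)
Your proof is correct and follows exactly the approach the paper outlines: the paper defines the map $G\mapsto H$ by sending a component to the component containing the image of any of its vertices, then explicitly leaves the verification of bijectivity to the reader. You have supplied precisely that verification, including the key observation that the fibre $\on{sp}^{-1}(v')$ over any vertex is connected (which is indeed immediate from the definition of $\on{sp}$ as an edge-contraction map).
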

\begin{proof}
Given $G \in \pi_0(\Gamma_S \gdiff W)$ and an element $g \in G$, we see that $\on{sp}(g)$ is not equal to $w$ and is thus contained in some $H \in \pi_0(\Gamma_T \gdiff w)$. This $H$ does not depend on the choice of $g \in G$, and so we define the map to send $G$ to $H$. We leave the verification of the bijectivity to the reader. 
\end{proof}
\begin{lemma}
Assume that the prestable curve $C/S$ is 1-aligned. Then for every $G \in \pi_0(\Gamma_S \gdiff W)$ there exists a unique $v \in W$ such that $G \in \pi_0(\Gamma_S \gdiff v)$. 
\end{lemma}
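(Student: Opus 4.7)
The plan is to reduce the statement to a combinatorial claim about edges between $G$ and $W$, and then invoke 1-alignment to rule out the bad case. Let $E_G$ be the set of edges of $\Gamma_S$ with one endpoint in $G$ and the other in $W$, and let $V_G \sub W$ denote the set of $W$-endpoints of edges in $E_G$. Since $G$ is a connected component of $\Gamma_S \setminus W$, every edge leaving $G$ lands in $W$; hence for $v \in W$, the subgraph $G$ is a connected component of $\Gamma_S \setminus v$ if and only if every edge in $E_G$ has its $W$-endpoint equal to $v$, i.e.\ $V_G = \{v\}$. The lemma therefore reduces to showing that $V_G$ is a singleton. Non-emptiness is immediate from the connectedness of $\Gamma_S$ (connected geometric fibres) together with $W \neq \emptyset$ (it contains the preimage of $w$).

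For uniqueness, suppose for contradiction that $V_G$ contains distinct vertices $v_1, v_2$. Pick edges $e_i \in E_G$ from $v_i$ to some $g_i \in G$, and a simple path $\pi'$ inside $G$ from $g_1$ to $g_2$ (possible since $G$ is connected). By construction of $W = \on{sp}^{-1}(w)$, the specialisation $\on{sp}$ contracts exactly those edges whose labels become units on $T$, so any two vertices of $W$ are joined inside $W$ by a path of such contracted edges; choose a simple such path $\pi$ from $v_1$ to $v_2$. The concatenation $e_1 \cdot \pi' \cdot e_2^{-1} \cdot \pi^{-1}$ is then a circuit in $\Gamma_S$ in the sense of the paper: the interior vertices of $\pi'$ lie in $G$, those of $\pi$ lie in $W$, the sets $G$ and $W$ are disjoint, $e_1 \neq e_2$ (they have distinct endpoints in $W$), and the four edge sets are pairwise disjoint; so no vertex other than $v_1$ and no edge is repeated.

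Now apply 1-alignment of $C_S/S$ at a geometric point above the controlling point: all edges of this circuit have a common label in the local ring at the controlling point. Since $v_1 \neq v_2$ the path $\pi$ contains at least one edge, whose label becomes a unit on $T$ by the definition of $\on{sp}$; hence so does the common label, and in particular so does the label of $e_1$. But then $e_1$ is itself contracted by $\on{sp}$, which identifies its endpoints $v_1$ and $g_1$ in $\Gamma_T$ and forces $g_1 \in \on{sp}^{-1}(w) = W$, contradicting $g_1 \in G \sub \Gamma_S \setminus W$. The main subtlety is ensuring that the concatenation really is a circuit in the sense of the definition (so that 1-alignment is applicable); once this combinatorial bookkeeping is in place, 1-alignment delivers the contradiction in a single step.
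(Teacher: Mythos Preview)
Your proof is correct and follows essentially the same approach as the paper's: both arguments construct a circuit from two edges joining $G$ to distinct vertices of $W$, a path inside $G$, and a path of contracted edges inside $W$, and then invoke 1-alignment to force the boundary edges to be contracted, yielding the contradiction. Your version is somewhat more explicit in setting up the reduction to $|V_G|=1$ and in checking that the concatenated walk really is a circuit, but the underlying idea is identical.
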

\begin{proof}
First we show that there is exists $v \in W$ such that every edge between $G$ and $W$ has one endpoint equal to $v$. Indeed, suppose that this is not the case, and let $e:v-g$ and $e':v'-g'$ be two edges with $v,v'\in W$ distinct and $g,g'\in G$. Then we may combine $e$ and $e'$ with a path from $v$ to $v'$ consisting of edges that are contracted by $\on{sp}$, and with a path from $g$ to $g'$ within $G$, making a circuit. The $1$-alignment then implies that every edge in this path has the same label, a label that becomes a unit on $T$, so the specialisation map contracts them all, a contradiction. 

It is clear that $G$ is then a connected component of $\Gamma_S\gdiff v$. Furthermore, since $\Gamma_S$ is connected there is at least one edge from $G$ to $v$, and thus $G$ is not a connected component of $\Gamma_S\gdiff v'$ for any other $v'\in W$.
\end{proof}

\begin{lemma}
Assume that $C/S$ is 1-aligned. Given $H\in \pi_0(\Gamma_T \gdiff w)$ there exists a unique vertex $v \in W$ and $G \in \pi_0(\Gamma_S \gdiff v)$ such that $\on{sp}(G) = H$. 
\end{lemma}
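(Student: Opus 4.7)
The plan is to combine the two preceding lemmas directly. Given $H\in\pi_0(\Gamma_T\setminus w)$, the bijection lemma furnishes a unique $G_0\in\pi_0(\Gamma_S\setminus W)$ with $\on{sp}(G_0)=H$, and the second lemma (using $1$-alignment of $C_S/S$) provides a unique $v\in W$ such that $G_0\in\pi_0(\Gamma_S\setminus v)$. Taking $G=G_0$ and this $v$ establishes existence.

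For uniqueness, suppose $v'\in W$ and $G'\in\pi_0(\Gamma_S\setminus v')$ satisfy $\on{sp}(G')=H$. The first step is to show that $G'$ is in fact a connected component of $\Gamma_S\setminus W$, so that we can apply the bijection lemma to recover $G'=G_0$. Since $\on{sp}(G')=H$ and $w\notin H$, no vertex of $G'$ maps to $w$, so $G'\cap W=\emptyset$ and hence $G'\subseteq \Gamma_S\setminus W$. Because $\Gamma_S\setminus W\subseteq \Gamma_S\setminus v'$, the connected subgraph $G'$ is contained in some connected component $G''$ of $\Gamma_S\setminus W$; but $G''$ is itself connected and sits inside $\Gamma_S\setminus v'$, so by maximality of $G'$ as a component of the latter we have $G''\subseteq G'$. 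Hence $G'=G''\in\pi_0(\Gamma_S\setminus W)$, and the bijection lemma gives $G'=G_0=G$.

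Finally, uniqueness of $v$ follows by applying the second lemma to $G$: there is at most one vertex in $W$ that separates off $G$ as a component of its complement, so $v'=v$. The only thing to be careful about is that the second lemma genuinely requires the $1$-alignment hypothesis on $C_S/S$, but this is already in force by the assumption of the present lemma (inherited from \ref{lem:ES_implies_1aligned}), so no additional input is needed. The argument is essentially bookkeeping with the two lemmas, and no further geometric input is expected; the only mildly delicate step is the verification that $G'$ is a component of $\Gamma_S\setminus W$ rather than merely contained in one.
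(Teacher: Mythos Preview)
Your proof is correct and follows exactly the approach the paper intends: the paper's own proof is simply ``Immediate from the previous two lemmas,'' and you have carefully spelled out how those two lemmas combine, including the small verification (which the paper leaves implicit) that any $G'\in\pi_0(\Gamma_S\setminus v')$ with $\on{sp}(G')=H$ must already lie in $\pi_0(\Gamma_S\setminus W)$. One minor remark: the $1$-alignment is an explicit hypothesis of the present lemma, so there is no need to invoke \ref{lem:ES_implies_1aligned} to justify it.
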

\begin{proof}
Immediate from the previous two lemmas. 
\end{proof}
Thus each relative component of $\Gamma_T$ is the image of exactly one relative component of $\Gamma_S$.

\begin{lemma}%{Owen thinks we can remove the `1-aligned' assumption here. David is fine with that. }Assume that $C_S/S$ is 1-aligned. 
Let $G\in\pi_0(\Gamma_S\gdiff v)$ for some $v \in W$. Then exactly one of the following occurs:
\begin{enumerate}
\item
$G \cap W$ is non-empty, i.e.\ $w\in\on{sp}(G)$;
\item $\on{sp}(G) \in \pi_0(\Gamma_T \gdiff w)$. 
\end{enumerate}
\end{lemma}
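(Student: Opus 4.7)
The plan is to reduce the existence part of the dichotomy to the bijection $\pi_0(\Gamma_S\setminus W) \to \pi_0(\Gamma_T\setminus w)$ established in the first lemma of this subsection, and to dispose of mutual exclusivity by a direct observation. Mutual exclusivity is immediate: if some $g\in G$ lies in $W$, then $w=\on{sp}(g)$ belongs to $\on{sp}(G)$, so $\on{sp}(G)\not\subseteq \Gamma_T\setminus w$ and in particular $\on{sp}(G)$ cannot be a connected component of $\Gamma_T\setminus w$.

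For the existence part, I would assume that (1) fails and derive (2). Suppose $G\cap W=\emptyset$; then $G\subseteq \Gamma_S\setminus W$. Since $G$ is connected, it is contained in a unique connected component $G''\in\pi_0(\Gamma_S\setminus W)$. Because $v\in W$, the subgraph $\Gamma_S\setminus W$ is contained in $\Gamma_S\setminus v$, so $G''$ is itself a connected subset of $\Gamma_S\setminus v$ and hence sits inside some component $\widetilde G\in\pi_0(\Gamma_S\setminus v)$. The chain $G\subseteq G''\subseteq \widetilde G$ then exhibits two components of $\Gamma_S\setminus v$, namely $G$ and $\widetilde G$, with non-empty intersection, forcing $G=\widetilde G$ and hence $G=G''$. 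Applying the bijection of the previous lemma yields $\on{sp}(G)=\on{sp}(G'')\in\pi_0(\Gamma_T\setminus w)$, which is exactly (2).

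I do not expect a serious obstacle: the argument is purely combinatorial and does not use the $1$-alignment hypothesis or the uniqueness lemma just proved (they will be used by the caller, to extract from this dichotomy a well-defined pullback on relative components). The one step that deserves care is the maximality argument showing that $G$, defined as a maximal connected subset of $\Gamma_S\setminus v$, is forced to coincide with the component $G''$ of the strictly smaller complement $\Gamma_S\setminus W$; this works because $G''$ remains connected after enlarging the ambient complement, so its containing component in $\Gamma_S\setminus v$ can be nothing other than $G$ itself.
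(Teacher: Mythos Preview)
Your proof is correct, though it follows a different path from the paper's. The paper argues directly: assuming $G\cap W=\emptyset$, it shows $\on{sp}(G)$ is a full connected component of $\Gamma_T\setminus w$ by taking any vertex $y$ connected to $\on{sp}(G)$ in $\Gamma_T\setminus w$, lifting the connecting path back to $\Gamma_S\setminus W\subseteq\Gamma_S\setminus v$, and using maximality of $G$ in $\Gamma_S\setminus v$ to conclude the lift lies in $G$, whence $y\in\on{sp}(G)$. You instead first prove the cleaner intermediate statement that $G$ itself lies in $\pi_0(\Gamma_S\setminus W)$ (the sandwich $G\subseteq G''\subseteq\widetilde G$ with $G,\widetilde G\in\pi_0(\Gamma_S\setminus v)$ is a nice way to do this), and then invoke the earlier bijection $\pi_0(\Gamma_S\setminus W)\to\pi_0(\Gamma_T\setminus w)$.

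One small point to make explicit: the earlier lemma defines the bijection by sending $G''$ to the component $H$ of $\Gamma_T\setminus w$ \emph{containing} $\on{sp}(G'')$, and leaves bijectivity to the reader. To conclude $\on{sp}(G'')\in\pi_0(\Gamma_T\setminus w)$ you need $\on{sp}(G'')=H$, not just $\on{sp}(G'')\subseteq H$. This does follow once bijectivity is checked (any $y\in H$ lifts to some $x\notin W$, whose component maps to $H$ and hence equals $G''$ by injectivity, so $y\in\on{sp}(G'')$), but it is worth saying. Your approach has the virtue of reusing the earlier lemma and isolating the key combinatorial fact $G\in\pi_0(\Gamma_S\setminus W)$; the paper's approach is more self-contained but essentially unpacks that same bijectivity argument inline.
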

\begin{proof}
It is clear that (1) and (2) cannot both happen; we will show that if (1) does not hold then (2) does.
Now suppose that $G$ does not meet $W$; then $\on{sp}(G) \sub \Gamma_T \gdiff w$ and is connected. Let $y$ be a vertex of $\Gamma_T$ with a path to $\on{sp}(G)$ that does not pass through $w$; we will show that $y\in\on{sp}(G)$ and thus that $\on{sp}(G)$ is a connected component of $\Gamma_T\gdiff w$.
By adding back in contracted edges where necessary, we may lift this path to a path in $\Gamma_S\gdiff W\sub \Gamma_S\gdiff v$ between a vertex $x$ and $G$. But since $G$ is a connected component of $\Gamma_S\gdiff v$, this path must belong entirely to $G$. Hence $x\in G$ and $y = \on{sp}(x) \in \on{sp}(G)$ as desired.
\end{proof}

Thus for $1$-aligned curves, we have a bijection
%\begin{equation*}
%\Psi\colon\bigsqcup_{v \in W} \{ G \in \pi_0(\Gamma_S \gdiff v) : G \cap W = \emptyset\} \ra \pi_0(\Gamma_T \gdiff w). 
%\end{equation*}
\begin{equation}\label{eg:rel_comp_bijection}
\begin{aligned}
\Psi&\colon\{\text{Relative components $(v,G)$ of }\Gamma_S : \on{sp}(v)\notin\on{sp}(G)\} \\
&\qquad\quad\ \to \{\text{Relative components $(w,H)$ of }\Gamma_T\}\\
& (v,G) \mapsto \bigl(\on{sp}(v),\on{sp}(G)\bigr).
\end{aligned}
\end{equation}

Recall that we write $f\colon T \ra S$. 
\begin{lemma}\label{lem:bad_component_trivial_sheaf}
Given $v \in W$ and $G\in\pi_0(\Gamma_S \gdiff v)$ with $G \cap W \neq \emptyset$, the coherent sheaf $f^*\ca{I}_{G^c}$ on $C_T$ is canonically isomorphic to $\ca{O}_{C_T}$, and thus its only invertible quotient is itself.
\end{lemma}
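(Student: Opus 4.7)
The main idea is to show that every edge in $E(G^c)$ has a label that becomes a unit on $T$; this will force the closed subscheme defining $\ca I_{G^c}$ to be empty after pullback to $T$, so that $f^*\ca I_{G^c}$ becomes canonically trivial. The final clause about invertible quotients then follows for free, because any surjection from a structure sheaf onto an invertible sheaf is automatically an isomorphism.

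First I would carry out the combinatorial step using $1$-alignment (which we may assume, as otherwise $\ca E(S)$ is empty and the map of enriched structures is trivial). Since $G \cap W \neq \emptyset$, fix $v' \in G \cap W$; because $v$ and $v'$ both lie in $W = \on{sp}^{-1}(w)$, there is a path $\pi_W$ from $v$ to $v'$ in $\Gamma_S$ consisting of edges contracted by $\on{sp}$, hence with labels that become units on $T$. The edges of $E(G^c)$ are exactly the edges from $v$ to $G$, since $v$ is the unique vertex of $G^c$ adjacent to $G$. For any such edge $e$ from $v$ to some $g \in G$, I would choose a path $\pi_G$ from $g$ to $v'$ within $G$. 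If $e$ itself is contracted there is nothing to prove; otherwise, concatenating $e$, $\pi_G$, and the reverse of $\pi_W$ gives a closed walk at $v$ in which $e$ appears exactly once, and this walk therefore contains a circuit through both $e$ and at least one edge of $\pi_W$. By $1$-alignment at a controlling point of $S$, all edges of this circuit carry the same label, so the label of $e$ also becomes a unit on $T$.

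Next I would translate this into the desired vanishing. Let $U_T \subset U_u$ denote the immediate neighbourhood of $T$ in $U_u$, through which $T \to U_u$ factors. Since the graph of $U_T$ at any controlling point is canonically identified with $\Gamma_T$, the labels of edges of $\Gamma_S$ that are contracted by $\on{sp}$ are units on $U_T$. By the previous paragraph, every label in $E(G^c)$ is a unit on $U_T$, so the closed subscheme $Z(G^c) \cap U_T$ of $U_T$ (cut out by these labels) is empty. The closed subscheme $\overline{G^c} \subset C_{U_u}$ is by construction supported over $Z(G^c)$, so its scheme-theoretic intersection with $C_{U_T}$ is empty as well. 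Hence the structural inclusion $\ca I_{G^c} \hookrightarrow \ca O_{C_{U_u}}$ restricts to an isomorphism over $C_{U_T}$, and pulling back along $T \to U_T$ yields the desired canonical isomorphism $f^*\ca I_{G^c} \cong \ca O_{C_T}$. The one genuine obstacle is the combinatorial circuit-extraction in the previous paragraph; the geometric content of the lemma is then immediate from the construction of $\ca I_{G^c}$ as the ideal sheaf of a subscheme supported over $Z(G^c)$.
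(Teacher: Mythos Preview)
Your proof is correct and follows essentially the same approach as the paper: both argue that the closed subscheme cut out by $\ca I_{G^c}$ lies over $Z(G^c)$, and that $Z(G^c)$ becomes empty on $U_T$ because the labels of edges between $G$ and $G^c$ become units there. The paper simply asserts in a parenthetical that these edges are all contracted by $\on{sp}$, whereas you supply the circuit argument using $1$-alignment that justifies this; in fact, since $Z(G^c)$ is cut out by the ideal generated by \emph{all} such labels, it already suffices that \emph{one} edge from $v$ to $G$ is contracted, and this follows without the circuit argument by taking a simple path of contracted edges from $v$ to some $v' \in G \cap W$ and noting its first edge lands in $G$.
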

\begin{proof}
The closed subscheme of $C_{\schart_S}$ corresponding to $\ca{I}_{G^c}$ lies over the closed subscheme of $\schart_S$ cut out by labels on edges on edges between $G$ and $G^c$ (which are all contracted by $\on{sp}$, i.e.\ become units on $\schart_T$), so $\schart_T \ra \schart_S$ does not meet this closed subscheme. 
Therefore the pullback of $\ca{I}_{G^c}$ to $\schart_T$ is the unit ideal sheaf $\ca{O}_{C_{\schart_T}}$, the pullback to $C_T$ of which is $\ca{O}_{C_T}$.
\end{proof}

We are now in a position to define the pullback of enriched structures $\ca{E}(S)\to\ca{E}(T)$ when $t\colon T\to S$ is a morphism of weakly controlled curves and $S$ has a controlled neighbourhood $s\colon S\to \schart$.
Suppose we are given an enriched structure in $\ca{E}(S)$; this consists of a line bundle $\cl{L}_{(v,G)}$ for every relative component of $\Gamma_S$, together with a surjective map
\begin{equation*}
s^* \ca{I}_{G^c} \twoheadrightarrow \cl{L}_{(v,G)},
\end{equation*}
and such that $\bigotimes_{v,G} \cl{L}_{(v,G)} \cong \ca{O}_{C_S}$. We need to construct the similar data of an invertible quotient $f^*s^*\ca{I}_{H^c}\twoheadrightarrow\cl{L}'_{(w,H)}$ for each relative component $(w,H)$ of $\Gamma_T$. 

By \ref{lem:ES_implies_1aligned}, we know that $C_S/S$ is $1$-aligned, and therefore each such pair $(w,H)$ corresponds to a pair $(v,G)=\Psi^{-1}(w,H)$ for some $v\in\on{sp}^{-1}(w)$ and component $G\in\pi_0(\Gamma_S\gdiff v)$ such that $G\cap \on{sp}^{-1}(w) = \emptyset$, and we define $\cl{L}'_{(w,H)}$ by
\begin{equation*}
\cl{L}'_{(w,H)} \coloneqq f^*\cl{L}_{(v,G)}.
\end{equation*}
Each of these is naturally an invertible quotient of $t^*\ca{I}_{H^c} = f^*s^*\ca{I}_{G^c}$.
%{Owen thinks we should maybe put this identity somewhere. David: confused; it is here... Do you mean it should be a separate lemma or something? Owen: nevermind, it makes sense to me now why this is obvious.} 
We want to define $(t^*\ca I_{H^c} \twoheadrightarrow \cl L'_{(w,H)})_{(w,H)}$ to be the pullback, but we need to check that 
\begin{equation*}
\bigotimes_{w,H} \cl{L}'_{(w,H)} \cong \ca{O}_{C_T}
\end{equation*}
locally on $T$. First the tensor product of all of these $\cl{L}'_{(w,H)}$ is
\begin{align*}
 \bigotimes_{w,H} \cl{L}'_{(w,H)} &= \bigotimes_{w,H} f^*\cl{L}_{\Psi^{-1}(w,H)}\\
 &= \bigotimes_{\mathclap{\stackrel{v,G}{\on{sp}(v)\notin\on{sp}(G)}}} f^*\cl{L}_{(v,G)}.
\end{align*}
Now by \ref{lem:bad_component_trivial_sheaf}, the invertible quotient $f^*\cl{L}_{(v,G)}$ of $f^*\ca{I}_{G^c}$ is trivial if $\on{sp}(v)\in\on{sp(G)}$, so we may include them in the tensor product at no cost, so that locally on $S$ we have:
\begin{align*}
\bigotimes_{w,H} \cl{L}'_{(w,H)} &\cong \bigotimes_{v,G} f^*\cl{L}_{(v,G)} \cong f^*\ca{O}_{C_S} = \ca{O}_{C_T}.
\end{align*}

\subsection{Enriched structures for general $C/S$}\label{sec:general_ES}
%Done (photo takes 18 May). Actually very easy using (what will be) some existing lemmas. 

We want to apply \cref{lem:comparison} to define enriched structures on arbitrary families of curves. In order to do this we need to check two things:
\begin{enumerate}
\item The full subcategory of controlled curves is a base for the \'etale topology on schemes over $\frak{M}$;
\item The functor of enriched structures is a sheaf on the full subcategory of curves over $S$ consisting of controlled curves. 
\end{enumerate}
Condition (1) is immediate from \ref{lem:locally-controlled}, so our task is to check (2). More precisely, we need to show that if $S \to \frak M$ is a controlled curve, $\{T_i\}_I \ra S$ is an \'etale cover by controlled curves, and for each $i, j \in I$ we have a cover $\{T_{i,j,k}\}_{K_{i,j}}$ of $T_i \times_S T_j$ by controlled curves, then the following diagram is an equaliser:
\begin{equation}\label{eq:basic_equaliser}
\ca{E}(S) \ra \prod_i \ca{E}(T_i) \rightrightarrows \prod_{i,j,k} \ca{E}(T_{i,j,k})
\end{equation}
(here we write $\ca E(S)$ in place of $\ca E(C/S)$, since we are working with schemes over $\frak M$, so each comes with a tautological curve). 
First consider the case where $C/S$ is \emph{not} 1-aligned. Then $\ca{E}(S)$ is empty by \ref{lem:ES_implies_1aligned}. Moreover, there exists $i_0 \in I$ such that $T_{i_0}$ hits a controlling point of $S$, and so (using that $T_{i_0} \ra S$ is \'etale) we deduce that $T_{i_0}$ is not 1-aligned, so $\ca{E}(T_{i_0})$ is empty. Similarly some $T_{i,j,k}$ hits a controlling point of $S$ and thus admits no enriched structures. This we see all the three terms of (\oref{eq:basic_equaliser}) are empty, so the diagram is certainly an equaliser! It thus remains to treat the case where $S$ is 1-aligned.

The key tool for this is a slight generalisation of the definition of enriched structure given in \ref{def:controlled_ES} (which will also be very useful in \ref{sec:representability}). 
\begin{definition}\label{def:ES_Gamma}
Let $S$ be a controlled curve with graph $\Gamma_S$, and let $s\colon S \to \schart$ be an immediate neighbourhood. Let $\Gamma_S \to \Gamma'$ be any contraction of $\Gamma$. Define $S' \hra S$ to be the locus in $S$ where the labels of the contracted edges are units, and similarly define $\schart' \sub \schart$ (using that $\Gamma_S$ is also the controlling graph of $\schart$). Then $S' \to \schart$ factors via $\schart'$; write $s'\colon S' \to \schart'$. 

Suppose we are given a vertex $v$ of $\Gamma'$ and a connected component $G' \in \pi_0(\Gamma' \gdiff v)$. Write $G$ for the set of vertices of $\Gamma_S$ which map to $G'$. Slightly abusing notation, define the ideal sheaf $\ca{I}_{G'^c}$ on $C_{\schart'}/\schart'$ to be the restriction to $C_{\schart'}$ of the ideal sheaf $\ca{I}_{G^c}$ on $C_{\schart}/\schart$. 

%Let $S$ be a controlled curve with graph $\Gamma$. Let $s\colon S \to U$ be an immediate neighbourhood, and for each 
%vertex $v$ of $\Gamma$ and connected component $G \in \pi_0(\Gamma \gdiff v)$ define the ideal sheaf $\ca{I}_{G^c}$ on $C_U/U$ as usual.

 Given any morphism $f\colon T \ra S'$ we define a \emph{$\Gamma_S/\Gamma'$-enriched structure} on $T$ to be the data of, for each vertex $v$ of $\Gamma'$ and connected component $G' \in \pi_0(\Gamma' \gdiff v)$, an invertible quotient
\begin{equation*}
f^*(s')^*\ca{I}_{G'^c} \twoheadrightarrow \cl{L}_{(v,G')}
\end{equation*}
of sheaves on $C_T$, such that the tensor product of all the line bundles $\cl{L}_{(v,G')}$ is $T$-locally trivial on $C_T$, and such that each $\cl L_{(v,G')}$ has total degree zero on every fibre of $C_T/T$. The set of $\Gamma_S/\Gamma'$-enriched structures on $T$ is denoted $\ca{E}_{\Gamma_S/\Gamma'}(T)$. Pulling back $\Gamma_S/\Gamma'$-enriched structures is defined in the evident way (just pull back the invertible quotients --- the combinatorial data is unchanged), yielding a presheaf %{It seems to me we only use apply this when $S$ is itself a simple chart; this might allow us to simplify the notation in this section? TODO}
\begin{equation*}
\ca{E}_{\Gamma_S/\Gamma'}\colon \cat{Sch}_{S'}^{op} \to \cat {Set}. 
\end{equation*}
\end{definition}

We are more interested in computing $\ca{E}(T)$ than $\ca{E}_{\Gamma_S/\Gamma'}(T)$, but whereas the latter has the elementary description above, the former will require  \ref{lem:comparison} in order to extend the definition to general $T$. In the special case that $\Gamma'=\Gamma_S$ (so $S'=S$ as well), then the two definitions agree at $S$: $\ca{E}_{\Gamma_S/\Gamma_S}(S) = \ca{E}(S)$. The following lemma shows that the same holds for any weakly controlled $T$ over $S$:

%In this section we are mostly interested in the case where $\Gamma = \Gamma'$ (in which case it is clear that $\ca{E}_\Gamma(S) = \ca{E}(S)$), but the more general case will be important in the next section. In general, we show that if $S$ is $1$-aligned then this equality also holds for any weakly controlled $T$ mapping to $S$:
\begin{lemma}\label{lem:E_is_E_Gamma_if_aligned}
In the setting of \ref{def:ES_Gamma}, with $\Gamma' = \Gamma_S$, suppose $S$ is 1-aligned and $T$ is weakly controlled. Then $\ca{E}_{\Gamma_S/\Gamma_S}(T) = \ca{E}(T)$. 
\end{lemma}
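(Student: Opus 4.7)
My plan is to reduce the statement to the combinatorial bookkeeping of Section 2.4. Both $\ca{E}_\Gamma(T)$ and $\ca{E}(T)$ consist of tuples of invertible quotients of certain ideal sheaves, the first indexed by relative components of $\Gamma$ and the second by relative components of $\Gamma_T$ (the graph of $T$). Specialisation gives a contraction $\on{sp}\colon \Gamma = \Gamma_S \to \Gamma_T$, and since $S$ is 1-aligned the bijection $\Psi$ constructed in Section 2.4 matches the ``good'' relative components of $\Gamma$ (those $(v,G)$ with $\on{sp}(v)\notin\on{sp}(G)$) with all relative components of $\Gamma_T$. The ``bad'' ones will contribute only trivial data. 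Concretely, I will build a bijection $\ca{E}_\Gamma(T) \to \ca{E}(T)$ by forgetting the data on bad components and transporting the data on good components via $\Psi$.

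First I would verify that $C_T/T$ inherits 1-alignment from $S$, so that the machinery of \ref{lem:bad_component_trivial_sheaf} and the $\Psi$-bijection of Section 2.4 applies verbatim to the morphism $f\colon T\to S$. Then for a bad pair $(v,G)$ of $\Gamma$ I would invoke (an immediate adaptation of) \ref{lem:bad_component_trivial_sheaf}: the closed subscheme of $C_U$ cut out by $\ca{I}_{G^c}$ lies over the subscheme of $U$ defined by the labels of the edges between $G$ and $G^c$; for bad $(v,G)$ these edges all become units on $U_T$, so $f^*s^*\ca{I}_{G^c}$ is canonically isomorphic to $\ca{O}_{C_T}$ and admits a unique invertible quotient (itself).

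The main obstacle is the comparison of ideal sheaves on good components, which I treat next. Given a good $(v,G)$ with $\Psi(v,G) = (w,H)$, I need a canonical isomorphism between $f^*s^*\ca{I}_{G^c}$ (defined via restriction from $C_{U'}$ through the contraction $\Gamma'\to\Gamma$) and $t^*\ca{I}_{H^c}$ (defined directly using the immediate neighbourhood of $T$ and the contraction to $\Gamma_T$). The point is that the construction of $\ca{I}_{V}$ in \ref{def:ideal_sheaves} is stable under further contraction once we restrict to a locus where the contracted labels have become units: the separating edges $E(G^c)\sub\on{Edge}(\Gamma')$ that survive to $\Gamma_T$ are exactly the separating edges $E(H^c)\sub\on{Edge}(\Gamma_T)$, while the others disappear along the open immersion into $U_T$. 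A direct check using the blowup-description of \ref{def:ideal_sheaves} (blowing up along the sections corresponding to the separating edges) shows that the two constructions are canonically identified after pullback to $C_T$.

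Combining the three ingredients, the map $\ca{E}_\Gamma(T)\to\ca{E}(T)$ sends $(\cl{L}_{(v,G)})_{(v,G)}$ to $(\cl{L}'_{(w,H)})_{(w,H)}$ with $\cl{L}'_{(w,H)} := \cl{L}_{\Psi^{-1}(w,H)}$, and the inverse fills in $\cl{L}_{(v,G)} = \ca{O}_{C_T}$ on bad pairs (the only invertible quotient available) and $\cl{L}_{\Psi^{-1}(w,H)} = \cl{L}'_{(w,H)}$ on good pairs. Since tensoring with $\ca{O}_{C_T}$ is harmless, the ``$T$-locally trivial tensor product'' condition corresponds under this bijection; likewise the fibrewise degree-zero condition in \ref{def:ES_Gamma} is automatic on bad components (they are already trivial) and is inherited on good components because $\Psi$ matches up the sheaves by the compatibility established above. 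The verification of equivalence (modding out by isomorphisms of the $\cl{L}_{(v,G)}$) is then formal, completing the proof.
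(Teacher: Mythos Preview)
Your proof is correct and follows essentially the same route as the paper's. Both arguments use the 1-alignment hypothesis to establish the dichotomy that for each relative component $(v,G)$ of $\Gamma$, either every edge from $v$ to $G$ is contracted by $\on{sp}$ (your ``bad'' case, where $f^*s^*\ca{I}_{G^c}$ is canonically trivial) or none is (your ``good'' case, where the ideal sheaf is identified with $t^*\ca{I}_{H^c}$ for $(w,H) = \Psi(v,G)$), and then read off the bijection. The paper states this dichotomy directly rather than packaging it via the map $\Psi$ of \S\ref{subsec:pullback_of_ES}, but the content is identical.

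One small gap: in the inverse direction $\ca{E}(T)\to\ca{E}_\Gamma(T)$, the fibrewise degree-zero condition of \ref{def:ES_Gamma} is \emph{not} ``inherited'' from anything --- the definition of $\ca{E}(T)$ imposes no such condition. You must actually prove that every $\cl{L}_{(w,H)}$ arising in an enriched structure has total degree zero on each fibre. The paper handles this by invoking \ref{lem:structure_of_inv_quotients} (which computes the restrictions of such an $\cl{L}$ to $C_H$ and $C_{H^c}$ and shows the degrees cancel); you should do the same. Your verification that $C_T/T$ is 1-aligned is harmless but not needed: the combinatorics of $\Psi$ in \S\ref{subsec:pullback_of_ES} only use 1-alignment of the source, which here is $S$.
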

\begin{proof}
Write $\Gamma_T$ for the graph of $T$, so we get a specialisation map $\on{sp}\colon \Gamma_S \ra \Gamma_T$. Since $C_S/S$ is 1-aligned, this map $\on{sp}$ has the property that, if $H$ is a circuit-connected component of $\Gamma_S$, then $\on{sp}$ contracts at least one edge in $H$ if and only if it contracts every edge in $H$. 
%{Owen asks if a $2$-vertex-connected subgraph is a set of edges, vertices, or both. David says: edges, but think of it as including `necessary' vertices so that we can talk about circuits etc. Owen: Okay.}

Moreover, if $v$ is a vertex of $\Gamma_S$ and $G \in \pi_0(\Gamma_S \gdiff v)$ then either
\begin{enumerate}
\item Every edge from $v$ to $G$ is contracted by $\on{sp}$, in which case $f^*s^*\ca{I}_{G^c}$ is canonically trivial on $C_T$, or
\item No edge from $v$ to $G$ is contracted by $\on{sp}$, in which case $H\coloneqq\on{sp}(G)$ is a connected component of $\Gamma_T\gdiff \on{sp}(v)$, and $f^*s^*\ca{I}_{G^c}$ is canonically isomorphic to $t^*\ca{I}_{H^c}$, where $t\colon T \to \schart_T$ is the immediate neighbourhood of $T$ in $\schart$ (noting $\schart_T \sub \schart$). 
Furthermore, every relative component of $\Gamma_T$ arises in this way.
\end{enumerate}
Thus the data of a $\Gamma_S/\Gamma_S$-enriched structure on $T$ is equivalent to the data of a $\Gamma_T/\Gamma_T$-enriched structure on $T$, but this is equivalent to the data of an ordinary enriched structure because $\Gamma_T$ is the graph for $T$. The degree condition is satisfied for elements of $\ca E(T)$ by \ref{lem:structure_of_inv_quotients} (this lemma is not proven until \ref{sec:ES_over_fields}, but this is only for expository reasons --- it is entirely independent of the present section). 
\end{proof}

To prove that  diagram (\oref{eq:basic_equaliser}) is an equaliser, we have already seen that we can reduce to the case where $S$ is 1-aligned. In that situation, it is equivalent by \ref{lem:E_is_E_Gamma_if_aligned} to show that the diagram 
\begin{equation}\label{eq:E_Gamma_equaliser}
\ca{E}_{\Gamma_S/\Gamma_S}(S) \ra \prod_i \ca{E}_{\Gamma_S/\Gamma_S}(T_i) \rightrightarrows \prod_{i,j,k} \ca{E}_{\Gamma_S/\Gamma_S}(T_{i,j,k}). 
\end{equation}
is an equaliser, where as before we write $\Gamma_S$ for the graph of $S$. We are done by the following lemma:
\begin{lemma}\label{lem:representability_of_E_Gamma}
In the setting of \ref{def:ES_Gamma}, the functor $\ca{E}_{\Gamma_S/\Gamma'}$ from schemes over $S'$ to sets is representable; in particular it is a sheaf for every subcanonical topology. 
\end{lemma}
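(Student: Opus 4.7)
The plan is to represent $\ca E_\Gamma$ as a closed subspace of a Quot-type fibre product over $S$. For each relative component $(v,G)$ of $\Gamma$, the functor on schemes over $S$ sending $f\colon T \ra S$ to the set of invertible quotients of $f^*s^*\ca I_{G^c}$ of fibrewise total degree zero should be representable by an open and closed subscheme $Q_{v,G}$ of the relative Quot scheme $\on{Quot}(s^*\ca I_{G^c}/C_S/S)$: the ambient Quot scheme exists by Grothendieck's theorem since $C_S/S$ is proper and flat (and relatively representable by algebraic spaces), and the conditions of being a locally free rank 1 quotient of total degree zero on each fibre cut out an open and closed subscheme. Then I set $Q \defeq \prod_S Q_{v,G}$, the fibre product over all relative components, which represents the functor of tuples of invertible quotients with no compatibility imposed.

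Over $Q$, denote by $\cl L \defeq \bigotimes_{v,G}\cl L_{(v,G)}$ the tensor product of the universal invertible quotients, a line bundle on $C_Q/Q$ of fibrewise total degree zero. The condition that $\cl L$ be $T$-locally trivial is by definition equivalent to its class in the relative Picard scheme of $C_S/S$ being zero. Local triviality forces the multidegree of $\cl L$ to vanish, so I pass to the open and closed subscheme $Q'\sub Q$ where the multidegree of $\cl L$ is identically zero; this makes sense because $S$ is simple-controlled, so the irreducible components of the fibres of $C_S/S$ are organised by the controlling graph and induce a combinatorial structure on components of $C_Q/Q$, making multidegree a well-defined and locally constant invariant.

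On $Q'$ the line bundle $\cl L$ is then classified by a morphism $Q' \ra \Pictdz_{C_S/S}$ to the multidegree zero relative Picard, which is a separated group algebraic space over $S$ (standard for families of nodal curves). Its zero section is a closed immersion, and the fibre product of $Q'$ with this zero section along the classifying morphism is a closed subspace of $Q'$ which represents $\ca E_\Gamma$. The main obstacle I anticipate is the bookkeeping around multidegrees: one needs to verify that the combinatorial structure coming from the controlling graph of $S$ propagates suitably to $C_Q/Q$ so that the ``multidegree zero'' locus is well-defined, and to check that $\Pictdz_{C_S/S}$ really is separated over $S$ in this generality; both should follow from the theory of relative Picard schemes for nodal curves together with the simple-controlled hypothesis on $S$.
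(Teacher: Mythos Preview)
Your proposal is correct and follows essentially the same route as the paper: represent invertible quotients via Quot schemes, take the fibre product over relative components, and impose the triviality and degree-zero conditions via the relative Picard functor. The one difference is that you interpose a multidegree-zero step to force separatedness so that the zero section is a closed immersion; the paper avoids this bookkeeping by only invoking that the unit section of the relative Picard is \emph{relatively representable} (not necessarily closed), which already suffices for representability of $\ca E_\Gamma$.
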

All we need right now is the sheaf property of $\ca E_{\Gamma_S/\Gamma_S}$, which is slightly quicker to prove, but we will need the general representability of $\ca E_{\Gamma_S/\Gamma'}$ very soon anyway. 
\begin{proof}
This is immediate by combining four standard ingredients:
\begin{enumerate}
\item The representability of Quot schemes;
\item The inclusion of invertible sheaves into finitely presented quasi-coherent sheaves is relatively representable;
\item The unit section of the relative Picard scheme is relatively representable. 
\item The subfunctor of the relative Picard space consisting of line bundles of total degree zero is representable. 
\end{enumerate}
Combining (1) and (2) we see that the functor of invertible quotients of each $\ca{I}_{G'^c}$ is representable. We take the fibre product over $S'$ of these functors for each relative component $(v,G')$ of $\Gamma'$, and then (3) and (4) show that the additional conditions that the tensor product of all the $\cl{L}_{(v,G')}$ be trivial and that they all have degree 0 do not break representability. 
\end{proof}

Therefore $\ca{E}$, as we have defined it, is an \'etale sheaf on the full subcategory of controlled curves, and since every curve is \'etale-locally controlled, the comparison \ref{lem:comparison} tells us that $\ca{E}$ extends uniquely (up to unique isomorphism) to an \'etale sheaf on all of $\cat{Sch}_\frak{M}$.
\begin{definition}\label{def:ES_in_general}
In the following, we will use $\ca{E}$ to refer to this unique extension, and for any curve $S\to\frak{M}$ we will call elements of $\ca{E}(S)$ \emph{enriched structures on the tautological curve $C/S$}.
\end{definition}
\section{Representability of the functor of enriched structures}\label{sec:representability}
\newcommand{\smallM}{\schart}
The aim of this section is to prove that $\ca{E}$ is relatively representable by an algebraic space over $\frak{M}$. We will prove representability of $\ca{E}$ by reducing to \ref{lem:representability_of_E_Gamma}, the representability of the various $\ca{E}_{\Gamma_S/\Gamma'}$. Note that $\ca{E}$ is by definition a sheaf  for the \'etale topology, as it was defined by extending a sheaf on the subcategory of controlled curves, so it is enough to show representability locally. Let $\schart \to \frak M$ be a simple chart% a smooth morphism from a scheme such that the tautological curve $\upsC /M$ is controlled; 
; we will show representability of the restricted functor $\ca E_\schart\colon \cat{Sch}_\schart^{op} \to \cat{Set}$. Write $\Gamma_{\schart}$ for the graph over $\schart$. Then $\ca{E}_{\Gamma_{\schart}/\Gamma_{\schart}}$ is certainly representable, but does \emph{not} in general coincide with $\ca{E}_\schart$ because $\schart$ is not in general $1$-aligned. This will be remedied by glueing together various suitable $\ca{E}_{\Gamma_\schart/\Gamma'}$, but before getting into the details we give an example where $\ca{E}_\schart \neq \ca{E}_{\Gamma_{\schart}/\Gamma_{\schart}}$. 
\subsection{Example}
%{This example would have to change, as $\smallM$ has been fixed. But I think it is fine, we're just describing the base change of the stack of $ES$ to this universal deformation. }
Let $k$ be an algebraically closed field, and take three copies of $\bb{P}^1$ over $k$. Glue the point $(0:1)$ on each curve to the point $(1:0)$ on the next so as to form a triangle. Let $\schart \hra k[a,b,c]$ be a controlled chart for this curve, with boundary $abc$, and graph $\Gamma_\smallM$ a triangle with labels $a$, $b$, $c$. 
%, and mark the points $(1:1)$ on each curve. In this way we obtain a stable curve of genus 1 with 3 marked points. Let $C_\schart/{\schart}$ be a as a stable marked curve, but then forget about the markings, as they will no longer be relevant (this is only formally smooth over $\frak M$, but the same considerations apply). 
%It is clear that $C_\schart/\smallM$ is controlled with graph $\Gamma_\smallM$ a triangle. Let $(a)$, $(b)$ and $(c)$ be the labels on the edges, so $\smallM = \on{Spec}k[[a,b,c]]$.
 We consider some examples:
\begin{enumerate}
\item Let $p$ be a point of $\smallM$ where at least two of $a$, $b$ and $c$ are units (so $C_p$ is irreducible). Then $\ca{E}_\smallM(p)$ is the set containing the empty enriched structure, and $\ca{E}_{\Gamma_\smallM/\Gamma_\smallM}(p)$ is a singleton consisting of the trivial enriched structure (since all the pullbacks of ideal sheaves are trivial). In particular, the two sets are naturally in bijection! 
\item Let $p$ be a point where exactly one of $a$, $b$ and $c$ is a unit; say $a$ is a unit for simplicity of notation. Then $\Gamma_p$ is obtained from $\Gamma_\schart$ by contracting the edge labelled $a$. We find that $\ca{E}_{\Gamma_\smallM/\Gamma_\smallM}(p)$ is empty but $\ca{E}_\smallM(p)$ is non-empty, in fact it is isomorphic to $k^*$;
\item If $p$ is a point where none of $a$, $b$ or $c$ is a unit (so $p$ is the closed point) then $\ca{E}_{\smallM/\smallM}(p) = \ca{E}_{\Gamma_\smallM}(p)$ directly from the definitions. 
\end{enumerate}
Summarising, in cases (1) and (3) the functor $\ca{E}_{\Gamma_\smallM/\Gamma_\smallM}$ gives the `correct' answer (i.e.\ the same as $\ca{E}_\smallM$), but in case (2) it does not; instead it yields the empty set. We will fix this by glueing in $\ca{E}_{\Gamma_\smallM/\Gamma'}$ for various contractions $\Gamma_{\smallM}\to \Gamma'$. 

\subsection{Glueing} 

Recall that we are assuming $C_\smallM/\smallM$ to be controlled, with graph $\Gamma_{\smallM}$. Given a contraction of graphs $\Gamma_{\smallM} \ra \Gamma'$, let $U_{\Gamma'} \sub \smallM$ be the open subscheme obtained by deleting the loci where the labels of contracted edges vanish. Then we apply \ref{def:ES_Gamma} with $S = \smallM$ to define a functor $\ca E_{\Gamma_\smallM/\Gamma'} \colon \cat{Sch}_{U_{\Gamma'}}^{op} \to \cat{Set}$. 

%This $U_\Gamma$ need not be controlled, but we can nonetheless imitate \ref{def:ES_Gamma} and define a functor $\ca E_\Gamma\colon \cat{Sch}_{U_\Gamma}^{op} \to \cat{Set}$. 

We have schemes $\ca{E}_{\Gamma_\smallM/\Gamma'}$ as $\Gamma'$ runs over various contractions of $\Gamma_\smallM$, from which we will build a representing object for $\ca{E}_\smallM$. We will carefully specify certain loci along which to glue the various $\ca{E}_{\Gamma_\smallM/\Gamma'}$ in order to obtain a representing object for $\ca{E}_\smallM$. First we need a few preliminaries. 

\begin{definition}
Let $f\colon \Gamma \ra \Gamma'$ be a contraction of graphs. We say $f\colon\Gamma \ra \Gamma'$ is \emph{aligned} if for every circuit-connected component $G$ of $\Gamma$, either
\begin{enumerate}
\item
every vertex of $G$ maps to the same vertex of $\Gamma'$, or
\item the map $f$ is injective on the vertices of $G$ (equivalently, no edge in $G$ is contracted). 
\end{enumerate}
\end{definition}
Note that case (1) is \emph{not} the same as saying that all edges in $G$ are contracted --- rather, it says there exists a connected spanning subgraph of $G$ all of whose edges are contracted. 

\begin{lemma}\label{lem_glueing:empty}
Let $\Gamma_\smallM \ra \Gamma'$ be a contraction which is not aligned. Let $p \in U_{\Gamma'}$. Then $\ca{E}_{\Gamma_\smallM/\Gamma'}(p) = \emptyset$. 
\end{lemma}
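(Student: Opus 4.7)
The strategy closely follows the proof of \ref{lem:ES_implies_1aligned}: the existence of a $\Gamma$-enriched structure at $p$ will force a $1$-alignment-type condition on the labels of the edges of $\Gamma$, which I then contradict using the non-alignment hypothesis.

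Suppose for contradiction that some $\Gamma$-enriched structure $(\cl L_{(v,G)})_{v,G}$ lies in $\ca E_\Gamma(p)$. At a singular point of $C_p$ corresponding to an edge of $\Gamma$ from a vertex $v$ to a component $G\in\pi_0(\Gamma\setminus v)$, the local presentation of $s^*\ca I_{G^c}$ has exactly the same form as the matrix (\oref{relation_matrix}) in the proof of \ref{lem:ES_implies_1aligned}, but now the generators $A, A_1, \ldots, A_n$ are indexed by all edges of $\Gamma$ (not $\Gamma_{\ca M}$) between $v$ and $G$. The identical case analysis forces their labels to generate the same principal ideal in $\widehat{\ca O}_{U_\Gamma, p}$, for every pair $(v,G)$.

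To extract the contradiction, let $H\subset\Gamma_{\ca M}$ be a circuit-connected component witnessing non-alignment: it contains a contracted edge $e_c$ (whose label is a unit on $U_\Gamma$), and since not all of its vertices collapse to a single vertex of $\Gamma$, also a non-contracted edge $e_u$. By circuit-connectedness of $H$ there is a circuit $\gamma\subset H$ through both $e_c$ and $e_u$. Tracking the image of $\gamma$ in $\Gamma$ (contracted edges identify their endpoints, non-contracted edges survive), I locate a vertex $v\in V(\Gamma)$, namely a vertex arising from collapsing an endpoint of $e_c$, together with a component $G\in\pi_0(\Gamma\setminus v)$ and two distinct edges of $\Gamma$ between $v$ and $G$ arising from two non-contracted edges of $\gamma$ on opposite sides of $e_c$; their far endpoints lie in a common component of $\Gamma\setminus v$ because $\gamma$ provides a path between them avoiding $v$. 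The alignment condition derived above would force their labels to agree in $\widehat{\ca O}_{U_\Gamma, p}$, but these are distinct edges of the simple chart with distinct non-unit label ideals, giving the required contradiction.

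The hardest step will be the final combinatorial one: carefully extracting the pair $(v,G)$ and the two distinct edges from the non-aligned circuit $\gamma$, and establishing that their labels really must be distinct non-unit ideals at $p$ (which relies on the simple-intersections hypothesis built into the notion of simple chart, and on the fact that at points of $U_\Gamma$ where $\Gamma$ is realised, distinct edge labels of $\Gamma$ remain distinct primes in the local ring).
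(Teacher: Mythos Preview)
Your approach is genuinely different from the paper's. The paper does \emph{not} adapt the local calculation of \ref{lem:ES_implies_1aligned}; it runs a partial-degree count instead. With $V$ the preimage in $\Gamma_{\ca M}$ of the collapsed vertex $v\in\Gamma$, the paper evaluates $\sum_{(w,H)}\deg\cl L_H|_v$ over relative components of $\Gamma_{\ca M}$, separates the contributions from $w\in V$ and $w\notin V$, and uses \ref{lem:structure_of_inv_quotients} to compute each piece (Claims~2 and~3). A purely combinatorial \emph{strict} inequality (Claim~1) then contradicts the vanishing of the total $v$-degree forced by $\bigotimes\cl L_H\cong\ca O_C$. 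The degree formulation is precisely what lets the paper handle the situation where $\Gamma\setminus v$ has several connected components without ever needing to locate two specific edges into a single component.

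Your combinatorial extraction step has a real gap. You assert that the far endpoints of $f_1,f_2$ lie in a common component of $\Gamma\setminus v$ because the remaining arc of $\gamma$ ``avoids $v$'', but that arc lives in $\Gamma_{\ca M}$, and its vertices may well map to $v$ via contracted edges lying \emph{outside} $\gamma$. Concretely: take $\Gamma_{\ca M}$ to be the hexagon $a\!-\!b\!-\!c\!-\!d\!-\!e\!-\!f\!-\!a$ together with the diagonal $a\!-\!d$, and contract $a\!-\!b$, $a\!-\!d$, $d\!-\!e$. Then $v=\{a,b,d,e\}$ and $\Gamma\setminus v=\{c\}\sqcup\{f\}$. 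With $e_c=a\!-\!b$ and $\gamma$ the hexagon, your recipe gives $f_1=f\!-\!a$ and $f_2=b\!-\!c$, which land in \emph{different} components of $\Gamma\setminus v$: the arc $c\!-\!d\!-\!e\!-\!f$ of $\gamma$ re-enters $v$ at $d$ and $e$. (A different choice of $\gamma$ happens to work in this example, but your argument does not supply a mechanism for choosing one, and it is not clear this can always be arranged.) The paper's degree argument sidesteps this entirely, since the vanishing of the total degree on the $v$-component already aggregates the contributions from all of $\pi_0(\Gamma\setminus v)$ at once.

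There is a second, independent issue with your final step. Even when $f_1,f_2$ do land in a common component $G$, you need their labels to generate \emph{distinct} ideals in $\ca O_{U_\Gamma,p}$, and you justify this only ``at points of $U_\Gamma$ where $\Gamma$ is realised''. For arbitrary $p\in U_\Gamma$ both labels may be units, and then the equality forced by your Step~1 is vacuous rather than contradictory.
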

\begin{proof}
Since $\Gamma_\smallM\to\Gamma'$ is not aligned, there exists some circuit-connected component $G$ of $\Gamma_\smallM$ such that at least one non-loop edge of $G$ is contracted, but the vertices of $G$ do not all have the same image in $\Gamma'$.
Let $e$ be such a contracted edge, let $v$ be the vertex of $\Gamma'$ to which it is contracted, and let $V$ be the set of vertices of $\Gamma_\smallM$ whose image in $\Gamma'$ is the vertex $v$.

Now not all the vertices of $G$ are in $V$ by assumption; let $e'$ be an edge of $G$ connecting such a vertex to $V$, and let $\gamma$ be a circuit of $G$ containing both $e$ and $e'$.
Thus $\gamma$ has at least one edge $e$ between two vertices in $V$ and another edge $e'$ between $V$ and $V^c$. 

\[\begin{tikzpicture}
 \draw[rounded corners=30pt, fill=lightgray!20] (-3,3) -- (-2,-1) -- (1,-1) -- (3,1) -- (2,5) -- (-1,5) -- cycle;
 \node at (-2.25,4.25) {$V$};
 \draw[{Circle}-{Circle}] (-0.05,1.95) -- (1.05,3.05);
 \node at (.25, 2.75) {$e$};
 \draw[{Circle}-{Circle}] (1.95,.05) -- (3.05,-1.05);
 \node at (2.75, -.25) {$e'$};
 \node at (1.7,0) {$w$};
 \node at (3,-1.25) {$u$};
 \draw[rounded corners = 10pt, dashed] (0,2) -- (0,1) -- (2,1) -- (2,0);
 \draw[-{Circle}, dashed] (1,3) -- (2.55,3);
 \draw[rounded corners = 10pt, dashed] (2.5,3) -- (4,3) -- (4,-1) -- (3,-1);
 \node at (4.1,3.1) {$\gamma$};
 \node at (2.7,3.3) {$w'$};
\end{tikzpicture}\]

We will prove the lemma by a careful analysis of the partial degrees which would appear in an enriched structure (more precisely, an element of $\ca{E}_{\Gamma_\smallM/\Gamma'}(p)$) if one were to exist, and then deriving a numerical contradiction. First we have three claims. 

\begin{inclaim} %the number of edges between $V$ and $V^c$ is strictly less than
\begin{equation}\label{eq:edges1}
\# \text{edges } V \text{ to } V^c > \sum_{w \in V}\sum_{\stackrel{H \in \pi_0(\Gamma_\smallM \gdiff w)}{H \cap V = \emptyset}} \# \text{edges }w \text{ to } H 
\end{equation}
where the condition $H \cap V = \emptyset$ should be read as saying that $H$ and $V$ have no vertex in common. 
\end{inclaim}
The point of the claim is to get a strict inequality; a non-strict inequality here is essentially obvious. 

\noindent\textbf{Proof of claim:}
There is an obvious injective map from edges on the RHS of (\oref{eq:edges1}) to edges on the LHS: every edge between such $w$ and $H$ is in particular an edge between $V$ and $V^c$. To show the strictness of the inequality we need to construct an edge from $V$ to $V^c$ which does not appear on the RHS. The key will be the circuit $\gamma$ defined above. 

Recall that $e'$ is an edge of $\gamma$ which has exactly one endpoint in $V$. Write $w$ for the end of $e'$ which is in $V$, and $u$ for the other end of $e'$. If $e'$ appears in the RHS of (\oref{eq:edges1}) then it must go from $w$ to the connected component $H$ of $\Gamma_\smallM \gdiff w$ that contains $u$. But this $H$ has a vertex $w'$ in common with $V$, namely the next point where $\gamma$ re-enters $V$ (note that $w \neq w'$ since $\gamma$ also contains $e$, an edge whose endpoints are both in $V$). Therefore $e'$ is not on the RHS of (\oref{eq:edges1}), which means that the inequality is strict.

This concludes the proof of the first claim. Now let $(p^*\ca I_{H^c} \twoheadrightarrow \cl{L}_H)_H$ be an element of $\ca{E}_{\Gamma_\smallM/\Gamma'}(p)$. We first compute the sums of partial degrees of $\cl{L}_H$ for $H$ corresponding to vertices in $V$. 
\begin{inclaim}
\begin{equation*}
\sum_{w \in V} \sum_{H \in \pi_0(\Gamma_\smallM \gdiff w)} \on{deg}\cl{L}_H|_v = \sum_{w \in V}\sum_{\stackrel{H \in \pi_0(\Gamma_\smallM \gdiff w)}{H \cap V = \emptyset}} \# \text{edges }w \text{ to } H. 
\end{equation*}
where the condition $H \cap V = \emptyset$ should again be read as saying that $H$ and $V$ have no vertex in common. 
\end{inclaim}
\noindent\textbf{Proof of claim:}
Fix $w \in V$ and $H \in \pi_0(\Gamma_\smallM \gdiff w)$. 
Note that if $H$ and $V$ have a vertex in common, then that vertex can be taken to be an endpoint of an edge to $w$ that contracts to $v$.
For if $w'\in H\cap V$, we can find a path from $w'$ to $w$ using only edges contracted to $v$, and thus $H$ contains all the vertices of this path except for $w$ itself.
Now if an edge from $H$ to $w$ is contracted in $\Gamma'$ then $\cl{L}_H$ is trivial, so has degree 0. 

On the other hand, if $H$ and $V$ are disjoint then by \ref{lem:structure_of_inv_quotients} (this lemma is not proven until the next section, but this is only for expository reasons - it is entirely independent of the present section) we have that 
\begin{equation*}
\on{deg}\cl{L}_H|_v = \# \text{ edges from }w \text{ to } H. 
\end{equation*}
This concludes the proof of the claim. We now compute the sums of partial degree of $\cl{L}_H$ for $H$ corresponding to vertices in $V$. 

\begin{inclaim}
\begin{equation*}
\sum_{w \notin V} \sum_{H \in \pi_0(\Gamma_\smallM \gdiff w)} \on{deg}\cl{L}_H|_v = -\# \text{edges } V \text{ to } V^c. 
\end{equation*}
\end{inclaim}
\noindent\textbf{Proof of claim:}
Let $w \notin V$. Then for each $H \in \pi_0(\Gamma_\smallM\gdiff w)$, we have
\begin{equation*}
\on{deg}\cl{L}_H|_v = -\#\text{ edges from }w \text{ to } V \cap H. 
\end{equation*}
Therefore, since every edge from $w$ to $V$ ends in exactly one connected component of $\Gamma_\smallM\gdiff w$, we have
\begin{equation*}
\sum_{H\in \pi_0(\Gamma_\smallM\gdiff w)}\on{deg}\cl{L}_H|_v = -\#\text{ edges from }w \text{ to } V. 
\end{equation*}
Summing over all $w\in V^c$ concludes the proof of the claim. 

We can now finish off the proof of the lemma by observing that (from the condition that the tensor product of all the line bundles appearing in an enriched structure must be trivial) we have
\begin{equation*}
\sum_{w \in \on{Vert}\Gamma_\smallM} \sum_{H \in \pi_0(\Gamma_\smallM \gdiff w)} \on{deg} \cl{L}_H |_v = 0
\end{equation*}
which is incompatible with the above claims, thus yielding a contradiction to the assumption that $\ca{E}_{\Gamma_\smallM/\Gamma'}(p)$ is non-empty. 
\end{proof}

%Let $f\colon \Gamma \ra \Gamma'$ be a contraction between two contractions of $\Gamma_\smallM$ such that $f$ is aligned. Let $T \ra U_\Gamma'$ be centrally controlled with graph $\Gamma_T$, and let $\Gamma \ra \tilde{\Gamma} \ra \Gamma_T$ be any factorisation of the canonical map $\Gamma \ra \Gamma_T$. Then there is a canonical isomorphism {improve!?}

\begin{lemma}\label{lem:aligned_gives_right_answer}
Let $f\colon \Gamma \to \Gamma'$ be a contraction between two contractions of $\Gamma_\smallM$ such that $f$ is aligned. Let $T \to U_{\Gamma'}$ be any morphism. Then there is a canonical identification \begin{equation*}
\ca{E}_{\Gamma_\smallM/\Gamma}(T) = \ca{E}_{\Gamma_\smallM/\Gamma'}(T). 
\end{equation*}
\end{lemma}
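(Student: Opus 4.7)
The plan is to exhibit a bijection between $\ca{E}_\Gamma(T)$ and $\ca{E}_{\Gamma'}(T)$ by showing that each relative component of $\Gamma$ either (a) contributes only trivial data, or (b) corresponds canonically to a relative component of $\Gamma'$, with this case-(b) correspondence being a bijection onto all relative components of $\Gamma'$. Fix a relative component $(v,G)$ of $\Gamma$. The edges from $v$ to $G$ are pairwise circuit-connected (any two are joined by a circuit consisting of these edges and a path in the connected graph $G$), so they all lie in a single circuit-connected component $C$ of $\Gamma$, all of whose vertices lie in $\{v\}\cup G$. Alignment applied to $C$ then forces exactly one of the following two cases.

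\emph{Case (a): every vertex of $C$ maps to $f(v)$.} The contracted edges of $\Gamma$ must merge the vertices of $C$ into a single vertex of $\Gamma'$, and since any walk of contracted edges from $v$ reaching another vertex of $C$ must first leave $v$ along an edge from $v$ to $G$ (the only edges of $\Gamma$ incident to $v$ that lie in $C$), at least one such edge is contracted. Its label becomes a unit on $U_{\Gamma'}$, so by an argument analogous to \ref{lem:bad_component_trivial_sheaf} the restriction of $\ca{I}_{G^c}$ to $C_{U_{\Gamma'}}$ is canonically $\ca{O}$; hence the only invertible quotient over $C_T$ is $\ca{O}_{C_T}$ itself.

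\emph{Case (b): $f$ is injective on the vertices of $C$.} Then no edge of $C$ is contracted, and I would show that $(f(v), f(G))$ is a relative component of $\Gamma'$ with the restriction of $\ca{I}_{G^c}$ to $C_{U_{\Gamma'}}$ canonically identified with $\ca{I}_{f(G)^c}$. First, no vertex $g'\in G$ can satisfy $f(g')=f(v)$: the chain of contracted edges witnessing $f(v)=f(g')$ would have to leave $v$ via an edge into $G$ (since $G$ is a component of $\Gamma\setminus v$), but that edge lies in $C$ and so is not contracted. Second, $f(G)$ is a maximal connected subset of $\Gamma'\setminus f(v)$: any path in $\Gamma'\setminus f(v)$ joining $f(G)$ to an outside vertex would lift (using that non-contracted edges of $\Gamma$ correspond to edges of $\Gamma'$, with contracted-edge connections within the fibres of $f$ filling in the gaps) to a walk in $\Gamma$ avoiding $f^{-1}(f(v))$, and hence avoiding $v$, contradicting $G$'s separation from the rest of $\Gamma$ by $v$. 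The identification of ideal sheaves then follows by unwinding \ref{def:ES_Gamma}, as the edges of $\Gamma_\ca{M}$ whose labels define $\ca{I}_{G^c}$ and $\ca{I}_{f(G)^c}$ agree after restricting to $U_{\Gamma'}$.

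Similar path-lifting arguments give that $(v,G)\mapsto (f(v),f(G))$ restricts to a bijection between case-(b) relative components of $\Gamma$ and the relative components of $\Gamma'$: injectivity rules out two distinct case-(b) components sharing an image, and surjectivity is by choosing, given $(w,H)$ a relative component of $\Gamma'$, a preimage $v$ of $w$ adjacent via a non-contracted edge to a preimage of some vertex of $H$, with $G$ the corresponding component of $\Gamma\setminus v$ (the non-contracted adjacent edge forces its circuit-connected component to be in alignment case (2), putting $(v,G)$ in case (b)). Assembling, a $\Gamma$-enriched structure on $T$ consists of an invertible quotient for each relative component subject to the tensor-product-trivial and fibrewise-degree-zero conditions; case-(a) data is forced to be $\ca{O}_{C_T}\twoheadrightarrow\ca{O}_{C_T}$ and so contributes trivially to both conditions, while case-(b) data corresponds bijectively to the data of a $\Gamma'$-enriched structure. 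This yields the canonical identification $\ca{E}_\Gamma(T)=\ca{E}_{\Gamma'}(T)$, functorial in $T$. The main obstacle is the path-lifting step in case (b): ensuring that $f(G)$ is really a full connected component of $\Gamma'\setminus f(v)$ rather than merely a connected subset, which is where the alignment hypothesis does its essential work.
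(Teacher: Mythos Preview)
Your proof is correct and follows essentially the same strategy as the paper's: both establish a bijection between the ``nontrivial'' relative components of $\Gamma$ and all relative components of $\Gamma'$, and observe that the remaining relative components of $\Gamma$ contribute only canonically trivial data. The paper phrases the bijection as $\phi\colon \bigsqcup_{w \in V}\{H \in \pi_0(\Gamma \setminus w): H \cap V = \emptyset\} \to \pi_0(\Gamma'\setminus v)$ for each vertex $v$ of $\Gamma'$ with $V = f^{-1}(v)$, simply asserting that alignment makes this a bijection, whereas you characterise the two cases via the circuit-connected component $C$ containing the separating edges and actually verify the bijection and the path-lifting step. Your condition ``case~(b)'' is equivalent to the paper's ``$H \cap V = \emptyset$'', so the two arguments are the same in substance; yours is simply more detailed. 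One small imprecision: in case~(a) you write that the walk of contracted edges ``must first leave $v$ along an edge from $v$ to $G$''---this is only true once you shorten the walk to a \emph{path} (which cannot revisit $v$), but the conclusion that some edge from $v$ to $G$ is contracted is unaffected.
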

\begin{proof}
Let $v$ be a vertex of $\Gamma'$ and write $V$ for the set of vertices of $\Gamma$ mapping to $v$.  Since $f$ is aligned it induces a bijection 
\begin{equation*}
\phi\colon \bigsqcup_{w \in V}\{H \in \pi_0(\Gamma \gdiff w): H \cap V = \emptyset \} \to \pi_0(\Gamma'\gdiff v). 
\end{equation*}
Then for each $G \in \pi_0(\Gamma' \gdiff v)$ we note that $\ca I_{G^c}|_T = \ca I_{{\phi^{-1}(G)}^c}|_T$ and define $\cl{L}_G = \cl{L}_{\phi^{-1}(G)}$ with the obvious map from $\ca I_{G^c}|_T$. To check that we have an enriched structure with respect to $\Gamma'$ it suffices to verify that the tensor product of all these bundles is trivial. But this is immediate from the same property of the original enriched structure with respect to $\Gamma$, using that $\ca I_{H^c}|_T$ is canonically trivial whenever an edge between $H$ and $H^c$ is contracted by $f$. 
\end{proof}

\begin{definition}\label{def:glueing_locus}
% \todo{Owen: There we define, given $S$, the largest open of $U$ on which the graph of $S$ makes sense. Perhaps we should go further and define the largest open on which any given contraction of the graph of $S$ makes sense? D: Could do. As a stop-gap I have re-worked the sentence below to hopefully be correct now. If it does not come often this may be easier. }
Let $\Gamma_\smallM \to \Gamma_1$ and $\Gamma_\smallM \to \Gamma_2$ be two contractions. To shorten notation, we write $U_i \coloneqq U_{\Gamma_i} \sub \smallM$ (the open subscheme of $\smallM$ obtained by inverting the labels of those edges contracted in passing from $\Gamma_\smallM$ to $\Gamma_i$), and $\ca E_i \coloneqq \ca E_{\Gamma_\smallM/\Gamma_i}$. 
% largest open subscheme on which $\Gamma_i$ makes sense --- more precisely, let $U_i$ be empty if the graph $\Gamma_i$ does not arise for any point in $\smallM$, and otherwise pick such a point in $\smallM$ and apply \ref{sec:largest_open_where_graph_makes_sense}, observing that the resulting $U_i$ is independent of the choice of point. 
Define the open subscheme $U_{1,2} \sub U_1 \times_\smallM U_2$ by
\begin{equation*}
U_{1,2} = \left\{ p \in U_1 \times_\smallM U_2 : \exists \text{ a diagram} 
\begin{tikzcd}[row sep=-1mm, column sep=small]
\Gamma_1 \arrow{dr} & & \\
& \tilde{\Gamma} \arrow{r} & \Gamma_p\\
\Gamma_2 \arrow{ur} & &\\
\end{tikzcd} 
\text{ such that } \begin{tikzcd}[row sep=-1mm, column sep=small]
\Gamma_1 \arrow{dr} &\\
& \tilde{\Gamma}  \\
\Gamma_2 \arrow{ur} & \\
\end{tikzcd} \text{are both aligned}\right\}. 
\end{equation*}
\end{definition}
It is clear that $U_{1,2}$ is constructible in $\smallM$, and it is closed under generalisation essentially by construction, hence $U_{1,2}$ is open in $\smallM$. \todo{Owen: I don't know how to tell why $U_{1,2}$ is constructible, so I'm going to have to trust you on this one.David: Basically it is made by including/excluding a bunch of intersections of boundary divisors. }
\begin{lemma}
We carry over the notation from \ref{def:glueing_locus}. Suppose $T \to U_1\cap U_2$ does not factor via $U_{1,2}$. Then at least one of $\ca{E}_1(T)$ and $\ca{E}_2(T)$ is empty. 
\end{lemma}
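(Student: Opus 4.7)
The plan is to reduce the statement to emptiness at a single geometric point of $T$ and then to rerun the partial-degree argument of \ref{lem_glueing:empty}.

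\textbf{Step 1 (reduction and identification of a non-aligned contraction).} Since the functors $\ca E_i = \ca E_{\Gamma_i}$ admit pullback along morphisms and $U_{1,2}$ is open in $\ca M$, I would pick a geometric point $p$ of $T$ whose image in $\ca M$ lies in $U_1 \cap U_2 \setminus U_{1,2}$; it then suffices to show $\ca E_{\Gamma_i}(p) = \emptyset$ for some $i \in \{1,2\}$. At this image point the graph is $\Gamma_p$, and $p \in U_i$ provides a contraction $\Gamma_i \to \Gamma_p$ factoring $\Gamma_\ca M \to \Gamma_p$. Taking $\tilde\Gamma = \Gamma_p$ in \ref{def:glueing_locus} shows that if both $\Gamma_1 \to \Gamma_p$ and $\Gamma_2 \to \Gamma_p$ were aligned then $p$ would lie in $U_{1,2}$; hence, relabelling if necessary, we may assume that $\Gamma_1 \to \Gamma_p$ is not aligned.

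\textbf{Step 2 (adapt the partial-degree argument).} It remains to prove $\ca E_{\Gamma_1}(p) = \emptyset$, which I would do by rerunning the proof of \ref{lem_glueing:empty} with $\Gamma_1$ in place of $\Gamma_\ca M$ and $\Gamma_p$ in place of $\Gamma$. Non-alignment of $\Gamma_1 \to \Gamma_p$ produces a circuit-connected component $G$ of $\Gamma_1$ containing a non-loop contracted edge $e$ — say contracting to a vertex $v \in \Gamma_p$ — together with an edge $e'$ of $G$ from the preimage $V\subset \on{Vert}\Gamma_1$ of $v$ to its complement, both lying in a common circuit of $G$. The three partial-degree claims of \ref{lem_glueing:empty} then transcribe directly to a $\Gamma_1$-enriched structure on $p$ and produce a strict numerical inequality contradicting the $p$-local triviality of the tensor product of all the line bundles, using \ref{lem:structure_of_inv_quotients} to compute the partial degrees on the irreducible components of $C_p$.

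\textbf{Main obstacle.} The heart of the argument is Step 2: although \ref{lem_glueing:empty} is formally stated for the fixed contraction $\Gamma_\ca M \to \Gamma$, its proof uses only (i) non-alignment of the contraction to produce $G$, $e$, $e'$, (ii) \ref{lem:structure_of_inv_quotients} for the partial degrees of invertible quotients of the ideal sheaves $\ca I_{H^c}$, and (iii) the triviality of the tensor product of all line bundles in a (here $\Gamma_1$-)enriched structure. Each of these inputs remains available after replacing $(\Gamma_\ca M,\Gamma)$ by $(\Gamma_1,\Gamma_p)$, so the proof transfers. The subtlety is purely bookkeeping: circuit-connected and relative components must now be taken in $\Gamma_1$, while partial degrees are evaluated on the irreducible components of $C_p$ indexed by the vertices of $\Gamma_p$, and one must keep these two indexings (and the map $\Gamma_1 \to \Gamma_p$ between them) straight throughout the counting of edges between $V$ and $V^c$.
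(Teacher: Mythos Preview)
Your proposal is correct and follows the same approach as the paper: reduce to a point by pullback, then invoke \ref{lem_glueing:empty}. The paper's own proof is two sentences (``Since enriched structures admit pullbacks it is enough to treat the case where $T$ is a point. The result is then clear from \ref{lem_glueing:empty}.''), and your Step~1 with $\tilde\Gamma = \Gamma_p$ makes explicit exactly the identification the paper leaves to the reader; your Step~2 correctly observes that the partial-degree argument in \ref{lem_glueing:empty} depends only on the non-alignment of the relevant contraction and on \ref{lem:structure_of_inv_quotients}, hence transfers with $\Gamma_1$ in the role of the indexing graph.
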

\begin{proof}
Since enriched structures admit pullbacks it is enough to treat the case where $T$ is a point. The result is then clear from \ref{lem_glueing:empty}. 
\end{proof}
%\Dcomment{In the next lemma, is $\ca E_{\frak U}$ denoting the base-change to $\ca E$ to $\frak U$? If not, what is it? If so, say so! }
\begin{lemma}\label{lem:glueing_isos}
We carry over the notation from \ref{def:glueing_locus}. Let $T \ra U_{1,2}$ be any morphism. Then the maps in \ref{lem:aligned_gives_right_answer} induce canonical isomorphisms
\begin{equation*}
\ca{E}_1(T) = \ca{E}_\smallM(T) = \ca{E}_2(T). 
\end{equation*}
\end{lemma}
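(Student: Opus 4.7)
The plan is to deduce both equalities from \ref{lem:aligned_gives_right_answer} via a well-chosen intermediate graph. By symmetry it will suffice to construct a canonical identification $\ca{E}_1(T) = \ca{E}(T)$; applying the same argument on the other side with $\alpha_2$ gives $\ca{E}(T) = \ca{E}_2(T)$, and the required $\ca{E}_1(T) = \ca{E}_2(T)$ follows by composition.

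Since $\ca{E}$, $\ca{E}_1$ and $\ca{E}_2$ are all \'etale sheaves, I first pass to an \'etale cover and assume that $T$ is simple-controlled, with controlling point $\frak{t}$ and controlling graph $\Gamma_T = \Gamma_p$, where $p$ denotes the image of $\frak{t}$ in $\ca{M}$; by construction $\ca{E}(T) = \ca{E}_{\Gamma_T}(T)$. Using the definition of $U_{1,2}$ at $p$, I choose an intermediate graph $\tilde{\Gamma}$ together with aligned contractions $\alpha_i\colon \Gamma_i \to \tilde{\Gamma}$ for $i=1,2$ and a further contraction $\tilde{\Gamma} \to \Gamma_p$. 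A short combinatorial check then shows that $T$ factors through $U_{\tilde{\Gamma}}$: for any image point $q \in \ca{M}$ of $T$, the controlled hypothesis on $T$ supplies a contraction $\Gamma_p \to \Gamma_q$ (since $p$ and $q$ are both specialisations of a common point in $\ca M$ whose graph is identified with $\Gamma_q$), so composing with $\tilde{\Gamma} \to \Gamma_p$ shows that every edge of $\Gamma_\ca{M}$ contracted by $\Gamma_\ca{M} \to \tilde{\Gamma}$ is also contracted by $\Gamma_\ca{M} \to \Gamma_q$ and thus has unit label at $q$.

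Once this setup is in place, \ref{lem:aligned_gives_right_answer} applied to the aligned $\alpha_1$ yields the canonical identification $\ca{E}_1(T) = \ca{E}_{\tilde{\Gamma}}(T)$, and the remaining task is to identify $\ca{E}_{\tilde{\Gamma}}(T) = \ca{E}_{\Gamma_T}(T)$. This is the main obstacle of the proof, because the contraction $\tilde{\Gamma} \to \Gamma_T$ need not itself be aligned and so \ref{lem:aligned_gives_right_answer} cannot be invoked directly. My plan is to adapt its proof by hand: relative components $(v,G)$ of $\tilde{\Gamma}$ whose separating edges are all contracted in $\tilde{\Gamma} \to \Gamma_T$ yield canonically trivial invertible quotients on $T$ by (the argument of) \ref{lem:bad_component_trivial_sheaf}, whereas all other relative components of $\tilde{\Gamma}$ pair up bijectively with the relative components of $\Gamma_T$ via the specialisation map. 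One may assume at least one side of the desired equality is non-empty (otherwise the bijection is vacuous), which forces $T$ to be $1$-aligned by \ref{lem:ES_implies_1aligned}, and it is exactly this $1$-alignment that lets the combinatorial pairing on relative components go through as in \ref{subsec:pullback_of_ES}. The triviality-of-tensor-product condition is preserved because the trivialised factors contribute nothing, and the degree condition is verified by the same partial-degree computations used in \ref{lem:ES_implies_1aligned} and \ref{lem_glueing:empty}.

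Finally, I verify that the resulting isomorphism $\ca{E}_1(T) = \ca{E}(T)$ is independent of the choice of $\tilde{\Gamma}$; the key observation is that the induced pairing of relative components of $\Gamma_1$ with those of $\Gamma_T$ is determined purely by the specialisation $\Gamma_1 \to \Gamma_T$, which factors through any valid $\tilde{\Gamma}$ but whose combinatorial outcome does not depend on that factorisation. This coherence allows the identifications constructed over simple-controlled pieces of $T$ to glue into a single canonical iso of sheaves on $U_{1,2}$, completing the proof.
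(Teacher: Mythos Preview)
Your approach takes the middle term ``$\ca{E}(T)$'' literally as the sheaf of enriched structures and tries to identify $\ca{E}_1(T)$ with it via the chain $\ca{E}_1(T)=\ca{E}_{\tilde\Gamma}(T)=\ca{E}_{\Gamma_T}(T)=\ca{E}(T)$. That second equality is where the argument breaks, and it cannot be repaired: take $\Gamma_{\ca M}$ a triangle and $\Gamma_1=\Gamma_2=\Gamma_{\ca M}$, so $U_{1,2}=\ca M$ (with $\tilde\Gamma=\Gamma_{\ca M}$ the only admissible choice), and let $T=p$ be a point where exactly one label is a unit. Then, as recorded in the example at the start of \S3.1, $\ca{E}_{\tilde\Gamma}(p)=\ca{E}_{\Gamma_{\ca M}}(p)=\emptyset$ while $\ca{E}(p)=\ca{E}_{\Gamma_p}(p)\cong k^*$. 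Your proposed rescue (``assume one side non-empty, force $1$-alignment'') does not help here: $T$ \emph{is} $1$-aligned, $\ca{E}(T)$ \emph{is} non-empty, yet $\ca{E}_{\tilde\Gamma}(T)$ is empty. The dichotomy you invoke on relative components of $\tilde\Gamma$ is precisely the alignment of $\tilde\Gamma\to\Gamma_T$, and that is what fails.

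The paper's proof sidesteps all of this. The definition of $U_{1,2}$ hands you, at each point, a $\tilde\Gamma$ with \emph{both} $\Gamma_i\to\tilde\Gamma$ aligned; two applications of \ref{lem:aligned_gives_right_answer} give $\ca{E}_1(T)=\ca{E}_{\tilde\Gamma}(T)=\ca{E}_2(T)$, and that is the isomorphism used to glue in \ref{def:glueing_locus} and Definition~3.6. You never need to pass through $\Gamma_T$ or the sheaf $\ca E$. The symbol $\ca{E}(T)$ in the displayed equality should be read as shorthand for this common value; the genuine comparison with the sheaf $\ca{E}$ is carried out separately in Theorem~3.7, where one additionally knows that $\Gamma_i\to\Gamma_T$ is aligned.
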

\begin{proof}
Immediate from \ref{lem:aligned_gives_right_answer}. 
\end{proof}

\begin{definition}
We define a scheme $\ca{E}_{glue}/\smallM$ by glueing all the $\ca{E}_{i}$ as $\Gamma_i$ runs over contractions of $\Gamma_\smallM$, where we glue $\ca{E}_{i}$ to $\ca{E}_{j}$ over $U_{i,j}$ along the isomorphisms given in \ref{lem:glueing_isos}. 
\end{definition}

\begin{theorem}
The isomorphisms given in \ref{lem:glueing_isos} induce an isomorphism of functors from $\ca{E}_\smallM$ to $\ca{E}_{glue}$. 
\end{theorem}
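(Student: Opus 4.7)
The plan is to construct an isomorphism of étale sheaves $\ca E \xrightarrow{\sim} \ca E_{glue}$ on $\cat{Sch}_\ca M$. Both sides are étale sheaves: the source by \ref{def:ES_in_general}, and the target because it is represented by a scheme. By \ref{lem:locally-simple-controlled}, every scheme over $\ca M$ has an étale cover by simple-controlled curves, so it suffices to construct the isomorphism functorially on simple-controlled $T \to \ca M$ and check naturality.

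For such a $T$ with controlling graph $\Gamma_T$, the canonical specialisation $\Gamma_\ca M \to \Gamma_T$ makes $\Gamma_T$ one of the contractions appearing in the gluing, and \ref{lem:factor-through-largest-open} shows $T \to \ca M$ factors through $U_{\Gamma_T}$. Applying \ref{lem:E_is_E_Gamma_if_aligned} with $\Gamma = \Gamma' = \Gamma_T$ gives a canonical identification $\ca E(T) \cong \ca E_{\Gamma_T}(T)$ (in the non-1-aligned case both sides are empty by \ref{lem:ES_implies_1aligned}, so there is nothing to check), and composing with the open immersion $\ca E_{\Gamma_T} \hookrightarrow \ca E_{glue}$ yields a natural map $\ca E(T)\to\ca E_{glue}(T)$. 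To show this is a bijection, consider any $\xi\colon T\to\ca E_{glue}$: Zariski-locally on $T$ the map $\xi$ factors through some $\ca E_{\Gamma_i}$, producing an element of $\ca E_{\Gamma_i}(T')$ on an open $T'\sub T$. I would then show that $T'\to\ca M$ factors through $U_{i,\Gamma_T}\sub U_{\Gamma_i}\times_\ca M U_{\Gamma_T}$; taking $\tilde\Gamma=\Gamma_T$ with the identity contraction $\Gamma_T\to\Gamma_T$ in \ref{def:glueing_locus}, this reduces to the combinatorial claim that the specialisation $\Gamma_i\to\Gamma_T$ is aligned. Non-emptiness of $\ca E_{\Gamma_i}$ at the controlling point of $T'$ forces this by the degree computation in the proof of \ref{lem_glueing:empty}, applied with $\Gamma_\ca M$ replaced by $\Gamma_i$ and $\Gamma$ replaced by $\Gamma_T$. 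Then \ref{lem:glueing_isos} identifies $\ca E_{\Gamma_i}(T')=\ca E(T')=\ca E_{\Gamma_T}(T')$, and the resulting local elements of $\ca E(T')$ glue via the sheaf property of $\ca E$.

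Naturality in $T$ under morphisms of simple-controlled curves follows from the compatibility of the identifications in \ref{lem:E_is_E_Gamma_if_aligned}, \ref{lem:aligned_gives_right_answer}, and \ref{lem:glueing_isos} with pullback (all of which ultimately reduce to pullback of invertible quotients of ideal sheaves), and the resulting natural isomorphism extends uniquely to all of $\cat{Sch}_\ca M$ by the étale sheaf property. The main obstacle is the alignment argument in the surjectivity step: one must carry out the relative analogue of \ref{lem_glueing:empty}'s degree computation, keeping careful track of which graph plays the role of $\Gamma_\ca M$ and which of $\Gamma$, but since the argument is purely fibrewise and graph-theoretic it adapts essentially verbatim.
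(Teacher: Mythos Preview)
Your approach is the same as the paper's: restrict to simple-controlled $T$, identify $\ca E(T) = \ca E_{\Gamma_T}(T)$, and argue that any contribution from some $\ca E_{\Gamma_i}$ is identified with $\ca E(T)$ via an alignment argument. The paper's proof is terser---it only considers contractions $\Gamma$ for which $T$ factors \emph{globally} through $U_\Gamma$, implicitly treating $\ca E_{glue}(T)$ as a union of the $\ca E_\Gamma(T)$---whereas you correctly make the Zariski-local factoring explicit.

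There is, however, a gap in your surjectivity step. Taking $\tilde\Gamma = \Gamma_T$ in \ref{def:glueing_locus} presupposes that the contraction $\Gamma_i \to \Gamma_T$ exists, i.e.\ that every edge contracted by $\Gamma_{\ca M} \to \Gamma_i$ is also contracted by $\Gamma_{\ca M} \to \Gamma_T$. That holds exactly when the controlling point $\frak s$ of $T$ lies in $U_{\Gamma_i}$; but your Zariski open $T'$ need not contain $\frak s$, so $\Gamma_i$ need not be an intermediate contraction and the claimed ``specialisation $\Gamma_i \to \Gamma_T$'' may simply not exist. The fix is to further refine $T'$ so that it is itself simple-controlled, with its own controlling graph $\Gamma_{T'}$ (a contraction of $\Gamma_T$), and to take $\tilde\Gamma = \Gamma_{T'}$ instead. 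Then $\Gamma_i \to \Gamma_{T'}$ does exist (the controlling point of $T'$ lies in $U_{\Gamma_i}$), your degree argument gives $\Gamma_i \to \Gamma_{T'}$ aligned, and the gluing identifies $\ca E_{\Gamma_i}(T') = \ca E_{\Gamma_{T'}}(T') = \ca E(T')$. The resulting local sections then glue uniquely because $\ca E \to \ca E_{glue}$ is injective on each piece.
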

\begin{proof}
It is enough to verify that the functors coincide on controlled curves. Let $T \ra \smallM$ be such a morphism, with graph $\Gamma_T$. We have a contraction $f\colon \Gamma_\smallM \to \Gamma_T$. If $\Gamma_\smallM \ra \Gamma$ is any other contraction, then $T \ra \smallM$ factors via $U_\Gamma$ if and only if $f$ factors via $\Gamma_\smallM \ra \Gamma$ (and if not then $\ca{E}_{\Gamma_\smallM/\Gamma}(T)$ is clearly empty). It thus suffices to consider intermediate contractions $\Gamma_\smallM \ra \Gamma \ra \Gamma_T$. Moreover, if $\Gamma \ra \Gamma_T $ is not aligned then $\ca{E}_{\Gamma_\smallM/\Gamma}(T)$ is empty, so again it can be ignored. 

Given two aligned intermediate contractions $\Gamma_\smallM \to \Gamma_1 \to \Gamma_2 \to \Gamma_T$, it is immediate that $T\ra\smallM$ factors via $U_{1,2}$, and so $\ca{E}_{1}(T) = \ca{E}_\smallM(T) = \ca{E}_{2}(T)$ are identified by the glueing as required. 
\end{proof}
\begin{corollary}\label{cor:ES_representable}
The functor $\ca{E}$ of enriched structures is representable. 
\end{corollary}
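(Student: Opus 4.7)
The plan is to combine the theorem immediately preceding with the representability results already in hand. Since $\ca{E}$ is by construction an \'etale sheaf on $\cat{Sch}_\ca{M}$ (\ref{def:ES_in_general}) and relative representability by an algebraic space is \'etale-local on the target, I may pass to an \'etale cover of $\ca{M}$ by simple charts and reduce to the case where $\ca{M}$ is a scheme and $\ca{C}/\ca{M}$ is simple-controlled with controlling graph $\Gamma_\ca{M}$, exactly as outlined at the start of this section.

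Under this reduction, the previous theorem gives a natural isomorphism $\ca{E} \iso \ca{E}_{glue}$. By definition $\ca{E}_{glue}$ is assembled by glueing the functors $\ca{E}_{\Gamma_i}$, as $\Gamma_i$ runs over contractions of $\Gamma_\ca{M}$, along the open subschemes $U_{i,j} \sub U_i \times_\ca{M} U_j$ from \ref{def:glueing_locus}, via the canonical isomorphisms of \ref{lem:glueing_isos}. Each individual $\ca{E}_{\Gamma_i}$ is representable by a scheme thanks to \ref{lem:representability_of_E_Gamma}, and the $U_{i,j}$ are open in $\ca{M}$. So $\ca{E}_{glue}$ is built by glueing representable functors along open subfunctors, which yields an algebraic space (in fact a scheme) as soon as the glueing data is valid.

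The one verification I would carry out explicitly is the cocycle condition on triple overlaps. Given three contractions $\Gamma_1$, $\Gamma_2$, $\Gamma_3$ of $\Gamma_\ca{M}$ and a map $T\to U_{1,2}\cap U_{1,3}\cap U_{2,3}$, the glueing isomorphism $\ca{E}_{\Gamma_i}(T)\to\ca{E}_{\Gamma_j}(T)$ is the composite of the canonical identifications $\ca{E}_{\Gamma_i}(T) = \ca{E}(T) = \ca{E}_{\Gamma_j}(T)$ coming from \ref{lem:aligned_gives_right_answer}. Because each leg factors through the common value $\ca{E}(T)$, the two ways of identifying $\ca{E}_{\Gamma_1}(T)$ with $\ca{E}_{\Gamma_3}(T)$ (directly, and via $\ca{E}_{\Gamma_2}(T)$) both reduce to the identity on $\ca{E}(T)$, so the cocycle condition is automatic.

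The main obstacle was already absorbed into the preceding theorem (and the representability lemma \ref{lem:representability_of_E_Gamma}), so at this stage the corollary is a formal consequence: $\ca{E}_{glue}$ exists as a scheme (hence in particular as an algebraic space) representing $\ca{E}$ locally on $\ca{M}$, and \'etale descent extends this to the full $\ca{E}\to\ca{M}$.
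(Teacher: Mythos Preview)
Your proposal is correct and follows essentially the same approach as the paper: the corollary is an immediate consequence of the preceding theorem identifying $\ca{E}$ with $\ca{E}_{glue}$, together with the representability of each $\ca{E}_{\Gamma_i}$ from \ref{lem:representability_of_E_Gamma}. The only difference is that you spell out the cocycle verification, whereas the paper absorbs this into the very definition of $\ca{E}_{glue}$ (which is already declared to be a scheme before the theorem is stated).
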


\section{Enriched structures over separably closed fields}\label{sec:ES_over_fields}
Main\`o defined enriched structures on curves over fields in a way which looks rather different from what we write. We will show that our definition is in fact equivalent in this case. We begin by recalling her definition, which we call a \emph{Main\`o-enriched} structure to distinguish the terminology from what we use. %Actually Main\`o's definition differs by a sign from what we write here, but they are clearly equivalent by taking duals, and we find this version more natural from our perspective where ideal sheaves play a larger role. 
\begin{definition}\label{def:MES1}
Let $k$ be a separably closed field, and $C/k$ be a curve. Write $\Gamma$ for the graph of $C/k$. For $v$ a vertex of $\Gamma$, write $C_v^c$ for the union of the irreducible components of $C$ corresponding to the other vertices. A \emph{Main\`o-enriched structure} on ${C}$ is an collection $(\cl{F}_v)_{v \in \on{vert}(\Gamma)}$ of line bundles on $C$ such that 
\begin{equation}\label{eq:MaES_1}
\bigotimes_{v \in \on{vert}(\Gamma)} \cl{F}_v \cong \cl{O}_C
\end{equation}
and for every $v \in \on{vert}(\Gamma)$, we have
\begin{equation}\label{eq:MaES_2}
\cl{F}_v|_{C_v} \cong 
\ca{O}_{C_v}(-C_v \cap C_v^c)
\end{equation}
and
\begin{equation}\label{eq:MaES_3}
\cl{F}_v|_{C^c_v} \cong  \ca{O}_{C_v^c}(C_v \cap C_v^c). 
\end{equation}
\end{definition}
Note that the symbol $\cong$ means `is isomorphic to' --- we do not specify the isomorphisms. 

Before showing equivalence to our rather more involved definition, we give a slightly modified (but equivalent) version of the above definition, since it will make the comparison rather more direct:
\begin{definition}\label{def:M_ES}
Let $k$ be a separably closed field, and $C/k$ be a curve. Write $\Gamma$ for the graph of $C/k$. An \emph{M-enriched structure} on $C/k$ consists of, for each relative component $(v,G)$ of $\Gamma$ a line bundle $\cl{F}_G$ on $C$ such that 
\begin{equation}\label{eq:MES_1}
\bigotimes_{v, G\in\pi_0(\Gamma\gdiff v)} \cl{F}_G \cong \cl{O}_C
\end{equation}
and for every $(v,G)$ we have
%\begin{equation}\label{eq:MES_2}
%\cl{F}_G|_{G^c} \cong 
%\ca{O}_{G^c}(-G \cap G^c)
%\end{equation}
%and
\begin{equation}\label{eq:MES_3}
\cl{F}_G|_{C_G} \cong  \ca{O}_{C_G}(-C_G \cap C_{G^c})
\end{equation}
(where $C_G$ denotes the union of the components of $C$ corresponding to vertices in $G$) and
\begin{equation}\label{eq:MES_4}
\cl{F}_G|_{C_{G^c}} \cong  \ca{O}_{C_{G^c}}(C_G \cap C_{G^c}).
\end{equation} 
\end{definition}

Given an M-enriched structure $(\cl{F}_G)_G$ it is easy to see that we can obtain a Main\`o-enriched structure by defining $\cl{F}_v = \otimes_{G \in \pi_0(\Gamma \gdiff v)} \cl{F}_G^\vee$ (use that conditions (\oref{eq:MaES_1}) and (\oref{eq:MaES_3}) together imply (\oref{eq:MaES_2})).

\begin{lemma}
The above construction induces a bijection between Main\`o-enriched structures and M-enriched structures. 
\end{lemma}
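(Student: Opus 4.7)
The plan is to exhibit an inverse to the forward map $F\colon (\cl F_G) \mapsto (\cl F_v := \bigotimes_{G \in \pi_0(\Gamma \setminus v)} \cl F_G^\vee)$, verify it is well-defined, and check that both compositions are the identity.

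Define a backward map $B$ by setting $\cl F_G := \bigotimes_{u \in G} \cl F_u$ for each relative component $(v, G)$ of $\Gamma$. To check that this produces an M-enriched structure I verify each of the three defining conditions. For (\ref{eq:MES_3}), restricting the definition of $\cl F_G$ to any component $C_w$ with $w \in G$ yields
\[
\cl F_G|_{C_w} \cong \ca O_{C_w}(-C_w \cap C_w^c) \otimes \bigotimes_{u \in G,\ u \neq w} \ca O_{C_w}(C_u \cap C_w),
\]
using (\ref{eq:MaES_2}) for $u = w$ and (\ref{eq:MaES_3}) for $u \in G \setminus \{w\}$; then the combinatorial identity $C_w \cap C_w^c = \sum_{u \in G \setminus \{w\}}(C_u \cap C_w) + (C_w \cap C_{G^c})$ collapses the expression to $\ca O_{C_w}(-C_w \cap C_{G^c})$, which glues over $w \in G$ to give (\ref{eq:MES_3}). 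Condition (\ref{eq:MES_4}) is analogous (only the positive contributions $\ca O_{C_w}(C_u \cap C_w)$ appear, summing to $C_G \cap C_w$). For (\ref{eq:MES_1}), each Maino bundle $\cl F_u$ appears in the product $\bigotimes_{(v, G)} \cl F_G$ once for each vertex $v \neq u$ (via the unique $G \in \pi_0(\Gamma \setminus v)$ containing $u$), hence with total multiplicity $|V|-1$; thus the product equals $\bigl(\bigotimes_u \cl F_u\bigr)^{\otimes(|V|-1)}$, trivial by (\ref{eq:MaES_1}).

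Next I verify $F \circ B = \mathrm{id}$. Since $\pi_0(\Gamma \setminus v')$ partitions $V \setminus \{v'\}$,
\[
(F \circ B)(\cl F_v)_{v'} = \bigotimes_{G \in \pi_0(\Gamma \setminus v')} \Bigl(\bigotimes_{u \in G} \cl F_u\Bigr)^\vee = \bigotimes_{u \in V \setminus \{v'\}} \cl F_u^\vee \cong \cl F_{v'},
\]
where the last isomorphism uses (\ref{eq:MaES_1}).

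Finally I treat $B \circ F = \mathrm{id}$, which is the most delicate step. A direct unwinding gives
\[
(B \circ F)(\cl F_G)_{G_0} = \bigotimes_{u \in G_0} \bigotimes_{H \in \pi_0(\Gamma \setminus u)} \cl F_H^\vee,
\]
and applying the same telescoping as in the well-definedness of $B$ (with the Maino bundles replaced by the explicit expressions $\cl F_u = \bigotimes_H \cl F_H^\vee$ coming from $F$) shows that this has the same restriction to each component $C_w$ as $\cl F_{G_0}$. To promote this component-wise agreement to a genuine isomorphism of line bundles on $C$, I would invoke (\ref{eq:MES_1}) together with (\ref{eq:MaES_1}) applied to $F(\cl F_G)$, which together pin down the remaining twist ambiguity at the nodes. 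The main obstacle is exactly this last identification: on a nodal curve whose dual graph contains cycles, equality of restrictions to every irreducible component does not by itself force equality of line bundles, so the two global tensor-product constraints must be used in concert to rule out a residual twist class.
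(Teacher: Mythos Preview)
Your backward map $B(\cl F_v)_G=\bigotimes_{u\in G}\cl F_u$ is a natural choice and in fact coincides with the bundle the paper constructs by gluing, but your verification has two genuine gaps, one of which you already flag.

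First, the argument for (\ref{eq:MES_3}) does not work as written. You compute $\cl F_G|_{C_w}$ for each vertex $w\in G$ and then say these ``glue over $w\in G$'' to give $\cl F_G|_{C_G}\cong\ca O_{C_G}(-C_G\cap C_{G^c})$. But $\cl F_G|_{C_G}$ is already a fixed line bundle on $C_G$; knowing its restriction to every component does not determine it when the induced subgraph on $G$ has cycles --- exactly the phenomenon you yourself point out later. The fix is easy: use (\ref{eq:MaES_1}) to rewrite $\cl F_G\cong\bigl(\bigotimes_{u\in G^c}\cl F_u\bigr)^\vee$, and then restrict to $C_G$ in one step. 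Since $C_G\subset C_u^c$ for every $u\in G^c$, condition (\ref{eq:MaES_3}) applies directly and gives $\cl F_u|_{C_G}\cong\ca O_{C_G}(C_u\cap C_G)$; only the term $u=v$ contributes (edges from $G$ to $G^c$ all end at $v$), yielding (\ref{eq:MES_3}) without any per-component gluing. Your argument for (\ref{eq:MES_4}) can be made rigorous the same way using the original formula, since $C_{G^c}\subset C_u^c$ for all $u\in G$.

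Second, and more seriously, you do not complete $B\circ F=\mathrm{id}$. After unwinding, you need
\[
\bigotimes_{(u,H):\,u\in G_0}\cl F_H^\vee\;\cong\;\cl F_{G_0},
\]
and via (\ref{eq:MES_1}) this reduces to showing that $\bigotimes_{(u,H):\,u\in G_0^c,\,H\neq G_0}\cl F_H$ is trivial. Your per-component calculation shows this bundle lies in the ``multidegree-zero'' part of $\on{Pic}(C)$, but when $\Gamma$ has cycles that part is a positive-dimensional torus, and invoking (\ref{eq:MES_1}) and (\ref{eq:MaES_1}) gives only one relation in it, not enough to force triviality in general (think of a bowtie graph, where the analogous products for the two triangles are only constrained to have trivial \emph{product}). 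So the ``pin down the twist'' step is not an afterthought --- it is the heart of the matter, and you have not supplied it. The paper sidesteps this by constructing the inverse via an explicit gluing of $\cl F_v^\vee$ restricted to an open $X$ with the trivial bundle on $C\setminus C_G$; with that description the required identifications become restrictions to opens rather than to closed subcurves, which is what makes the check of ``inverse'' go through.
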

\begin{proof}
If $V$ is a set of vertices of $\Gamma$ then we are writing $C_V$ for the union of the corresponding components of $C$. If $(v,G)$ is a relative component then $C_{G^c}$ is a closed subscheme of $C$, and we can also define $C \setminus C_G$ to be the open subscheme of $C$ obtained by deleting $C_G$. Then $C_{G^c}$ is the closure of $C\setminus C_G$. 

With the notation out of the way, we can build an M-enriched structure from a Main\`o-enriched structure $(\cl{F}_v)$ as follows. Given a vertex $v$ and component $G \in \pi_0(\Gamma \setminus v)$, write
\begin{equation*}
X \coloneqq C \setminus \cup_{\stackrel{G' \in \pi_0(\Gamma \gdiff v)}{G' \neq G}} C_{G'}. 
\end{equation*}
Let 
\begin{equation*}
g\colon \cl F_v|_{C_v}\iso \ca O_{C_v}(-C_v \cap C_{v^c})
\end{equation*}
be an isomorphism. Then on $X \setminus C_G$, the isomorphism $g$ restricts to a trivialisation of $\cl F_v$. Then we define a line bundle $\cl F_G$ by taking the restriction of the line bundle $\cl{F}^\vee_v$ to $X$, and the trivial bundle on $C \setminus C_G$, and glueing them on $X \setminus C_G$ along the trivialisation $g$. 

We leave it to the reader to check that this is well-defined and an inverse to the construction above. \todo{Owen is going to wait to check this until he already understands the easy direction. David thinks this is reasonable! He has also adjusted the above proof, as there was a small mistake in the construction... However, it does now \emph{seem} to be correct. }
\end{proof}

\subsection{The structure of pullbacks of certain ideal sheaves to fibres}

For the remainder of this section, take a controlled curve $C/S$ where $S= \on{Spec}k$, with immediate neighourhood $s\colon S \to \schart$. We will exhibit an isomorphism between the enriched structures on $C/S$ and the $M$-enriched structures on $C/S$. In particular, this shows that (when the test object is a separably closed point) the data of the surjections from the ideal sheaves is redundant, and the line bundles themselves actually determine the enriched structure. We do not know if this is the case in general. Write $\Gamma$ for the graph of $C/S$. %, and $V$ for its set of vertices. 

\begin{lemma}\label{lem:local_structure_of_pullbacks}
Let $(v,G)$ be a relative component of $\Gamma$. Write $B_G$ for the union of the sections corresponding to separating edges of $(v,G)$, which we think of as a set of smooth points on $C_G$ or $C_{G^c}$, or as a divisor (on either) where each point has multiplicity 1.

%Let $v$ be a vertex of $\Gamma$, and $C_v$ the corresponding irreducible component, etc. Let $G$ be a connected component of $\Gamma \gdiff v$. Write $C_G$ for the (connected) curve which is the union of the vertices in $G$. Write $B = C_v\cap C_v^c$, which we think of as a set of smooth points on $C_v$ or $C_G$, or as a divisor where each point has multiplicity 1. Let $B_G$ denote the subset of $B$ consisting of points corresponding to edges between $v$ and points in $G$. 
%
%So $B = \cup_G B_G$ and $C_v^c = \cup_G C_G$ as $G$ runs over connected components of $\Gamma \gdiff v$. 

Let $T_G$ be the torsion module on $C_G$ given by 
\begin{equation*}
T_G = \bigoplus_{p \in B_G}k_p^{\oplus (B_G - p)}
\end{equation*}
where $k_p$ denotes the skyscraper sheaf $k$ at $p$. Then there exist isomorphisms
\begin{equation}
f_{G}\colon s^*\ca I_{G^c}|_{C_G} \stackrel{\sim}{\ra} \ca{O}_{C_G}(-B_G) \oplus T_G,
\end{equation}
\begin{equation}
f_{G^c}\colon s^*\ca I_{G^c}|_{C_{G^c}} \stackrel{\sim}{\ra} \bigoplus_{p \in B_G}\ca{O}_{C_{G^c}}(p)
\end{equation}
and for each $b \in B_G$ an isomorphism
\begin{equation*}
g_b\colon b^*\left( \ca{O}_{C_G}(-B_G) \oplus T_G \right ) \ra b^*\left( \bigoplus_{p \in B_G}\ca{O}_{C_{G^c}}(p) \right)
\end{equation*} 
such that for each $b \in B_G$ the following diagram commutes:

\begin{tikzcd}
b^*\left( \ca{O}_{C_G}(-B_G) \oplus T_G \right ) \arrow{rr}{g_b} \arrow{d}{b^*f_{G}} & & b^*\left( \bigoplus_{p \in B_G}\ca{O}_{C_{G^c}}(p) \right)\arrow{d}{b^*f_{G^c}}\\
b^*s^*\ca I_{G^c}|_{C_G} \arrow[equals]{r} & (s\circ b)^*\ca I_{G^c} \arrow[equals]{r}& b^*s^*\ca I_{G^c}|_{C_{G^c}}. \\
\end{tikzcd}

Moreover, if we view the isomorphism $g_b$ as being a morphism
\begin{equation*}
b^*\ca{O}_{C_G}(-b) \oplus \bigoplus_{B_G - b} k \stackrel{\sim}{\ra} b^*\ca{O}_{C_{G^c}}(b) \oplus \bigoplus_{B_G - b} k,
\end{equation*}
then $g_b$ is the identity on the $\bigoplus_{B_G-b}k$ parts, and induces an isomorphism $b^*\ca{O}_{C_G}(-b)  \stackrel{\sim}{\ra} b^*\ca{O}_{C_{G^c}}(b)$ (in particular $g_b$ respects the evident direct sum decomposition). 
\end{lemma}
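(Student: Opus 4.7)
The plan is to combine an explicit local computation at each separating edge $b \in B_G$ with a global patching argument, using the local presentation of $s^*\ca I_{G^c}$ given in the proof of \ref{lem:ES_implies_1aligned}.

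At each $b \in B_G$, choose local coordinates $x, y$ at $b$ in $C$ so that $\{x = 0\}$ is the branch in $C_{G^c}$ and $\{y=0\}$ is the branch in $C_G$. Since $S = \on{Spec} k$, all labels pull back to zero, and the cited presentation realises $s^*\ca I_{G^c}$ at $b$ as the $k[[x,y]]/(xy)$-module with generators $A, A_1, \ldots, A_n, X$ (where $n = |B_G| - 1$) modulo the relations $xA = 0$, $xA_i = 0$ for all $i$, and $yX = A$. The identification with $\ca I_{G^c}$ sends $X \mapsto x$, $A \mapsto a$, and $A_i \mapsto a_i$, where $a, a_1, \ldots, a_n$ are the labels of the separating edges ($a$ being that of $b$). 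Restricting to $C_G = \{y=0\} \cong k[[x]]$ kills $A$ and yields $k[[x]]\langle X\rangle \oplus \bigoplus_i (k[[x]]/x)\langle A_i\rangle \cong k[[x]] \oplus k^n$; restricting to $C_{G^c} = \{x=0\} \cong k[[y]]$ trivialises the relations in $x$ and yields the free module $k[[y]]^{n+1}$ generated by $X$ and the $A_i$.

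To globalise $f_G$, observe that away from $B_G$ the subscheme $\bar V$ does not meet $C_G$, so $\ca I_{G^c}$ is locally the unit ideal there and $s^*\ca I_{G^c}|_{C_G}$ is locally free of rank one and torsion-free. The torsion subsheaf of $s^*\ca I_{G^c}|_{C_G}$ is thus precisely $T_G$, and the canonical map $s^*\ca I_{G^c}|_{C_G} \to \ca O_{C_G}$ (induced by $\ca I_{G^c} \subset \ca O_{C_{U_u}}$) kills the torsion and has image the ideal $\ca O_{C_G}(-B_G) \subset \ca O_{C_G}$, since at each $b$ the generator $X$ maps to the local coordinate $x$ cutting out $b$ on $C_G$. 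Because $T_G$ is skyscraper-supported, $\on{Ext}^1(\ca O_{C_G}(-B_G), T_G) = 0$, and the sequence $0 \to T_G \to s^*\ca I_{G^c}|_{C_G} \to \ca O_{C_G}(-B_G) \to 0$ splits, giving $f_G$. A parallel argument on $C_{G^c}$ builds $f_{G^c}$: at each $b$, the generator $X$ (resp.\ $A_i$) provides a simple-pole generator at $b$ (resp.\ at $p_i$) and thus extends to a global section of $\ca O_{C_{G^c}}(b)$ (resp.\ $\ca O_{C_{G^c}}(p_i)$), yielding $s^*\ca I_{G^c}|_{C_{G^c}} \cong \bigoplus_{p \in B_G} \ca O_{C_{G^c}}(p)$.

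For the compatibility, the pullback $b^*s^*\ca I_{G^c}$ is intrinsic (it is the fibre of the sheaf at $b$, a $k$-vector space of dimension $n+1$), so one may simply set $g_b = b^*f_{G^c} \circ (b^*f_G)^{-1}$, making the diagram commute by construction. The torsion generators $A_1, \ldots, A_n$ are intrinsically attached to the other separating edges and transform identically under both restrictions, so $g_b$ is the identity on the $\bigoplus_{B_G - b} k$ factor, and restricts to an isomorphism $b^*\ca O_{C_G}(-b) \stackrel{\sim}{\to} b^*\ca O_{C_{G^c}}(b)$ on the remaining one-dimensional factors. The main obstacle is establishing that the global line bundle is exactly $\ca O_{C_G}(-B_G)$ (rather than merely abstractly isomorphic to it), which boils down to carefully matching the local generator $X$ at each $b$ with the coordinate cutting out $b$ on the respective branch, and similarly for the decomposition on $C_{G^c}$.
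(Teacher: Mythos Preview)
Your approach is essentially the same as the paper's: both compute $s^*\ca I_{G^c}$ explicitly in completed local rings at each $b\in B_G$ using the presentation from \ref{lem:ES_implies_1aligned}, identify the restriction to $C_G$ and $C_{G^c}$ away from $B_G$, and then glue. The paper writes down the three $A$-modules $\iota_G^*s^*\ca I_{G^c}$, $\iota_{G^c}^*s^*\ca I_{G^c}$, and their common fibre at $b$ explicitly, together with the restriction maps, and leaves the reader to assemble the global isomorphisms; you instead package the $C_G$ side as a short exact sequence $0\to T_G\to s^*\ca I_{G^c}|_{C_G}\to \ca O_{C_G}(-B_G)\to 0$ and split it via $\on{Ext}^1(\ca O_{C_G}(-B_G),T_G)=0$, which is a clean way to produce $f_G$ without tracking generators. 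Your construction of $f_{G^c}$ is a little thinner than the paper's (you assert that $X$ and the $A_i$ extend to global sections of the appropriate twists, whereas the paper identifies the sheaf away from $B_G$ as $\pi_s^*$ of the free $k$-module on symbols $e_p$ and checks the transition at $b$), but the content is the same. One small point worth being careful about: since your splitting of the $C_G$ sequence is non-canonical, the claim that $g_b$ is literally the identity on the $\bigoplus_{B_G-b}k$ summands depends on choosing the splitting so that the torsion generators $A_i$ go to the same elements of the fibre as the corresponding summands of $f_{G^c}$; this is possible (and is exactly what the paper's explicit generators enforce), but it does not follow automatically from the Ext argument alone.
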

\begin{proof}
We may assume the immediate neighbourhood $\schart$ is affine; write $R \coloneqq \ca{O}_\schart(\schart)$. In the notation of \ref{def:ideal_sheaves}, let $\ca J = (e_p:p \in B_G)$ be the ideal of $R$ generated by the labels of singular points in $B_G$. Write $\pi_u \colon C_\schart \ra \schart$ and $\pi_s\colon C\to S$ for the structure maps, $\iota_{G^c}\colon C_{G^c} \ra C$ and $\iota_G\colon C_G \ra C$ for the inclusions (closed immersions). 

First we analyse $s^*\ca I_{G^c}|_{C_{G^c}}$ outside points in $B_G$. On the open subscheme $C_\schart \setminus C_G$ we find that $\pi_u^*\ca J = \ca I_{G^c}$, so we find that outside $B_G$ we have $s^*\ca I_{G^c} |_{C_{G^c}} = \iota_{G^c}^*\pi_u^*\ca J$. Hence away from $B_B$ we find that $s^*\ca I_{G^c} |_{C_{G^c}}$ is just the pullback along $\pi_s$ of $s^*\ca J$, and the latter is a free $k$-module generated by symbols $e_p$ as $p$ runs over $B_G$. \todo{Owen: I don't understand this paragraph. Isn't $\pi^*_u\ca{J} = \ca{I}/_{G^c}$ a statement about the curve over $\schart$? Then what does it mean to say it holds on the open subscheme $C\setminus C_G$? How do you pull back $\pi_u^*\ca J$ (on $C_\schart$) along $\iota_{G^c}\colon C_{G^c}\to C$? David: Corrected $C \setminus G_C$ to $C_\schart \setminus C_G$. We don't yet have notation for the inclusion of the closed subscheme $C$ into $C_\schart$, perhaps this is causing the confusion about pulling back $\pi_u^*\ca J$ --- we first pul it back to $C$ along this nameless inclusion. Do you think it would make it clearer also to name the inclusion? The notation is quite heavy already, and restricting a sheaf to a closed subscheme maybe doesn't need another letter?}

Next we analyse $s^*\ca I_{G^c}|_{C_G}$ outside points in $B_G$. On the open subscheme $C \setminus C_{G^c}$ we find that $\ca I_{G^c}$ is the trivial bundle, so it follows that $s^*\ca I_{G^c}|_{C_G}$ is canonically trivial outside points in $B_G$. 

It remains to analyse in detail what happens locally at a point in $B_G$. By fpcq descent we may work with completed local rings at such a point, taking care that the morphisms we construct glue suitably. We fix now some $b \in B_G$, and for the remainder of this proof the letter $p$ will be an index running over $B_G - b$. After choosing an isomorphism we may write 
\begin{equation*}
A = \frac{\widehat{R_s}[[x,y]]}{(xy - e_b)}
\end{equation*}
for the completed local ring of $C_\schart$ at $b$. If $\frak{m}$ is the maximal ideal of $\widehat{R_s}$ then locally
\begin{equation*}
C_{G^c} = \on{Spec} \frac{A}{\frak{m} + (x)} = k[[y]]\;\; \text{ and }\;\;C_G = \on{Spec} \frac{A}{\frak{m} + (y)} = k[[x]]. 
\end{equation*}
Next we will compute the sheaves $\iota_{G}^*s^*\ca I_{G^c} = s^*\ca I_{G^c}|_{C_G}$, $b^*\iota_{G}^*s^*\ca I_{G^c} = b^* \iota_{G^c}^*s^*\ca I_{G^c}$ and $\iota_{G^c}^*s^*\ca I_{G^c} = s^*\ca I_{G^c}|_{C_{G^c}}$ explicitly as $A$-modules (here we will slightly abuse notation by conflating $A$-modules and quasi coherent sheaves on $\on{Spec}A$). First we have
\begin{equation*}
\begin{split}
\iota_{G}^*s^*\ca I_{G^c} = (\ca J,x) \otimes_A \frac{A}{(\frak{m} , y)}  & \stackrel{\sim}{\ra} \left( \bigoplus_p k\right) \oplus  x k[[x]] \\
e_b= 0 & \mapsto 0\\
x & \mapsto (0, x)\\
e_p & \mapsto (\delta_p, 0)\\
\end{split}
\end{equation*}
where $\delta_p$ indicates the element of $\bigoplus_p k$ taking value $1$ in the $p$-th position and zero elsewhere. Next we find
\begin{equation*}
\begin{split}
\iota_{G^c}^*s^*\ca I_{G^c} = (\ca J,x) \otimes_A \frac{A}{(\frak{m} , x)}  & \stackrel{\sim}{\ra} \left( \bigoplus_p k[[y]]\right) \oplus  k[[y]] \\
e_b & \mapsto  (0,y)\\
x & \mapsto (0, 1)\\
e_p & \mapsto (\delta_p, 0), \\
\end{split}
\end{equation*}
where $\delta_p$ is as above but taking values in $\bigoplus_p k[[y]]$. Finally we compute
\begin{equation*}
\begin{split}
b^*\iota_{G}^*s^*\ca I_{G^c} = b^*\iota_{G^c}^*s^*\ca I_{G^c} = (\ca J,x) \otimes_A \frac{A}{(\frak{m} , x, y)}  & \stackrel{\sim}{\ra} \left( \bigoplus_p k\right) \oplus  k \\
e_b =0 & \mapsto 0\\
x & \mapsto (0, 1)\\
e_b & \mapsto (\delta_p, 0).  \\
\end{split}
\end{equation*}
The module $b^*\iota_G^*s^*\ca I_{G^c} = b^*\iota_{G^c}^*s^*\ca I_{G^c}$ is naturally a quotient of $\iota_{G}^*s^*\ca I_{G^c}$ and of $\iota_{G^c}^*s^*\ca I_{G^c}$, and the maps are given by
\begin{equation*}
\begin{split}
xk[[x] \ra & k \leftarrow k[[y]]\\
x \mapsto & 1 \testleft 1\\
x^2 \mapsto & 0 \testleft y. \\
\end{split}
\end{equation*}
We leave it to the reader to check that with the natural glueing maps the local descriptions we have given here  combine to yield the lemma. 
\end{proof}

\begin{lemma}\label{lem:structure_of_inv_quotients}
We continue in the notation of the previous lemma. Let $\cl L$ be an invertible quotient of $s^*\ca I_{G^c}$. 
\begin{enumerate}
\item There exists an isomorphism $\cl L |_{C_G} \cong \ca O_{C_G}(-B_G)$. 
\item There exists an isomorphism $\cl L |_{C_G{^c}} \cong \ca O_{C_{G^c}}(B_G + D)$ where $D$ is some effective divisor on $C_{G^c}$;
\item If moreover we assume $\cl L$ to have total degree 0, then the divisor $D$ above is zero. 
\end{enumerate}
\end{lemma}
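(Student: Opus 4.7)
My plan is to settle (1) directly from torsion-freeness, then for (2) to exploit the compatibility isomorphisms $g_b$ of \ref{lem:local_structure_of_pullbacks} to produce $|B_G|$ canonical sections of $\cl L(-B_G)$ that globally generate it, and finally extract a nonzerodivisor section by a genericity argument; part (3) will then drop out of a simple degree count.

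For (1), the torsion summand $T_G$ admits no nonzero map to a line bundle, so the surjection $\ca O_{C_G}(-B_G)\oplus T_G \twoheadrightarrow \cl L|_{C_G}$ factors through a surjection $\ca O_{C_G}(-B_G) \twoheadrightarrow \cl L|_{C_G}$ between line bundles, which is automatically an isomorphism; I fix this isomorphism once and for all. For (2), write $\phi_p\colon \ca O_{C_{G^c}}(p) \to \cl L|_{C_{G^c}}$ for the restriction of the given quotient to the $p$-th summand of $s^*\ca I_{G^c}|_{C_{G^c}} \cong \bigoplus_{p\in B_G}\ca O_{C_{G^c}}(p)$, and let $\tilde\phi_p \in H^0(\cl L(-p))$ denote the corresponding section. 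Chasing the square involving $g_b$ using the chosen isomorphism from (1) shows that at each $b \in B_G$ the fibre map $b^*s^*\ca I_{G^c}|_{C_{G^c}} \to b^*\cl L|_{C_{G^c}}$ factors through projection onto the $\ca O_{C_{G^c}}(b)$-summand of the local decomposition; hence $\phi_b$ is an isomorphism in a neighbourhood of $b$, while $\phi_p$ has a zero at every $b \in B_G\setminus\{p\}$. Equivalently, $\tilde\phi_p$ vanishes on $B_G\setminus\{p\}$ and therefore lies naturally in $H^0(\cl L(-B_G)) \subset H^0(\cl L(-p))$ while remaining nonvanishing at $p$.

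Tensoring the original surjection with $\ca O(-B_G)$ preserves surjectivity, so the $\tilde\phi_p$ globally generate $\cl L(-B_G)$; in particular, on each irreducible component $C^{(\alpha)}$ of $C_{G^c}$ at least one $\tilde\phi_p|_{C^{(\alpha)}}$ is not identically zero. Since $k$ is separably closed and hence infinite, a generic linear combination $s = \sum_p c_p \tilde\phi_p \in H^0(\cl L(-B_G))$ avoids the finite union of proper linear subspaces of $k^{|B_G|}$ corresponding to ``identically zero on $C^{(\alpha)}$'', so it restricts to a nonzero section on every component and is therefore a nonzerodivisor. Its vanishing locus is an effective Cartier divisor $D$ on $C_{G^c}$ with $\cl L(-B_G) \cong \ca O_{C_{G^c}}(D)$, which gives (2). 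For (3), combining (1) and (2) yields $0 = \deg \cl L = -|B_G| + |B_G| + \deg D = \deg D$, and an effective divisor of degree zero is trivial. The main delicate step is the final global gluing: the compatibility at the points of $B_G$ only constrains the $\tilde\phi_p$ locally there, so individually each $\tilde\phi_p$ may vanish identically on components of $C_{G^c}$ disjoint from $B_G$, and the nonzerodivisor section has to be assembled using both the surjectivity and the infinite cardinality of $k$.
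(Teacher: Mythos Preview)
Your argument is correct and follows the same architecture as the paper's: torsion is killed for (1), the compatibility isomorphisms $g_b$ force the sections $\tilde\phi_p$ into $H^0(\cl L(-B_G))$ with $\tilde\phi_b(b)\neq 0$ for (2), and (3) is the obvious degree count. The only substantive difference is in how (2) is finished. The paper simply says that $\cl L(-B_G)|_{C_{G^c}}$ ``admits a non-zero global section, and the result is clear (noting that $C_{G^c}$ is connected)''; you instead observe that the $\tilde\phi_p$ globally generate $\cl L(-B_G)$ and then extract a nonzerodivisor section by a generic-linear-combination argument over the infinite field $k$. Your version is the more careful one: on a reducible curve a merely nonzero global section need not define an effective Cartier divisor, so the paper's sentence is really shorthand for exactly the global-generation step you wrote out. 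In short, same proof, with the key transition in (2) made explicit.
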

\begin{proof}
Part (3) follows from parts (1) and (2) by a degree calculation. Part (1) is clear since we know $s^*\ca I_{C_G} \cong \ca O_{C_G}(-B_G) \oplus T_G$ and $T_G$ is torsion so is killed by any map to an invertible sheaf. The content is in (2). 

To give a map $\bigoplus_{b \in B_G}\ca O_{C_{G^c}}(b) \to \cl L|_{C_{G^c}}$ is to give a section of $\cl L(-b)|_{C_{G^c}}$ for each $b \in B_G$. The glueing conditions at the singular points imply that each such section lands in $\cl L(-B_G)|_{C_{G^c}}$ and takes a non-zero value at $b$. Hence $\cl L(-B_G)|_{C_{G^c}}$ admits a non-zero global section, and the result is clear (noting that $C_{G^c}$ is connected). 
\end{proof}
\subsection{Comparing the two notions of enriched structure}
We continue in the notation of the previous section, so $C/k$ is a prestable curve over a separably closed field, with graph $\Gamma$. 
\begin{lemma}\label{lem:ES_to_MES}
Let 
\begin{equation*}
\left(\phi_G\colon s^*I_{G^c} \twoheadrightarrow \cl{L}_G\right)_{(v,G)}
\end{equation*}
(where $(v,G)$ runs over relative components of $\Gamma$) be an enriched structure on $C$. Then $(\cl{L}_G)_{G}$ is an M-enriched structure on $C$. 
\end{lemma}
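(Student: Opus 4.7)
The plan is to verify each of the three conditions of Definition \ref{def:M_ES} in turn, using Lemma \ref{lem:structure_of_inv_quotients} as the principal structural input. Condition (\ref{eq:MES_1}) is tautological: the tensor product of the $\cl{L}_G$ being trivial is already required in the definition of an enriched structure (Definition \ref{def:controlled_ES}), so there is nothing to prove.

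For conditions (\ref{eq:MES_3}) and (\ref{eq:MES_4}), my first step is a purely combinatorial observation: since $G$ is a connected component of $\Gamma \setminus v$, any edge of $\Gamma$ with exactly one endpoint in $G$ must have its other endpoint equal to $v$. Thus the set of separating edges of $(v,G)$ coincides with the set of all edges between $G$ and $G^c$, and the divisor $B_G$ appearing in Lemma \ref{lem:local_structure_of_pullbacks} is exactly the scheme-theoretic intersection $C_G \cap C_{G^c}$. With this identification in hand, Lemma \ref{lem:structure_of_inv_quotients}(1) immediately yields (\ref{eq:MES_3}), while Lemma \ref{lem:structure_of_inv_quotients}(2) yields (\ref{eq:MES_4}) up to an unknown effective divisor $D_G$ on $C_{G^c}$.

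The hard part will be showing $D_G = 0$ for every relative component; this is where the global hypothesis on the tensor product has to be fed in, since Lemma \ref{lem:structure_of_inv_quotients} treats one component in isolation. My plan is a global degree argument. Summing partial degrees shows that $\cl{L}_G$ has total degree $-\abs{B_G} + (\abs{B_G}+\deg D_G) = \deg D_G \geq 0$. On the other hand, the triviality of $\bigotimes_{(v,G)} \cl{L}_G$ forces $\sum_{(v,G)} \deg D_G = 0$, and since each summand is non-negative, each must vanish. Hence $D_G = 0$ for every relative component, giving (\ref{eq:MES_4}); alternatively one could phrase the last step as applying Lemma \ref{lem:structure_of_inv_quotients}(3) to each $\cl{L}_G$ once its total degree is known to be zero.
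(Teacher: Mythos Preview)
Your proof is correct and follows essentially the same route as the paper, which simply invokes \ref{lem:structure_of_inv_quotients} to obtain (\oref{eq:MES_3}) and (\oref{eq:MES_4}) after noting that (\oref{eq:MES_1}) is built into the definition. Your argument is more explicit in one respect: the paper appeals to \ref{lem:structure_of_inv_quotients} without spelling out how to verify the total-degree-zero hypothesis of part (3), whereas you supply exactly that step via the global degree count $\sum_{(v,G)}\deg D_G = 0$ with each summand non-negative.
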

\begin{proof}
From the definition of an enriched structure we have that $\bigotimes_{(v,G)}\cl{L}_G$ is trivial, so formula (\oref{eq:MES_1}) is satisfied. We must verify that formulae (\oref{eq:MES_3}) and (\oref{eq:MES_4}) hold, which is done by \ref{lem:structure_of_inv_quotients}. 
%
%
%. Let $v$ be a vertex of $\Gamma$ and $G \in \pi_0(\Gamma \gdiff v)$. We must check that $\cl{L}_G|_{C_G}\cong \ca{O}_{C_G}(-G \cap G^c)$. 
%
%Recall that $C_G$ is connected. In this case \cref{lem:local_structure_of_pullbacks} shows that the restriction of the sheaf $s^*I_G$ to $C_{G}$ is of the form $\ca{O}_{C_G}(-G \cap G^c) \oplus T$ where $T$ is some torsion module, so the only possible invertible quotient is $\ca{O}_{G}(-G \cap G^c)$ (up to isomorphism). 
\end{proof}

\begin{lemma}\label{lem:extending_surjections}
Let $v$ be a vertex of $\Gamma$, and $G \in \pi_0(\Gamma \gdiff v)$. Let $\cl{L}$ be a line bundle on $C$ such that 	
\begin{equation*}
\cl{L}|_{C_{G^c}} \cong 
\ca{O}_{C_{G^c}}(G \cap G^c)
\end{equation*}
and
\begin{equation*}
\cl{L}|_{C_G} \cong  \ca{O}_{C_G}(-G \cap G^c). 
\end{equation*}
Let $\phi\colon s^*I_{G^c}|_{C_G}\twoheadrightarrow \cl{L}|_{C_G}$ be any surjection. Then there exists a unique surjection $\Phi\colon s^*I_{G^c} \twoheadrightarrow \cl{L}$ such that $\Phi|_{C_G} = \phi$. 
\end{lemma}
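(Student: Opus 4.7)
The plan is to exploit the description of $C$ as the pushout of $C_G$ and $C_{G^c}$ along $B_G$: giving a morphism $\Phi\colon s^*\ca I_{G^c}\to \cl L$ is equivalent to giving $\Phi_G\colon s^*\ca I_{G^c}|_{C_G}\to \cl L|_{C_G}$ and $\Phi_{G^c}\colon s^*\ca I_{G^c}|_{C_{G^c}}\to \cl L|_{C_{G^c}}$ agreeing (under the identifications of \ref{lem:local_structure_of_pullbacks}) after pullback along each $b\in B_G$. Since $\Phi_G=\phi$ is prescribed, I need to show that there is a unique $\Phi_{G^c}$ whose $b$-restrictions match those of $\phi$, and that the resulting $\Phi$ is surjective.

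First I will unpack the hypotheses via \ref{lem:local_structure_of_pullbacks}: choose identifications $s^*\ca I_{G^c}|_{C_G}\cong \ca O_{C_G}(-B_G)\oplus T_G$ and $s^*\ca I_{G^c}|_{C_{G^c}}\cong \bigoplus_{p\in B_G}\ca O_{C_{G^c}}(p)$, together with the prescribed isomorphisms $g_b$ on $B_G$-pullbacks respecting the summand decomposition. Because $\cl L|_{C_G}\cong \ca O_{C_G}(-B_G)$ is torsion-free, $\phi$ must kill the torsion summand $T_G$; and since $C_G$ is connected (as $G$ is a connected subgraph), the remaining data is an isomorphism $\ca O_{C_G}(-B_G)\iso \cl L|_{C_G}$ given by a single scalar. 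Pulling back along each $b\in B_G$ and using $g_b$ one reads off the required $b$-pullback of $\Phi_{G^c}$: on the summand $\ca O_{C_{G^c}}(b)$ the value at $b$ is forced to be a specified unit $\lambda_b$, and on each summand $\ca O_{C_{G^c}}(p)$ with $p\neq b$ the value at $b$ must be $0$.

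For uniqueness, any two extensions differ by a morphism $\Delta\colon s^*\ca I_{G^c}\to \cl L$ vanishing on $C_G$; as above this forces each component $\Delta_p\colon \ca O_{C_{G^c}}(p)\to \cl L|_{C_{G^c}}\cong \ca O_{C_{G^c}}(B_G)$ to vanish at every $b\in B_G$, i.e.\ the corresponding section of $\ca O_{C_{G^c}}(B_G-p)$ vanishes on all of $B_G$, hence is a global section of $\ca O_{C_{G^c}}(-p)$. Since $C_{G^c}$ is connected (all its components meet at the vertex $v\in G^c$), $H^0(\ca O_{C_{G^c}}(-p))=0$, so $\Delta=0$.

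For existence I construct each component $\Phi_{G^c,p}$ separately: I view it as a section of $\ca O_{C_{G^c}}(B_G-p)$, and I require it to vanish on $B_G\setminus\{p\}$ and take the value $\lambda_p$ at $p$. Such sections form the image of $H^0(\ca O_{C_{G^c}})=k$ inside $H^0(\ca O_{C_{G^c}}(B_G-p))$ under multiplication by the tautological section cutting out $B_G-p$; this tautological section is non-vanishing at $p$, so the value at $p$ parametrises this $k$-line bijectively and uniquely determines $\Phi_{G^c,p}$. Gluing these together and checking compatibility with $\phi$ at each $b\in B_G$ produces the required $\Phi$. Finally, surjectivity is a pointwise check: off $B_G$ either $\phi$ is surjective (on $C_G\setminus B_G$) or $\Phi_{G^c,p}$ is non-vanishing at any $q\in C_{G^c}\setminus B_G$ (its zero locus is exactly $B_G-p$), and at each $b\in B_G$ the component $\Phi_{G^c,b}$ takes the unit value $\lambda_b$. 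The main delicate point is bookkeeping the various identifications of \ref{lem:local_structure_of_pullbacks}, but all the geometric input reduces to the connectedness of $C_{G^c}$ and the explicit shape of $\ca O_{C_{G^c}}(B_G-p)$.
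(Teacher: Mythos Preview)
Your proof is correct and follows essentially the same approach as the paper's: both use the pushout description of $C$ (Ferrand descent) together with \ref{lem:local_structure_of_pullbacks} to reduce to showing that each component $\Phi_{G^c,p}$, viewed as a section of $\ca O_{C_{G^c}}(B_G-p)$, is uniquely determined by the constraints of vanishing on $B_G\setminus\{p\}$ and taking a prescribed unit value at $p$, and both conclude by invoking connectedness of $C_{G^c}$ to force such a section to be constant. Your write-up is somewhat more explicit than the paper's (for instance, you spell out the uniqueness argument via the difference $\Delta$ and the surjectivity check separately), but the underlying argument is the same.
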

\begin{proof}We adopt the notation of \ref{lem:local_structure_of_pullbacks} (so $B_G = G \cap G^c$), and choose a compatible system of isomorphisms as in the lemma. 
By \cite{Ferrand2003Conducteur-desc} and \ref{lem:local_structure_of_pullbacks}, it is equivalent to show that there is a unique surjection 
\begin{equation*}
\bigoplus_{p \in B_G}\ca{O}_{C_{G^c}}(p) \ra \ca{O}_{C_{G^c}}(B_G)
\end{equation*}
such that suitable compatibilities hold at each of the point $p \in B_G$. Fix one $b \in B_G$. To give a map $\ca{O}_{C_{G^c}}(b) \ra \ca{O}_{C_{G^c}}(B_G)$ is the same as to give a global section of $\ca{O}_{C_{G^c}}(B_G-b)$. Since the sheaf $T$ is torsion we know that it is killed by $\phi \circ f_{G}^{-1}$, which implies that the section of $\ca{O}_{C_{G^c}}(B_G-b)$ must vanish at every point in the support of $B_G-p$. Since $C_{G^c}$ is connected, this further implies that the section is actually constant. To pin the section down completely it is enough to determine its value at a single point. Indeed, its value at $b$ is determined by the restriction of the isomorphism $g_b$ to the first factor, and we are done. 
\end{proof}

\begin{lemma}\label{lem:MES_to_ES} 
Let $(\cl{L}_G)_{G}$ be an M-enriched structure on $C$. Then there exists an enriched structure $(\phi_G\colon s^*I_{G^c} \twoheadrightarrow \cl{L}_G)_{G}$. Moreover, if $(\phi'_G\colon s^*I_{G^c} \twoheadrightarrow \cl{L}_G)_{G}$ is another enriched structure with the same quotients, then it is equivalent to $(\phi_G\colon s^*I_{G^c} \twoheadrightarrow \cl{L}_G)_{G}$. 
\end{lemma}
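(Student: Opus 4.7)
The plan is to build each surjection $\phi_G$ by first choosing a surjection on $C_G$ and then extending uniquely via \ref{lem:extending_surjections}, after which the tensor product condition comes for free from the M-enriched structure hypothesis.

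For existence, I would fix a relative component $(v,G)$ and use the isomorphism $\cl{L}_G|_{C_G} \cong \ca{O}_{C_G}(-B_G)$ provided by \ref{eq:MES_3} together with the splitting $s^*\ca{I}_{G^c}|_{C_G} \cong \ca{O}_{C_G}(-B_G) \oplus T_G$ of \ref{lem:local_structure_of_pullbacks}. Projecting onto the first summand yields a surjection $s^*\ca{I}_{G^c}|_{C_G} \twoheadrightarrow \cl{L}_G|_{C_G}$, and \ref{lem:extending_surjections} extends it uniquely to a global surjection $\phi_G\colon s^*\ca{I}_{G^c} \twoheadrightarrow \cl{L}_G$. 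The triviality of $\bigotimes_G \cl{L}_G$ is then exactly the M-enriched structure hypothesis \ref{eq:MES_1}, and the total degree zero condition on each $\cl{L}_G$ follows from \ref{eq:MES_3} and \ref{eq:MES_4} by a straightforward degree count, so we have produced a genuine enriched structure.

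For uniqueness up to equivalence, suppose $(\phi_G)_G$ and $(\phi'_G)_G$ are two enriched structures yielding the same line bundles $\cl{L}_G$. Restricting to $C_G$, both $\phi_G|_{C_G}$ and $\phi'_G|_{C_G}$ must annihilate the torsion summand $T_G$ (since the target is torsion-free), so each factors as an epimorphism $\ca{O}_{C_G}(-B_G) \twoheadrightarrow \cl{L}_G|_{C_G}$. Two such epimorphisms between line bundles on the connected curve $C_G$ differ by multiplication by some scalar $\lambda_G \in k^\times$, so $\phi'_G|_{C_G} = \lambda_G \cdot \phi_G|_{C_G}$. Applying the uniqueness in \ref{lem:extending_surjections} to the surjection $\lambda_G \cdot \phi_G$ then propagates the scalar relation globally, giving $\phi'_G = \lambda_G \cdot \phi_G$, and taking $\psi_G \coloneqq \lambda_G \cdot \on{id}_{\cl{L}_G}$ exhibits the required equivalence.

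I expect each step to be routine given \ref{lem:extending_surjections}. The mildest subtlety is that the existence construction depends on a choice of isomorphism in \ref{eq:MES_3}, but any two choices differ by a scalar and therefore yield equivalent enriched structures, which is precisely the content of the uniqueness statement. I do not anticipate a significant obstacle beyond keeping track of the torsion-free factorisation carefully.
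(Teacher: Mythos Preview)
Your proposal is correct and follows essentially the same approach as the paper: choose a surjection on $C_G$ using the direct-sum decomposition from \ref{lem:local_structure_of_pullbacks}, extend via \ref{lem:extending_surjections}, and for uniqueness observe that two surjections onto the same line bundle over the connected curve $C_G$ differ by a scalar in $k^\times$, then invoke the uniqueness clause of \ref{lem:extending_surjections}. The only superfluous step is your check of the total-degree-zero condition: that condition belongs to \ref{def:ES_Gamma} rather than to \ref{def:controlled_ES}, so it is not needed here (though it is true and harmless).
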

\begin{proof}
To construct the enriched structure, we first choose surjections $s^*I_{G^c} |_{C_G} \twoheadrightarrow \cl{L}_G|_{C_G}$, which is certainly possible since the $s^*I_{G^c} |_{C_G}$ is isomorphic to the direct sum of $\cl{L}_G|_{C_G}$ with a torsion module (\ref{lem:local_structure_of_pullbacks}). \Cref{lem:extending_surjections} then allows us to extend these surjections to an enriched structure. For the uniqueness, notice that any two enriched structure structures $(\phi_G\colon s^*I_{G^c} \twoheadrightarrow \cl{L}_G)_{G}$ differ by multiplications by elements of $k^*$ since (up to torsion) they are just isomorphisms of line bundles on \emph{connected} curves over a separably closed field. The uniqueness part of \ref{lem:extending_surjections} then shows that the two enriched structure structures are equivalent. 
\end{proof}

Combining \cref{lem:ES_to_MES} and \cref{lem:MES_to_ES} we immediately deduce
\begin{theorem} \label{thm:ES_MES}
The map from enriched structures to M-enriched structures which sends $(\phi_G\colon s^*I_G \twoheadrightarrow \cl{L}_G)_{G}$ to $(\cl{L}_G)_{G}$ induces a bijection between equivalence classes of enriched structures and M-enriched structures. 
\end{theorem}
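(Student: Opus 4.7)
The proof will be short since almost all of the substantive work has already been done in \ref{lem:ES_to_MES} and \ref{lem:MES_to_ES}; what remains is to package these lemmas into a well-defined bijection between the appropriate equivalence classes.

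The plan is as follows. First I would define the candidate map
\begin{equation*}
\Phi \colon \{\text{equiv.\ classes of enriched structures on }C\} \to \{\text{M-enriched structures on }C\}
\end{equation*}
on the level of representatives by $(\phi_G\colon s^*\ca I_{G^c} \twoheadrightarrow \cl L_G)_G \mapsto (\cl L_G)_G$. That this lands in M-enriched structures is exactly \ref{lem:ES_to_MES}. To see that $\Phi$ descends to equivalence classes, one observes that an equivalence between two enriched structures $(\phi_G)_G$ and $(\phi'_G)_G$ provides, by definition, isomorphisms $\psi_{v,G}\colon \cl L_{(v,G)} \to \cl L'_{(v,G)}$; thus the resulting tuples of line bundles are isomorphic in the sense tacitly used by \ref{def:M_ES}, where the datum of an M-enriched structure is specified only up to the existence of the isomorphisms in \ref{eq:MES_1}, \ref{eq:MES_3}, \ref{eq:MES_4}.

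Next I would establish surjectivity. Given an M-enriched structure $(\cl L_G)_G$, \ref{lem:MES_to_ES} directly produces surjections $\phi_G \colon s^*\ca I_{G^c} \twoheadrightarrow \cl L_G$ forming an enriched structure whose underlying line bundles are $(\cl L_G)_G$. Hence $\Phi$ is surjective.

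Finally, for injectivity, suppose $(\phi_G\colon s^*\ca I_{G^c} \twoheadrightarrow \cl L_G)_G$ and $(\phi'_G\colon s^*\ca I_{G^c} \twoheadrightarrow \cl L'_G)_G$ are enriched structures whose images under $\Phi$ agree as M-enriched structures. After composing one family with the isomorphisms $\cl L'_G \cong \cl L_G$ provided by this agreement, we may assume the line bundles are literally equal. Then the uniqueness statement in \ref{lem:MES_to_ES} says precisely that the two enriched structures are equivalent. Combining these three steps yields the desired bijection; the only subtle point to verify carefully is the well-definedness on equivalence classes, since the notion of equivalence on the source is spelled out in \ref{def:controlled_ES} while the target is treated only up to the isomorphisms implicit in \ref{def:M_ES}.
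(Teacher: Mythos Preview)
Your proposal is correct and follows the same approach as the paper: the paper's proof is literally the single sentence ``Combining \cref{lem:ES_to_MES} and \cref{lem:MES_to_ES} we immediately deduce'' the theorem, and you have simply spelled out how that combination works (well-definedness, surjectivity from the existence part of \ref{lem:MES_to_ES}, injectivity from its uniqueness part).
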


\section{The stack of enriched structures and universal N\'eron models}\label{sec:UNMA_morphisms}

Suppose that $S$ is a scheme, $U \sub S$ a dense open subscheme, and $A \to U$ an abelian scheme. A \emph{N\'eron model} of $A$ over $S$ is a proper smooth group algebraic space $N/S$ together with an isomorphism $N_U \ra A$, satisfying the \emph{N\'eron mapping property}: for every smooth morphism $T \ra S$, and for every $U$-morphism $f\colon T_U \ra N_ A$\todo{Owen: Shouldn't $N_A$ be just $A$?D: canonically isomoprhisc, and I think it is a little clearer like this... }, there exists a unique $S$-morphism $F\colon T \ra N$ such that $F|_{T_U} = f$. In \cite{Holmes2014Neron-models-an} the second author investigated N\'eron models in the case of jacobians of prestable curves, giving necessary and sufficient criteria for their existence. 

%For the remainder of this section we work over a fixed base scheme $\Lambda$, which we assume to be excellent and regular --- for example, we could take $\Lambda = \on{Spec} \bb{C}$ or $\Lambda  = \on{Spec} \bb{Z}$. We assume {Now no need to assume, it's true!}that $\ca{M}$ is smooth over $\Lambda$ and the boundary is a relative normal crossings divisor (cf.\ \cite{Deligne1969The-irreducibil}). 

We write $\mathring{\frak{M}}$ for the locus of smooth curves in $\frak{M}$, and $J / \mathring{\frak M}$ for the jacobian of the universal smooth curve. 

In \cite{Holmes2014A-Neron-model-o} the second author constructed a \emph{universal N\'eron-model-admitting morphism} $\widetilde{\frak M} \to {\frak M}$, a terminal object in the category of \emph{N\'eron model admitting morphisms}. A N\'eron model admitting morphism is a morphism $t\colon T \to {\frak M}$ such that 
\begin{itemize}
\item $T$ is regular;
\item $t^{-1}\mathring{\frak M}$ is dense in $T$;
\item $t^*J$ admits a N\'eron model over $T$
\end{itemize}
(a morphism satisfying the first two conditions we call `non-degenerate'). In particular, $J$ admits a N\'eron model $N / \widetilde{\frak M}$. In other words, the universal property says that any N\'eron model admitting morphism factors uniquely via $\widetilde{\frak M} \to {\frak M}$. 

This $\widetilde{\frak M}$ is not quasi-compact over ${\frak M}$, but it can be written as a union of a `tower' of quasi-compact pieces. The general construction is more involved, but the first step of the tower is easy to describe; we define ${\frak M}^1 \hra \widetilde{\frak M}$ to be the largest open substack such that the pullback of $\upsC$ to ${\frak M}^1$ is regular (equivalently, $\frak M^1 \to \frak M$ is a terminal object in the category of 1-aligned morphisms to $\frak M$). The aim of this section is to construct an isomorphism between ${\frak M}^1$ and $\ca E$.

First we will define a map $\bb{E}\colon \frak{M}^1 \to \ca{E}$. In fact we will do something equivalent but a bit slicker; we will construct an enriched structure on the tautological curve over every controlled non-degenerate 1-aligned morphism $t\colon T \ra \frak{M}$. Since both $\frak M^1$ and $\ca E$ are separated and birational over $\frak M$ (and $\frak M^1$ is reduced), maps between (open subsets of) these stacks are unique if they exist. Hence if we construct maps locally, they will automatically glue to globally defined maps.

\begin{definition}\label{def:inv_quots_if_1aligned}
Let $t\colon T \ra \frak{M}$ be a non-degenerate morphism from a scheme to $\frak{M}$ which is 1-aligned. Assume $T$ is controlled with graph $\Gamma$. Let $v$ be a vertex of $\Gamma$ and $G \in \pi_0(\Gamma \gdiff v)$. Let $T \ra \schart \ra \frak{M}$ be an immediate neighbourhood, and let $\ca I_{G^c}$ be the ideal sheaf on $C_\schart$ as in \ref{def:ideal_sheaves}. The same recipe defines an ideal sheaf $\ca L_{G^c}$ on $C_T$, and we have a canonical surjection $t^*\ca I_{G^c} \twoheadrightarrow \ca L_{G^c}$. Moreover, alignment of $T \to \frak M$ means that all the edges between $v$ and $G$ have the same label (which is non-zero by non-degeneracy), and so $\ca L_{G^c}$ cuts out a locally-principal closed subscheme, which has codimension 1 since $t^*\mathring{\frak M}$ is dense, and hence this closed subscheme is a Cartier divisor on $C_T$. Hence $\ca L_{G^c}$ is actually a line bundle. The map $t^*\ca I_{G^c} \twoheadrightarrow \ca L_{G^c}$ from above is then the quotient map required in the definition of an enriched structure. One easily checks that the conditions are satisfied. 
\end{definition}

\begin{remark}
This enriched structure can alternatively be defined by looking at the closure of the unit section in the relative Picard space of $C/T$, and imposing degree conditions to cut out the right sections. Note that the closure of the unit section is (\'etale locally on $T$) a union of sections by our alignment assumption, see \cite{Holmes2014Neron-models-an}. 
\end{remark}

The remainder of this section will be taken up with proving that $\bb{E}\colon \frak{M}^1 \to \ca{E}$ is an isomorphism, and observing a few consequences. First a quick lemma, then we do quite some work to prove that, if $\Lambda$ is a point, then $\ca{E}_\Lambda$ is regular. From this it is a small step to show that $\bb{E}$ is an isomorphism. 

\begin{lemma}\label{lem:phi_onto}
The map $\bb{E}\colon \frak{M}^1 \to \ca{E}$ is surjective. 
\end{lemma}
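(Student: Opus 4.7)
The plan is to establish surjectivity at the level of geometric points: given an algebraically closed field $\bar k$ and an object $\bar e \in \ca{E}(\bar k)$ lying over $\bar m\colon \on{Spec}\bar k \to \ca{M}$, I would produce (possibly after an extension of $\bar k$) a $\bar k$-point $\bar p \in \ca{M}^1(\bar k)$ with $\bb{E}(\bar p) = \bar e$. Since $\ca{M}^1 \to \ca{M}$ is a modification of $\ca{M}$, such a $\bar p$ can exist even when $C_{\bar m}$ is singular. The strategy is to build a regular one-parameter smoothing of $C_{\bar m}$ whose $\bb{E}$-image realises $\bar e$ on the closed fibre.

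After replacing $\ca{M}$ by a simple chart containing $\bar m$, with graph $\Gamma$ at $\bar m$, the enriched structure $\bar e$ is described via \ref{lem:local_structure_of_pullbacks} and \ref{lem:structure_of_inv_quotients} by a collection of non-zero scalars attached to the edges of $\Gamma$, subject to the compatibility coming from the triviality of $\bigotimes_{v,G}\cl{L}_{(v,G)}$. I would then construct a DVR $R$ over $\bar k$ together with a morphism $\on{Spec}R \to \ca{M}$ sending the closed point to $\bar m$, the generic point into $\mathring{\ca{M}}$, and pulling back each label of an edge $e$ of $\Gamma$ to $\pi u_e$, where $\pi$ is a uniformiser of $R$ and the units $u_e \in R^{\times}$ are determined by the scalars encoded in $\bar e$. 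By construction $\on{Spec} R \to \ca{M}$ is $1$-aligned, $\on{Spec} R$ is regular, the pullback of $\ca{C}$ is regular (every node has local equation $xy = \pi$ up to a unit), and the generic point lies in $\mathring{\ca{M}}$.

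By the universal property of $\ca{M}^1$ (terminal among such $1$-aligned morphisms to $\ca{M}$), the map $\on{Spec} R \to \ca{M}$ factors as $\on{Spec} R \to \ca{M}^1 \to \ca{M}$; the image of the closed point of $\on{Spec}R$ yields the desired $\bar p \in \ca{M}^1(\bar k)$. Tracing through \ref{def:inv_quots_if_1aligned} shows that $\bb{E}(\bar p)$ is assembled from the same scalar data, hence equals $\bar e$. The main obstacle is showing that the scalars extracted from $\bar e$ can really be realised simultaneously by a single regular smoothing: the global triviality condition $\bigotimes \cl{L}_{(v,G)} \cong \ca{O}_C$ must translate precisely into consistency of the unit multipliers $u_e$ across all edges. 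This amounts to a local analysis on completed local rings at the nodes of $C_{\bar m}$, in the spirit of the matrix computation (\oref{relation_matrix}) that appeared in \ref{lem:ES_implies_1aligned}.
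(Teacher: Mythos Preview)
Your proposal is correct and takes essentially the same approach as the paper: both reduce surjectivity to showing that every enriched structure on a curve over a separably closed field arises from a regular one-parameter smoothing over a DVR, then use the universal property of $\ca{M}^1$ to lift the DVR map. The paper dispatches the key existence step by citing \cite[Corollary 6.6]{Esteves2002Limit-canonical} and \cite{Maino1998Moduli-space-of} for the stable case and remarking that the general case follows from ``a similar tangent space computation, which we omit''; you instead sketch the construction explicitly via the scalar data and correctly identify that same computation (matching the global triviality condition $\bigotimes \cl{L}_{(v,G)} \cong \ca{O}_C$ to consistency of the node units $u_e$) as the remaining work.
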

\begin{proof}
Let $k$ be a separably closed field, $C_0/k$ a curve and $E$ an enriched structure on $C_0$. We wish to show the existence of a regular curve $C/k[[t]]$ with $C_0$ isomorphic to the special fibre, and such that $E$ is the enriched structure induced by $C$ as in \ref{def:inv_quots_if_1aligned}. Marking some points on $C_0$ we can assume it to be stable, whereupon this is proven in \cite[Corollary 6.6]{Esteves2002Limit-canonical}, or \cite{Maino1998Moduli-space-of} (who assumes characteristic zero). This $C$ is a regular curve over a dedekind scheme and thus 1-aligned, so the tautological map $\on{Spec} k[[t]] \ra \frak{M}$ lifts canonically to $\frak{M}^1$. It is then immediate from the construction of $\bb{E}$ that the image of the closed point of $\on{Spec}k[[t]]$ maps to the point corresponding to $E$.  
\end{proof}

\subsection{Geometric regularity of the stack of enriched structures}\label{sec:geometric_regularity}
Let $k$ be a separably closed field. In this section, we will show that the base-change $\ca E_k$ is regular.

Note that $\frak{M}_k$ is locally of finite type over $k$, and hence so are $\frak{M}_k^1$ and $\ca{E}_k$. We will show that $\ca{E}_k$ is regular by computing the dimension of its tangent space. 

%{Only locally... Delete this para?}Since regularity is \'etale local we may and will assume that $\ca{M}$ is an integral scheme. By \cite{Holmes2014A-Neron-model-o} it follows that $\ca{M}^1$ is integral, hence by \ref{lem:phi_onto} that $\ca{E}$ is irreducible. 

By our finite-type assumption it is enough to check that $\ca{E}_k$ is regular at all $k$ points. We fix $p$ a $k$-point of $\ca{E}_k$. Write $d$ for the dimension of $\frak{M}_k$ at $p$. 

\begin{lemma}\label{lem:lower_bound_dim}
$\on{dim}_p\ca{E}_k \ge d$
\end{lemma}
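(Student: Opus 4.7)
The plan is to deduce the inequality by showing that $\ca{E}$ is irreducible of global dimension exactly $d$, from which $\on{dim}_p \ca{E} = d$ follows automatically at every closed point of an irreducible algebraic space locally of finite type over a field.

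First I would observe that the structure morphism $\ca{E} \to \ca{M}$ restricts to an isomorphism over the smooth locus $\mathring{\ca{M}}$. Indeed, for a smooth curve the (controlling) graph consists of a single vertex, so its set of relative components is empty, and the empty tuple is the unique enriched structure. This exhibits $\mathring{\ca{M}}$ as an open substack of $\ca{E}$ of dimension $d$.

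Next, to see that $\ca{E}$ is irreducible, I would combine the surjectivity of $\bb{E}\colon \ca{M}^1 \to \ca{E}$ from \ref{lem:phi_onto} with the fact, recorded in \cite{Holmes2014A-Neron-model-o}, that $\ca{M}^1$ is integral (here we use that $\ca{M}$ is assumed integral). The continuous surjective image of an irreducible topological space is irreducible, so $\ca{E}$ is irreducible. Together with the first paragraph this gives a dense open of dimension $d$ inside an irreducible space, so $\dim \ca{E} = d$.

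Since $\ca{E}$ is locally of finite type over $k$ and irreducible, every closed point has local dimension equal to the global dimension, so $\on{dim}_p \ca{E} = d$, which is the required inequality. The main substantive point is really just the observation about smooth curves admitting only the empty enriched structure; once this is in hand, the rest of the argument is formal and short.
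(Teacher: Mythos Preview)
Your proof is correct and follows essentially the same approach as the paper: both use the irreducibility of $\ca{E}$ (deduced from the surjectivity of $\bb{E}$ in \ref{lem:phi_onto} together with the integrality of $\ca{M}^1$) combined with the fact that $\ca{E} \to \ca{M}$ is dominant. You are slightly more explicit than the paper in identifying $\mathring{\ca{M}}$ as an open subscheme of $\ca{E}$, which in fact yields the equality $\dim_p \ca{E} = d$ directly rather than just the inequality; the paper only states the inequality here and defers the equality until after the tangent space computation, but your observation is a perfectly good shortcut for the dimension statement (though of course it does not by itself give regularity).
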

We will see later that this is an equality. 
\begin{proof}
Since our schemes are of finite type over a field, this follows from the existence of an irreducible open neighbourhood of $p$ in $\ca{E}_k$, and the dominance of $\ca{E}_k \ra \frak{M}_k$ (which follows from the same property for $\frak{M}_k^1 \ra \frak{M}_k$). 
\end{proof}
Write $\pi\colon \ca{E}_k \ra \frak{M}_k$ for the structure map. Write $\pi_*\colon T_p\ca{E}_k \to T_{\pi(p)}\frak{M}_k$ for the map on tangent spaces induced by $\pi$. 

\begin{lemma}\label{lem:dim_ker}
Write $\Gamma$ for the graph of the curve over $p$. Then
\begin{equation*}
\on{dim} \on{Ker}\pi_* \le \# \{\text{non-loop edges of }\Gamma\} + \# \on{Vert}\Gamma - \sum_{v \in \on{Vert} \Gamma} \# \pi_0(\Gamma \gdiff v) - 1. 
\end{equation*}
\end{lemma}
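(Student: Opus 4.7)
The plan is to compute $\ker \pi_*$ via deformation theory and bound its dimension using the local sheaf structure from \ref{lem:local_structure_of_pullbacks}. Write $e$ for the number of non-loop edges of $\Gamma$, $r=\#\on{Vert}\Gamma$, and $c=\sum_v\#\pi_0(\Gamma\setminus v)$; so we want to prove $\dim\ker\pi_* \le e+r-c-1$.

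An element of $\ker\pi_*$ is a first-order deformation of the enriched structure at $p$ over the trivial first-order deformation of $C/k$. Via the Quot-scheme construction of $\ca E$ from the proof of \ref{lem:representability_of_E_Gamma}, deformations of each invertible quotient $q_{(v,G)}\colon s^*\ca I_{G^c} \twoheadrightarrow \cl L_{(v,G)}$ over $C\times_k\on{Spec}k[\epsilon]/(\epsilon^2)$ form a torsor under $\on{Hom}_C(K_{(v,G)}, \cl L_{(v,G)})$, where $K_{(v,G)}$ is the kernel of $q_{(v,G)}$. The requirement that $\bigotimes_{(v,G)}\cl L_{(v,G)}$ remains trivializable is a linear condition valued in $H^1(C,\ca O_C) = T_{[\ca O]}\on{Pic}(C)$, so
\begin{equation*}
\ker\pi_* \ =\ \ker\!\left(\bigoplus_{(v,G)} \on{Hom}_C(K_{(v,G)}, \cl L_{(v,G)}) \ \lra\ H^1(C,\ca O_C)\right).
\end{equation*}

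To bound each summand, I would use \ref{lem:local_structure_of_pullbacks} and \ref{lem:structure_of_inv_quotients}: on $C_G$, the torsion subsheaf $T_G$ of $K_{(v,G)}|_{C_G}$ contributes nothing to the Hom since the target $\cl L_{(v,G)}|_{C_G}\cong\ca O_{C_G}(-B_G)$ is torsion-free; on $C_{G^c}$, $K_{(v,G)}$ is the rank-$(n_{(v,G)}-1)$ locally free kernel of the surjection $\bigoplus_{p\in B_G}\ca O_{C_{G^c}}(p)\twoheadrightarrow\ca O_{C_{G^c}}(B_G)$, and the gluing at each $b\in B_G$ forces Hom-sections to vanish at $b$. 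Using that each non-loop edge of $\Gamma$ is a separating edge for exactly two relative components, $\sum_{(v,G)}n_{(v,G)}=2e$, so a Riemann--Roch count bounds the total dimension of $\bigoplus\on{Hom}$ by $2e-c$ plus a genus correction coming from positive-genus components of the $C_{G^c}$. The linear map to $H^1(C,\ca O_C)$ then surjects onto the $h_1(\Gamma)=e-r+1$-dimensional torus part of $\on{Pic}(C)$ (realised by varying node-gluing data of the $\cl L_{(v,G)}$), together with matching jacobian contributions which absorb the genus correction on the Hom side, yielding
\begin{equation*}
\dim\ker\pi_* \ \le\ (2e-c)-(e-r+1) \ =\ e+r-c-1.
\end{equation*}

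The main obstacle is the bookkeeping in the last step: one must verify that genus-dependent contributions to $\bigoplus\on{Hom}$ are exactly matched by contributions to the image in $H^1(C,\ca O_C)$ beyond the torus part, and that the map does surject onto the $h_1(\Gamma)$-dimensional torus part via node-twist deformations of some $\cl L_{(v,G)}$. Both require careful local analysis at the nodes in $B_G$ combined with global sheaf-cohomological arguments.
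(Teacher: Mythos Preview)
Your approach is genuinely different from the paper's, and the gap you flag at the end is real.

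The paper does not pass through $H^1(C,\ca O_C)$ at all. Instead it uses the explicit description of $s^*\ca I_{G^c}$ from \ref{lem:local_structure_of_pullbacks} to show that, for each relative component $(v,G)$, any surjection $s^*\ca I_{G^c}|_{C_{G^c}}\twoheadrightarrow\cl L_G|_{C_{G^c}}$ corresponds to a tuple in $\bigoplus_{b\in B_G}H^0(C_{G^c},\ca O(B_G-b))$, and the glueing conditions at the nodes force each such section to land in $H^0(C_{G^c},\ca O)=k[\epsilon]$. Thus the space of choices is literally one copy of $k$ per separating edge, \emph{independent of the genera of the components}. From there the bound is a purely combinatorial count (imitating Maino): one orders the vertices and tracks how many of the edge-parameters at each new vertex are already pinned down by the triviality condition and the previous choices.

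Your route needs two things you have not supplied. First, to bound $\dim\ker\pi_*$ from \emph{above} via $\ker(\bigoplus\on{Hom}\to H^1)$ you need a \emph{lower} bound on the image, i.e.\ you must prove that the map hits at least an $(e-r+1)$-dimensional subspace (and more, to absorb your genus terms). You assert surjectivity onto the torus part but give no argument. Second, your ``genus correction'' in the Hom spaces is a symptom of not having used the node-glueing constraint sharply enough: the paper's analysis shows that the glueing does more than make sections vanish at $b$---it forces them to be pulled back from $\ca O_S$, killing any genus contribution on the Hom side outright. Without that observation you are left trying to cancel unknown genus terms against unknown image terms, which is exactly the bookkeeping you could not close.
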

We will see later that this is an equality. 
\begin{proof}
In this proof we write $k[\epsilon] \coloneqq k[\epsilon ]/\epsilon^2$. We think of the tangent space to $\ca{E}$ at $p$ as the space of pointed maps from $T \coloneqq \on{Spec}k[\epsilon]$ to $\ca{E}$ through $p$. The kernel of $\pi_*$ corresponds exactly to those maps which factor via the inclusion of the fibre over $\pi(p)$; in other words, where the curve does not deform, only the enriched structure on it deforms. 

Let $C_0/k$ be the curve corresponding to $p$, and let $C = C_0 \times_k k[\epsilon]$, a constant deformation. Let $E$ be the enriched structure on $C_0$ corresponding to $p$. We need to show that the dimension of the space of enriched structures on $C$ which restrict to $E$ on $C_0$ is at most 
\begin{equation*}
N \coloneqq \# \{\text{non-loop edges of }\Gamma\} + \# \on{Vert}\Gamma - \sum_{v \in \on{Vert} \Gamma} \# \pi_0(\Gamma \gdiff v) - 1. 
\end{equation*}

Let $(v,G)$ be a relative component of $\Gamma$. Write $t\colon T \ra \frak{M}_k$ for the tautological morphism. Write $B_G$ for the set of sections in $C(T)$ corresponding to the edges between $G$ and $G^c$. We find by \ref{lem:local_structure_of_pullbacks} (and using that $t$ factors via a point) that 
\begin{equation*}
t^*\ca I_{G^c} |_{C_G} \cong \ca{O}_{C_G}(-B_G)\oplus \on{torsion}
\end{equation*}
and so has a unique invertible quotient $\ca{O}_{C_G}(-B_G)$ up to isomorphism. By the condition that the tensor product of all the line bundles be trivial we find (\ref{lem:structure_of_inv_quotients}) that if $\cl{L}_G$ is part of an enriched structure on $C$ then $\cl{L}_G|_{C_{G^c}} \cong \ca{O}_{C_{G^c}}(B_G)$. 
Applying \ref{lem:local_structure_of_pullbacks} again we obtain
\begin{equation*}
t^*\ca I_{G^c} |_{C_{G^c}} \cong \bigoplus_{b \in B_G}\ca{O}_{C_{G^c}}(b), 
\end{equation*}
so as in \ref{lem:structure_of_inv_quotients}, surjective maps to $\cl{L}_G$ correspond to elements of 
\begin{equation*}
\bigoplus_{b \in B_G} H^0(C_{G^c}, \ca{O}(B_G - b))
\end{equation*}
and the glueing conditions imply that the section under consideration lands in 
\begin{equation*}
H^0(C_{G^c}, \ca{O}) \sub H^0(C_{G^c}, \ca{O}(B_G - b)). 
\end{equation*}
Since $C_{G^c} \ra T$ is proper, flat and has reduced connected geometric fibres we deduce that $H^0(C_{G^c}, \ca{O}) = k[\epsilon]$. To specify the surjection to $\cl{L}_G$ is to specify the elements of $k[\epsilon]$ at points in $B_G$ which glue these maps together. The surjections are already specified over the closed point $k$ (since we are looking at the tangent space to a fixed enriched structure), so we get a $k$-worth of choices at each element of $B_G$. We do not claim (at this point) that all of these enriched structures actually exist, we only say that any enriched structure must satisfy these conditions (hence the inequality in the statement of the lemma). 

At this point we can simply count the dimension, imitating the proof of \cite[proposition 2.12]{Maino1998Moduli-space-of}. Let $v_1, \ldots, v_r$ be the vertices of $\Gamma$, and define 
\begin{equation*}
n_i = \#\{\text{edges }v - v^c\} - \# \pi_0(\Gamma \gdiff v_i). 
\end{equation*}
The dimension of the space of choices for $\cl{L}_G$ (for $G \in \pi_0(\Gamma \gdiff v_1)$) is the number of edges from $v_1$ to $G$, minus 1 to allow for adjusting the isomorphism on $G$. Summing over $G \in \pi_0(\Gamma \gdiff v_1)$ we find that the dimension of the space of choices for the $\cl{L}_G:G \in \pi_0(\Gamma \gdiff v_1)$ is $n_1$. 

A similar argument for $v_2$ yields that the dimension of the space of choices for the $\cl{L}_G:G \in \pi_0(\Gamma \gdiff v_2)$ is $n_2$, except that $\# \{\text{edges }v_1 - v_2\} -1$ choices were already determined by our choices for $v_1$ and the condition that the tensor product of all the bundles together be trivial.  Thus the dimension of the space of choices for connected components in the complements of $v_1$ \emph{or} $v_2$ is $n_1 + n_2 - \# \{\text{edges }v_1 - v_2\} +1$. Continuing in this fashion yields that the total dimension is $N$ as required. 
\end{proof}

\begin{lemma}\label{lem:dim_coker}
Write $\on{Part}\Gamma$ for the decomposition of $\Gamma$ into circuit-connected components as in \ref{lem:circuit_conn_partiton}. Then 
\begin{equation*}
\on{dim} \on{coker} \pi_* \ge \sum_{G \in \on{Part} \Gamma} \left(\#\on{edges}G -1\right)
\end{equation*}
\end{lemma}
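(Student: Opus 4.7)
The plan is to show that the image of $\pi_*$ is forced to lie inside a certain proper subspace of $T_{\pi(p)}\ca M$ cut out by the 1-alignment condition, and then count the codimension of that subspace.

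First I would work \'etale locally and assume $\ca M$ is a simple chart with $\Gamma$ the controlling graph at $\pi(p)$. Let $A = \ca O_{\ca M, \pi(p)}$, and for each edge $e$ of $\Gamma$ choose a generator $\alpha_e \in A$ of the corresponding label ideal. The simple-chart hypothesis (normal crossings boundary with simple intersections) guarantees that the $\alpha_e$ are part of a regular system of parameters, so they are linearly independent in $\mathfrak m / \mathfrak m^2$. Consequently the linear functionals $c_e\colon T_{\pi(p)}\ca M \to k$ defined by $v(\alpha_e) = c_e(v)\,\epsilon$ (viewing $v$ as a ring map $A \to k[\epsilon]/\epsilon^2$) assemble into a surjection $c = (c_e)_e \colon T_{\pi(p)}\ca M \twoheadrightarrow \bigoplus_{e \in \on{Edge}\Gamma} k$.

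Next I would show that for any $v$ in the image of $\pi_*$ and any circuit-connected component $G$ of $\Gamma$, the values $c_e(v)$ for $e \in E(G)$ are all equal. Indeed, such a $v$ lifts to a morphism $\on{Spec}k[\epsilon] \to \ca E$, which corresponds via the defining sheaf of enriched structures to a simple-controlled curve admitting an enriched structure. Since all $\alpha_e$ map into the maximal ideal $(\epsilon)$, no edge gets contracted and the graph of this family is $\Gamma$ itself. By \ref{lem:ES_implies_1aligned}, the family is 1-aligned: for every circuit $\gamma$ in $\Gamma$, the elements $c_e(v)\epsilon$ for $e \in \gamma$ coincide in $k[\epsilon]$, so the $c_e(v)$ are equal. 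Transitivity of the circuit-connectedness relation (\ref{lem:circuit_conn_partiton}) extends this to all of $E(G)$.

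Finally I would count: inside $\bigoplus_e k$, the condition \emph{``for every $G \in \on{Part}\Gamma$ the values $(c_e)_{e\in E(G)}$ are constant''} cuts out a subspace of codimension $\sum_{G \in \on{Part}\Gamma}(\#E(G)-1)$, since the components partition the edge set. Because $c$ is surjective, the preimage of this subspace in $T_{\pi(p)}\ca M$ has the same codimension, and it contains the image of $\pi_*$ by the previous step. Therefore
\begin{equation*}
\on{dim}\on{coker}\pi_* \ \geq\ \on{codim}(\on{Im}\pi_*) \ \geq\ \sum_{G \in \on{Part}\Gamma}\bigl(\#E(G)-1\bigr),
\end{equation*}
as required. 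The only real subtlety is the first step --- verifying that the $\alpha_e$ are part of a regular system of parameters --- which is exactly what the simple-chart/normal-crossings setup was designed to give; the rest is bookkeeping once \ref{lem:ES_implies_1aligned} is invoked.
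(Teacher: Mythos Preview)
Your approach is essentially the paper's: both arguments invoke \ref{lem:ES_implies_1aligned} to constrain the image of $\pi_*$ in the edge-directions of each circuit-connected component and then count codimension. However, there is a slip in your step~3.

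Labels on edges are principal \emph{ideals}, not elements. One-alignment says that for edges $e,e'$ in the same circuit the ideals $(c_e(v)\epsilon)$ and $(c_{e'}(v)\epsilon)$ of $k[\epsilon]$ coincide. But the only ideals of $k[\epsilon]$ are $0$, $(\epsilon)$, and $(1)$, so this tells you only that $c_e(v)$ and $c_{e'}(v)$ are either both zero or both nonzero --- it does \emph{not} force them to be equal. Indeed the generators $\alpha_e$ were only well-defined up to units, and a different choice would give a different ``diagonal''; the paper itself remarks (in a footnote to this proof) that it does not identify which $1$-dimensional line the image actually lands in.

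The fix is exactly what the paper does. For each $G\in\on{Part}\Gamma$ look at the projection of $\on{Im}\pi_*$ to $T_G=\bigoplus_{e\in E(G)}k$. This projection is a linear subspace $W$ in which every nonzero vector has all coordinates nonzero (the correct translation of 1-alignment over $k[\epsilon]$). Any such $W$ has $\dim W\le 1$: if $w,w'\in W$ were independent, some linear combination would kill one coordinate and hence, by the property, be zero --- contradicting independence. Summing over $G$ gives the codimension bound. So your outline is sound; only the ``elements coincide'' line needs to be replaced by this short linear-algebra observation.
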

We will see later that this is an equality. 
\begin{proof}
This follows easily from the fact that enriched structures can only exist when the curve is 1-aligned (\ref{lem:ES_implies_1aligned}). If $G \in \on{Part} \Gamma$ then let $T_G$ be the $(\# \on{edges} G)$-dimensional subspace of $T_{\pi(p)}\frak{M}_k$ spanned by deforming those nodes which are edges in $G$. Then the intersection of $\pi_*T_P\ca{E}_k$ with $T_G$ cannot meet any of the coordinate hyperplanes except at the origin, from which we deduce that the image is contained in\footnote{Later we will deduce that this containment is an equality. It should be possible to give an intrinsic characterisation of this 1-dimensional subspace but we have not yet done so. } a 1-dimensional subspace of $T_G$. Applying this condition for each $G$, the lemma follows. 
\end{proof}
\begin{lemma}
\begin{equation*}
\sum_{G \in \on{Part} \Gamma} \left(\#\on{edges}G -1\right) = \# \{\text{non-loop edges of }\Gamma\} + \# \on{Vert}\Gamma - \sum_{v \in \on{Vert} \Gamma} \# \pi_0(\Gamma \gdiff v) - 1. 
\end{equation*}
\end{lemma}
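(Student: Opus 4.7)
The plan is to reinterpret both sides in terms of the block decomposition of $\Gamma$ and then invoke a classical identity about the block-cut tree. Recall (\ref{lem:circuit_conn_partiton}) that the circuit-connected components partition the edges of $\Gamma$; each such component is either a single loop or what I will call a \emph{non-loop block} (a maximal 2-vertex-connected subgraph containing at least one non-loop edge). Since a single loop contributes $1-1=0$ to the left-hand sum, the first step is to rewrite
\begin{equation*}
\sum_{G \in \on{Part}\Gamma}(\#\on{edges} G - 1) = \#\{\text{non-loop edges of }\Gamma\} - B,
\end{equation*}
where $B$ is the number of non-loop circuit-connected components. Cancelling $\#\{\text{non-loop edges of }\Gamma\}$ from both sides, the claimed identity reduces to
\begin{equation*}
\sum_{v \in \on{Vert}\Gamma}\#\pi_0(\Gamma\setminus v) = \#\on{Vert}\Gamma + B - 1.
\end{equation*}

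The key local observation is that for every vertex $v$, $\#\pi_0(\Gamma\setminus v) = b^*(v)$, where $b^*(v)$ denotes the number of non-loop blocks containing $v$. This is a standard fact about block decompositions: removing $v$ disconnects $\Gamma$ exactly along the non-loop blocks through $v$, while loops at $v$ are removed with $v$ and contribute nothing to $\pi_0$. I would verify this by splitting into cases (cut vertex vs.\ non-cut vertex), using that the graph is connected.

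Summing this over all vertices, one computes $\sum_v b^*(v)$ via the block-cut tree of $\Gamma$. Letting $c$ denote the number of cut vertices, the block-cut tree has $B+c$ nodes (one for each non-loop block and one for each cut vertex); being a tree, it has $B+c-1$ edges, each of which records an incidence of a cut vertex with a non-loop block. Hence $\sum_{v\text{ cut}} b^*(v) = B+c-1$. A non-cut vertex in a graph with $\geq 2$ vertices lies in exactly one non-loop block, contributing $|V|-c$ to the sum. Adding gives $\sum_v b^*(v) = |V| + B - 1$, as required. The degenerate case of a single-vertex graph (possibly carrying loops) should be handled separately: here $|V|=1$, $B=0$, and $\pi_0(\Gamma\setminus v)=0$, so both sides vanish.

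I expect the main (minor) obstacle is simply being precise about how loops interact with the combinatorics, in particular ensuring that ``block'' is taken to mean ``non-loop block'' wherever needed so that the block-cut tree argument applies without modification. Apart from this bookkeeping the proof is a standard combinatorial verification.
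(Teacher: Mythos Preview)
Your proof is correct and takes a genuinely different route from the paper's.

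The paper argues by reduction: first delete loops (which contribute $0$ to both sides), then observe that bridges in $\on{Part}\Gamma$ contribute $0$ to both sides and that both sides behave additively when one splits along bridges into disjoint pieces. This reduces to the case where $\Gamma$ itself is circuit-connected, and then both sides are computed directly: the left is $\#\on{edges}\Gamma - 1$, and on the right circuit-connectedness forces $\#\pi_0(\Gamma\setminus v)=1$ for every $v$, giving $\#\on{edges}\Gamma + \#\on{Vert}\Gamma - \#\on{Vert}\Gamma - 1$.

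Your approach instead keeps $\Gamma$ intact and recasts the identity as $\sum_v \#\pi_0(\Gamma\setminus v) = \#\on{Vert}\Gamma + B - 1$, then proves this via the block--cut tree. This is more structural and has the advantage of making the role of the block decomposition explicit, while the paper's additivity-and-reduce argument is terser but leaves the bookkeeping (especially at shared cut vertices) to the reader. One small wording point: your parenthetical description of a non-loop block as ``a maximal 2-vertex-connected subgraph'' is not quite right for bridges, but since you are really using ``non-loop circuit-connected component'' throughout and these coincide with the edge sets of the non-loop blocks of $\Gamma$, the argument is unaffected.
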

\begin{proof}
We may assume $\Gamma$ has no loops since they make no contribution to either side. Further, elements of $\on{Part} \Gamma$ which are bridges make no contribution to either side, and both sides of the equality are additive in disjoint unions of graphs, so we may assume $\Gamma$ is circuit-connected (so $\#\on{Part} \Gamma = 1$). Then we compute both sides; the LHS is just the number of edges in $\Gamma$ minus 1. The circuit-connectivity implies that each $\pi_0(\Gamma \gdiff v)$ has exactly 1 element, and the result follows. 
\end{proof}
\begin{proposition}\label{prop:regularity_of_E}
$\ca{E}_k$ is regular at $p$. 
\end{proposition}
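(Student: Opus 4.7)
The plan is to combine the three preceding lemmas via the standard exact sequence of tangent spaces under $\pi$, and conclude by bounding the tangent space dimension against the lower bound on the local dimension of $\ca E$. Since regularity at a $k$-point is equivalent (for a scheme locally of finite type over a field) to the tangent space dimension being equal to the local dimension, this will suffice.

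More concretely, first I would observe that $\pi \colon \ca E \to \ca M$ induces a linear map $\pi_* \colon T_p \ca E \to T_{\pi(p)}\ca M$ with
\begin{equation*}
\dim T_p \ca E \;=\; \dim \ker \pi_* \;+\; \dim \on{im} \pi_* \;=\; \dim \ker \pi_* \;+\; \dim T_{\pi(p)}\ca M \;-\; \dim \on{coker}\pi_*.
\end{equation*}
Since $\ca M$ is smooth over the field $k$ of relative dimension $d$, we have $\dim T_{\pi(p)}\ca M = d$. Substituting the upper bound from \ref{lem:dim_ker} and the lower bound from \ref{lem:dim_coker}, together with the preceding lemma asserting that these two quantities are equal to the common value
\begin{equation*}
N \;\defeq\; \#\{\text{non-loop edges of }\Gamma\} \,+\, \#\on{Vert}\Gamma \,-\, \sum_{v \in \on{Vert}\Gamma}\#\pi_0(\Gamma\setminus v) \,-\, 1,
\end{equation*}
we obtain $\dim T_p \ca E \leq N + d - N = d$.

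Combining with the lower bound $\dim_p \ca E \geq d$ of \ref{lem:lower_bound_dim}, we deduce $\dim T_p \ca E \leq d \leq \dim_p \ca E$. But for a scheme of finite type over a field, the tangent space dimension is always at least the local dimension; so these must all be equalities, and the equality $\dim T_p \ca E = \dim_p \ca E$ is precisely the statement that $\ca E$ is regular at $p$.

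There is essentially no obstacle here beyond assembling the previous three lemmas correctly; the real work was done in proving \ref{lem:dim_ker} (which bounds the kernel by a deformation-theoretic count of invertible quotients) and \ref{lem:dim_coker} (which uses the $1$-alignment constraint from \ref{lem:ES_implies_1aligned} to confine the image of $\pi_*$ to a single line inside each circuit-connected component's deformation space). Note that the computation also shows \emph{a posteriori} that both bounds in those lemmas are equalities, and in particular that the image of $\pi_*$ on each $T_G$ is exactly the $1$-dimensional subspace cut out by alignment.
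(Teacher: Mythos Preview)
Your proof is correct and follows essentially the same approach as the paper's own proof: both use the relation $\dim T_p\ca{E} = \dim\ker\pi_* + \dim T_{\pi(p)}\ca{M} - \dim\on{coker}\pi_*$, apply the bounds from \ref{lem:dim_ker} and \ref{lem:dim_coker} together with the equality lemma to obtain $\dim T_p\ca{E} \le d$, and then conclude via \ref{lem:lower_bound_dim}. Your added remark that the argument forces both earlier inequalities to be equalities is also noted in the paper.
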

\begin{proof}
From the definition of the kernel and cokernel we have that 
\begin{equation*}
\on{dim} T_p\ca{E}_k - \on{dim} T_{\pi(p)}\frak{M}_k = \on{dim} \on{ker} \pi_*  - \on{dim} \on{coker} \pi_*. 
\end{equation*}
Combining with \ref{lem:dim_ker,lem:dim_coker} we deduce that 
\begin{equation*}
\on{dim} T_p\ca{E}_k \le \on{dim} T_{\pi(p)}\frak{M}_k  = d. 
\end{equation*}
But by \ref{lem:lower_bound_dim} we know that $\on{dim}_p\ca{E}_k \ge d$, so in fact all these inequalities are equalities and $\ca{E}_k$ is regular at $p$. 
\end{proof}

\subsection{$\bb{E}$ is an isomorphism}

Returning now to working over $\bb Z$, we prove that $\bb{E}$ is an isomorphism. This mostly consists of mild gymnastics with properties of morphisms --- the main work was carried out in \ref{sec:geometric_regularity}. In fact, it would have been enough to show that $\ca{E}_k$ was reduced, as regularity would follow from what we prove in this section, but in any case it was nice to play with computing tangent spaces, since this is something which our definition of enriched structure over arbitrary base schemes has made possible. Anyway, on with the main result:

\begin{theorem}
$\bb{E}\colon \frak{M}^1 \to \ca{E}$ is an isomorphism. 
\end{theorem}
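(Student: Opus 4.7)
The plan is to construct a two-sided inverse $\Psi\colon \ca{E} \to \ca{M}^1$ to $\bb{E}$ via the universal property of $\ca{M}^1$, and to verify that the two compositions are identities using separatedness. This turns the problem into a formal consequence of the results already established, chiefly \ref{prop:regularity_of_E}, \ref{lem:phi_onto}, and \ref{lem:ES_implies_1aligned}.

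First I would verify that the structure morphism $\pi\colon \ca{E} \to \ca{M}$ is itself a 1-aligned morphism in the sense required by the universal property of $\ca{M}^1 \to \ca{M}$. By \ref{prop:regularity_of_E}, the geometric fibres of $\ca{E}$ over $\Lambda$ are regular, and combined with flatness over $\Lambda$ (which follows from the construction via Quot and Picard schemes in \ref{lem:representability_of_E_Gamma}) this gives the global regularity of $\ca{E}$. The preimage $\pi^{-1}(\mathring{\ca{M}})$ is dense in $\ca{E}$ since $\ca{E}$ is irreducible and $\pi$ is dominant. Finally, the universal enriched structure on $\ca{C}_\ca{E}/\ca{E}$ together with \ref{lem:ES_implies_1aligned} shows that $\pi$ is 1-aligned; combined with regularity this implies $\pi^*\ca{C}$ is regular. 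The universal property of $\ca{M}^1$ therefore produces a unique factorisation $\Psi\colon \ca{E} \to \ca{M}^1$ of $\pi$ through $\ca{M}^1 \to \ca{M}$.

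To check that $\Psi$ is a two-sided inverse to $\bb{E}$, I would observe that the compositions $\bb{E} \circ \Psi\colon \ca{E} \to \ca{E}$ and $\Psi \circ \bb{E}\colon \ca{M}^1 \to \ca{M}^1$ are both morphisms over $\ca{M}$. Under the canonical identifications $\pi_\ca{E}^{-1}(\mathring{\ca{M}}) = \mathring{\ca{M}} = \pi_{\ca{M}^1}^{-1}(\mathring{\ca{M}})$, which hold because smooth curves are automatically 1-aligned and admit only the trivial enriched structure, both compositions restrict to the identity on a dense open substack. Since both $\ca{E} \to \ca{M}$ and $\ca{M}^1 \to \ca{M}$ are separated (the former from the list of properties in the introduction, the latter as an iterated blowup of $\ca{M}$), the equaliser of each composition with the corresponding identity is a closed substack that contains a dense open, and hence equals everything. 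Thus $\bb{E} \circ \Psi = \on{id}_\ca{E}$ and $\Psi \circ \bb{E} = \on{id}_{\ca{M}^1}$, so $\bb{E}$ is an isomorphism.

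The main obstacle is the first step: verifying that $\pi\colon \ca{E} \to \ca{M}$ really lies in the category of 1-aligned morphisms to which the universal property of $\ca{M}^1$ applies. This requires globalising \ref{lem:ES_implies_1aligned} from simple-controlled curves via the \'etale descent of \ref{def:ES_in_general}, and upgrading the fibrewise regularity of \ref{prop:regularity_of_E} to global regularity over an arbitrary excellent regular $\Lambda$. Both steps are technical but fairly direct given the earlier material; once they are in place the rest of the argument is essentially formal, and notably does not require the delicate tangent-space comparison that a direct \'etaleness proof for $\bb{E}$ would need.
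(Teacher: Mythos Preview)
Your overall strategy matches the paper's: construct an inverse $\Psi\colon \ca{E}\to\ca{M}^1$ via the universal property of $\ca{M}^1$, then argue via density of $\mathring{\ca{M}}$ and separatedness that the compositions are identities. The paper does exactly this, but your execution has two genuine gaps.

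The first is circularity. You invoke separatedness of $\ca{E}\to\ca{M}$ to conclude $\bb{E}\circ\Psi=\on{id}_{\ca{E}}$, citing the list of properties in the introduction. But that list is explicitly presented as a \emph{consequence} of identifying $\ca{E}$ with $\ca{M}^1$; separatedness of $\ca{E}$ is not established anywhere prior to this theorem (the glueing in \ref{def:glueing_locus} does not obviously yield a separated object). The paper sidesteps this by an asymmetric argument: it only proves $\Psi\circ\bb{E}=\on{id}_{\ca{M}^1}$ directly, using separatedness of $\ca{M}^1\to\ca{M}$ (which \emph{is} known from \cite{Holmes2014A-Neron-model-o}). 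From this and surjectivity of $\bb{E}$ (\ref{lem:phi_onto}) it deduces that $\bb{E}$ is a bijection, then applies Zariski's Main Theorem and a formal \'etaleness check to conclude $\bb{E}$ is an isomorphism. Your symmetric two-sided argument is cleaner in principle, but you cannot run it without independently proving $\ca{E}\to\ca{M}$ separated.

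The second gap is in your bootstrap to global regularity. You assert that flatness of $\ca{E}$ over $\Lambda$ ``follows from the construction via Quot and Picard schemes in \ref{lem:representability_of_E_Gamma}''; it does not, at least not obviously---Quot schemes are proper but not flat over the base, and the further conditions imposed do not help. The paper instead works first over a geometric point of $\Lambda$, where \ref{prop:regularity_of_E} applies directly, proves $\bb{E}_k$ is an isomorphism there, and \emph{then} uses the fibrewise criterion for flatness (with $\ca{M}^1\to\Lambda$ already known to be smooth) to deduce $\bb{E}$ is smooth and hence that $\ca{E}\to\Lambda$ is smooth. Only at that point is $\ca{E}$ known to be regular, and the global $\Psi$ can be constructed.
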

\begin{proof}
Being an isomorphism is local on $\frak M$, so base-changing we may replace $\frak{M}$ by a scheme. Note that $\frak{M}^1$ and $\ca{E}$ are both of finite presentation over $\frak{M}$. Let $\on{Spec}k \to \on{Spec} \bb Z$ be any geometric point (so $k$ is some algebraically closed field). Now $\mathring{\frak{M}}_k$ is dense in $\frak{M}_k^1$, so it is dense in $\ca{E}_k$ by surjectivity of $\bb{E}_k$. Moreover, $\ca{E}_k$ is regular by \ref{sec:geometric_regularity}, in particular it is reduced. The map $\ca{E}_k \ra \frak{M}_k$ is 1-aligned by \ref{lem:ES_implies_1aligned}. By the universal property of $\frak{M}_k^1$ (see \cite[definitions 3.1 and 12.1]{Holmes2014A-Neron-model-o} --- note that the normal-crossings assumption is superfluous) we obtain a map $\bb{F}_k\colon \ca{E}_k \ra \frak{M}_k^1$ (over $\frak{M}_k$):
\begin{equation*}
\frak{M}^1_k \stackrel{\bb{E}_k}{\longrightarrow} \ca{E}_k \stackrel{\bb{F}_k}{\longrightarrow} \frak{M}^1_k. 
\end{equation*}
The composite $\bb{F}_k \circ \bb{E}_k$ is the identity over the schematically dense open $\mathring{\frak{M}}_k$, and $\frak{M}^1 \ra \frak{M}$ is separated, so in fact $\bb{F}_k \circ \bb{E}_k$ is the identity. We also know that $\bb{E}_k$ is surjective, and thus is bijective, in particular quasi-finite. Since $\frak{M}^1_k$ is separated we can apply Zariski's Main Theorem to deduce that $\bb{E}_k$ is the composite of an open immersion and a finite map. The open immersion is clearly an isomorphism, so $\bb{E}_k$ is finite. It is also a bijection, hence is radicial. It is locally of finite presentation, and the fact that $\bb{F}_k \circ \bb{E}_k$ is the identity allows one to easily deduce that $\bb{E}_k$ is formally \'etale, but a radicial \'etale morphism is an isomorphism. 

Readers who are only interested in working over an algebraically closed field can stop here, but it is only a little more work to deduce the general case: we apply a fibrewise criterion, then repeat the above arguments. Firstly, the above result and the fibrewise criterion for flatness implies that $\bb{E}$ is flat. It is also an isomorphism on every geometric fibre, and hence is smooth. This implies (since smoothness is local on the source for the smooth topology, and $\frak{M}^1 \ra \on{Spec} \bb Z$ is smooth) that $\ca{E} \to \on{Spec} \bb Z$ is smooth. In particular it is reduced, and $\mathring{\frak{M}}$ is dense in $\ca{E}$, so $\ca{E} \to \frak{M}$ is 1-aligned, and we obtain a map $\bb{F}\colon \ca{E} \to \frak{M}^1$ as before. Separatedness of $\frak{M}^1$ implies that $\bb{F} \circ \bb{E}$ is the identity (since it is over $\mathring{\frak{M}}$). Applying Zariski's Main Theorem and formal \'etaleness just as above we deduce that $\bb{E}$ is an isomorphism as required. 
\end{proof}

We can now reap the tasty fruits of our labour, as many nice properties of $\ca{E}$ follow from the same for $\frak{M}^1$. 
\begin{corollary}\label{cor:from_Mtilde}\leavevmode
\begin{enumerate}
\item
$\ca{E}$ is smooth over $\on{Spec} \bb Z$;
\item $\ca{E}$ is separated over $\frak{M}$;
\item If $\mathring{\ca{J}} \to \mathring{\frak{M}}$ is the jacobian of the tautological smooth curve, then $\mathring{\ca{J}}$ admits a finite-type N\'eron model $\ca{N}$ over $\ca{E}$ (which is unique up to unique isomorphism);
\item If $t\colon T \to \frak{M}$ is any morphism from a regular scheme such that 
\begin{itemize}
\item
$t^*\frak{C}$ is smooth over a dense open of $T$;
\item 
$t^*\frak{C}$ is 1-aligned; 
\end{itemize}
then $t^*\ca{N}$ is the N\'eron model of $t^*\mathring{\ca{J}}$ (in particular, the N\'eron model exists). 
\item 
$\ca{E} \to \frak{M}$ is in fact uniquely characterised by the above property - more precisely, it is the terminal object in the category of such morphism to $\frak{M}$. 
\item Over the stack of stable curves, the pullback of Caporaso's balanced Picard stack $\ca{P}_{d,g}$ to $\ca E$ acquires a natural group/torsor structure (see \cite[\S 13]{Holmes2014A-Neron-model-o} for a precise statement). 
\end{enumerate}
\end{corollary}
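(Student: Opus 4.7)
The plan is to transport each property across the isomorphism $\bb{E}\colon \ca{M}^1 \to \ca{E}$ just established. Since $\bb{E}$ identifies $\ca{E}$ with the first level of the tower of universal N\'eron-model-admitting morphisms, all of these statements are consequences of the corresponding results for $\ca{M}^1$ proved in \cite{Holmes2014A-Neron-model-o}.

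For (1), in the proof of the preceding theorem we already observed that $\ca{E} \to \Lambda$ is smooth (deduced via a fibrewise criterion and using that $\ca{M}^1 \to \Lambda$ is smooth). For (2), the map $\ca{M}^1 \to \ca{M}$ is separated by \cite{Holmes2014A-Neron-model-o}, and so the same holds for $\ca{E} \to \ca{M}$ under the identification $\bb{E}$. For (3), the N\'eron model $\ca{N}$ of $\mathring{\ca{J}}$ over $\ca{M}^1$ was constructed in \cite{Holmes2014A-Neron-model-o}, and we transport it to $\ca{E}$ along $\bb{E}^{-1}$; uniqueness of $\ca{N}$ up to unique isomorphism is a formal consequence of the N\'eron mapping property.

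For (4) and (5), the universal property of $\ca{M}^1$ proved in \cite[definitions 3.1 and 12.1]{Holmes2014A-Neron-model-o} says exactly that $\ca{M}^1 \to \ca{M}$ is terminal in the category of regular, generically smooth, 1-aligned morphisms $T \to \ca{M}$, and that for every such $T$ the pullback of $\ca{N}$ is the N\'eron model of $t^*\mathring{\ca{J}}$. Composing with $\bb{E}^{-1}$ gives these properties for $\ca{E}$. Finally, for (6), the group/torsor structure on the pullback of $\ca{P}_{d,g}$ to $\ca{M}^1$ is constructed in \cite[\S 13]{Holmes2014A-Neron-model-o}, and transports along $\bb{E}$ to give the required structure on the pullback to $\ca{E}$.

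The only real step is the application of the theorem identifying $\ca{E}$ with $\ca{M}^1$; no further obstacle arises, since the hard work (existence of N\'eron models, construction of the universal 1-aligned morphism, and the group/torsor structure on the Picard stack) has already been done in the cited references.
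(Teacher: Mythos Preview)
Your proposal is correct and takes essentially the same approach as the paper: transport the properties of $\ca{M}^1$ across the isomorphism $\bb{E}$, citing \cite{Holmes2014A-Neron-model-o}. The paper's proof is in fact terser than yours (it simply refers to the cited works), with one small addition you omit: the \emph{finite-type} assertion in (3) requires the separate reference \cite{Holmes2016Quasi-compactne} for quasi-compactness of the N\'eron model, not just \cite{Holmes2014A-Neron-model-o}.
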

\begin{proof}
See \cite{Holmes2014A-Neron-model-o}, and \cite{Holmes2016Quasi-compactne} for the quasi-compactness of the N\'eron model. 
\end{proof}

\section{Relation to the constructions of Main\`o}\label{sec:Maino_relation}

In this section, we begin by recalling some definitions due to Main\`o. These can be found both in her thesis \cite{Maino1998Moduli-space-of} and in her unpublished preprint \cite{Maino1998preprint}; we will give precise references to the latter, as it seems to have been more widely circulated. We will then relate her constructions to the definitions made in the present work, and use this to prove certain of her conjectures. 

Main\`o worked over an algebraically closed field of characteristic zero, but this restriction is not necessary, so we will work over arbitrary base schemes $\bb Z$ as usual (the interested reader can easily check that her proofs carry over in this setting). Main\`o worked exclusively with stable curves, over the coarse Deligne-Mumford moduli space $\overline M_g$, but find it clearer to work over the fine moduli stack. The restriction to stable curves does seem necessary for some of her results explicitly describing chains of blowups to construct moduli spaces. Because of this, we will work over the stack $\Mbar /\bb Z$ of stable (unmarked) curves. To avoid clashes with the notation for compactified enriched structures in later sections, we will in this section use the notation $\ca M^{stab}$ for the Deligne-Mumford stack of stable curves. The natural map $\Mbar  = \ca{M}^{stab} \to \frak M$ is an open immersion, and we denote by $\ca E^{stab}$ the pullback of our fine moduli stack of enriched structures from $\frak M$ to $\ca{M}^{stab}$.

Main\`o makes the following preliminary definition: 
\begin{definition}(\cite[definition 2.1]{Maino1998preprint})
Let $k$ be a separably closed field and $C/k$ a stable curve with irreducible components $C_1, \dots, C_r$. An \emph{enriched structure} on $C$ is a collection of isomorphism classes of line bundles $\ca F_1, \dots, \ca F_r$ such that there exists a regular smoothing $C'$ of $C$ with $\ca F_i = \ca O_{C'}(C_i)|_{C}$. 
\end{definition}
She then shows (\cite[proposition 2.14]{Maino1998preprint}) that this definition is equivalent to the more intrinsic one given in \ref{def:MES1}. In particular, by \ref{thm:ES_MES} we know that our definition coincides with hers over separably closed fields. 

In particular, Main\`o's above definition gives a way to associate an enriched structure on $C/k$ to any regular smoothing of $C/k$. Writing $\on{Def}(C/k)$ for the $3g-3$-dimensional $k$-vector space of first-order infinitesimal deformations, we have a linear subspace $\on{Def}^{lt}(C/k)\sub\on{Def}(C/k)$ of locally trivial deformations --- these are also known as `equisingular' deformations, as they are exactly the deformations which do not smooth any nodes. If $C'/k[[t]]$ is a regular smoothing of $C/k$, Main\`o shows in \cite[theorem 2.6]{Maino1998preprint} that the resulting enriched structure on $C/k$ depends only on the image of this smoothing in the quotient 
\begin{equation*}
N \coloneqq \frac{\on{Def}(C/k)}{\on{Def}^{lt}(C/k)}. 
\end{equation*}

\subsection{Over reduced base schemes}

Main\`o's next goal is to generalise her definition from curves over fields to curves over reduced base schemes (this is the subject of section 3 of \cite{Maino1998preprint}). The restriction to the reduced case seems unavoidable with her approach, because her definition depends heavily on specifying the restriction of line bundles to geometric fibres, and so cannot `see' non-reduced structure on the base. 

Unfortunately, the definition Main\`o gives (\cite[definition 3.3]{Maino1998preprint}) does not behave as intended. In \ref{sec:Maino_original}, we will recall her original definition, and give an example to show why the resulting functor of enriched structures is not representable. In particular, this disproves her conjectures 5.5 and 5.6, relating moduli of enriched structures to blowups of the moduli space of stable curves (see \ref{sec:maino_blowups} for a detailed description of these conjectures). 

This problem arises because Main\`o indexes the line bundles in her enriched structures by vertices rather than by relative components. In \ref{sec:our_version-of_maino}, we will present a slightly modified version of her definition, and show that it is equivalent to ours. We will prove later (\ref{sec:maino_blowups}, in particular \ref{cor:blowup_conjecture}) that her conjectures 5.5 and 5.6 \emph{do} hold with this slightly-modified definition.

\subsubsection{Our modification of Main\`o's definition}\label{sec:our_version-of_maino}
We choose first to present our modified version of Main\`o's definition. In this way, the reader who is not familiar with Main\`o's work and wishes simply to take the shortest path to the proofs of conjectures 5.5 and 5.6 can do so, skipping \ref{sec:Maino_original}. 

We begin with a family of curves over a reduced strictly hensellian local base $S$ (in particular, note that such a family is controlled)\footnote{In fact, Main\`o writes that she works over a reduced local ring with algebraically closed residue field. However, a family of curves over such a base need not admit any sections, which she needs for some intermediate constructions. We assume that she intended a strictly hensellian local base, so that sections exist through the smooth locus of every component of every fibre. }, and we have an immediate neighbourhood $s\colon S \to \schart_S$. Write $\Gamma$ for the graph over the closed point. Let $(v,G)$ be a relative component of $\Gamma$. We define the open subset $U_{(v,G)}\sub S$ to be the complement of the closed subscheme $Z(\ca J_G)$ in $S$ (see \ref{def:ideal_sheaves}).

With these notions in hand, we define an M-enriched structure on $C/S$ (the `M' stands for `Main\`o'):
\begin{definition}(c.f. \cite[definition 3.3]{Maino1998preprint})
If $C/S$ is not 1-aligned, the set of M-enriched structures on $C/S$ is defined to be empty. Otherwise, an \emph{M-enriched structure} on $\pi\colon C\to S$ is a collection of total-degree-zero line bundles $\ca F_{(G,v)}$ for $(v,G)$ relative components of $\Gamma$, such that 
\begin{enumerate}
\item
For every $s \in S$: given a relative component $(v,G)$ of $\Gamma_s$, let $(v', G')$ be the unique relative component of $\Gamma$ specialising to $(v,G)$ as in \ref{eg:rel_comp_bijection}. Let $\ca F_{(v,G)} = \ca F_{(v', G')}$. Then the $\ca F_{(v,G)}$ form an enriched structure on $C_s/s$. 
\item 
For every relative component $(v,G)$ of $\Gamma$, we have
\begin{equation*}
\ca F_{(v,G)} |_{\pi^{-1}(U_{(v,G)})} \cong \ca O_{\pi^{-1}(U_{(v,G)})}. 
\end{equation*}
\end{enumerate}
\end{definition}
The first condition requires that these sheaves give us an enriched structure on every fibre of $C/S$. 

Suppose we have an enriched structure on $C/S$ (in the sense of \ref{def:controlled_ES}). This immediately gives the data of an $M$-enriched structure, and it follows from \ref{thm:ES_MES} that this collection of line bundles satisfies Main\`o's first, fibrewise condition. To check her second condition, it is enough to observe that the ideal sheaves $s^*\ca I_{G^c}$ are themselves trivial over the relevant $\pi^{-1}U_{(v,G)}$, so the same certainly holds for their invertible quotients. In this way, we see that any enriched structure gives rise to an M-enriched structure. 

Conversely, every M-enriched structure arises in this way, but we must work a little more to prove it. Suppose we are given an M-enriched structure $(\ca F_{(v,G)})_{(v,G)}$, and let $(v,G)$ be relative component of $\Gamma$. We will construct an invertible quotient $\ca L_{(v,G)}$ of the pulled-back ideal sheaf $s^*\ca I_{G}$ which functions as the line bundle of the M-enriched structure. 

There are two quite different cases to consider. The first is where the open subset $U_{(v,G}) \sub S$ is non-empty. In this case the ideal sheaves $\ca I_{(G,v)}$ are in fact already invertible, by our alignment assumption (c.f. (\cite[corollary 3.12]{Maino1998preprint}).

The second case is where $U_{(v,G)}$ is empty; here the reducedness of $S$ and 1-alignedness of $C/S$ implies that the labels of all edges from $v$ to $G$ are zero. The structure of the restriction of $\ca F_{(v,G)}$ to $C_G$ and $C_{G^c}$ is thus as given in \ref{lem:structure_of_inv_quotients}. By \ref{lem:extending_surjections}, any line bundle satisfying these conditions arises as an invertible quotient of $s^*\ca I_G$ in a unique way. The last two lemmas were stated and proven when the base is a field, but the same proofs carry over to the case where the labels vanish; c.f. \ref{part:compactifying}. Finally, the compatibility is clear, since we impose it fibrewise over a reduced base. Putting this together, we obtain
\begin{theorem}
For $S$ any reduced strictly hensellian local scheme, and $C/S$ a stable curve, there is a natural bijection between enriched structures on $C/S$ and M-enriched structures on $C/S$. 
\end{theorem}

Main\`o then defines an enriched structure on an arbitrary reduced scheme $S$ as the limit of a diagram of M-enriched structures on all strictly hensellian local rings mapping to $S$. Since the strictly hensellian local rings are precisely the local rings in the \'etale topology, we see that M-enriched structures coincide with enriched structures over arbitrary reduced base schemes.

\subsubsection{Main\`o's original definition}\label{sec:Maino_original}
Main\`o's original definition over reduced bases does not work as intended. We will describe it briefly here, and give an example to show that the resulting functor is not representable. 

Recall the partition $\on{Part}(\Gamma)$ of the edges of $\Gamma$ into maximal circuit-connected subsets as in \ref{lem:circuit_conn_partiton}. Given $H \in \on{Part}(\Gamma)$, we write $\bar H$ for the unique connected subgraph of $\Gamma$ with those edges --- following Main\`o, we will refer to such a $\bar H$ as a \emph{link} of $\Gamma$, though the reader should be warned that this is non-standard terminology. We say a vertex is \emph{disconnecting} if it is contained in at least two distinct links. 

If $u$ and $v$ are two vertices, Main\`o defines an open subscheme $U_{uv} \sub S$ to be the locus of $s \in S$ such that the specialisations of $u$ and $v$ to the graph $\Gamma_s$ coincide; this is equivalent to requiring that, for at least one path $p$ from $u$ to $v$ in $\Gamma$, all the edges along $p$ have labels which are units in $U_{uv}$. 

Given a vertex $v$ of $\Gamma$, we write $L_v$ for the union of the vertices of all links containing $v$. So if $v$ is non-disconnecting this consists of exactly the vertices of the unique link containing $v$. Main\`o then defines open subsets $U_v \sub S$ by the formulae
\begin{equation*}
\begin{split}
U_v & = \bigcup_{u \in L_v} U_{uv} \;\; \text{if $v$ is non-disconnecting, and}\\
U_v & = \bigcap_{u \in L_v} U_{uv} \;\; \text{if $v$ is disconnecting.}\\
\end{split}
\end{equation*}

\begin{remark}
For a relative component $(v,G)$ of $\Gamma$, we see 
\begin{equation*}
U_{(v,G)} = \bigcup_{u \in G} U_{uv}, 
\end{equation*}
relating our construction above to that of Main\`o. 
\end{remark}

With these notions in hand, Main\`o makes the following definition:
\begin{definition}(\cite[definition 3.3]{Maino1998preprint})\label{def:es_maino_reduced}
A \emph{Main\`o-enriched structure} on $\pi\colon C\to S$ is a collection of line bundles $\ca F_v$ for $v \in \on{Vert}(\Gamma)$ such that 
\begin{enumerate}
\item
For every $s \in S$, write $\on{sp}_s\colon \Gamma\to \Gamma_s$ for the contraction map. For each vertex $v $ of $\Gamma_s$, let $\ca F_v = \bigotimes_{u : \on{sp}_s(u) = v} \ca F_u$. Then the $\ca F_v$ form a Main\`o-enriched structure on $C_s/s$ (as in \ref{def:MES1}). 
\item 
For every vertex $v$ of $\Gamma$, we have
\begin{equation*}
\ca F_v |_{\pi^{-1}(U_v)} \cong \ca O_{\pi^{-1}(U_v)}. 
\end{equation*}
\end{enumerate}
\end{definition}

\begin{example}\label{eg:maino_not_rep}
We now present an example to show that the resulting functor of Main\`o-enriched structures is not representable. We work over a base scheme $S = k[[t]]$ with $k$ a field. We define a labelled graph $\Gamma$ with 4 vertices and 3 edges:
\begin{equation*}
\Gamma : \;\;\;\;\;  v_1 \stackrel{0}{\rule{1cm}{0.55pt}} v_2 \stackrel{t}{\rule{1cm}{0.55pt}} v_3 \stackrel{0}{\rule{1cm}{0.55pt}} v_4. 
\end{equation*}
We choose a stable compact-type curve $C/S$ with labelled graph $\Gamma$.

Since $C/S$ is of compact-type it admits a unique enriched structure $(\ca F_{(v,G)})_{(v,G)}$ indexed by relative components. For each vertex $v_i$, define $\ca F_i$ by tensoring together the (one or two) $\ca F_{(v,G)}$ for $G$ running over the connected components of ${\Gamma\gdiff v}$. This is easily checked to satisfy the first condition of Main\`o's definition, and since the open sets $U_{v_1}, \dots, U_{v_4}$ are all empty, the second condition in Main\`o's definition is vacuous. 
%\{Owen, please can you check that you agree that the $U_{v_i}$ are empty? I found her definition here quite convoluted (and I may have interpreted it incorrectly above - could you try to check from her original def?) Thanks!} \{As far as I can tell, this all looks right to me.} 

Now choose $p$ and $q$ two sections through the smooth locus of $C/S$, with the same reduction modulo $t$; we see these as Cartier divisors on $C$. We define
\begin{equation*}
\begin{split}
\ca F_1' &= \ca F_1\\
\ca F_2' &= \ca F_2(p-q)\\
\ca F_3' &= \ca F_3(q-p)\\
\ca F_4' &= \ca F_4. \\
\end{split}
\end{equation*}
On the special fibre we have $\ca F_i' = \ca F_i$, because of the assumption that $p$ and $q$ coincide modulo $t$. On the other hand, on the generic fibre we have $\ca F_2' \otimes \ca F_3' = \ca F_2 \otimes \ca F_3$, so these again give a Main\'o-enriched structure on the generic fibre. Hence, Main\`o's first condition is satisfied. The second condition is still vacuous, as the $U_{v_i}$ are empty. Thus $\ca F_1', \dots, \ca F_4'$ also define a Main\`o-enriched structure. 
\end{example}

In this way, we have built two non-isomorphic Main\`o-enriched structures, which coincide over the closed point and also over the generic point of $S$. This contradicts representability of the functor of Main\`o-enriched structures, as we have two $k[[t]]$ points which agree at the generic and closed points, but are not equal. 

The problem lies in the fact that Main\`o attaches her line bundles $\ca F_v$ to vertices, not to relative components. Because of this, her second condition cannot be made strong enough to force the line bundles to be of the required shape. 

%===
%Graph with 3 vertices and 2 edges, over $k[[t]]$, labels $0$ and $t$. 
%\begin{equation*}
%v_1 \stackrel{t}{\to} v_2 \stackrel{0}{\to} v_3. 
%\end{equation*}
%This is 1-aligned, so there exists ES. It's compact type, so there is a `natural' ES consisting of $\ca F_1$, $\ca F_2$, $\ca F_3$, which we should write down carefully. Then let $p$ and $q$ be two sections through the smooth locus of the component $v_1$(or any component, actually?), such that $p$ and $q$ coincide modulo $t$. Then consider
%\begin{equation*}
%\begin{split}
%\ca F_1' &= \ca F_1(p-q)\\
%\ca F_2' &= \ca F_2(q-p)\\
%\ca F_3' &= \ca F_3. \\
%\end{split}
%\end{equation*}
%First, notice that on the special fibre, we have $\ca F_i' = \ca F_i$, because of the assumption that $p$ and $q$ coincide over the special fibre. On the other hand, on the generic fibre we have $\ca F_1' \otimes \ca F_2' = \ca F_1 \otimes \ca F_2$, so these again give a enriched structure on the generic fibre. Hence, Main\`o's first condition is satisfied. Then we need to check the second condition. But in fact $U_1 = U_2 = \emptyset$, ???? Really? No, I think $U_2 = \emptyset$, but $U_1 \neq \emptyset$. 

\subsection{Blowing up the moduli space of curves}\label{sec:maino_blowups}

Main\`o's other main contribution is the construction of an explicit chain of blowups of $\ca{M}^{stab}$, and an explicit open subscheme of this blowup, which she conjectures coincides with the moduli space of enriched structures. As we have seen in \ref{sec:Maino_original}, her notion of enriched structures did not define a representable functor, and so this conjecture was not true as stated. In this section, we will describe her blowups and open subscheme, and then show that the resulting space \emph{can} be naturally identified with the moduli space of enriched structures as we define them, thus proving a `corrected' version of her conjectures. 

From now on, we work in $\ca{M}_g^{stab}$, the moduli stack of stable curves of fixed genus $g$. For $1 \le i \le 3g-3$, we define the locally closed subscheme $\ca R_i \sub \ca{M}^{stab}_g$ to be the locus of curves whose graphs have $i$ nodes, are not trees, and are circuit-connected. 

We set $B_{3g-3} = \ca{M}_g^{stab}$. Inductively on $2 \le i \le 3g-3$, starting with $i = 3g-3$, we define $B_{i-1}$  to be the blowup of $B_{i}$ at the closure of the preimage of $R_i$. In other words, we first blow up $\ca R_{3g-3}$, then we blow up the strict transform of the closure of $\ca R_{3g-4}$, then the strict transform of the closure of $\ca R_{3g-5}$, etc. The final step in the tower is $B_{1}$, which we also simply denote by $B$. Main\`o proves (\cite[proposition 6.2]{Maino1998preprint}) that $B$ is regular, so the strict transform of $\ca R_1$ is a Cartier divisor, hence we would obtain the same result if we had run our induction all the way to $i=1$. 

Writing $\tilde{\ca R}_i$ for the closure of the pullback of $\ca R_i$ to $B$, Main\`o defines a closed subset $Z \tra B$ by the formula
\begin{equation*}
Z = \bigcup_{i=1}^{3g-3}\overline{ \left(
\left(  \cup_{1 \le j < i}\tilde{\ca R}_j  \right)\cap \pi^{-1} \ca R_i
\right)}, 
\end{equation*}
and then defines $En$ to be the open substack of $B$ obtained by deleting $Z$. 

Main\`o then constructs (\cite[theorem 4.3]{Maino1998preprint}), for every field-valued point of $\ca{M}_g^{stab}$, a natural bijection between the enriched structures on the corresponding stable curve $C/k$, and the $k$-points of the fibre of $En$. She then conjectures (\cite[conjecture 5.6]{Maino1998preprint}) that this natural bijection can be extended from field-valued points to points taking values in arbitrary reduced local rings (in the statement she omits the `reduced' assumption, but as she does not give a definition of enriched structures over non-reduced rings, we assume this to be an oversight). Her conjecture 5.5 is the same statement, but restricted to discrete valuation rings. 

As we have seen in \ref{sec:Maino_original}, these conjectures are both false as stated, since her definition of enriched structures over reduced local rings does not yield a representable functor. In what follows (in particular in \ref{cor:blowup_conjecture}), we will show that her conjectures \emph{do} hold when one uses our definition of enriched structure, and the requirements that the base be reduced or local are superfluous. More precisely, we will construct an isomorphism of stacks over $\ca M_g^{stab}$ between $En$ and $\ca E^{stab}$. % (really the pullback of the latter from $\ca{M}$ to $\ca{M}_g^{stab}$, but to avoid cluttering the notation we will continue to denote it by $\ca E$).
 Such an isomorphism is necessarily unique if it exists, as both stacks are separated and birational to $\ca{M}_g^{stab}$. 

We know that both stacks $En$ and $\ca E^{stab}$ are regular, and Main\`o has already provided us with a bijection on the field-valued points. Hence it is enough to construct a map between these stacks, which is compatible with Main\`o's bijection on each field-valued fibre. Since $\ca E^{stab}$ has a universal property, it is most reasonable to construct a map from $En$ to $\ca E^{stab}$. Perhaps the most satisfying way to do this would be to write down a universal enriched structure over $En$, but the details of this become somewhat painful. Instead, recall from \ref{sec:UNMA_morphisms} the isomorphism $\ca E^{stab} \to \ca{M}^{1, stab}$, from the stack of enriched structures to the universal 1-aligned stack pulled back to the moduli stack of stable curves. 
\begin{lemma}
$En$ is 1-aligned. 
\end{lemma}
\begin{proof}
Let $\bar x$ be a geometric point of $En$, with local ring $\ca O^{et}_{En, \bar x}$. Write $\Gamma$ for the labeled graph over $x$, whose labels are principal ideals in $\ca O^{et}_{En, \bar x}$. If $\gamma$ is a circuit in $\Gamma$, we need to show that all edges in $\gamma$ have the same label. 

We treat first the case where $\bar x$ maps to some $\ca R_i$, so the graph is circuit-connected. Main\`o checks that the fibre of $En$ over this point is obtained by blowing up the closure of $\ca R_i$, then deleting the `coordinate hyperplanes', i.e. the parts of the exceptional locus where some label becomes a non-trivial power of another label. The universal property of the blowup (principalising ideal sheaves), and the fact that there are no higher-power relations between labels, together imply that $En$ is 1-aligned at $\bar x$. 

The general case is similar, but more involved. Main\`o shows that the fibre is isomorphic to a product of projective spaces, one for each link of the graph, and again with the coordinate hyperplanes removed. In other words, for each link we prinicipalise the ideal sheaf generated by its labels, then delete the loci where some label is a non-trivial power of another. As before, this implies that $En$ is 1-aligned at $\bar x$. 
\end{proof}

This lemma furnishes us with a map $En \to \ca{M}^{1,stab}$, and hence $En \to \ca E^{stab}$. Using test curves it is straightforward to check that this is compatible with Main\`o's bijection on field valued points --- we omit the details, as it would require reproducing large parts of her proofs; we leave the interested reader to check the original source. Putting things together, we have 
\begin{corollary}\label{cor:blowup_conjecture}
There is a unique isomorphism $En \to \ca E^{stab}$ of stacks over $\ca{M}_g^{stab}$. This isomorphism is compatible with Main\`o's bijection on each geometric fibre over $\ca{M}_g^{stab}$. 
\end{corollary}
This proves the `corrected' versions of her conjectures 5.5 and 5.6. 

Since $En$ is clearly a scheme over $\ca{M}_g^{stab}$, we obtain
\begin{corollary}
The stack $\ca E^{stab}$ is relatively representable by a scheme over $\ca{M}_g^{stab}$. 
\end{corollary}

It seems natural to ask whether Main\`o's blowup $B$ coincides with our compactified stack of enriched structures, defined in \ref{part:compactifying}. We do not know if this is the case. It is certainly true that there are many different ways of blowing up $\ca{M}_g^{stab}$ which also contain open subschemes isomorphic to $\ca E^{stab}$ (for a trivial example, just blow up $B$ at a point in $Z$). 

\part{Compactifying the stack of enriched structures}\label{part:compactifying}

\section{Defining compactified enriched structures}
The problem of compactifying the moduli stack of enriched structures has been open since the work of Main\`o \cite{Maino1998Moduli-space-of}. Since enriched structures were only defined over fields prior to this work, this was always going to be a rather awkward problem. Main\`o considered building a compactification using `bubbling', but this did not work as the boundary components were too large; looking back, we realise that Main\`o was seeing higher parts of the universal N\'eron-Model admitting stack $\tilde{\frak M}$ (see \cite{Holmes2014A-Neron-model-o}).  While $\tilde {\frak M}$ could be viewed as a compactification of the moduli stack of enriched structures for some purposes (for example, it satisfies a restricted version of the valuative criterion for properness), it is a very unsatisfactory compactification in other ways --- since it is not proper over $\frak M$ it is not ideal for doing intersection theory etc. We will use the other common approach to compactifying moduli of line bundles: we will use torsion free sheaves of rank 1. 

There are two main parts to our definition of an enriched structure. The first is to consider invertible quotients of pullback of certain carefully chosen ideal sheaves. This part is rather easily adapted to torsion free sheaves; we simply replace the requirement that the quotients be invertible by stipulating that they be torsion free of rank 1 (see \ref{def:TFR1} for a precise definition). The second part of the definition is to require that the tensor product of all these bundles be isomorphic (locally on $S$) to the trivial bundle. This does not work well with torsion free rank 1 sheaves; by definition, if a tensor product of sheaves is trivial then they are all invertible! To get around this, we show that the condition that the tensor product of the sheaves be trivial can be replaced by a condition on the shapes of the kernels of the quotient maps, see \ref{def:compatibility}. We show that for line bundles this recovers our original definition (\ref{sec:CES_for_invertible_is_ES}), that for torsion-free rank 1 sheaves it defines a representable functor, and that the representing object is proper over $\frak M$ (\ref{sec:properness}). 

\begin{definition}\label{def:TFR1}
Let $C/k$ be a curve over a separably closed field, and let $j\colon C^{sm} \hra C$ be the inclusion of the smooth locus. We say a coherent sheaf $\cl F$ on $C$ is \emph{torsion free of rank 1} if
\begin{enumerate}
\item
$j^* \cl F$ is locally free of rank 1 on $C^{sm}$;
\item The canonical map $\cl F \to j_*j^*\cl F$ is injective. 
\end{enumerate}
Now let $C/S$ be a curve over any base, and $\cl F$ a finitely presented quasi-coherent sheaf on $C$. We say $\cl F$ is \emph{torsion free of rank 1} if $\cl F$ is $S$-flat and if for every geometric point $\bar{s} \to S$, the pullback $\cl F_{\bar{s}}$ on $C_{\bar s}$ is torsion free of rank 1 in the above sense. 
\end{definition}
This is one of many possible equivalent definitions. Such sheaves are very commonly used in compactifying moduli of invertible sheaves, see for example \cite{Mumford1964Further-comment}, \cite{DSouza1979Compactificatio}, \cite{Altman1980Compactifying-t}, \cite{Esteves2001Compactifying-t}, \cite{Caporaso2008Neron-models-an}. 

\subsection{From relative components to hemispheres}

If $V$ is a subset of the vertices of a graph $\Gamma$, we say $V$ is \emph{connected} if the subgraph induced by $V$ is connected. Recall that for us being connected means having exactly one connected component, in particular being non-empty. 

\begin{definition}
Let $\Gamma$ be a connected graph. A \emph{hemisphere} of $\Gamma$ is a connected subset $G$ of the vertices of $\Gamma$ such that the complement of $G$ is also connected. We call the edges of $\Gamma$ with exactly one end in $G$ the \emph{separating edges} of $G$. 
\end{definition}

In the first part of this paper we worked with invertible sheaves indexed by relative components $(v,G)$ of $\Gamma$. We worked quite hard to prove that the set of enriched structures was empty if the family of curves was not 1-aligned, which was key in allowing us to define pullbacks of enriched structures since it essentially said that, whenever it was unclear how to define the pullback, the set of enriched structures was empty anyway, so there was no problem! With compactified enriched structures this strategy no longer works, since it is possible to have compactified enriched structures on families which are not 1-aligned (indeed, otherwise the valuative criterion for properness could not hold). To fix this we need to generalise the notion of relative component to allow $V$ to be a connected set of vertices of $\Gamma$ rather than just a single vertex. We could thus work with relative components $(V,G)$ where $V$ was simply a connected subgraph, but there would be a fair amount of redundancy since $G$ would no longer determine $V$, so we could end up with two torsion free rank 1 quotients of the same ideal sheaf. The compatibility conditions we will impose in \ref{def:compatibility} can be phrased so as to require these quotients to be isomorphic, but this gets a little messy. Thus when defining compactified enriched structures we will work instead with hemispheres; the hemispheres of $\Gamma$ are exactly those $G$ that appear in these generalized relative components $(V,G)$. Then setting the data of a compactified enriched structure to be a tuple of torsion free rank $1$ quotients of $s^*\ca I_{G^c}$ for each hemisphere $G$ avoids the redundancy having the same $G$ appear with multiple $V$.

\subsection{Enrichment data}
In this section we define an enrichment datum on a $\sigma$-controlled curve (to be defined in a moment). After this we will define what it means for such a datum to be \emph{compatible}, and we will define a \emph{compactified enriched structure} to be a compatible enrichment datum. 

\begin{definition}\label{def:sscontrolled}
Let $S \ra \frak M$ be a controlled curve. Let $\frak{s} \in S$ be a controlling point with graph $\Gamma$, and let $S \ra \schart \ra \frak M$ be a controlled neighbourhood. Let $\schart_S$ be the immediate neighbourhood of $S$ in $\schart$. %, so $S \ra \schart$ factors via $\schart_\frak{s} \ra \schart$, and $\schart_\frak{s}$ is the immediate neighbourhood of $S$ in $\schart$. 

We say $S \ra \frak M$ is \emph{$\sigma$-controlled} if for every irreducible component $v$ of $C_\frak s$ there exists a section $\sigma_v\colon \schart_S \to C_{\schart_S}$ passing through the smooth locus of $v$. 
\end{definition}
Note that we do \emph{not} fix the section $\sigma_v$, so one cannot glue these to make sections over the whole of $\frak M$. 
\begin{lemma}\label{lem:ssc_base_for_topology}
$\sigma$-controlled curves form a base for the big \'etale site over $\frak M$. 
\end{lemma}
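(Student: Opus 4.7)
The plan is to invoke \ref{lem:locally-simple-controlled}, which shows that simple-controlled curves already form a base for the big étale site over $\ca M$; this reduces the claim to showing that any simple-controlled curve $S\to\ca M$ admits an étale cover by $\sigma$-simple-controlled ones. Fix such $S\to\ca M$ with controlling point $\frak s$, simple neighbourhood $U$, immediate neighbourhood $U_{\frak s}\subset U$, and controlling graph $\Gamma$.

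For each vertex $v$ of $\Gamma$, the corresponding irreducible component of $C_{\frak s}$ extends uniquely (by controllability of $U_{\frak s}$) to a closed subscheme of $C_{U_{\frak s}}$ that is flat over $U_{\frak s}$ and generically smooth on every fibre. Intersecting this extension with the smooth locus of $C_{U_{\frak s}}\to U_{\frak s}$ produces an open subscheme $W_v\subset C_{U_{\frak s}}$ that is smooth over $U_{\frak s}$ and surjective onto $U_{\frak s}$ (non-empty on each fibre). The local structure theorem for smooth morphisms then yields, for each $v$, an étale cover $U_v\to U_{\frak s}$ equipped with a section into $W_v\times_{U_{\frak s}}U_v$. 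Taking the finite fibre product $U':=\prod_{v\in\on{Vert}\Gamma}U_v$ over $U_{\frak s}$ produces a single étale cover $U'\to U_{\frak s}$ carrying, for every vertex $v$, a section $\sigma_v\colon U'\to C_{U'}$ whose image lies in the smooth locus of the component corresponding to $v$.

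Setting $S':=S\times_{U_{\frak s}}U'$ gives an étale cover of $S$, and the composition $S'\to U'\to U\to\ca M$ is étale. The boundary of $C_{U'}/U'$ has simple intersections by the étale stability of that property, and $U'$ is controlled at each preimage of $\frak s$ with graph still $\Gamma$, so $U'\to\ca M$ is a simple chart. Hence $S'\to\ca M$ is simple-controlled via this simple neighbourhood, and the pulled-back sections $\sigma_v$ exhibit it as $\sigma$-simple-controlled. The only substantive input is the étale-local existence of sections of a smooth surjection, which is standard; I do not anticipate any real obstacle beyond bookkeeping, since controllability, the graph at the controlling point, and simple intersections of the boundary are all preserved under étale base change.
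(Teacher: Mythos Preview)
Your proposal is correct and follows essentially the same approach as the paper: reduce to the simple-controlled case via \ref{lem:locally-simple-controlled}, then use that smooth morphisms admit sections \'etale-locally. The paper's proof is a one-line appeal to this standard fact, whereas you unpack the construction explicitly; the only minor bookkeeping point is that your fibre product $U'$ need not itself be controlled (it could be disconnected, or a preimage of the controlling point might fail to control all of $U'$), but this is harmless since one can further refine $U'$ by controlled pieces, and the sections restrict.
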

\begin{proof}
We know this for controlled curves by \ref{lem:locally-controlled}, and it is standard that smooth morphisms admit sections \'etale-locally. 
\end{proof}
This base for the \'etale site is where we will initially define compactified enriched structures - as before, we will use \ref{sec:sheaves_on_a_base} to then extend to the general case. 

\begin{situation}\label{sit:sscontrolled}
Let $S \ra \frak M$ be a $\sigma$-controlled curve. Let $\frak{s} \in S$ be a controlling point with graph $\Gamma$, and let $S \stackrel{s}{\ra} \schart \ra \frak M$ be a controlled neighbourhood. Let $\schart_S$ be the immediate neighbourhood of $S$ in $\schart$, so $S \ra \schart$ factors via $\schart_S \ra \schart$. Choose sections $\sigma_v$ for each vertex $v$. 
\end{situation}

\begin{definition}\label{def:hemisphere_enrichment}
Suppose we are in \ref{sit:sscontrolled}. Let $G$ be a hemisphere of $\Gamma$. An \emph{enrichment datum} for $G$ is a torsion free rank 1 quotient
\begin{equation*}
q_G\colon s^*\ca{I}_{G^c} \twoheadrightarrow \cl{F}_{G}
\end{equation*}
such that on every fibre of $C/S$, the Euler characteristic of $\ca F_G$ is equal to the Euler characteristic of the structure sheaf $\ca O_C$. We say two enrichment data $q_G\colon s^*\ca{I}_{G^c} \twoheadrightarrow \cl{F}_{G}$ and $q'_G\colon s^*\ca{I}_{G^c} \twoheadrightarrow \cl{F}'_{G}$ are \emph{equivalent} if there exists an isomorphism $\psi_G\colon \cl{F}_{G} \ra \cl{F}'_{G}$ making the obvious triangle commute. 

An \emph{enrichment datum for $\Gamma$} consists of an enrichment datum for every hemisphere of $\Gamma$, with a corresponding notion of isomorphism. 
\end{definition}

\begin{remark}\leavevmode\label{rem:ED_indep_of_neigh}
\begin{enumerate}
\item
We could perfectly well take quotients of $\ca I_G$ instead of $\ca I_{G^c}$, but we choose the latter convention to be consistent with the first part of this paper (on the non-compactified case).  

\item We have not used the sections $\sigma_v$ yet; these will be needed for defining compatibility, and can be ignored in this section. 

\item As in the case of enriched structures (see \ref{rem:changing_the_neighbourhood}), one checks easily that these notions are independent of the choice of controlled neighbourhood. 
\end{enumerate}
\end{remark}

\subsection{Local structure of pullbacks of ideal sheaves again}
In this section we prove a generalisation of \ref{lem:local_structure_of_pullbacks}, and some related results, which will be useful in what follows. Some of these results will make earlier results (such as \ref{lem:local_structure_of_pullbacks}) redundant, but we have separated them out to reduce the technicalities encountered by the reader only interested in the non-compactified case. We start by recalling a result of Faltings on the local structure of torsion free rank 1 sheaves. 
\begin{lemma}\label{lem:local_str_of_tfr1}
Let $R$ be a local ring, $C/R$ a controlled curve, and $\cl F$ a torsion free rank 1 sheaf on $C/R$. Let $e$ be an edge of the graph of $C/R$ whose label is zero in $R$, and choose an isomorphism $f$ from the $R$-algebra $A\coloneqq R[[x,y]]/(xy)$ to the completion of $C$ along the section corresponding to $e$. Then there exist elements $a$, $b \in R$ with $ab=0$ and an isomorphism of $A$-modules
\begin{equation*}
f^* \cl F \iso \frac{A\Span{U,V}}{yU+aV,xV+bU}. 
\end{equation*}
\end{lemma}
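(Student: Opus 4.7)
The plan is to deduce this from Faltings' classification of torsion-free rank-$1$ modules over the local ring of a node, combined with a Nakayama-style lift from the closed fibre. First I would reduce to the case where $R$ is complete Noetherian local by replacing $R$ with its completion $\hat R$; this is harmless because torsion-freeness of rank $1$ and the desired presentation both descend under the faithfully flat map $R \to \hat R$. Set $M = f^*\cl F$ and $\bar A = A/\mathfrak{m}_R A = k[[x,y]]/(xy)$ where $k = R/\mathfrak{m}_R$, and consider the reduction $\bar M = M \otimes_R k$, a torsion-free rank-$1$ module over $\bar A$.

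The key classical input is the Seshadri/Serre classification for nodes over fields: $\bar M$ is isomorphic to either $\bar A$ or the maximal ideal $(\bar x, \bar y) \subset \bar A$. In the first case, Nakayama lifts the single generator of $\bar M$ to a generator of $M$; $R$-flatness of $M$ combined with torsion-freeness on fibres forces the annihilator of this generator to vanish, so $M \cong A$, and the relations coming from $(a,b) = (1,0)$, namely $yU + V$ and $xV$, collapse (via $V = -yU$) to give exactly $A$. In the second case, lift generators $\bar U, \bar V$ of $\bar M$ (corresponding to $\bar x, \bar y$) to elements $U, V$ of $M$, and consider the surjection $\pi\colon A^2 \twoheadrightarrow M$. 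Its kernel $K$ is $R$-flat because both $A^2$ and $M$ are, and modulo $\mathfrak{m}_R$ it is generated by $\bar y U$ and $\bar x V$; so by Nakayama $K$ is generated over $A$ by two lifts $\rho_1 \equiv yU$ and $\rho_2 \equiv xV$ modulo $\mathfrak{m}_R A^2$. A change-of-generators argument, exploiting the flatness of $K$ over $R$, then lets me normalise these relations to $\rho_1 = yU + aV$ and $\rho_2 = xV + bU$ with $a, b \in \mathfrak{m}_R$.

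It remains to verify $ab = 0$. Multiplying the first relation by $x$ and using $xy = 0$ gives $axV = 0$; substituting $xV = -bU$ yields $abU = 0$, and symmetrically $abV = 0$, so $ab$ annihilates $M$. Now cut to the fibre over any prime $\mathfrak{p} \in \on{Spec} R$: the module $M \otimes_R \kappa(\mathfrak{p})$ is a nonzero torsion-free module over $\kappa(\mathfrak{p})[[x,y]]/(xy)$, so the image of $ab$ in $\kappa(\mathfrak{p})$, lying in $R \subset A_{\kappa(\mathfrak{p})}$ (a nonzerodivisor if nonzero), must vanish. Thus $ab$ is nilpotent; a further application of $R$-flatness of $M$ (or, if one prefers, a Cohen-style presentation of $R$) upgrades this to $ab = 0$ in $R$.

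The main obstacle I expect is the normalisation step in the non-invertible case: one must carefully verify that the lifts of the two relations can be chosen without higher-order corrections absorbed into $U$ and $V$, and that $ab$ really vanishes in $R$ and not just up to nilpotents. Since the lemma is explicitly stated as a recollection of Faltings' result, an alternative is to invoke Faltings' original classification to dispose of both technical points at once, and only sketch the translation into the exact presentation appearing in the statement.
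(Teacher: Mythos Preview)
The paper's proof is a single sentence: it cites Theorem~3.5 of Faltings' \emph{Moduli-stacks for bundles on semistable curves}, noting that Faltings treats the more general case of non-constant degeneration and higher rank. Your final paragraph anticipates exactly this --- you suggest invoking Faltings' classification directly --- so in that sense you land where the paper does.

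Your attempted direct argument is a reasonable reconstruction of how such a result is proven, but the two gaps you flag are real. The normalisation of the lifted relations to the precise shape $yU+aV$, $xV+bU$ is genuinely the heart of the matter and is not a one-line change of basis; it requires a careful successive-approximation argument using completeness and flatness (this is what Faltings does). And your argument for $ab=0$ only yields $ab$ nilpotent: from $abM=0$ and $R$-flatness of $M$ you get $(ab)\otimes_R M=0$, but $M$ is not faithfully flat over $R$ in any obvious sense, so this does not force $ab=0$ when $R$ is non-reduced. The actual fix is to observe that the freedom in choosing $a,b$ (units can be absorbed into $U,V$) lets you adjust them so that $ab=0$ exactly, but making this precise again requires the full normalisation argument. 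Since you explicitly propose falling back on the citation to handle these points, your proposal is fine; just be aware that the direct route is not as short as your sketch suggests.
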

The elements $a$, $b$ are clearly not unique --- for example, if $f^* \cl F$ is invertible then we can take $a=0$ and $b$ to be any unit in $R$ or vice versa. 
\begin{proof}
This is a special case of theorem 3.5 of \cite{Faltings1996Moduli-stacks-f}, which treats also the case of non-constant degeneration and higher-rank torsion free sheaves. The authors are grateful to Emre Can Sert\"oz for pointing out this reference. 
\end{proof}

We now give a slight generalisation of \ref{lem:local_structure_of_pullbacks}. 
\begin{lemma}\label{lem:enhanced_local_structure_of_pullbacks}
Let $C/S$ be a controlled curve with graph $\Gamma$ and $S \stackrel{s}{\to} {\schart_S}$ an immediate neighbourhood, and assume that $S$ is local. Let $G$ be a hemisphere of $\Gamma$, and assume that for every separating edge $e$ of $G$, the label of $e$ is 0 on $S$. 

Write $C_G$ for the union of the irreducible components of $C$ \todo{Owen: why say ``containing points''?D: I agree; removed} corresponding to vertices in $G$, and define $C_{G^c}$ similarly, so that $C$ is a fibred coproduct over $S$ of $C_G$ with $C_{G^c}$ along the union of the sections of $C/S$ corresponding to separating edges of $G$. Write $B_G$ for this union of sections, which we will also view as Cartier divisors on $C_G$ or $C_{G^c}$ interchangeably. 

Let $T_G$ be the torsion module on $C_G$ given by 
\begin{equation*}
T = \bigoplus_{p \in B_G}\ca{O}_p^{\oplus (B_G - p)}
\end{equation*}
where $\ca{O}_p$ denotes the push forward to $C_G$ along $p$ of the structure sheaf of $S$. Then there exist isomorphisms
\begin{equation}
f_{G}\colon s^*\ca I_{G^c} |_{C_G} \stackrel{\sim}{\ra} \ca{O}_{C_G}(-B_G) \oplus T_G,
\end{equation}
\begin{equation}
f_{G^c}\colon s^*\ca I_{G^c}|_{C_{G^c}} \stackrel{\sim}{\ra} \bigoplus_{p \in B_G}\ca{O}_{C_{G^c}}(p)
\end{equation}
and for each $b \in B_G$
\begin{equation*}
g_b\colon b^*\left( \ca{O}_{C_G}(-B_G) \oplus T_G \right ) \ra b^*\left( \bigoplus_{p \in B_G}\ca{O}_{C_{G^c}}(p) \right)
\end{equation*} 
such that for each $b \in B_G$ the following diagram commutes:

\begin{tikzcd}
b^*\left( \ca{O}_{C_G}(-B_G) \oplus T_G \right ) \arrow{rr}{g_b} \arrow{d}{b^*f_{G}} & & b^*\left( \bigoplus_{p \in B_G}\ca{O}_{C_{G^c}}(p) \right)\arrow{d}{b^*f_{G^c}}\\
b^*s^*\ca I_{G^c}|_{C_G} \arrow[equals]{r} & (s\circ b)^*\ca I_{G^c} \arrow[equals]{r}& b^*s^*\ca I_{G^c}|_{C_{G^c}}. \\
\end{tikzcd}

Moreover, if we view the isomorphism $g_b$ as being a morphism
\begin{equation*}
b^*\ca{O}_{C_G}(-b) \oplus \bigoplus_{B_G - b} \ca O_p \stackrel{\sim}{\ra} b^*\ca{O}_{C_{G^c}}(b) \oplus \bigoplus_{B_G - b} \ca O_p,
\end{equation*}
then $g_b$ is the identity on the $\bigoplus_{B_G-b} \ca O_p$ parts, and induces an isomorphism $b^*\ca{O}_{C_G}(-b)  \stackrel{\sim}{\ra} b^*\ca{O}_{C_{G^c}}(b)$ (in particular $g_b$ respects the evident direct sum decomposition). 
\end{lemma}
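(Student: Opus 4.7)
The plan is to adapt the proof of \ref{lem:local_structure_of_pullbacks} essentially verbatim; the new content consists of three generalisations, none of which substantively alters the argument. First, $S$ is now an arbitrary local scheme rather than a separably closed point; this replaces the base field $k$ by $R \coloneqq \ca{O}_S(S)$ throughout, and accordingly the skyscraper sheaf $k_p$ in the torsion module is replaced by $\ca{O}_p$. Second, $G$ is now an arbitrary hemisphere rather than a component of $\Gamma \setminus v$ for a single vertex $v$; since the original proof already works componentwise at each separating node, this too requires no substantive change. Third, the assumption that every separating edge of $G$ has label $0$ on $S$ takes the place of the automatic vanishing of labels over a field.

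I would first reduce to the affine case $S = \on{Spec} R$ with $R$ local and then proceed by fpqc descent from the completed local rings on $C$ to globally defined sheaves, exactly as in the original proof. The hypothesis on the labels implies that for each separating section $b \in B_G$, the completion of $s^*\ca{O}_{C_{U_S}}$ along $b$ is isomorphic to $A \coloneqq R[[x,y]]/(xy)$, with $x$ a local coordinate on $C_G$ and $y$ a local coordinate on $C_{G^c}$. Then, fixing such a $b$, I would compute the pullback of $\ca{I}_{G^c}$ to $A$ as the ideal generated by $x$ together with the pullbacks of the labels $e_p$ of the other separating edges $p \in B_G \setminus \{b\}$, and obtain explicit presentations of $\iota_{G}^* s^* \ca{I}_{G^c}$, $\iota_{G^c}^* s^* \ca{I}_{G^c}$, and $b^* s^* \ca{I}_{G^c}$ by reducing modulo $y$, modulo $x$, and modulo $(x,y)$ respectively. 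These are identical to the computations in the original proof with $R$ in place of $k$, and they produce the maps $f_G$, $f_{G^c}$ and $g_b$ together with the block-diagonal decomposition of $g_b$.

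Away from the separating nodes, I would separately verify that $s^* \ca{I}_{G^c}|_{C_G \setminus B_G}$ is canonically trivial and that $s^* \ca{I}_{G^c}|_{C_{G^c} \setminus B_G}$ is a free module of rank $|B_G|$ on symbols $e_p$. The first assertion follows because the closed subscheme $\bar{G^c}$ cut out by $\ca{I}_{G^c}$ meets $C_G$ only along $B_G$, which is immediate from the blowup construction of \ref{def:ideal_sheaves}; the second follows because, away from $B_G$, the sheaf $\ca{I}_{G^c}$ is the pullback of the ideal $\ca{J} = (e_p : p \in B_G) \subset \ca{O}_{U_S}$, and by hypothesis every $e_p$ vanishes on $S$. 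These local descriptions glue with the local-at-$b$ ones to yield the asserted global isomorphisms and the compatibility diagram. The only potentially new verification is the disjointness $\bar{G^c} \cap C_G = B_G$ for a general hemisphere, but this is a direct consequence of the construction; all the module computations at each node are linear with integer coefficients and descend from the field case to an arbitrary local ring without change, so I expect no substantial obstacle to the generalisation.
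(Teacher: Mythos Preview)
Your proposal is correct and follows exactly the paper's own approach: the paper's proof simply states that the argument of \ref{lem:local_structure_of_pullbacks} carries over with little change, noting that hemispheres replace relative components, sections replace points, and completions are taken along the defining ideal of the section rather than a maximal ideal. Your write-up spells out these adaptations in more detail than the paper does, but the strategy and content are the same.
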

\begin{proof}
The proof of \ref{lem:local_structure_of_pullbacks} carries over with little change. The notation is a little different since now $G$ is a hemisphere instead of being part of a relative component, but this is unimportant for the proof. `Points' becomes sections, but this again makes no real difference. Completions should be taken with respect to the defining ideal of the image of the section, which is no longer a maximal ideal, but this again does not make any difference. 
\end{proof}

We now combine \ref{lem:local_str_of_tfr1,lem:enhanced_local_structure_of_pullbacks} to obtain more detailed information on the structure of enrichment data. This is the point in our story where the restriction on the Euler characteristic in the definition of enrichment data plays a key role. 
\begin{lemma}\label{lem:enhanced_structure_of_ED}
In the notation of \ref{lem:enhanced_local_structure_of_pullbacks}, suppose $q\colon s^*\cl I_{G^c} \twoheadrightarrow \cl F$ is an enrichment datum for $G$. Write $K$ for the kernel of $q$, and fix an isomorphism \begin{equation*}
s^* \ca I_{G^c}|_{C_{G^c}} \iso \bigoplus_{b \in B_G} \ca O_{C_{G^c}}(b)
\end{equation*}
as in \ref{lem:enhanced_local_structure_of_pullbacks}. 
\begin{enumerate}
\item The coherent sheaf $K$ is supported on $C_{G^c}$, and the restricted inclusion $K \to s^*\ca I_{G^c}|_{C_{G^c}}$ factors via the natural inclusion \todo{Owen: is this again just on $C_{G^c}$, or is it on all of $C$?D: on whole of $C$... Have adjusted, is it better now? }
\begin{equation*}
\bigoplus_{b \in B_G} \ca O_{C_{G^c}} \to \bigoplus_{b \in B_G} \ca O_{C_{G^c}}(b)
\end{equation*}
\item There exists an isomorphism $K|_{C_{G^c}} \cong \bigoplus_{\#e(G, G^c)-1} \ca O_{C_{G^c}}$. 
\item Choose an isomorphism as in (2). The induced map 
\begin{equation*}
\bigoplus_{\#e(G, G^c)-1} \ca O_{C_{G^c}} \to \bigoplus_{b \in B_G} \ca O_{C_{G^c}}
\end{equation*}
is given by a matrix with entries in $\ca O_S$. 
\item The quotient $\cl F$ is invertible at the point $b\in B_G$ if and only if the image of the $b$th standard basis vector of $\bigoplus_{b \in B_G} \ca O_{C_{G^c}}$ in $\cl F$ is non-zero. 
\end{enumerate}
\end{lemma}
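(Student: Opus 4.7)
The plan is to combine the explicit local decompositions in \ref{lem:enhanced_local_structure_of_pullbacks} with Faltings' form for torsion free rank 1 sheaves from \ref{lem:local_str_of_tfr1}, using the Euler characteristic constraint in \ref{def:hemisphere_enrichment} to supply the global numerical input. Throughout I would work with the short exact sequence
\begin{equation*}
0 \to K \to s^*\ca I_{G^c} \to \cl F \to 0
\end{equation*}
and analyse it separately on $C_G$ and $C_{G^c}$, combining fibrewise rank and Euler characteristic data with $S$-flatness to extract the claimed global structure.

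For part (1), the starting observation is that on the open $C_G \setminus B_G$, contained in the smooth locus of $C$, the pullback $s^*\ca I_{G^c}$ is canonically trivial (the closed subscheme $\bar{G^c}$ of \ref{def:ideal_sheaves} does not meet this open) and $\cl F$ is invertible, so $q$ must be an isomorphism and $K$ vanishes there. At a node $b \in B_G$ on the $C_G$ side I would use the splitting $s^*\ca I_{G^c}|_{C_G} \cong \ca O_{C_G}(-B_G) \oplus T_G$ together with Faltings' form of $\cl F$: torsion-freeness on generic fibres plus the fact that $\ca O_{C_G}(-B_G)$ already surjects onto the rank one quotient forces the torsion summand $T_G$ into $K$, so $K$ is supported on $C_{G^c}$. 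The factorisation statement then follows by transporting this across the node via the gluing $g_b$ of \ref{lem:enhanced_local_structure_of_pullbacks}, which matches the annihilated torsion summand on $C_G$ with the subsheaf $\bigoplus_{b' \in B_G} \ca O_{C_{G^c}} \subset \bigoplus_{b' \in B_G} \ca O_{C_{G^c}}(b')$ on $C_{G^c}$.

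For parts (2) and (3), I would carry out a fibrewise Euler characteristic count. Using \ref{lem:enhanced_local_structure_of_pullbacks} and the Mayer--Vietoris sequence for the pushout $C = C_G \cup_{B_G} C_{G^c}$, one finds $\chi(s^*\ca I_{G^c}) = \chi(\ca O_C) + (n-1)\chi(\ca O_{C_{G^c}})$ on each geometric fibre, with $n = \#e(G,G^c)$; together with $\chi(\cl F) = \chi(\ca O_C)$ this pins down $\chi(K) = (n-1)\chi(\ca O_{C_{G^c}})$. Both $s^*\ca I_{G^c}$ and $\cl F$ are $S$-flat, so $K$ is too; combined with (1) this makes $K|_{C_{G^c}}$ an $S$-flat subsheaf of $\bigoplus_{b \in B_G} \ca O_{C_{G^c}}$ of fibrewise rank $n-1$ and with the correct Euler characteristic. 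A degree-and-generators argument on each fibre forces this to be $\ca O_{C_{G^c}}^{\oplus(n-1)}$. For (3), the resulting inclusion into $\bigoplus_{b \in B_G} \ca O_{C_{G^c}}$ is encoded by an $n \times (n-1)$ matrix of sections of $\ca O_{C_{G^c}}$, which by properness and geometric connectedness of fibres of $C_{G^c} \to S$ all lie in $\ca O_S$.

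For part (4), I would write $\cl F$ at the node $b$ in Faltings' form from \ref{lem:local_str_of_tfr1}; invertibility of $\cl F$ at $b$ is equivalent to one of the two structural constants being a unit. Tracking generators through $g_b$, this condition is equivalent to the $b$th standard basis vector of $\bigoplus \ca O_{C_{G^c}}$ having non-zero image in $\cl F$ at $b$, since the non-invertible case (both Faltings constants in the maximal ideal) forces the corresponding image to vanish. The main obstacle across the proof is the local bookkeeping at the nodes in $B_G$, where the Faltings form of $\cl F$, the structural decomposition of $s^*\ca I_{G^c}$, and the gluing $g_b$ all need to be matched consistently; once this is pinned down, (1) is largely a translation of \ref{lem:enhanced_local_structure_of_pullbacks}, (2) and (3) follow from the Euler characteristic count, and (4) is a direct reading of Faltings' classification.
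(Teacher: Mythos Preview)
Your overall plan is right and matches the paper's: both parts rest on the local decompositions of \ref{lem:enhanced_local_structure_of_pullbacks}, Faltings' form from \ref{lem:local_str_of_tfr1}, and the Euler characteristic constraint. But there is a genuine gap in your argument for (1), and your route through (2) diverges from the paper's in a way that requires an extra step you have not supplied.

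For (1), the issue is your use of the gluing $g_b$. The map $g_b$ is an isomorphism of \emph{pullbacks to the point $b$}, not an identification of sheaves in a neighbourhood of $b$ on $C_{G^c}$. Knowing that $T_G$ sits in $K|_{C_G}$ and matching this at the closed point via $g_b$ tells you nothing about the $\ca O_{C_{G^c}}(b)$-component of $K$ in an infinitesimal neighbourhood of $b$ on $C_{G^c}$; that component could a priori have a pole at $b$. The paper instead works directly in the completed local ring $A \cong R[[x,y]]/(xy)$ at the node and writes $s^*\ca I_{G^c}$ there as $A\langle X, L_1,\dots,L_n\rangle/(\text{relations})$, with $X$ the generator of $\ca O_{C_{G^c}}(b)$. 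Using Faltings' form one sees that under $q$ the generator $X$ goes to a unit multiple of $V$ while each $L_i$ goes to a unit multiple of $yV$; from this one reads off that any kernel element $\sum t_iL_i + sX$ must have $y\mid s$, which is exactly the factorisation through $\ca O_{C_{G^c}}\subset \ca O_{C_{G^c}}(b)$. This explicit local step is where the content of (1) lives, and your ``transport via $g_b$'' does not substitute for it.

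For (2) and (3), your Mayer--Vietoris/Euler-characteristic computation of $\chi(K)$ is correct and is a reasonable alternative, but your final ``degree-and-generators argument'' is missing a step: you need the quotient $\bigoplus_b \ca O_{C_{G^c}}/K$ to be a line bundle, and the restriction $\cl F|_{C_{G^c}}$ can in principle acquire torsion at $B_G$, so this is not automatic from flatness and rank alone. The paper avoids this by instead looking at the image $\cl V\subset \cl F|_{C_{G^c}}$ of the standard basis vectors and using the local computation from (1) to see that $\cl V$ is \emph{invertible}; the Euler characteristic constraint then forces $\deg\cl V=0$ fibrewise, whence $\cl V\cong\ca O_{C_{G^c}}$ (degree $0$ with global generators) and $K\cong\ca O_{C_{G^c}}^{\,n-1}$ immediately. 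Part (3) is then immediate from connectedness of the fibres of $C_{G^c}$, as you say. Your approach to (4) is fine and matches the paper's reduction to the field case.
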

\begin{proof}\leavevmode
\begin{enumerate}
\item We check this locally at a singular section $p \in B_G$. We choose an isomorphism $f$ from $A \coloneqq R[[x,y]]/(xy)$ to the completion of $C$ along $p$ as in \ref{lem:local_str_of_tfr1}, assuming that $x$ vanishes on the component corresponding to $G^c$. We also choose an isomorphism $f^* \cl F$ to $A\Span{U,V}/(yU+aV, xV+bU)$. We write 
\begin{equation*}
A \otimes_{\ca O_C} \bigoplus_{b \in B_G} \ca O_{C_{G^c}}(b) \iso \frac{A}{x}\Span{L_1, \dots, L_n, X}, 
\end{equation*}
where the $L_i$ correspond to elements of $B_G \setminus p$ and $X$ corresponds to the generator for $\ca O_{C_{G^c}}(p)$. The glueing conditions of \ref{lem:enhanced_local_structure_of_pullbacks} then imply that $X$ maps to a unit multiple of $V$, and each of the $L_i$ maps to a unit multiple of $yV$. Thus if $\sum_i t_i L_i  + sX$ maps to $0$ under $q$, we must have that $y \mid s$. This proves the first claim. 

\item Given $p \in B_G$, write $\textbf{v}_p$ for the $p$th standard basis vector in $\bigoplus_{b \in B_G} \ca O_{C_{G^c}}$. The submodule $\cl V$ of $\cl F|_{C_{G^c}}$ spanned by the images of the $\textbf{v}_p$ is clearly invertible (by the computations in the proof of the first part); we claim that $\cl V$ has degree 0. This we can check on field-valued points of $S$, so we reduce to the case where $S$ is a point. The structure of torsion free rank 1 sheaves is then very well understood, and the restriction that $\cl F$ have Euler characteristic equal to that of $\ca O_C$ implies that $\cl V$ has degree 0 as required. 

Knowing that $\cl V$ has degree 0 the result is very easy; the kernel $K$ consists of relations between the $\textbf{v}_p$, and such relations must then have coefficients in $\ca O_{C_{G^c}}$, proving the claim. 

\item This is immediate from (2) since $C_{G^c}$ has connected fibres. 
\item A torsion free rank 1 sheaf is invertible if and only if it is so on every fibre over $S$, so we reduce to the case where $S$ is a geometric point. This is then an easy calculation since the structure of torsion free rank 1 sheaves on curves over fields is very well understood. 
\qedhere
\end{enumerate}
\end{proof}

\begin{lemma}\label{lem:support of kernel}
Assume we are in \ref{sit:sscontrolled}, and let $G$ be a hemisphere of $\Gamma$, and $v$ a vertex not in $G$. Let $q_G\colon s^*\ca I_{G^c} \twoheadrightarrow \cl F_G$ be an enrichment datum with kernel $K_G$. Let $\sigma_v$ be the section through the smooth locus chosen in \ref{sit:sscontrolled}. Then the annihilator of $\sigma_v^*K_G$ contains the labels of all the separating edges of $G$. 
%is contained in the closed subscheme of $S$ cut out by the labels of separating edges of $G$. {Do we want the fitting support or just the annihiator? See how we use it - at the moment the proof does the annihilator version. For \ref{lem:K_indep_of_section} the annihilator version is fine. For step 1 of \ref{lem:equivalence_of_ED_and_blowup}, still need to check. Later: it's fine! }
\end{lemma}
Recall that $\sigma_v$ goes through the smooth locus of an irreducible of $C_{\frak s}$ component corresponding to a vertex in $G^c$; if the section went through the smooth locus of an irreducible component corresponding to a vertex in $G$ then $\sigma_v^*K_G = 0$ so the lemma would hold trivially. 
\begin{proof}
This result is very clear set-theoretically \todo{Owen: It's not clear to me yet. David: But if the scheme-theoretic proof is OK then that does not matter too much...}, but we must do a little work for the scheme-theoretic version. We may assume $S$ and $\schart_S$ are affine, say $S = \on{Spec}B \to \schart = \on{Spec} A$. Let $a_1, \dots, a_n \in A$ be the labels of the separating edges of $G$, and say $a_i$ maps to $b_i \in B$. Then we find that $\sigma_v^*\ca I_{G^c} = s^*(a_1, \dots, a_n)$ (since $\sigma_v$ goes through $G^c$), so pulling back along $\sigma_v$ yields a sequence
\begin{equation*}
0 \to \sigma_v^*K_G \to \sigma_v^*(a_1, \dots, a_n) \to \sigma_v^*\cl F_G\to 0, 
\end{equation*}
which is exact since $\cl F_G$ is flat over $C$ in a neighbourhood of $\sigma_v$. This $\sigma_v^*\cl F_G$ is invertible (since $\cl F_G$ can only be non-invertible at non-smooth points of the fibres). Moreover, using that $\frak M$ has normal crossings singularities we see that 
\begin{equation*}
\sigma_v^*(a_1, \dots, a_n) = \frac{B\langle A_1, \dots, A_n\rangle}{(b_iA_j-b_jA_i:1\le i,j\le n)}. 
\end{equation*}
To simplify notation write $\psi = \sigma_v^*q_G$. Since $\cl F_G$ is $S$-flat by assumption, we see that $\sigma_v^*K_G$ is still the kernel of $\psi$. Hence we have an exact sequence 
\begin{equation*}
0 \to \sigma_v^*K_G \to \frac{B\langle A_1, \dots, A_n\rangle}{(b_iA_j-b_jA_i:1\le i,j\le n)} \stackrel{\psi}{\to} \sigma_v^*\cl F_G \to 0
\end{equation*}
where $\sigma_v^*\cl F_G$ is invertible, and we want to show $\sigma_v^*K_G$ is killed by every $b_i$. 

Well, let $\sum_ic_iA_i \in \on{Ker}\psi$. Note that $b_j\sum_i c_iA_i = \left(\sum_ib_ic_i\right)A_j$. Then 
\begin{equation*}
0 = b_j0 = b_j\psi\left(\sum_ic_iA_i \right)= \left(\sum_i b_ic_i\right)\psi(A_j), 
\end{equation*}
and since the $\psi(A_j)$ generate an invertible module this implies that $\sum_ib_ic_i = 0$. Then we see that
\begin{equation*}
b_j\sum_i c_iA_i = \left(\sum_ib_ic_i\right)A_j = 0A_j = 0
\end{equation*}
as required. 
\end{proof}

%\subsection{Modules with large support}\label{sec:large_support}
%{This is introduced to fix a problem with def of compatibility (was 6.13). To polish, maybe move, etc. }
%
%Let $S$ be a scheme and $M$ a quasi-coherent $\ca O_S$-module. We define the \emph{schematic support} $\on{SSup}M$ to be the closed subscheme of $S$ cut out by the annihilator ideal sheaf of $M$. 
%
%If $M$ is finitely generated then by \cite[\href{https://stacks.math.columbia.edu/tag/00L2}{Tag 00L2}]{stacks-project} we see that the underlying set of $\on{SSup}M$ is the usual, `set-theoretic' support $\on{Supp}M$. 
%
%Note that by \cite[\href{https://stacks.math.columbia.edu/tag/07T8}{Tag 07T8}]{stacks-project}, if $M$ is finitely generated then the formation of $\on{SSup} M$ commutes with flat base-change. However, the flatness is really necessary here; consider $S = \on{Spec} ...$
%
%We say $M$ has \emph{weakly-large support} if $M$ is finitely generated and the closed immersion $\on{SSup}M \to S$ is an isomorphism. We say $M$ has \emph{large support} if it has weakly large support universally, i.e. after arbitrary base-change over $S$. 
%
%\begin{lemma}
%Suppose $M$ is cyclic, then $M$ has large support if and only if $M$ has weakly large support. 
%\end{lemma}
%\begin{proof}
%A cyclic module with weakly large support is free of rank 1. 
%\end{proof}
%
%\begin{lemma}
%Suppose $S$ is reduced, then $M$ has large support if and only if $M$ has weakly large support. 
%\end{lemma}
%\begin{proof}
%Maybe true, maybe needed?? 
%\end{proof}

\subsection{Compatibility of enrichment data}
Suppose we are in \ref{sit:sscontrolled} and have an enrichment datum for $\Gamma$. We need some analogue of the condition in the definition of an enriched structure that the tensor product of the invertible sheaves is trivial. There are two problems with applying that condition directly in our situation:
\begin{enumerate}
\item
If a tensor product of sheaves is trivial then by definition they were all invertible, so we are not going to see very interesting torsion-free rank 1 sheaves!
\item Because we are working with hemispheres instead of relative components it is not completely clear which combinations of quotients we should require to be trivial. 
\end{enumerate}
In this section we construct a different condition, which satisfies two important properties (the proofs of which will take up much of the remainder of this paper):
\begin{enumerate}
\item
If one restricts to invertible quotients, then the resulting notion of compactified enriched structure is naturally equivalent to our previous definition of enriched structure, and this makes the stack of enriched structures into an open substack of the stack of compactified enriched structures; 
\item The resulting functor of compactified enriched structures is relatively representably by an algebraic space over $\frak{M}$, and moreover is proper over $\frak M$, making it a good notion of `compactification'. 
%\item The map from the stack of  is an open immersion. 
\end{enumerate}
In future work we plan to show that the stack of compactified enriched structures is smooth over $\bb{Z}$, give a stratification of the boundary into smooth pieces, and show that the stack of enriched structures is dense in every fibre over $\ca{M}$, making it a very good compactification! Note that the boundary is \emph{not} a normal crossings divisor for dimension reasons, cf.\ \ref{sec:2-gons} where we removed 2 points from the blowup of the completed affine plane. 

We now begin the definition of compatibility. Suppose again that we are in \ref{sit:sscontrolled} and have an enrichment datum for $\Gamma$, so we have a torsion free rank 1 quotient 
 $q_G\colon s^*\ca{I}_{G^c} \twoheadrightarrow \cl{F}_{G}$ for every hemisphere $G$ (a surjection of sheaves on the tautological curve $C/S$). Write $K_G$ for the kernel of $q_G$, so we have a short exact sequence
\begin{equation*}
0 \to K_G \to s^*\ca{I}_{G^c} \twoheadrightarrow \cl{F}_{G} \to 0
\end{equation*}
of coherent sheaves on $C$ (for later use, note that the formation of the kernel commutes with base change by the $S$-flatness of $\cl{F}_G$). Write $\ca J_G$ for the ideal sheaf on $\schart_S$ generated by the labels of separating edges of $G$. If $v \in G^c$ is any vertex then we have a canonical inclusion 
\begin{equation*}
\sigma_v^*K_G \hookrightarrow \sigma_v^*s^*\ca{I}_{G^c} = \ca J_G
\end{equation*}
of sheaves on $S$ (using that $\cl{F}_G$ is $C$-flat in a neighbourhood of $\sigma_v$). We write $\bb{K}_G \coloneqq \sigma_v^*K_G$ for this submodule of $\ca J_G$; the notation is justified by 
\begin{lemma}\label{lem:K_indep_of_section}
The submodule $\bb K_G = \sigma_v^*K_G \sub \ca J_G$ is independent of the choice of component $v \in G^c$ and of the choice of section $\sigma_v$ through the smooth locus of $v$. 
\end{lemma}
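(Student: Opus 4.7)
My plan is to give an explicit description of $\sigma_v^* K_G$ (as a submodule of $\ca J_G$) which is manifestly independent of the section $\sigma_v$, using the local structure results \ref{lem:enhanced_local_structure_of_pullbacks} and \ref{lem:enhanced_structure_of_ED}. First, since $\cl F_G$ is $S$-flat and $\sigma_v$ lands in the smooth locus of a component of $G^c$ (where $\cl F_G$ is locally free), pulling back the short exact sequence $0\to K_G\to s^*\ca I_{G^c}\to\cl F_G\to 0$ along $\sigma_v$ preserves exactness, so $\sigma_v^* K_G = \ker(\sigma_v^* q_G)$. Moreover, by \ref{lem:support of kernel} this kernel is supported on $V(\ca J_G)\sub S$, so it suffices to check the claim after passing to the local situation near a point of $V(\ca J_G)$, where the hypotheses of \ref{lem:enhanced_local_structure_of_pullbacks} apply.

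In that setting, fix an isomorphism $s^*\ca I_{G^c}|_{C_{G^c}}\cong\bigoplus_{b\in B_G}\ca O_{C_{G^c}}(b)$ as in \ref{lem:enhanced_local_structure_of_pullbacks}. By \ref{lem:enhanced_structure_of_ED} parts (1)--(3), $K_G$ is supported on $C_{G^c}$ and its inclusion into $s^*\ca I_{G^c}|_{C_{G^c}}$ factors through $\bigoplus_{b\in B_G}\ca O_{C_{G^c}}\sub\bigoplus_{b\in B_G}\ca O_{C_{G^c}}(b)$ as the image of a matrix $M$ whose entries lie in $\ca O_S$ (pulled back via $\pi\colon C_{G^c}\to S$). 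The crucial observation is that each $\ca O_{C_{G^c}}(b)$ carries a canonical global section $1$, namely the image of the inclusion $\ca O_{C_{G^c}}\hookrightarrow\ca O_{C_{G^c}}(b)$, vanishing only along $b$; since $\sigma_v$ avoids every $b\in B_G$ (being through the smooth locus), this section yields a canonical trivialization $\sigma_v^*\ca O_{C_{G^c}}(b)\cong\ca O_S$ which is the same for every admissible $\sigma_v$.

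Therefore, under the resulting section-independent identification $\sigma_v^*\bigl(\bigoplus_{b}\ca O_{C_{G^c}}(b)\bigr)\cong\bigoplus_b\ca O_S$, the submodule $\sigma_v^* K_G$ becomes the image of $M$; since $M$ already has entries in $\ca O_S$, its pullback along any section of $C_{G^c}/S$ is $M$ itself, giving the desired independence of $\sigma_v$ (and of $v$). Compatibility with the identification $\sigma_v^* s^* \ca I_{G^c}=\ca J_G$ used in the statement is then a direct check using the local description of $\ca I_{G^c}$ from \ref{def:ideal_sheaves}, combined with the fact that the canonical sections $1\in\ca O_{C_{G^c}}(b)$ correspond precisely to the labels of the separating edges. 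The main obstacle I anticipate is the bookkeeping forced by the non-canonicity of the isomorphism $s^*\ca I_{G^c}|_{C_{G^c}}\cong\bigoplus_b\ca O_{C_{G^c}}(b)$: a single choice must be made and used for all sections simultaneously, but any such consistent choice makes the argument go through.
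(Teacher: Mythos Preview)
Your proposal is correct and follows essentially the same route as the paper: reduce via \ref{lem:support of kernel} to the locus where all separating-edge labels vanish, then invoke part (3) of \ref{lem:enhanced_structure_of_ED} to see that the inclusion $K_G\hookrightarrow s^*\ca I_{G^c}|_{C_{G^c}}$ is given by a matrix with entries in $\ca O_S$, so its pullback along any smooth section through $G^c$ is independent of the section. Your write-up is somewhat more explicit than the paper's about the trivialisations $\sigma_v^*\ca O_{C_{G^c}}(b)\cong\ca O_S$ and the bookkeeping matching this with $\ca J_G$, but the argument is the same.
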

\begin{proof}
Write $i\colon Z_S(\ca J_G) \to S$ for the closed immersion. By \ref{lem:support of kernel} we see that $i_*i^*\sigma_v^*K_G = \sigma_v^*K_G$, hence we may reduce to the case where $i$ is an isomorphism, i.e. where all the labels vanish on the whole of $S$. The result is then clear from part 3 of \ref{lem:enhanced_structure_of_ED}, since the inclusion of $K_G$ in $s^*\ca{I}_{G^c}$ is isomorphic to a linear map of free $\ca O_{C_{G^c}}$-modules with matrix entries in $\ca O_S$ (note that the Euler characteristic restriction is key in that lemma).
\todo{Owen: Can you supply just a little more connecting information? I'm not sure what to do with \ref{lem:enhanced_structure_of_ED} to get this result.D: added a bit, please say if more needed. }
%We can then apply \ref{lem:local_str_of_tfr1,lem:enhanced_local_structure_of_pullbacks,lem:enhanced_structure_of_ED} to obtain detailed information on the structure of $s^*\ca I_G$. 
%Restricting to $C_{G^c}$ (which $\sigma_v$ goes through by assumption), we have that $s^*\ca I_{G^c} \cong \bigoplus_{p \in B_G} \ca{O}_{C_{G^c}}(p)$. For each $p \in B_G$, let $f_p$ be the global section of $\bigoplus_{p \in B_G} \ca{O}_{C_{G^c}}(p)$ taking value 1 on $\ca{O}_{C_{G^c}}(p)$ and 0 on the other summands. Then this section maps to a global section of $\ca{O}_{C_{G^c}}(B_G)$, and moreover (by a careful study of the glueing conditions, cf \ref{lem:extending_surjections}{update}) it lands within $\ca{O}_{C_{G^c}} \sub \ca{O}_{C_{G^c}}(B_G)$. Since $C_{G^c}$ is proper and flat with reduced connected geometric fibres we know that $\ca{O}_{C_{G^c}}(C_{G^c}) = \ca{O}_S$, so the image of $f_p$ is in fact constant. Finally, the $f_p:p \in B_G$ generate the restriction of $s^*\ca I_{G^c}$ to $C_{G^c}\setminus B_G$ (which is the intersection of the smooth locus of $C/S$ with $C_{G^c}$), so the lemma follows. 
\end{proof}
%Note that the above lemma is the only place where we need that $\upsC/\frak M$ has normal crossings singularities. Thus if one were prepared to fix a section through the smooth locus of every irreducible component the normal crossing assumption could be ignored at least in defining compactified enriched structures. Requiring that the torsion free rank 1 quotients be invertible one would obtain a new notion of enriched structure; we do not know whether this is interesting. 

Let $E$ be a non-empty subset of the edges of $\Gamma$. Let $\ca J_{E}$ be the ideal sheaf on $\schart_S$ cut out by the labels of edges in $E$. If $G$ is a hemisphere of $\Gamma$ such that the separating edges of $G$ all lie in $E$ then the ideal sheaf $\ca I_{G^c}$ is contained in $\pi^*\ca J_{E}$ on $C_{\schart_S}$, where $\pi\colon C_{\schart_S} \to \schart_S$ is the structure morphism. Pulling back along $\sigma_v \circ s$  for any $v \in G^c$ we obtain a map of coherent sheaves on $S$
\begin{equation*}
f_{E,G}\colon \ca J_G = \sigma_v^*s^*\ca I_{G^c} \ra \sigma_v^*s^* \pi^*\ca J_{E} = s^*\ca J_{E}. 
\end{equation*}
Recall from \ref{sec:table_of_notation} that we use $\ca J$ for the ideal on the base $\schart$, and $\ca I$ for the ideal upstairs on the curve. We use $K$ for kernels of maps upstairs on the curve, and $\bb K$ for (corresponding) kernels of maps downstairs on $\schart$. 

\begin{definition}\label{def:compatibility}
We say the enrichment datum is \emph{compatible} (or \emph{Fitting-compatible} for emphasis) if for every non-empty set $E$ as above the closed immersion
\begin{equation}\label{eq:compatibility}
\Fsupp \frac{s^*\ca{J}_{E}}{\sum_Gf_{E,G}(\bb{K}_G)} \to S
\end{equation}
is an isomorphism. Here the sum runs over hemispheres $G$ such that every separating edge of $G$ is contained in $E$, and $\Fsupp$ denotes the Fitting support of a coherent sheaf, see \ref{def:fitting_support}. 
\end{definition}

%For later use, we also introduce a slight generalisation: if $E_0$ is any subset of the edges of $\Gamma$, we say an enrichment datum is \emph{$E_0$-compatible} (or \emph{$E_0$-Fitting-compatible}) if the condition of \ref{def:compatibility} holds for every $E \sub E_0$. 

\begin{definition}\label{def:controlled_compactified_ES}
If $C/S$ is $\sigma$-controlled, a \emph{compactified enriched structure} on $C/S$ is a compatible enrichment datum on $C/S$. 
\end{definition}
\begin{remark}
This definition is independent of the choice of controlled neighbourhood, and also that of the sections $\sigma_v$. For the former, see \ref{rem:ED_indep_of_neigh}. The latter follows from \ref{lem:K_indep_of_section}. 
\end{remark}

The definition of compatibility risks being somewhat cryptic, so we conclude this section with two detailed examples. 
\begin{example}[Compact type]
Suppose that $C/S$ is of compact type, so hemispheres of $\Gamma$ are in natural bijection with edges of $\Gamma$. Then each of the ideal sheaves $\ca I_{G^c}$ is itself invertible, so every enrichment datum consists of isomorphisms $q_G\colon s^*\ca I_{G^c} \stackrel{\sim}{\to} \ca F_G$. The kernels $K_G$ are thus the zero module, and the same holds for their images under $f_{E, G}$. Thus, the enrichment datum being compatible is equivalent to the map 
\begin{equation*}
\Fsupp s^*\ca{J}_{E} \to S
\end{equation*}
being an isomorphism. Since the formation of the Fitting support commutes with arbitrary base-change, it suffices to check that the natural map 
\begin{equation*}
\Fsupp \ca{J}_{E} \to \schart_S
\end{equation*}
is an isomorphism. Now we have $\on{Fitt}_0 \ca{J}_{E} \subseteq \on{Ann} \ca{J}_{E} = (0)$, the first equality by \ref{fitt_0_in_ann}, and the second since all labels are non-zero and $\schart$ is integral. So we see that this map is always an isomorphism in the compact type case. To summarise, for curves of compact type, enrichment data always consist only of isomorphisms, and compatibility is automatic. 
\end{example}

\begin{example}[Neighbourhood of the 2-gon]
Let $C/k$ be the 2-gon, and $\schart$ be an immediate neighbourhood, so $\ca O_\schart(\schart)$ has elements $x$ and $y$ corresponding to the labels of the two edges. Let $s\colon \on{Spec} k \to \schart$ be the inclusion of the central fibre $x=y=0$. Note that the graph has exactly two hemispheres, each consisting of a single vertex. If the vertices are denoted $v_1$ and $v_2$, then an enrichment datum thus consists of a pair
\begin{equation*}
(q_1\colon s^* \ca I_{v_1} \twoheadrightarrow \cl F_{v_1}, q_2\colon s^* \ca I_{v_2} \twoheadrightarrow \cl F_{v_2})
\end{equation*}
of torsion free rank 1 (see \ref{def:TFR1}) quotients of the $s^*\ca I_{-}$. 

When considering compatibility, we have to run over all non-empty subsets $E$ of the edges of the graph. There are exactly three such subsets. Denoting the edges $e$ and $e'$, we treat first the case where $E$ has cardinality 1, say $E = \{ e\}$. Then there are no hemispheres with separating edges contained in $E$, so the sum in the expression (\oref{eq:compatibility}) is over the empty set, hence is the zero sub-module. We argue as in the compact-type case above to see that the map in (\oref{eq:compatibility}) is thus an isomorphism. 

It remains to treat the case where $E = \{ e, e'\}$. Here $\ca J_{E}$ is the ideal generated by $x$ and $y$, so its pullback $s^*\ca J_{E}$ is a free module of rank 2, with natural basis elements which we denote $X$, $Y$. Supposing we have sections $\sigma_{v_i}$ of $C_\schart/\schart$ passing through the smooth loci of the components $C_{v_i}$ of the central fibre. Then we have to consider the images of the $\sigma_{v_i}^* (\on{ker} q_i)$ in $s^*\ca J_{E}$. By \ref{lem:enhanced_local_structure_of_pullbacks}, $\sigma_{v_i}^* (\on{ker} q_i)$ is a $k$-module of rank 1, so its image is a 1-dimensional subspace of the module $s^*\ca J_{E} = k\Span{X, Y}$. In fact, the quotient is uniquely determined by such a submodule, so that the space of enrichment data is given by the space of pairs of lines in $k\Span{X,Y}$, hence is naturally isomorphic to $\bb P^1 \times \bb P^1(k)$. 

The map (\oref{eq:compatibility}) is an isomorphism if and only if the $k$-module
\begin{equation*}
\frac{s^*\ca{J}_{E}}{\sum_Gf_{E,G}(\bb{K}_G)}  = \frac{k\Span{X, Y}}{\sigma_{v_1}^* (\on{ker} q_1) + \sigma_{v_2}^* (\on{ker} q_2)}
\end{equation*}
is non-zero, if and only if the two 1-dimensional subspaces $\sigma_{v_i}^* (\on{ker} q_i)$ are equal. Hence, the space of compatible enrichment data is given by the diagonal in ${\bb P^1\times\bb P^1}(k)$. 

From (4) of \ref{lem:enhanced_structure_of_ED} we see that the quotient $\ca F_{v_i}$ is invertible if and only if the submodule $\sigma_{v_i}^* (\on{ker} q_i)$ in $s^*\ca I_{E}$ is not given by $X=0$ or $Y=0$; in other words, the invertible enrichment data are parametrised by a $\bb G_m(k)  \sub \bb P^1(k)$. We see then that the compatibility condition in particular imposes that either both $\ca F_{v_1}$ and $\ca F_{v_2}$ are invertible, or neither is. 
\end{example}

\subsection{Pulling back compactified enriched structures}

Suppose we are in \ref{sit:sscontrolled}. Let $\phi\colon T \to S$ be another $\sigma$-controlled curve, and let $T\stackrel{t}{\to} \schart_T \hra   \schart_S$ be the immediate neighbourhood of $T$ in $\schart_S$. We may take the sections on $C_{\schart_T}/\schart_T$ to be induced by those from $C_{\schart_S}/\schart_S$. We have a contraction map $\phi_\Gamma\colon \Gamma_S \to \Gamma_T$. 

\begin{lemma}
If $H$ is a hemisphere of $\Gamma_T$ then $\phi_\Gamma^{-1}H$ is a hemisphere of $\Gamma_S$. 
\end{lemma}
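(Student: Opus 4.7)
The plan is to reduce the statement to the following more basic fact: if $W$ is any connected subset of the vertices of $\Gamma_T$, then $\phi_\Gamma^{-1}(W)$ is a connected subset of the vertices of $\Gamma_S$. Granting this, the lemma follows immediately by applying the fact to both $H$ and its complement $H^c$ in $\Gamma_T$ (noting that $\phi_\Gamma^{-1}(H^c)$ is precisely the complement in $V(\Gamma_S)$ of $\phi_\Gamma^{-1}(H)$, since $\phi_\Gamma$ sends vertices to vertices).

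To prove the basic fact, I would exploit that $\phi_\Gamma$ is a graph contraction, so each fibre $\phi_\Gamma^{-1}(v)$ over a vertex $v$ of $\Gamma_T$ is connected in $\Gamma_S$: by the definition of the specialisation map, $\phi_\Gamma^{-1}(v)$ is spanned by those edges of $\Gamma_S$ whose labels become units on $T$ and which $\phi_\Gamma$ contracts to $v$. Given two vertices $u, u' \in \phi_\Gamma^{-1}(W)$, first choose a path in $\Gamma_T$ from $\phi_\Gamma(u)$ to $\phi_\Gamma(u')$ lying entirely within $W$ (using connectedness of $W$). Then lift this path to $\Gamma_S$ edge by edge: every non-contracted edge in the $\Gamma_T$-path comes from an edge of $\Gamma_S$ between fibres, and to traverse the successive fibres one uses the connectedness of each $\phi_\Gamma^{-1}(v)$ for $v$ on the chosen $\Gamma_T$-path. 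Splicing together such a lift with paths inside the fibres containing $u$ and $u'$ produces a path in $\phi_\Gamma^{-1}(W)$ joining $u$ and $u'$.

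There is essentially no obstacle here; the only thing to be slightly careful about is that the lift of the $\Gamma_T$-path truly remains in $\phi_\Gamma^{-1}(W)$, which is automatic because the path was chosen to lie in $W$ and each step either traverses an edge of $\Gamma_S$ mapping to an edge of $W$ or moves within a fibre $\phi_\Gamma^{-1}(v)$ for $v \in W$. Thus the proof should be a brief two or three sentences citing the description of $\phi_\Gamma$ as contraction of edges with unit labels and invoking the connectedness of the fibres and of $W$, $H^c$.
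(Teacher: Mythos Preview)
Your proposal is correct and follows the same approach as the paper: reduce to showing that $\phi_\Gamma^{-1}$ takes connected sets to connected sets, then apply this to both $H$ and $H^c$. The paper's proof is simply a terser version of yours, dismissing the connectedness of the preimage with ``this is clear since $\phi_\Gamma$ simply contracts edges'' where you spell out the path-lifting argument via connectedness of the fibres.
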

\begin{proof}
Since $\phi_\Gamma^{-1}$ preserves intersections and emptiness it is enough to show that the pullback of a connected set is connected, but this is clear since $\phi_\Gamma$ just contracts edges. 
\end{proof}

If $H$ is a hemisphere of $\Gamma_T$, let $G =\phi_\Gamma^{-1}H$. Suppose we are given an enrichment datum 
\begin{equation*}
q_G\colon s^*\ca I_G \twoheadrightarrow \cl F_G
\end{equation*}
for $G$. Note that $\phi^*s^*\ca I_G = t^*\ca I_H$, and the pullback of a torsion free rank 1 sheaf is torsion free rank 1, and surjectivity is stable under pullback. We thus obtain an enrichment datum 
\begin{equation*}
q_H \coloneqq \phi^*q_G \colon t^*\ca I_H \to \phi^* \cl F_G. 
\end{equation*}
In this way, we build an enrichment datum for $C_T/T$ from one for $C_S/S$. 

\begin{lemma}\label{lem:compatible_pullback}
Let 
\begin{equation*}
\left(q_G\colon s^*\ca I_G \twoheadrightarrow \cl F_G\right)_G
\end{equation*}
be a \emph{compatible} enrichment datum for $C/S$. Then the pullback to $T$ (as constructed above) is also compatible. 
\end{lemma}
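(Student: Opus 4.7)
The plan is to match compatibility on $T$ for a non-empty subset $B$ of edges of $\Gamma_T$ with compatibility of the original datum on $S$ for the corresponding set $A \coloneqq \phi_\Gamma^{-1}(B)$ consisting of non-contracted edges of $\Gamma_S$ mapping into $B$. Since every edge of $B$ lifts uniquely to a non-contracted edge of $\Gamma_S$, this $A$ is non-empty and we may apply the compatibility hypothesis at $A$.

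The combinatorial heart of the argument is a natural bijection between hemispheres $G$ of $\Gamma_S$ whose separating edges all lie in $A$ and hemispheres $H$ of $\Gamma_T$ whose separating edges all lie in $B$, given by $G \mapsto \phi_\Gamma(G)$ with inverse $H \mapsto \phi_\Gamma^{-1}(H)$. The key observation is that requiring every separating edge of $G$ to lie in $A$ forces none of them to be contracted; the contracted edges of $\Gamma_S$ then lie entirely inside $G$ or entirely inside $G^c$, so $\phi_\Gamma(G)$ and $\phi_\Gamma(G^c)$ partition $\on{Vert}(\Gamma_T)$ into two connected subsets, giving a hemisphere of $\Gamma_T$. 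The inverse direction uses the preceding lemma. This also explains why no ``stray'' hemispheres of $\Gamma_S$ with contracted separating edges contribute to the relevant sum on $S$.

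Under the bijection $G \leftrightarrow H$, I would verify that the algebraic data match up under $\phi^*$. By $S$-flatness of each $\cl F_G$, kernel formation commutes with base change, so $\phi^* K_G = K_H$; choosing the sections on $T$ to be pulled back from those on $S$ then gives $\phi^*\bb K_G = \bb K_H$. Since $A$ contains only non-contracted edges, the ideals pull back cleanly: $s^*\ca I_A$ pulls back to $t^*\ca I_B$ and $\ca J_G$ to $\ca J_H$, and the inclusion-induced map $f_{A,G}$ pulls back to $f_{B,H}$. Right exactness of pullback then yields $\phi^*\bigl(f_{A,G}(\bb K_G)\bigr) = f_{B,H}(\bb K_H)$, and applying right exactness once more to the quotient gives
\begin{equation*}
\phi^*\left(\frac{s^*\ca I_A}{\sum_G f_{A,G}(\bb K_G)}\right) \;=\; \frac{t^*\ca I_B}{\sum_H f_{B,H}(\bb K_H)}.
\end{equation*}
By the compatibility hypothesis on $S$, the left-hand side has support equal to $S$; since the support condition for a coherent sheaf is compatible with pullback (a stalk of the pullback is non-zero iff the original stalk is non-zero, by Nakayama), the right-hand side has support equal to $T$, which is the desired compatibility on $T$.

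The main obstacle is the bookkeeping in the hemisphere bijection, and in particular the insistence that $A$ be taken to be exactly $\phi_\Gamma^{-1}(B)$ so that no hemisphere of $\Gamma_S$ with a contracted separating edge sneaks into the sum. A secondary subtlety, worth pinning down carefully in the writeup, is the claim that ``$\on{Supp}\to S$ is an isomorphism'' is preserved by pullback scheme-theoretically (not merely set-theoretically); this should follow either by interpreting support via the $0$th Fitting ideal, which is compatible with pullback, or directly from the fact that the support condition is stable under base change for right-exact operations on finitely presented sheaves.
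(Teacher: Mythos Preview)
Your proposal is correct and follows essentially the same approach as the paper: you set $A = \phi_\Gamma^{-1}(B)$, establish the bijection between hemispheres of $\Gamma_S$ with separating edges in $A$ and hemispheres of $\Gamma_T$ with separating edges in $B$ via $\phi_\Gamma^{-1}$, use $S$-flatness of the $\cl F_G$ to make kernel formation commute with base change, and conclude via compatibility of support with pullback. The paper's proof is slightly terser on the algebraic side (simply asserting that pullbacks, quotients, kernels to flat targets, and supports are all compatible with base change, citing \cite[\href{http://stacks.math.columbia.edu/tag/056J}{Lemma 056J}]{stacks-project} for the last), but the combinatorial core --- the hemisphere bijection and the reason no hemisphere with a contracted separating edge appears --- is argued exactly as you do.
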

\begin{proof}
Formation of pullbacks, quotients and the Fitting support are compatible with base-change. Formation of kernels of surjections to \emph{flat} targets are also compatible with base change. Thus the problems are essentially combinatorial.

Let $E_T'$ be a non-empty subset of the edges of $\Gamma_T$. Let $E_S'$ be the set of edges of $\Gamma_S$ which map to edges in $E_T'$ (so in particular $\# E_T' = \#E_S'$). Let $H$ be a hemisphere of $\Gamma_T$ with all separating edges of $H$ contained in $E_T'$. Then every separating edge of $\phi_\Gamma^{-1}H$ is contained in $E_S'$, since it maps to a separating edge of $H$. Conversely, if $G$ is a hemisphere of $\Gamma_S$ with every separating edge of $G$ contained in $E_S'$ then $G$ is the pullback of a hemisphere from $\Gamma_T$; indeed, any vertex $v$ of $\phi_\Gamma^{-1}\phi_\Gamma(G)$ which is not contained in $G$ must be connected to some vertex of $G$ by a chain of edges which are all contracted by $\phi_\Gamma$, but then none of these edges are in $E_S'$, contradicting the existence of such a $v$. 

In this way we see that $\phi_\Gamma^{-1}$ induces a bijection between hemispheres of $\Gamma_T$ with all separating edges in $E_T'$ and hemispheres of $\Gamma_S$ with all separating edges in $E_S'$. Thus in the notation of \ref{def:compatibility} we see that 
\begin{equation*}
\phi^* \frac{s^*\ca{J}_{E_S'}}{\sum_Gf_{E_S',G}(\bb{K}_G)}  = \frac{t^*\ca{J}_{E_T'}}{\sum_H f_{E_T',H}(\bb{K}_H)} 
\end{equation*}
where the sums run over the relevant sets of hemispheres discussed above. The result is clear since the formation of the Fitting support of a coherent module commutes with pullback, see \ref{fitt_0_in_ann}. 
%\cite[\href{http://stacks.math.columbia.edu/tag/056J}{Lemma 056J}]{stacks-project}. \todo{Owen: The argument being that if we take something supported everywhere, then pull it back, it's still supported everywhere? Is that true? I'm imagining including a point into a point with fluff, and pulling back the fluff. D: formation of support of coherent modules commutes with base change --- added a ref. }{Change ref to our lemma about the Fitting support. }
\end{proof}

\begin{definition}\label{def:CES_sscont}
Write $\cat{\sigma\textbf{-Sch}}_{\frak M}$ for the full subcategory of schemes over $\frak {M}$ whose objects are $\sigma$-controlled curves. Then by \ref{lem:ssc_base_for_topology} this is a base for the \'etale topology on $\cat{Sch}_\frak M$. 

Let $\overline{\ca E}\colon \cat{\sigma\textbf{-Sch}}^{op}_{\frak M} \to \cat{Set}$ be the functor sending a curve to the set of compactified enriched structures on it, with the pullback defined just above. This is the \emph{functor of compactified enriched structures} on the subcategory of $\sigma$-controlled curves. 
\end{definition}

\subsection{Compactified enriched structures for general $C/S$, and properties}
To define compactified enriched structures for general (not necessarily $\sigma$-controlled) $C/S$ we need to show the functor of compactified enriched structures defined in \ref{def:CES_sscont} is a sheaf for the \'etale topology (c.f. \ref{sec:ES_on_controlled_curves}). As in the non-compactified case we will make use of a representability result for a related functor, so we begin by defining some functors related to $\overline{\ca E}$. 
%More precisely, we need to show that if $C/S$ is a $\sigma$-controlled curve, $\{T_i\}_I \ra S$ is an \'etale cover by $\sigma$-controlled curves, and for each $i, j \in I$ we have a cover $\{T_{i,j,k}\}_{K_{i,j}}$ of $T_i \times_S T_j$ by $\sigma$-controlled curves, then the following diagram is an equaliser:
%\begin{equation}\label{eq:basic_equaliser}
%\bar{\ca{E}}(S) \ra \prod_i \bar{\ca{E}}(T_i) \rightrightarrows \prod_{i,j,k} \bar{\ca{E}}(T_{i,j,k}). 
%\end{equation}

Let $C/S$ be a $\sigma$-controlled curve, with graph $\Gamma$, controlled neighbourhood $S \ra \schart$ and immediate neighbourhood $S \stackrel{s}{\to}\schart_S \sub \schart$. If $G$ is a hemisphere of $\Gamma$, let
\begin{equation*}
\ca{ED}_{S,G}\colon \cat{Sch}_S^{op} \to \cat{Set}
\end{equation*}
be the functor sending an $S$-scheme $T\stackrel{f}{\to} S$ to the set of isomorphism classes of torsion free rank 1 quotients of $f^*s^*\ca{I}_{G^c}$, with the evident notion of pullback. Let
\begin{equation*}
\ca{ED}_S\colon \cat{Sch}_S^{op} \to \cat{Set}
\end{equation*}
be the fibre product over $S$ of the functors $\ca{ED}_{S,G}$ as $G$ runs over hemispheres of $\Gamma$. Note that $\ca{ED}_S(S)$ is exactly the set of enrichment data on $S$. Finally, let 
\begin{equation*}
\overline{\ca{E}}_S\colon \cat{Sch}_S^{op} \to \cat{Set}
\end{equation*}
be the subfunctor of $\ca{ED}_S$ obtained by requiring that the enrichment data are compatible. More precisely, we choose sections $\sigma_v$ of $C_S/S$ through the smooth locus of each irreducible component $v$ of any controlling fibre, and define $\bb{K}_G$ to be the pullback to $T$ along $f^*\sigma_v$ (any $v \in G^c$) of the kernel of $q_G\colon f^*s^*\ca I_{G^c} \to \ca{F}_G$. We then require that for each non-empty set $E$ of edges of $\Gamma$, the inclusion of the Fitting support of the coherent $\ca{O}_T$-module 
\begin{equation}\label{eq:sup}
\frac{f^*s^*\ca{J}_{E}}{\sum_G \bb{K}_G}
\end{equation}
to $T$ is an isomorphism (the sum running over those $G$ all of whose separating edges are contained in $E$). Again, if $S=T$ we immediately see that $\overline{\ca E}_S(S) = \overline{\ca E}(S)$. 

We made similar definitions in the non-compactified case, but here things are actually much simpler; we do not have to worry about 1-alignment, instead we have a very general lemma:
\begin{lemma}\label{lem:Ebar_S_is_Ebar}
For all $\sigma$-controlled $T/\frak{M}$ and $\frak{M}$-maps $T \to S$, we have that $\overline{\ca{E}}_S(T) = \overline{\ca E}(T)$. 
\end{lemma}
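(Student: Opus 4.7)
The plan is to build a natural bijection between $\bar{\ca E}_S(T)$ and $\bar{\ca E}(T)$, despite the former being indexed by hemispheres of $\Gamma$ (the graph of $S$) and the latter by hemispheres of $\Gamma_T$. The specialisation contraction $\phi_\Gamma\colon \Gamma \to \Gamma_T$, whose contracted edges are precisely those with labels becoming units on the immediate neighbourhood $U_T\hookrightarrow U_S$, organises the whole correspondence.

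First, I would show that every hemisphere $G$ of $\Gamma$ with at least one contracted separating edge gives rise to a canonical triviality $f^*s^*\ca I_{G^c}\cong \ca O_{C_T}$. Indeed, unwinding \ref{def:ideal_sheaves}, the closed subscheme $Z(G^c)\subset U_S$ is cut out by the labels of the separating edges of $G$, and when one of these labels is a unit on $U_T$ we get $Z(G^c)\cap U_T=\emptyset$; hence the blow-up $\bar{G^c}$ becomes empty after pullback to $C_{U_T}$ and its ideal sheaf is $\ca O_{C_{U_T}}$. Since the only torsion free rank 1 quotient of $\ca O_{C_T}$ with the correct Euler characteristic is the identity, the datum $q_G$ is forced, with $\bb K_G = 0$. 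In particular these ``extra'' hemispheres of $\Gamma$ contribute no information to an element of $\bar{\ca E}_S(T)$.

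Second, for hemispheres $G$ of $\Gamma$ with no contracted separating edge, the combinatorial argument in the proof of \ref{lem:compatible_pullback} shows that $G=\phi_\Gamma^{-1}(H)$ for a unique hemisphere $H$ of $\Gamma_T$, and every hemisphere of $\Gamma_T$ arises this way. For such a pair, the blow-up recipe of \ref{def:ideal_sheaves} gives a canonical identification $f^*s^*\ca I_{G^c}=t^*\ca I_{H^c}$ on $C_T$: the separating sections of $G$ over $U_T$ are precisely those of $H$. Hence the non-forced part of an enrichment datum in $\bar{\ca E}_S(T)$ is in canonical bijection with an enrichment datum for $\Gamma_T$ on $T$.

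Finally, I would verify the compatibility conditions match. For non-empty $A\subseteq\on{Edges}(\Gamma)$, write $A=A_{\on{contr}}\sqcup A'$ with $A_{\on{contr}}$ the contracted edges. Since $\bb K_G = 0$ for hemispheres $G$ with a contracted separating edge, the sum $\sum_G\bb K_G$ reduces to a sum over $G=\phi_\Gamma^{-1}(H)$ with $\on{sep}(H)\subseteq\phi_\Gamma(A')$; if $A_{\on{contr}}=\emptyset$ the compatibility for $A$ in $\bar{\ca E}_S(T)$ then becomes exactly the compatibility for $B=\phi_\Gamma(A')$ in $\bar{\ca E}(T)$. If $A_{\on{contr}}\neq\emptyset$ then $f^*s^*\ca I_A = \ca O_{C_T}$ and one must check that the condition on $\ca O_{C_T}/\sum_H \bb K_H$ is automatic given the condition for $B=\phi_\Gamma(A')$ (and conversely implies nothing new). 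The main obstacle is precisely this last verification: it requires a careful inspection showing that the ``trivial numerator'' condition is implied by (and so adds nothing to) the honest compatibility for $B=\phi_\Gamma(A')$, so that letting $A$ range over all non-empty subsets of $\on{Edges}(\Gamma)$ yields exactly the collection of conditions indexed by non-empty subsets of $\on{Edges}(\Gamma_T)$.
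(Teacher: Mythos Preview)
Your proposal is correct and follows essentially the same route as the paper, which also reduces to the combinatorics of \ref{lem:compatible_pullback} plus a check that sets $A$ containing a contracted edge impose no condition. The point you flag as the ``main obstacle'' is resolved more directly than you suggest: you do not need to invoke compatibility for $B=\phi_\Gamma(A')$. When $A$ contains a contracted edge, $f^*s^*\ca I_A$ is canonically $\ca O_T$, and the image of \emph{every} relevant $\bb K_G$ in it is already zero. Indeed, the composite $f^*s^*\ca J_G \to f^*s^*\ca I_A \cong \ca O_T$ sends the generator corresponding to a separating edge $e'$ to its label $b_{e'}\in\ca O_T$, and the identity $\sum_i b_i c_i = 0$ from the proof of \ref{lem:support of kernel} shows that every element $\sum c_{e'} L_{e'}\in\bb K_G$ maps to zero. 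Hence the quotient is $\ca O_T$, supported on all of $T$, and the condition is vacuous. (Minor slip: your $\ca O_{C_T}$ in the last paragraph should be $\ca O_T$; the compatibility condition lives on the base, not on the curve.)
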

More precisely, the discussion in the proof of \ref{lem:compatible_pullback} gives an obvious map $\overline{\ca{E}}_S(T) \to \overline{\ca E}(T)$, and we will prove that this is an isomorphism. 
\begin{proof}
This almost comes from looking at the proof of \ref{lem:compatible_pullback}. The only extra thing we need to observe is that if $E$ is a nonempty set of edges of $\Gamma = \Gamma_S$ which does not come by pullback from the graph $\Gamma_T$ of $C_T/T$ (equivalently, at least one edge in $E$ is contracted under the map to $\Gamma_T$) then $f^*s^*\ca J_{E}$ is canonically trivial, and all the images of relevant $\bb{K}_G$ in it are zero, so the quotient in (\oref{eq:sup}) clearly has Fitting support equal to $T$. 
\end{proof}

\begin{lemma}\label{lem:some_representability}\leavevmode
\begin{enumerate}
\item
For each hemisphere $G$ of $\Gamma$, the functor $\ca{ED}_{S,G}\colon \cat{Sch}_S^{op} \to \cat{Set}$ is representable by a separated $S$-scheme. 
\item 
The functor $\ca{ED}_S\colon \cat{Sch}_S^{op} \to \cat{Set}$ is representable by a separated $S$-scheme. 
\item
The functor $\overline{\ca{E}}_S\colon \cat{Sch}_S^{op} \to \cat{Set}$ is representable by a separated $S$-scheme. 
\end{enumerate}
\end{lemma}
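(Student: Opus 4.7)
The three parts build on each other.

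For part (1), I would invoke Grothendieck's Quot scheme. Since $C_S \to S$ is a proper, flat, projective family of nodal curves, the Quot functor of quotients of $s^*\ca I_{G^c}$ with fixed Hilbert polynomial is representable by a projective $S$-scheme. The Euler characteristic condition determines the Hilbert polynomial on each connected component of $S$ (picking out a union of such components), and the conditions of being torsion free and of rank one on fibres are open on flat families of coherent sheaves (standard openness results, cf.\ the verification that being flat with locally free fibres is open). Hence $\ca{ED}_{S,G}$ is representable by an open subscheme of a projective $S$-scheme, in particular separated over $S$.

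For part (2), $\ca{ED}_S$ is by construction the fibre product over $S$ of the finitely many $\ca{ED}_{S,G}$, one for each hemisphere $G$ of $\Gamma$. Representability and separatedness over $S$ are preserved by finite fibre products over $S$.

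For part (3), on $\ca{ED}_S$ we have a universal enrichment datum, producing for each non-empty set $A$ of edges of $\Gamma$ a universal coherent sheaf $\ca G_A^{\mathrm{univ}} = s^*\ca I_A/\sum_G f_{A,G}(\bb{K}_G^{\mathrm{univ}})$, whose formation commutes with base change by the $S$-flatness of the $\cl F_G$ (hence of the $\bb{K}_G^{\mathrm{univ}}$). The compatibility condition at $\phi\colon T \to \ca{ED}_S$ says that $\on{Ann}_{\ca O_T}(\phi^*\ca G_A^{\mathrm{univ}}) = 0$ for each such $A$. My plan is to show this defines a subfunctor representable by a locally closed subscheme of $\ca{ED}_S$, and to conclude since a locally closed subscheme of a separated $S$-scheme is a separated $S$-scheme. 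The key technical input is \ref{lem:enhanced_structure_of_ED}: the submodules $\bb{K}_G^{\mathrm{univ}} \sub \ca J_G$ are, locally on the boundary strata where the labels of separating edges of $G$ vanish, given by the image of a linear map of free $\ca O$-modules whose matrix entries lie in $\ca O_S$. This reduces the compatibility condition to a determinantal condition on the entries of an explicit universal matrix, expressible by the vanishing of appropriate Fitting ideals; Fitting ideals are stable under base change, so they cut out locally closed subschemes of $\ca{ED}_S$. Intersecting over the finitely many non-empty $A \sub \on{Edges}(\Gamma)$ yields $\bar{\ca{E}}_S \hookrightarrow \ca{ED}_S$.

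The main obstacle is the reformulation in part (3). The naive attempt would be to cut out $\bar{\ca E}_S$ as the preimage of the scheme-theoretic support of $\ca G_A^{\mathrm{univ}}$ in $\ca{ED}_S$; but this gives only a necessary condition for compatibility, not a sufficient one, since $\on{Ann}$ does not commute with arbitrary base change (pulling back a non-faithful module along a non-flat map can enlarge the annihilator, as one sees by base-changing $k[x,y]/(x)\oplus k[x,y]/(y)$ along $k[x,y]\to k[x]$, $y\mapsto x$). The substantive work is therefore to exploit the explicit determinantal structure of the $\bb{K}_G$ from \ref{lem:enhanced_structure_of_ED} to obtain a base-change-stable reformulation of the ``support equals $T$'' condition, and then to verify that this Fitting-ideal reformulation recovers exactly the scheme-theoretic support condition rather than a weaker or stronger one.
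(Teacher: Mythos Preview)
Your parts (1) and (2) match the paper's argument essentially word for word.

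For part (3) the paper's proof is a single sentence: $\bar{\ca E}_S$ is represented by the (scheme-theoretic) support of the universal coherent sheaf $\bigoplus_A \ca G_A^{\mathrm{univ}}$ on $\ca{ED}_S$, hence by a \emph{closed} subscheme of a separated $S$-scheme, and is therefore separated. Your route through \ref{lem:enhanced_structure_of_ED} and an explicit determinantal description on boundary strata is unnecessary, and in fact gappier than what you are trying to improve on: that lemma is only stated on the locus where the labels of the separating edges of $G$ vanish, so you would have to stratify $\ca{ED}_S$, compute Fitting ideals stratum by stratum, and then argue that the pieces patch to a single locally closed subscheme. That last step is real work and you have not indicated how to do it.

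Your worry about annihilators not commuting with base change is a legitimate subtlety that the paper glosses over. But the clean fix is not the stratification you propose: it is simply to read ``support'' as $V(\on{Fitt}_0)$. The modules $\ca G_A^{\mathrm{univ}}$ are finitely presented (quotients of the finitely presented $s^*\ca I_A$), their zeroth Fitting ideals commute with arbitrary base change, and the compatibility condition at $\phi\colon T \to \ca{ED}_S$ becomes exactly $\on{Fitt}_0(\phi^*\ca G_A^{\mathrm{univ}}) = \phi^{-1}\on{Fitt}_0(\ca G_A^{\mathrm{univ}})\cdot \ca O_T = 0$, i.e.\ that $\phi$ factor through the closed subscheme $V(\on{Fitt}_0(\ca G_A^{\mathrm{univ}}))$. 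This recovers the paper's one-line argument, gives a closed (not merely locally closed) subscheme, and requires no appeal to the fine structure of the $\bb K_G$.
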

\begin{proof}
For (1), we note that $\ca{ED}_{S,G}$ is naturally a subfunctor of a quot scheme. The inclusion of this subfunctor is moreover relatively representable, since the inclusion of torsion free rank 1 sheaves in the stack of all finitely presented sheaves is relatively representable, and fixing the Euler characteristic is an open and closed condition. We thus see that $\ca{ED}_{S,G}$ is representable, and we need to check separatedness. All the torsion free rank 1 quotients appearing as $G$-enrichment data have the same Hilbert polynomial (because they are required to have the same Euler characteristic), and thus $\ca{ED}_{S,G}$ is a subfunctor of a quot functor for a  fixed Hilbert polynomial. Such a functor satisfies the valuative criterion for properness with respect to discrete valuation rings by \cite[\S 5.5.7]{Fantechi2005Fundamental-alg}. Moreover, since $C/S$ and the various ideal sheaves we consider are of finite presentation we can reduce to the case where $S$ is noetherian, so $\ca{ED}_{S,G}$ is a subfunctor of a separated functor. 

Part (2) holds since a product of separated schemes is a separated scheme. The third functor is represented by the intersection of Fitting supports of suitably chosen coherent sheaves on the second, namely, the universal versions of the sheaves in \ref{def:compatibility}. The Fitting support is closed and it's formation commutes with arbitrary base-change (\ref{fitt_0_in_ann}), so the third functor is also represented by a separated $S$-scheme. 
\end{proof}

\begin{corollary}
The functor $\overline{\ca E}$ is a sheaf for the \'etale topology. 
\end{corollary}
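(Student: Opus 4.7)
The plan is to reduce the sheaf property for $\bar{\ca E}$ to the representability result of Lemma \ref{lem:some_representability}, using the identification from Lemma \ref{lem:Ebar_S_is_Ebar}. Concretely, since $\sigma$-simple-controlled curves form a base for the big \'etale site over $\ca M$ by \ref{lem:ssc_base_for_topology}, it suffices to verify the sheaf condition on this base. So let $C/S$ be a $\sigma$-simple-controlled curve, let $\{T_i \to S\}_{i \in I}$ be an \'etale cover by $\sigma$-simple-controlled curves, and for each $i,j$ let $\{T_{i,j,k}\}_{k \in K_{i,j}}$ be a cover of $T_i \times_S T_j$ by $\sigma$-simple-controlled curves (both of which exist by \ref{lem:ssc_base_for_topology}). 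We must show that
\begin{equation*}
\bar{\ca E}(S) \to \prod_i \bar{\ca E}(T_i) \rightrightarrows \prod_{i,j,k} \bar{\ca E}(T_{i,j,k})
\end{equation*}
is an equaliser.

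The key point is that by \ref{lem:Ebar_S_is_Ebar} applied to each $T_i$ and $T_{i,j,k}$ (all of which are $\sigma$-simple-controlled), this diagram is canonically isomorphic to
\begin{equation*}
\bar{\ca E}_S(S) \to \prod_i \bar{\ca E}_S(T_i) \rightrightarrows \prod_{i,j,k} \bar{\ca E}_S(T_{i,j,k}),
\end{equation*}
and the identification is natural in the test object (the pullbacks on both sides are defined using the same base-change operations on quotient sheaves, supports, and kernels, all of which commute with \'etale pullback). By \ref{lem:some_representability}, the functor $\bar{\ca E}_S$ on $\cat{Sch}_S^{op}$ is representable by a separated $S$-scheme. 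Since representable presheaves are sheaves for every subcanonical topology, and the \'etale topology is subcanonical, the displayed diagram is an equaliser.

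In contrast to the non-compactified case (cf.\ the discussion around (\oref{eq:basic_equaliser})), there is no additional subtlety arising from the possible failure of $1$-alignment: \ref{lem:Ebar_S_is_Ebar} holds unconditionally, not only when $C/S$ is $1$-aligned. The main piece of work has thus already been absorbed into the previous two lemmas, and the present corollary is essentially a formal consequence. The only thing to double-check (and the closest thing to an obstacle) is that the comparison $\bar{\ca E}_S \cong \bar{\ca E}$ on $\sigma$-simple-controlled $S$-curves is compatible with the restriction maps appearing in the equaliser diagram, which follows by unwinding the definitions of pullback given just after \ref{def:compatibility}.
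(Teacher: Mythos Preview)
Your proof is correct and follows essentially the same approach as the paper: reduce to the base of $\sigma$-simple-controlled curves, invoke \ref{lem:Ebar_S_is_Ebar} to replace $\bar{\ca E}$ by $\bar{\ca E}_S$ on all terms of the equaliser diagram, and then use the representability of $\bar{\ca E}_S$ from \ref{lem:some_representability}. The paper's version is terser, but your added remarks on compatibility of pullbacks and the contrast with the non-compactified case are accurate elaborations rather than deviations.
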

\begin{proof}
We need to show that if $C/S$ is a $\sigma$-controlled curve, $\{T_i\}_I \ra S$ is an \'etale cover by $\sigma$-controlled curves, and for each $i, j \in I$ we have a cover $\{T_{i,j,k}\}_{K_{i,j}}$ of $T_i \times_S T_j$ by $\sigma$-controlled curves, then the following diagram is an equaliser:
\begin{equation}
\overline{\ca{E}}(S) \ra \prod_i \overline{\ca{E}}(T_i) \rightrightarrows \prod_{i,j,k} \overline{\ca{E}}(T_{i,j,k}). 
\end{equation}
However, for such $T$ we know that $\overline{\ca{E}}(T) = \overline{ \ca E}_S(T)$ and that the functor $\overline{\ca E}_S$ is representable, and hence a sheaf for any subcanonical topology. 
\end{proof}

\begin{definition}
By \ref{lem:comparison} the sheaf $\overline{\ca E}$ extends uniquely to a sheaf on the big \'etale site over $\frak{M}$. We denote this extension also by $\overline{\ca E}$, and call it the functor of compactified enriched structures. 
\end{definition}

\begin{corollary}
The functor $\overline{\ca E}$ is representable by a separated algebraic space over $\frak{M}$. 
\end{corollary}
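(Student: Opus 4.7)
The plan is to exploit the local representability from \ref{lem:some_representability} together with the étale sheaf property (just established in the preceding corollary) to obtain an algebraic space by étale descent. By \ref{lem:ssc_base_for_topology}, we may choose an étale cover $\{U_\alpha \to \ca M\}_{\alpha \in A}$ by $\sigma$-simple-controlled curves. By \ref{lem:some_representability}(3), for each $\alpha$ the restriction $\bar{\ca E}_{U_\alpha}$ is representable by a separated $U_\alpha$-scheme $X_\alpha$, and by \ref{lem:Ebar_S_is_Ebar} the functor $\bar{\ca E}_{U_\alpha}$ agrees with the restriction of $\bar{\ca E}$ to the site of $\sigma$-simple-controlled schemes over $U_\alpha$.

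First I would verify that the $X_\alpha$ glue correctly over the overlaps. For any pair $\alpha,\beta$, the fibre product $U_\alpha \times_{\ca M} U_\beta$ is étale over both $U_\alpha$ and $U_\beta$, and hence étale-locally $\sigma$-simple-controlled. On such a common $\sigma$-simple-controlled refinement $V$, both $X_\alpha \times_{U_\alpha} V$ and $X_\beta \times_{U_\beta} V$ represent the restriction of $\bar{\ca E}$ to $\cat{Sch}_V$ (using \ref{lem:Ebar_S_is_Ebar} on each side), and are therefore canonically isomorphic. These canonical isomorphisms satisfy the cocycle condition automatically because they are induced from the universal property. Taking $X \coloneqq \bigsqcup_\alpha X_\alpha$ and the evident étale equivalence relation $R \rightrightarrows X$ coming from the gluing data (together with the cover of $U_\alpha \times_{\ca M} U_\beta$), we obtain an algebraic space $\bar{\ca E}_{\mathrm{glue}} = X/R$ over $\ca M$.

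Next I would show that $\bar{\ca E}_{\mathrm{glue}}$ represents the sheaf $\bar{\ca E}$. Since both are étale sheaves on $\cat{Sch}_{\ca M}$ and they agree when restricted to $\sigma$-simple-controlled curves (being represented there by the same schemes $X_\alpha$), \ref{lem:comparison} produces a canonical isomorphism between them. Thus $\bar{\ca E}$ is representable by the algebraic space $\bar{\ca E}_{\mathrm{glue}}$.

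Finally, separatedness is étale-local on the target, so it suffices to check that $\bar{\ca E}_{\mathrm{glue}} \times_{\ca M} U_\alpha \to U_\alpha$ is separated for each $\alpha$; but this pullback is $X_\alpha$, which is separated over $U_\alpha$ by \ref{lem:some_representability}(3). The main subtlety to be careful about is simply the bookkeeping of the gluing data: one must either work on a common étale refinement of $U_\alpha \times_{\ca M} U_\beta$ by $\sigma$-simple-controlled opens (existing by \ref{lem:ssc_base_for_topology}) and use the sheaf property to descend the induced isomorphisms, or equivalently invoke the general fact that an étale sheaf whose restriction to some étale cover is representable by a scheme is automatically an algebraic space. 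Either way no serious obstacle arises, because all the technical input---representability on charts, compatibility on overlaps, and the sheaf property---has been assembled in the preceding lemmas and corollary.
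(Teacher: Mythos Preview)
Your proposal is correct and follows essentially the same approach as the paper: use the \'etale sheaf property to reduce to $\sigma$-simple-controlled charts, where representability and separatedness are given by \ref{lem:Ebar_S_is_Ebar} and \ref{lem:some_representability}(3), and then invoke that separatedness is \'etale-local on the target. The paper's proof is simply a terser version of yours, appealing directly to the principle that an \'etale sheaf which is locally representable by a (separated) scheme is a (separated) algebraic space rather than spelling out the gluing.
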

\begin{proof}
By definition $\overline{\ca E}$ is a sheaf, so we can check representability locally on $\frak{M}$, say on $\sigma$-controlled patches. The result is then immediate by combining \ref{lem:Ebar_S_is_Ebar} and part (3) of \ref{lem:some_representability}. The separatedness follows by again applying part (3) of \ref{lem:some_representability}, using that separatedness is local on the target (see \cite[\href{http://stacks.math.columbia.edu/tag/02YJ}{Section 02YJ}]{stacks-project}). 
\end{proof}

\section{Properness of the stack of compactified enriched structures}\label{sec:properness}

Properness is fpqc-(hence smooth-)local on the target, so it can be checked on any smooth cover. From now until the end of the present \ref{sec:properness} we will thus replace $\frak M$ by a scheme mapping smoothly to $\frak M$ (we continue to denote it $\frak M$ to avoid cluttering the notation). Moreover we can now assume that $\upsC /\frak{M}$ is $\sigma$-controlled with graph $\Gamma$. Since the compatibility condition is defined by the vanishing of a certain Fitting ideal, we see that $\overline{\ca E}$ is a closed subscheme of the fibre product over $\frak M$ of functors $\ca{ED}_{\frak M, H}$ as $H$ runs over hemispheres of $\Gamma$. Hence it is enough to show that each $\ca{ED}_{\frak M, H}$ is proper over $\frak M$. Fixing a hemisphere $H$, we will construct an isomorphism between $\ca{ED}_{\frak M, H}$ and the blowup of $\frak M$ at the ideal sheaf $\ca J_H$ (generated by labels of separating edges of $H$), which is clearly proper over $\frak M$. 

Let $\sigma$ be a section through the smooth locus of $\upsC /\frak M$ passing through a component in $H^c$. If $s\colon S \to \frak M$ is any scheme then an object of $\ca{ED}_{\frak M, H}$ is a torsion-free rank 1 quotient 
\begin{equation*}
s^* \ca I_{H^c} \twoheadrightarrow \cl F_H, 
\end{equation*}
and pulling back along $\sigma$ we obtain an invertible quotient 
\begin{equation*}
s^* \ca J_H \to \sigma^* \cl F_H
\end{equation*}
(here (and elsewhere) we abuse notation --- the section $\sigma$ is over $\frak M$, but we pull it back to make a section of $C_S/S$, and use the same notation for this new section). 
\todo{Owen: I'm having trouble parsing these combinations. Isn't $s^* \ca I_{H^c} \twoheadrightarrow \cl F_H$ taking place on $C_S$, but $\sigma$ a map from $\frak M$ to $\ca{C}$? D: All that you say is true. But I am implicitly base-changing the section to make a section over $S$.  Added a few words, please say if not happy with this. }
We thus obtain a natural transformation from torsion-free rank 1 quotients of $s^*\ca I_{H^c}$ on $\upsC_S$ to invertible quotients of $s^* \ca J_H$ on $S$. 
\begin{lemma}\label{lem:equivalence_of_ED_and_blowup}
The above natural transformation is an isomorphism of functors $\cat{Sch}_{\frak M}^{op} \to \cat{Set}$. 
\end{lemma}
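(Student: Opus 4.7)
The plan is to construct a two-sided inverse $\Phi$ to the given natural transformation by leveraging the structure theorems \ref{lem:enhanced_local_structure_of_pullbacks} and \ref{lem:enhanced_structure_of_ED}. First, observe that the target functor (invertible quotients of $s^*\ca J_H$) is representable by the blowup $B := \mathrm{Bl}_{\ca J_H}\ca M$ via the universal property of blowups, while the source $\ca{ED}_{\ca M, H}$ is representable by \ref{lem:some_representability}. Over the open locus $\ca M \setminus V(\ca J_H)$ where $\ca J_H$ is the unit ideal, $s^*\ca I_{H^c}$ is already invertible with unique TFR1 quotient (itself) and $s^*\ca J_H$ has a unique invertible quotient (itself), so the natural transformation is trivially an isomorphism there. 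It therefore suffices to establish an inverse on a neighbourhood of $V(\ca J_H)$.

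The construction of $\Phi$ is local. Given an invertible quotient $q\colon s^*\ca J_H \twoheadrightarrow \cl L$, choose a local trivialisation $\ell$ of $\cl L$ and write $q(a_b) = \lambda_b \ell$ for $b \in B_H$, where $a_b$ is the label of the separating edge $b$. Surjectivity of $q$ forces $(\lambda_b)_{b \in B_H}$ to generate the unit ideal in $\ca O_S$ locally, and the Koszul relations in $\ca J_H$ (arising from the normal crossings structure of $\ca M$ along $V(\ca J_H)$) give identities $a_b \lambda_{b'} = a_{b'} \lambda_b$ in $\ca O_S$. Using \ref{lem:enhanced_local_structure_of_pullbacks} to identify $s^*\ca I_{H^c}|_{C_{H^c}}$ with $\bigoplus_{b \in B_H} \ca O_{C_{H^c}}(b)$, we define $\cl F_H|_{C_{H^c}}$ as the quotient by the submodule generated locally by the elements $\lambda_{b'}\bd v_b - \lambda_b \bd v_{b'}$, which matches the local model in part (3) of \ref{lem:enhanced_structure_of_ED}. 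On $C_H$, we set $\cl F_H|_{C_H} := \ca O_{C_H}(-B_H)$, killing the torsion part $T_H$ of $s^*\ca I_{H^c}|_{C_H}$. These pieces are glued along $B_H$ using the compatibility isomorphisms of \ref{lem:enhanced_local_structure_of_pullbacks}, yielding a coherent sheaf $\cl F_H$ on $C_S$ together with a surjection from $s^*\ca I_{H^c}$. Part (4) of \ref{lem:enhanced_structure_of_ED} shows that $\cl F_H$ is TFR1 (invertible precisely where $\lambda_b$ is a unit), and the Euler characteristic condition is checked fibrewise. Independence from the choice of $\ell$ is immediate, since rescaling $\ell$ by $u \in \ca O_S^\times$ rescales all $\lambda_b$ simultaneously and preserves the defining submodule.

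Finally, one checks that $\Phi$ is a two-sided inverse. Pulling back $\Phi(q)$ along $\sigma$ through the smooth locus of a component of $H^c$ sends $a_b$ to $\lambda_b \ell$ by construction, so $\Psi \circ \Phi = \mathrm{id}$ tautologically. Conversely, given a TFR1 quotient $\cl F_H$ of $s^*\ca I_{H^c}$, part (3) of \ref{lem:enhanced_structure_of_ED} tells us the kernel of the quotient map is entirely determined by the matrix of $\sigma^*$-pulled-back scalars, so $\Phi(\sigma^*\cl F_H) \cong \cl F_H$ canonically. The main obstacle is ensuring that the local construction of $\cl F_H|_{C_{H^c}}$ is globally well-defined and glues coherently across $C_S$; this is precisely what the explicit local model in \ref{lem:enhanced_structure_of_ED} is designed to deliver, and independence of choices plus separatedness of both functors can be used to rigidify the resulting natural isomorphism.
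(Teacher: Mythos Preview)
Your approach has two genuine gaps. First, \ref{lem:enhanced_local_structure_of_pullbacks} and \ref{lem:enhanced_structure_of_ED} both carry the hypothesis that \emph{every label of a separating edge of $H$ vanishes on $S$}; only under that hypothesis do the closed subschemes $C_H$, $C_{H^c}$ and the sections $B_H$ exist, giving the decomposition you glue along. Saying ``it suffices to work in a neighbourhood of $V(\ca J_H)$'' does not achieve this: over a complete local $S$ whose closed point lands in $V(\ca J_H)$ the labels are typically nonzero non-units (e.g.\ a DVR with all labels equal to a uniformiser), so $C_S$ does \emph{not} split as $C_H\cup_{B_H}C_{H^c}$ and your recipe for $\cl F_H$ has no meaning. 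The paper's proof confronts this head-on by writing the completion along a separating section as $A=R[[x,y]]/(xy-\ell_0)$ with $\ell_0\in R$ arbitrary, and building the candidate quotient $M$ via an explicit presentation over $A$.

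Second, and more seriously, you never verify $S$-flatness of the sheaf you build, which is part of the definition of torsion free rank~1 (\ref{def:TFR1}). Part~(4) of \ref{lem:enhanced_structure_of_ED} concerns invertibility at a fixed singular point of a sheaf \emph{already assumed} TFR1; it says nothing about flatness, and in any case that lemma takes an enrichment datum as input rather than producing one. Flatness of the cokernel is the substantive content of the paper's argument: it occupies Steps~3--5 and requires the auxiliary \ref{lem:giulio_flatness} (checking that $M/(x-y)M$ is $R$-flat and that $x-y$ acts injectively on $M$). Without this, you have constructed a coherent quotient of $s^*\ca I_{H^c}$ with the right fibrewise Euler characteristic, but not an enrichment datum.
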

\begin{proof}
For each $\frak M$-scheme $S$ we must prove that the map from torsion-free rank 1 quotients of $s^*\ca I_{H^c}$ to invertible quotients of $s^* \ca J_H$ above is a bijection. Since both sides are representable functors it is enough to check this in the case where $S$ is the spectrum of a strictly hensellian complete local ring. Write $R = \ca O_S(S)$. 

Let $Z \tra \frak M$ be the closed subscheme where the labels of the separating edges of $H$ vanish (so $Z$ is cut out by $\ca J_H$). Suppose we are given an invertible quotient $q\colon s^* \ca J_H \twoheadrightarrow R$. Write $\bb K$ for the kernel. Our first task will be to construct a corresponding submodule $K$ of $s^* \ca I_{H^c}$, and then to show that the cokernel of $K \to s^* \ca I_{H^c}$ is torsion free of rank 1. In this way we will construct an inverse to the `pullback along $\sigma$' map above, showing that the map of functors is an isomorphism (we will leave it to the reader to check that the map we define is actually an inverse). 

\noindent \textbf{Step 1: Constructing $K$}\\
Let $\iota\colon V(H^c) \tra C_S$ be the closed subscheme corresponding to $H^c$ as in \ref{def:ideal_sheaves}. Then $\pi_{H^c}\colon V(H^c) \to Z$ is proper and flat, and we define $K = \iota_* \pi_{H^c}^*\bb K$ (noting that $\bb K$ is supported on $Z$ by a simpler analogue of \ref{lem:support of kernel}). %{This is fine, but better to be clearer: the point is that the annihilator of this module contains the ideal of $Z$, so push-pull on $Z$ yields an isomorphism. }

\noindent \textbf{Step 2: Constructing a map from $K$ to $s^* \ca I_{H^c}$}\\
Let $U_H = C_S \setminus V(H^c)$. Then $K$ is zero on $U_H$, so the map to $s^* \ca I_{H^c}$ on $U_H$ is clear. Write $\pi\colon C_S \to S$ for the structure map.  If we let $V(H)$ be as defined in \ref{def:ideal_sheaves} (so $C_Z = V(H) \cup V(H^c)$) and set $U_{H^c} = C_S \setminus V(H)$, then on $U_{H^c}$ we find that $s^*\ca I_{H^c} |_{U_{H^c}}= \pi^* s^*\ca J_{H}|_{U_{H^c}}$ and $\pi^*\bb K |_{U_{H^c}} = K|_{U_{H^c}}$, so again the map is clear. Moreover, these two maps agree on the intersection of $U_H$ and $U_{H^c}$. 

However, we are not done yet, since we also need to define a (compatible) map in a neighbourhood of the sections corresponding to separating edges of $H$. Fix such an edge $e$. In order to write down our map (and for later calculations) we need to give presentations of the various modules involved. We may assume $\frak M$ is affine and that the labels of separating edges of $H$ are principal ideals on $\frak M$. Number the separating edges $e_0, \dots, e_n$. Choose a generator $\ell_i$ for each separating edge $i$. From now on we will work locally at the completion of $C_S$ at a point in the singular subscheme corresponding to $e_0$. We see that 
\begin{equation*}
s^*\ca J_H = \frac{R\langle L_0, \dots, L_n\rangle}{\ell_i L_j - \ell_j L_i : 0 \le i, j \le n}. 
\end{equation*}
Writing $A$ for the completion of $C_S$ along $e_0$, we choose an isomorphism $A \cong R[[x,y]]/(xy-\ell_0)$, and let us assume that $x$ vanishes on the component lying in $H$. Then we find 
\begin{equation*}
s^*\ca I_{H^c} = \frac{A\langle X, L_0, \dots, L_n\rangle}{yX-L_0, \ell_i X - xL_i, \ell_i L_j - \ell_j L_i : 0 \le i, j \le n}. 
\end{equation*}
Recalling that $q\colon s^*\ca J_H \twoheadrightarrow R$ is the invertible quotient we started with, we set $q_i \coloneqq q(L_i)$. We see that 
\begin{equation*}
\bb K = \left\{\sum_{i \ge 0} \alpha_i L_i \colon \sum_{i \ge 0} \alpha_i q_i = 0\right\} \sub s^* \ca J_H. 
\end{equation*}
Then 
\begin{equation*}
K = \left\{\sum_{i \ge 0} \alpha_i L_i \colon \sum_{i \ge 0} \alpha_i q_i = 0\right\} \sub \frac{R[[y]]\langle L_0, \dots, L_n\rangle}{\ell_i L_j - \ell_j L_i : 0 \le i, j \le n}. 
\end{equation*}
It is then clear how to define the map $K \to s^* \ca I_{H^c}$; simply send $(\sum_{j \ge 0} r_j y^j)L_i$ to the same symbol considered an an element of $A\langle L_0, \dots, L_n\rangle$. The compatibility with the maps defined above on $U_H$ and $U_{H^c}$ is also clear. 

\noindent \textbf{Step 3: Verifying that the cokernel is torsion free rank 1}\\
We have built a map $K \to s^* \ca I_{H^c}$, and need to check that the cokernel is torsion free of rank 1 and of the correct Euler characteristic. The fibrewise part of this is a straightforward calculation, and is omitted. The hard part is checking that this cokernel is flat over $R$. Writing $M$ for the cokernel, we have a presentation 
\begin{equation*}
M = \frac{A\langle X, L_0, \dots, L_n\rangle}{yX-L_0, \ell_i X - xL_i, \ell_i L_j - \ell_j L_i (0 \le i, j \le n), K}. 
\end{equation*}
%which simplifies to 
%\begin{equation*}
%M = \frac{A\langle X, L_1, \dots, L_n\rangle}{\ell_i X - xL_i, \ell_i L_j - \ell_j L_i (1 \le i, j \le n), K}. 
%\end{equation*}
We want to show $M$ is $R$-flat. From the constructions this is clear away from points in the closed subscheme corresponding to the section $e_0$. This subscheme is contained in the subscheme cut out by the equation $x-y=0$. We wish to apply \ref{lem:giulio_flatness} with $a = x-y$ to check flatness. The lemma does not apply directly because the finite presentation assumptions fail, so we apply the lemma \emph{before} completing along $e_0$, and then observe that the properties we need to check can be verified \emph{after} completion. Hence by \ref{lem:giulio_flatness} it is enough to check the following:
\begin{enumerate}
\item $M/(x-y)M$ is flat over $R$;
\item $(x-y)\colon M \to M$ is injective. 
\end{enumerate}

\noindent \textbf{Step 4: Flatness of $M/(x-y)M$}\\
This is rather easy from the presentation; as $R$-modules we find that 
\begin{equation*}
\begin{split}
\frac{M}{(x-y)M} & = \frac{R[[x]]}{(x^2-\ell_0)} \frac{\langle X, L_0, \dots, L_n\rangle}{xX - L_0, \ell_i X - xL_i, \ell_i L_j - \ell_j L_i (0 \le i, j \le n), K}\\
& = \frac{R\langle X, L_0, L_1, \dots, L_n, xX, xL_0, xL_1, \dots, xL_n\rangle}{xX-L_0, \ell_i X - xL_i, \ell_i L_j - \ell_j L_i (0 \le i, j \le n), K}\\
& \cong \frac{R\langle X, L_0, \dots, L_n\rangle}{\ell_i L_j - \ell_j L_i (0 \le i,j \le n), K}\\
(\text{via }q) & = R\langle X \rangle
\end{split}
\end{equation*}
which is evidently $R$-flat!

\noindent \textbf{Step 5: Injectivity of $(x-y)\colon M \to M$}\\
Let $\bb L$ be the sub-$A$-module of $M$ generated by the $L_i$. By a small diagram chase, it is enough to show 
\begin{enumerate}
\item[2.1.]
$(x-y)\colon \bb L \to \bb L$ is injective;
\item[2.2.]
$(x-y)\colon M/\bb L \to M/\bb L$ is injective
\end{enumerate}
For (2.1), we observe first that we have a presentation
\begin{equation*}
\bb L = \frac{A\langle L_0, \dots, L_n \rangle}{\ell_i L_j - \ell_j L_i (0 \le i ,j \le n), \bb K}, 
\end{equation*}
so $\bb L \cong A$ as an $A$-module (via the map $q$), and so multiplication by $(x-y)$ is easily seen to be  injective. For (2.2), it is clear from the presentations that $M/\bb L$ is again isomorphic as an $A$-module to $A$, and so injectivity again follows. 

In conclusion, we have shown how to build a torsion free rank 1 quotient of $s^* \ca I_{H^c}$ starting from an invertible quotient of $s^* \ca J_{H}$. To show bijectivity, it remains to check that this is actually an inverse of the `pull back along $\sigma$' map, but this is not hard and we omit it. 
\end{proof}

The key technical result we used above was the following beautiful lemma whose statement and proof are adapted from \cite[lemma 1.2.2]{Orecchia2014Torsion-free-ra}. 

\begin{lemma}\label{lem:giulio_flatness}
Let $R$ be a ring, $A$ a finitely presented $R$-algebra, and $M$ a finitely presented $A$ module. Let $a \in A$. Suppose
\begin{enumerate}
\item $M/aM$ is flat over $R$;
\item $a\colon M \to M$ is injective. 
\end{enumerate}
Then for every point $q \in Z(a) \sub \on{Spec} A$, the localisation $M_q$ is flat over $R$. 
\end{lemma}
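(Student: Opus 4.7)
The plan is to apply the local criterion of flatness via $\on{Tor}$, combined with a reduction to the Noetherian case.

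The short exact sequence
\[ 0 \to M \xrightarrow{a} M \to M/aM \to 0 \]
is exact by hypothesis~(2), and combined with the flatness of $M/aM$ over $R$ in the long exact $\on{Tor}$ sequence, shows that multiplication by $a$ is an isomorphism on $\on{Tor}_1^R(M, N)$ for every $R$-module $N$. Since localization is exact and commutes with $\on{Tor}$ in the first argument, the same holds for $T(N) \coloneqq \on{Tor}_1^R(M_q, N)$.

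To conclude that $M_q$ is $R$-flat it suffices, by the $\on{Tor}$ criterion for flatness, to prove $T(R/I) = 0$ for every finitely generated ideal $I \subseteq R$. Since $a$ lies in the maximal ideal of $A_q$ and acts as an isomorphism on $T(R/I)$, Nakayama's lemma will force $T(R/I) = 0$ as soon as it is finitely generated as an $A_q$-module.

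Finite generation is established by a Noetherian reduction. Using the finite presentation of $A$ over $R$ and $M$ over $A$, write $R$ as a filtered colimit of its finitely generated $\bb{Z}$-subalgebras $R_\lambda$ (each Noetherian) and descend the data $(A, M, a, I)$ to $(A_0, M_0, a_0, I_0)$ over some $R_{\lambda_0}$. After possibly enlarging $\lambda_0$, one arranges both that $M_0/a_0 M_0$ is flat over $R_{\lambda_0}$ (by standard descent of flatness for finitely presented modules) and that $a_0$ is $M_0$-regular. Repeating the $\on{Tor}$ computation at this Noetherian stage produces an isomorphism by $a_0$ on the finitely generated $(A_0)_{q_0}$-module $T_0(R_{\lambda_0}/I_0)$ (finite generation now holds automatically since $A_0$ is Noetherian), and Nakayama forces $T_0(R_{\lambda_0}/I_0) = 0$. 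Thus $(M_0)_{q_0}$ is flat over $(R_{\lambda_0})_{p_0}$, and flatness is preserved under the base change $R_{\lambda_0} \to R$, giving the conclusion.

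The main technical obstacle is the descent of the $a$-regularity hypothesis to a finite stage: while flatness of $M_\lambda/a_\lambda M_\lambda$ descends by a standard argument, injectivity of $a_\lambda$ is more delicate. One must exploit the coherence of $M_\lambda$ over the Noetherian $A_\lambda$, and analyze how kernels of maps transform under the (generally non-flat) transition maps $R_\lambda \to R_\mu$. This is the crux of the argument, and the reference \cite{Orecchia2014Torsion-free-ra} cited just above presumably handles it in essentially this way.
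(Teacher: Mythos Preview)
Your approach is correct and genuinely different from the paper's, and in the Noetherian case it is arguably cleaner. The paper argues by building the filtration $M/a^nM$ (showing each quotient is $R$-flat by induction), then observing that the kernel of $I\otimes_{R_p} M_q \to M_q$ lies in $\bigcap_n a^n(I\otimes_{R_p} M_q)$, and finally invoking Artin--Rees to get $N=aN$ for this intersection before applying Nakayama. Your Tor argument short-circuits all of this: the long exact sequence immediately gives that $a$ acts bijectively on $T=\operatorname{Tor}_1^R(M_q,R/I)$, and since $T$ is exactly the kernel in question, Nakayama applies directly once $T$ is known to be finitely generated. You never need the $a^n$-filtration or Artin--Rees.

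For the non-Noetherian extension, both arguments defer to outside work: the paper points to \cite{Orecchia2014Torsion-free-ra} and Stacks tags 00R1, 00R6, while you propose a full Noetherian approximation of the data. Your identification of descending the $a$-regularity as the delicate step is accurate, but note that you may not need it in the form stated. Since you have already established $aT=T$ \emph{at the level of $R$} using the original hypotheses, an alternative is to approximate only in order to get finite generation, rather than to re-run the whole argument over $R_{\lambda_0}$. Concretely, $T$ sits inside the finitely generated $A_q$-module $I\otimes_R M_q$, and one can try to control it via a Noetherian approximation of that ambient module rather than by descending both hypotheses (1) and (2). Either way, the reduction is of the same flavour as what the paper cites, so your proposal is on solid ground.
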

\begin{proof}
\noindent \textbf{Step 1:} There is an obvious exact sequence 
\begin{equation*}
0 \to \frac{aM}{a^2M} \to \frac{M}{a^2M} \to \frac{M}{aM} \to 0. 
\end{equation*}
Now $a\colon M \to aM$ is a bijection by (2), hence $\frac{aM}{a^2M} \cong \frac{M}{aM} $ as $A$-modules, so by (1) the module $\frac{aM}{a^2M}$ is $R$-flat, hence $\frac{M}{a^2M}$ is $R$-flat. Repeating this argument, we deduce that $\frac{M}{a^nM}$ is $R$-flat for all $n \ge 1$. 

\noindent \textbf{Step 2:} Fix a point $q \in Z(a) \sub \on{Spec} A$, and let $p \in \on{Spec}R$ be its pullback. Write $A_q$ for the localisation of $A$ at $q$, $R_p$ for the localisation of $R$ at $p$, and $M_q = M \otimes_A A_q$. We need to show $M_q$ is flat over $R_p$. By \cite[\href{http://stacks.math.columbia.edu/tag/00HD}{Tag 00HD}]{stacks-project} this is equivalent to checking that for all finitely generated ideals $I \triangleleft R_p$, the canonical map $I \otimes_{R_p} M_q \to M_q$ is injective. For each integer $n \ge 1$ consider the diagram 
\begin{center}
\begin{tikzcd}
I \otimes_{R_p} M_q \arrow{r} \arrow{d}{\phi_n} & M_q \arrow{d}\\
I \otimes_{R_p}\left(\frac{M_q}{a^nM_q}\right) \arrow[hook]{r} & \frac{M_q}{a^nM_q}, 
\end{tikzcd}
\end{center}
where the bottom arrow is injective by the $R$-flatness of $\frac{M}{a^nM}$ established above. We need to show the top horizontal arrow is injective; for this we will show that $\bigcap_{n \ge 1} \on{Ker} \phi_n = 0$. Note that 
\begin{equation*}
I \otimes_{R_p}\left(\frac{M_q}{a^nM_q}\right) = \frac{I \otimes_{R_p}M_q}{a^n(I \otimes_{R_p}M_q)}, 
\end{equation*}
and so $\on{Ker}\phi_n = a^n(I \otimes_{R_p}M_q)$. Hence we want to show that $N=0$ where
\begin{equation*}
N \coloneqq \bigcap_{n \ge 1} a^n(I \otimes_{R_p}M_q). 
\end{equation*}If $R$ (and hence $A$) is Noetherian we can use the Artin-Rees lemma; let $P \coloneqq I \otimes_{R_p}M_q$, then Artin-Rees tells us that there exists an $m \ge 0$ such that 
\begin{equation*}
(a^{m+1}P) \cap N = a((a^mP) \cap N), 
\end{equation*}
in other words that $N = aN$, so by Nakayama's lemma (using that $a \in q$) we have that $N=0$ as required. 

To extend to the general (non-Noetherian) case one uses \cite[\href{http://stacks.math.columbia.edu/tag/00R1}{Tag 00R1} and \href{http://stacks.math.columbia.edu/tag/00R6}{Tag 00R6}]{stacks-project}; more details are in \cite[lemma 1.2.2]{Orecchia2014Torsion-free-ra}. 
\end{proof}

We have established an isomorphism of functors between $\ca{ED}_{\frak M, H}$ and the functor of invertible quotients of $\ca J_H$, the latter being defined entirely in terms of $\frak M$ with no reference to the curve living over it. We still need to show that this latter functor is proper. 
\begin{lemma}
Let $Q\colon \cat{Sch}_{\frak M}^{op} \to \cat{Set}$ be the functor sending a scheme $s\colon S \to \frak M$ to the set of invertible quotients of $s^* \ca J_H$. This functor is naturally equivalent to the functor of points of the blowup of $\frak M$ along $\ca J_H$; in particular it is proper. 
\end{lemma}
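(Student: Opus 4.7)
The plan is to identify $Q$ with the functor of points of $\on{Proj}$ of the Rees algebra of $\ca J_H$, which is the classical definition of the blowup. Recall that $\on{Bl}_{\ca J_H}\ca M = \on{Proj}_{\ca O_\ca{M}} \bigoplus_{n \ge 0} \ca J_H^n$, and since $\ca J_H$ is (Zariski-locally on $\ca M$) generated in degree $1$ by the labels of separating edges of $H$, the Rees algebra is a graded $\ca O_\ca{M}$-algebra, finitely generated in degree $1$. I will first reduce to the affine case, so that $\ca M = \on{Spec} R$ and $\ca J_H$ is an honest ideal $J \triangleleft R$, finitely generated.

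The key input is the standard functorial description of $\on{Proj}$ of a graded algebra generated in degree one: for such an algebra $A = \bigoplus_{n \ge 0} A_n$ over $R = A_0$, the $R$-scheme $\on{Proj} A$ represents the functor sending an $R$-scheme $s\colon S \to \on{Spec} R$ to the set of isomorphism classes of pairs $(\cl L, \phi)$, where $\cl L$ is an invertible sheaf on $S$ and $\phi\colon s^* A_1 \twoheadrightarrow \cl L$ is a surjection (see, for instance, the standard references on $\on{Proj}$; two pairs are identified when there is an isomorphism of invertible sheaves making the obvious triangle commute). Applied to the Rees algebra with $A_1 = \ca J_H$, this gives exactly the functor $Q$ sending $S$ to the set of invertible quotients of $s^*\ca J_H$.

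First I would assemble the natural transformation explicitly: the universal invertible quotient of $\pi^*\ca J_H$ on $\on{Bl}_{\ca J_H}\ca M$ (namely the ideal sheaf of the exceptional Cartier divisor) yields a morphism from $\on{Bl}_{\ca J_H}\ca M$ to $Q$. Conversely, given an invertible quotient $q\colon s^*\ca J_H \twoheadrightarrow \cl L$ over $S$, the twist construction turns $q$ into a surjection of graded $\ca O_S$-algebras $s^*\bigoplus_n \ca J_H^n \twoheadrightarrow \bigoplus_n \cl L^{\otimes n}$, which globalises to a morphism $S \to \on{Proj}_{\ca O_\ca{M}}\bigoplus_n \ca J_H^n = \on{Bl}_{\ca J_H}\ca M$ over $\ca M$. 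Verifying these constructions are mutually inverse is routine. Properness is then immediate: the blowup of a Noetherian (or more generally, locally finitely presented) scheme along a finitely generated ideal sheaf is projective, and properness is local on the target so we may conclude after replacing $\ca M$ by an affine cover.

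The only genuine subtlety is ensuring that the Rees-algebra description really is the right one for our $\ca J_H$, which requires that $\ca J_H$ be of finite presentation and that the Proj-functor description applies without Noetherian hypotheses; this is handled by the generation of $\ca J_H$ in degree one by finitely many labels, allowing a direct comparison with $\on{Proj} \ca O_\ca{M}[T_0, \ldots, T_m]/I$ for a suitable homogeneous ideal $I$, whose functor of points over $\ca M$ is well-known to send $S$ to surjections $\ca O_S^{m+1} \twoheadrightarrow \cl L$ satisfying the corresponding relations—namely, factoring through $s^*\ca J_H$. I expect this bookkeeping, rather than any deep step, to be the main obstacle, and it is essentially cited from \cite{stacks-project}.
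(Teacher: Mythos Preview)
Your approach is essentially the same as the paper's --- identify the blowup with $\on{Proj}$ of the Rees algebra and invoke the functorial description of $\on{Proj}$ --- but there is a genuine gap in the key step.

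Your second paragraph asserts that for a graded algebra $A$ generated in degree $1$, the scheme $\on{Proj} A$ represents the functor of invertible quotients of $s^*A_1$. This is false in general: the correct description (as in \cite[\href{http://stacks.math.columbia.edu/tag/01NS}{Section 01NS}]{stacks-project}, which the paper cites) requires in addition that for every $d>0$ the induced surjection $s^*\on{Sym}^d(A_1) \twoheadrightarrow \cl L^{\otimes d}$ factor through $s^*A_d$. For $A$ the Rees algebra of $\ca J_H$, this means the quotient $s^*\ca J_H \twoheadrightarrow \cl L$ must be compatible with the higher powers $\ca J_H^d$, which is not automatic. Your last paragraph compounds the error: the homogeneous ideal $I$ cutting the Rees algebra out of a polynomial ring contains not only the degree-$1$ syzygies among the generators (which, as you say, amount to ``factoring through $s^*\ca J_H$'') but potentially higher-degree relations as well.

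The paper's proof is precisely the observation that this extra condition is \emph{vacuous} in the situation at hand, because $\ca J_H$ is generated by a regular sequence (the normal-crossings assumption on $\ca C/\ca M$), so the canonical surjection $\on{Sym}^d(\ca J_H) \to \ca J_H^d$ is an isomorphism for all $d$. You never invoke the normal-crossings hypothesis, and without it the statement you need is simply not true: for a general finitely generated ideal the functor of invertible quotients of its pullback is strictly larger than the functor of points of the blowup.
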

%We found the discussion of the functor of points of the blowup at \href{http://mathoverflow.net/questions/91357/which-functor-does-the-blowing-up-represent}{http://mathoverflow.net/questions/91357/which-functor-does-the-blowing-up-represent} helpful. 
\begin{proof}
The blowup $B \to \frak M$ is by definition the Proj of the blowup algebra, whose functor of points is spelled out in \cite[\href{http://stacks.math.columbia.edu/tag/01NS}{Section 01NS}]{stacks-project} (note that the blowup algebra is generated in degree 1). In our situation, for a scheme $s\colon S \to \frak M$, we see that $B(S)$ is the set of equivalence classes of invertible quotients of $s^*\ca J_H \twoheadrightarrow \cl L$ such that for every integer $d >0$, the induced surjection $s^*\on{Sym}^d(\ca J_H) \twoheadrightarrow \cl L$ factors via $s^* (\ca J_H^d)$. However, recall that $\ca J_H$ is assumed to correspond to a normal crossings divisor in $\frak M$, and so this latter condition is vacuous\todo{Owen: Is that because $\on{Sym}^d \ca J_H = J_H^d$ or something? D: Yes :-). }, so the blowup is simply the functor of invertible quotients of $\ca J_H$. 
\end{proof}

\section{Comparison to enriched structures}\label{sec:CES_for_invertible_is_ES}

There are three key differences between enriched structures and compactified enriched structures:
\begin{enumerate}
\item Enriched structures are built from invertible sheaves, whereas compactified enriched structures are built from torsion free rank 1 sheaves;
\item For an enriched structures we specify one quotient for each relative component of $\Gamma$, whereas for compactified enriched structures  we specify one quotient for each hemisphere of $\Gamma$. 
\item For enriched structures we require that the tensor product of the invertible sheaves be trivial, whereas for compactified enriched structures we have a rather complicated compatibility condition in terms of the Fitting supports of certain modules. 
\end{enumerate}

Suppose we have a compactified enriched structure where all the torsion free rank 1 quotients are in fact invertible. By throwing out the $\cl{F}_{G}$ where $G$ is not a connected component of the complement of a single vertex (i.e. coming from a relative component) we obtain a collection of data which is a reasonable candidate to be an enriched structure - we just need to check that the tensor product of the invertible sheaves is trivial, which we will do in the next sections. In this way we have a map from the `invertible locus' of compacted enriched structures to enriched structures, which clearly behaves well with pullbacks, giving a map of functors. We will verify that this map is an isomorphism, and hence that the invertible locus of the stack of compactified enriched structures is canonically identified with the stack of enriched structures.

\begin{definition}
Let $C/S$ be $\sigma$-controlled. We say an enrichment datum on $C/S$ is \emph{invertible} if each of the torsion free rank 1 quotients is in fact an invertible sheaf. Similarly, we say a compactified enriched structure is \emph{invertible} if the underlying enrichment datum is invertible. 

A \emph{vertex-enrichment datum} is the same data as an enrichment datum, except that we restrict to hemispheres $G$ which are connected components of the complements of single vertices (in other words, which come from relative components). There is an obvious parallel notion of an invertible vertex-enrichment datum. We say an invertible vertex-enrichment datum is \emph{$\otimes$-compatible} if the tensor product of all the invertible sheaves is $S$-locally trivial. We say it is \emph{Fitting-compatible} if it satisfies the analogue of \ref{def:compatibility} (summing only over $G$ coming from relative components). 
\end{definition}
Thus a `$\otimes$-compatible invertible vertex-enrichment datum' is exactly the same thing as an enriched structure as defined in \ref{def:controlled_ES}! 

A vertex enrichment datum is easily obtained from an enrichment datum by forgetting some of the data. This clearly preserves invertibility, and sends a Fitting-compatible enrichment datum to a Fitting-compatible vertex enrichment datum. We need to show two things:
\begin{enumerate}
\item
Every Fitting-compatible invertible vertex enrichment datum is $\otimes$-compatible. 
\item The resulting functor from invertible compactified enriched structures to enriched structures is an equivalence. 
\end{enumerate}
We will actually prove these in a somewhat roundabout way. This is essentially because the case of non-integral base schemes is rather hard to handle, so instead we side-step it by using that the stack of invertible compactified enriched structures is reduced (since it is regular). We first prove the result in the case where all labels vanish, then deduce the result for integral base schemes, and then finally deduce the general case.

\subsection{When all the labels vanish}\label{sec:all_lab_vanish}
Suppose we are in \ref{sit:sscontrolled}. Suppose moreover that all the labels on all edges in $\Gamma$ are zero, and that $S$ is local; let $R \coloneqq \ca O_S(S)$. Recall that $\schart$ is affine, and the labels of edges in $\Gamma$ are principal ideals of $\ca O_\schart(\schart)$. We choose once and for all a generator of each principal ideal; say the edge $e$ has as label the principal ideal generated by $l_e$ (so all the $l_e$ vanish on the image of $S$ in $\schart_S$).  
%Given an invertible compactified enriched structure on $C/S$, we will show that the corresponding invertible vertex-enrichment datum is $\otimes$-compatible (\ref{}). We will then show that the resulting map from invertible compactified enriched structures on $C/S$ to enriched structures on $C/S$ is a bijection (\ref{}). 
For each vertex $v$ of $\Gamma$, let $C_v$ be the corresponding irreducible component of $C_S$ (c.f. \ref{lem:connected_irred_components}). 

%Let $\tilde{C}$ be the blowup of $C$ along the union of the sections corresponding to edges, and for each vertex $v$ let $C_v$ be the connected component of $\tilde{C}$ containing $v$, so that $C$ is the fibred coproduct of the $C_v$ along the union of the sections corresponding to edges. %Then the union of the images of these sections is exactly the locus where $C \to S$ is not smooth - we denote it $\on{Sing} \pi$. 

We begin by fixing an invertible enrichment datum $$\frak E \coloneqq (q_G\colon s^*I_{G^c} \twoheadrightarrow \cl F_G)_G$$ on $C/S$. It is easy to check with (4) of \ref{fitt_0_in_ann} that if $\frak E$ is Fitting-compatible (i.e. it is an invertible compactified enriched structure) then the corresponding vertex enrichment datum $\frak E_{vert}$ is Fitting-compatible. The main goal of this section is to prove the following theorem, whose proof will occupy the remainder of \ref{sec:all_lab_vanish}. 

%{Maybe move statement and proof of the next theorem to the start of the section, to give some structure to the random-looking collection of lemmas? Need to have the maps defined by the time we state it. }
%{Here we have\\
%- invertible compactified enrichment datum\\
%- compatible invertible enrichment datum\\
%-  invertible compactified enriched structure\\
% which all mean the same. NO! The first does not assume compatibility, the second two both do, and are stated as equivalent. But still, maybe it could be made mode clear? }

\begin{theorem}\label{thm:transfer_of_compatibility_when_labels_vanish}
In the above setting: 
\begin{enumerate}
\item
If we start with a Fitting-compatible invertible enrichment datum $\frak E$ then the invertible Fitting-compatible vertex-enriched structure $\frak E_{vert}$ obtained by forgetting extra quotients is also $\otimes$-compatible; 
\item The resulting functor from invertible compactified enriched structures to enriched structures is an equivalence. 
\end{enumerate}
\end{theorem}

First, suppose that $\frak E_{vert}$ is Fitting-compatible. We need to show that the invertible vertex-enriched structure $\frak E_{vert}$ is $\otimes$-compatible. It is enough to check this compatibility on each non-loop circuit-connected component of $\Gamma$ separately, so we fix such a component $\Upsilon$ of $\Gamma$ and let $E(\Upsilon)$ be the set of its edges and $V(\Upsilon)$ the set of its vertices. For a relative component $(v,G)$ we write $e(v,G)$ for the set of edges from $v$ to $G$, and we write $\pi(\Upsilon)$ for the set of those relative components $(v, G)$ such that $e(v,G) \subseteq E(\Upsilon)$. Observe that $e(v, G)$ is contained in a unique circuit-connected component (we will prove a more general version of this in \ref{lem:connecteing_edges_cc}), and for each $v$ there is at most one $G \in \pi^0(C - v)$ with $(v,G) \in \pi(\Upsilon)$. Note that the elements of $e(v,G)$ correspond to nodes lying on the irreducible component $C_v$; abusing notation, we will also denote by $e(v,G)$ the corresponding Cartier divisor on $C_v$.

\begin{lemma}
Let $(v,G)$ and $(u,H) \in \pi(\Upsilon)$ with $u \neq v$. Then 
\begin{enumerate}
\item 
 $v \notin H^c$ (i.e. $v \in H$), and
\item
If $e \in e(u, H)$ and $e$ has an end at $v$, then $e \in e(v, G)$. 
\end{enumerate}
\end{lemma}
\begin{proof}
\begin{enumerate}
\item
We need to show that $H^c \cap V(\Upsilon) = \{ u\}$. If not then $\Upsilon$ contains an edge from $u$ to a different relative component at $u$, which contradicts circuit-connectivity of $\Upsilon$. 
\item
Similarly; if not then $\Upsilon$ contains an edge from $v$ to a different relative component at $v$, which contradicts circuit-connectivity of $\Upsilon$. 
\end{enumerate}
\end{proof}

Combined, these two conditions imply that for every vertex $u \neq v$, the coherent sheaf $s^*\ca I_{H^c}|_{C_v \setminus e(v,G)}$ is canonically trivial (it corresponds to an ideal sheaf whose closed subscheme does not meet this locus). 

A \emph{local trivialisation} of this invertible vertex-enriched structure consists of, for each $(v,G)\in\pi(\Upsilon)$, a choice of isomorphism $f_G\colon \cl{F}_G|_{C_v} \iso \ca{O}_{C_v}(e(v,G))$ (here we are thinking of $e(v,G)$ as a horizontal Cartier divisor on $C_v$). Because of the canonical triviality of the $s^*\ca I_{H^c}|_{C_v \setminus e(v,G)}$, our choice of $f_G$ induces a map
\begin{equation*}
\left. F_G\colon s^*\ca I_{G^c} \otimes \bigotimes_{(v', G') \in \pi(\Upsilon), G' \neq G} s^*\ca I_{G'^c} \right|_{C_v \setminus e(v,G)} \to \ca{O}_{C_v \setminus e(v,G)}. 
\end{equation*}

Let $\pi_C\colon C \to S$ be the structure map. For each $e \in e(v,G)$ we note that $F_G(e) \coloneqq F_G(l_e\otimes 1 \dots \otimes 1 ) $ lies in $\pi_C^{-1}\ca O_S \sub \ca{O}_{C_v \setminus e(v,G)}(C_v \setminus e(v,G))$ (cf.\ \ref{lem:enhanced_structure_of_ED}), and is a unit since $\cl F_G$ is invertible (again by \ref{lem:enhanced_structure_of_ED}). If $(u,H)\in \pi(\Upsilon)$ \todo{[comment too long, see tex file]Owen: we've now got two meanings of $\pi$ in the same sentence, right? The structure map and the ``relative components'' function?D: Oops. Technically not in same sentence, just same paragraph I think , so that's OK, right...? I changed the structure map to $\pi_C$ --- it's not pretty, but this notation is I think only used briefly, and I am afraid of introducing another notation clash elsewhere. } is another relative component, we say $F_G$ and $F_H$ \emph{match up} if for every edge $e$ from $u$ to $v$, we have that $F_G(e) = F_H(e)$.  Note that the expression $F_H(e)$ makes sense, defining it by the analogue of the formula $F_G(e)$ (since the edge $e$ has an end at $H$).

%{I'm not sure how to interpret $F_H(e)$. In which factor do we insert the term $\ell_e$? Well, it shouldn't matter b definition of the tensor product/multilinear maps?! But this is not correct - as an element of $R$ we have $\ell_e = 0$, so we should instead be interpreting it in the pulled-back module as the pullback from $S$. So I think it does matter. Oh, we just insert it in the first place again! So it's basically fine, but the notation is quite confusing, having a tensor product over $H$s, then taking a particular $H$ and applying the same formula. }

%We set
%\begin{equation*}
%F_G(e) \coloneqq F_v(l_e \otimes 1 \otimes \cdots 1) \in \ca{O}_S^\times \sub \ca{O}_{C_v \setminus e(v,G)}(C_v \setminus e(v,G)), 
%\end{equation*}
%observing that $F_G(e) = f_G(q_G(e))$. 

\begin{lemma}\label{lem:matching_and_trivialisation}
Suppose we have chosen a local trivialisation. The following are equivalent:
\begin{enumerate}
\item For all $(v,G), (u,H) \in \pi(\Upsilon)$, the maps $F_G$ and $F_H$ match up. 
%\item
%There exists a surjection 
%\begin{equation*}
%F\colon \bigotimes_{v \in V(\Upsilon)} s^*I_v \to \ca{O}_F
%\end{equation*}
%such that for each $v \in V(\Upsilon)$, we have that $F|_{C_V\setminus e(v,G)} = F_v$; 
\item There exists an isomorphism $\Phi\colon \bigotimes_{(v,G) \in \pi(\Upsilon)}\cl F_G \iso \ca{O}_C$ such that for each $(u,H) \in \pi(\Upsilon)$, the restriction to the component $C_u\setminus e(u,H)$ of the composite 
\begin{equation*}
 \bigotimes_{(v,G) \in \pi(\Upsilon)} s^*\ca I_{G^c} \stackrel{\otimes q_G}{\lra} \bigotimes_{(v,G) \in \pi(\Upsilon)}\cl F_G \stackrel{\Phi}{\to} \ca{O}_C
\end{equation*}
is equal to $F_H$. 
\end{enumerate}
\end{lemma}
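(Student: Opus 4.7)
My plan is to treat the two implications separately: the direction (2) $\Rightarrow$ (1) is essentially immediate from the defining property of $\Phi$, while (1) $\Rightarrow$ (2) is the standard gluing direction, where the matching condition plays exactly the role of the cocycle condition for gluing local trivialisations.

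For (2) $\Rightarrow$ (1), suppose $\Phi$ is given. Fix two relative components $(v,G), (u,H) \in \pi(A)$ with an edge $e$ joining $u$ and $v$, so that $u \in G$ and $v \in H$. I would apply the composite from (2) to the element $l_e \otimes 1 \otimes \cdots \otimes 1$ (placing the label $l_e$ in the $G^c$-th factor, and using that $l_e$ genuinely lies in $s^*\ca{I}_{G^c}$ and in $s^*\ca{I}_{H^c}$). The result is a global section of $\ca{O}_C$ whose restriction to $C_v \setminus e(v,G)$ equals $F_G(e)$ by assumption, and whose restriction to $C_u \setminus e(u,H)$ equals $F_H(e)$ by the same assumption applied to $(u,H)$. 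Since each $C_w$ is connected and the target value is locally constant, this section is the constant $F_G(e)$ on all of $C_v$ and the constant $F_H(e)$ on all of $C_u$. In the presentation of $C$ as the fibred coproduct of the $C_w$'s along the sections corresponding to edges, the values at the gluing section $e$ must coincide, so $F_G(e) = F_H(e)$.

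For (1) $\Rightarrow$ (2), the construction of $\Phi$ proceeds by gluing. First, I would extend each $F_G$ from $C_v \setminus e(v,G)$ to a trivialisation of the line bundle $\bigotimes_{(u,H) \in \pi(A)} \cl{F}_H$ on all of $C_v$. Here $f_G$ already identifies $\cl{F}_G|_{C_v}$ with $\ca{O}_{C_v}(e(v,G))$, while for $H \neq G$ the restriction $\cl{F}_H|_{C_v}$ is a line bundle whose structure is determined by \ref{lem:structure_of_inv_quotients} (invertibility and the degree constraint forced by the supp-compatibility hypothesis carried over from $\frak{E}$), and the canonical trivialisations of the corresponding ideal sheaves $s^*\ca{I}_{H^c}$ on $C_v \setminus e(v,G)$ extend across the separating edges in a way controlled precisely by the scalars $F_G(e)$. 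For each edge $e \in E(A)$ joining $v$ and $u$, the transition function between the extensions on $C_v$ and on $C_u$ at the gluing point is the ratio $F_G(e)/F_H(e)$, which equals $1$ by the matching hypothesis. The fibred-coproduct description of $C$ along edges then produces a global trivialisation $\Phi$ of $\bigotimes_{v \in V(A)} \cl{F}_{G(v)}$; on components $C_z$ for $z \notin V(A)$ the tensor product is canonically trivial (no separating edge of any $H \in \pi(A)$ touches $z$), so the gluing extends uniquely over these. By construction, the composite through $\Phi$ restricts to $F_G$ on each $C_v \setminus e(v,G)$, yielding (2).

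The main obstacle I anticipate is the careful bookkeeping in the extension step of (1) $\Rightarrow$ (2): the map $F_G$ is initially a map of tensor products of ideal sheaves, not of the $\cl{F}_H$ directly, and the factor $\cl{F}_G|_{C_v}$ carries a genuine twist by the divisor $e(v,G)$ while the other $\cl{F}_H|_{C_v}$ carry twists determined by their separating edges meeting $v$. The matching condition is exactly what is needed to ensure that when one passes across a separating edge $e$, the local trivialisations on either side of $e$ are comparable, so that the global section of $\ca{O}_C$ one produces is well-defined. Once this is in place, everything else is formal sheaf gluing.
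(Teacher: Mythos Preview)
Your plan for $(1)\Rightarrow(2)$ is essentially what the paper does, though the paper executes the ``careful bookkeeping'' you anticipate by working in the completed local ring $A=R[[x,y]]/(xy)$ at each edge $e$ and writing down the map $\Psi\colon M\to A$ explicitly on generators (sending $X\otimes Y\mapsto F_G(e)=F_H(e)$, etc.). Your global gluing language is fine, but absent the local formula you have not actually shown the extension exists or that the transition ratio is $F_G(e)/F_H(e)$; this is precisely the computation the paper carries out.

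Your $(2)\Rightarrow(1)$ has a genuine gap. The element $l_e\otimes 1\otimes\cdots\otimes 1$ with $l_e$ in the $G$-slot is \emph{not} a global section of $\bigotimes_{(u,H)} s^*\ca I_{H^c}$: the factors ``$1$'' only make sense where the corresponding $s^*\ca I_{H^c}$ is canonically trivial, i.e.\ on $C_v\setminus e(v,G)$. So its image under the composite is a section of $\ca O$ on $C_v\setminus e(v,G)$, not on all of $C$, and you cannot restrict it to $C_u\setminus e(u,H)$. Even if you could, on the $u$-side the composite is $F_H$, and $F_H$ applied to your element (with $l_e$ in the $G$-slot, not the $H$-slot) is not $F_H(e)$ by definition. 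The paper avoids this by passing to the completion at $e$: there the tensor product $M=s^*\ca I_{G^c}|_A\otimes_A s^*\ca I_{H^c}|_A$ has a genuine local generator $X\otimes Y$, and one computes $\Psi(X\otimes Y)$ after inverting $y$ (using $yX=L_e$) to get $F_G(e)$, and after inverting $x$ (using $xY=L_e$) to get $F_H(e)$. The point is that $X\otimes Y$ is the ``common'' element visible from both sides, and it does not come from any global section of the tensor product of ideal sheaves.
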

\begin{proof}
Let $e\colon u - v$ be an edge in $\Upsilon$, with $(v,G)$ and $(u,H) \in \pi(\Upsilon)$. Recall that $R = \ca O_S(S)$, define $A \coloneqq R[[x,y]]/(xy)$, and fix an isomorphism between $A$ and the completion of the local ring of $C_S$ along the section corresponding to $e$, such that $x$ vanishes on the component corresponding to $v$. Then we have natural isomorphisms of $A$-modules 
\begin{equation*}
s^* \ca I_{G^c} |_A= \frac{A\langle X, L_e, B_1, \dots, B_n\rangle}{yX - L_e, xB_1, \dots, xB_n}, 
\end{equation*}
where $L_e$ is a generator corresponding to the label generator $l_e$, and $B_1, \dots, B_n$ correspond to the labels of the other edges from $v$ to $G$. Similarly we have
\begin{equation*}
s^* \ca I_{H^c} |_A= \frac{A\langle Y, L_e, C_1, \dots, C_m\rangle}{xY - L_e, yC_1, \dots, yC_m}. 
\end{equation*}
Tensoring the above presentations together over $A$ we obtain a presentation for the $A$-module 
\begin{equation*}
M \coloneqq s^* \ca I_{G^c}|_A \otimes_A s^* \ca I_{H^c}|_A. 
\end{equation*}
Note that the restrictions of the $s^* \ca I_{F^c}$ for $F \notin \{G,H\}$ are canonically trivial on this locus, so we can ignore them. 

Considering the condition (2), note that the map $\Phi$ is uniquely determined on the smooth locus by the condition on the restriction to the $C_u\setminus e(u,H)$. So the only question is whether these maps on the smooth locus can be patched together over the nodes. The condition (2) is thus equivalent to the existence of an $A$-module map $\Psi\colon M \to A$ satisfying the following two conditions:
\begin{itemize}
\item
 after inverting $x$, the map $\Psi$ sends $\frac{X}{x} \otimes L_e$ to $F_H(e)$ \emph{i.e. it gives the correct map on the $u$-component};
 \item  after inverting $y$, the map $\Psi$ sends $L_e \otimes \frac{Y}{y} $ to $F_G(e)$ \emph{i.e. it gives the correct map on the $v$-component}. 
\end{itemize}

We now move to proving the lemma. Suppose first that (1) holds. Then we construct a map $\Psi$ as above by sending
\begin{itemize}
\item
$X \otimes Y $ to $F_G(e) = F_H(e)$; 
\item $L_e \otimes Y$ to  $yF_G(e)$;
\item $X \otimes L_e$ to $xF_H(e)$;
\item$ B_i \otimes Y$ to $yF_G(1 \otimes \dots \otimes l_i \otimes \dots \otimes 1)$;
\item $X \otimes C_j$ to $xF_H(1 \otimes \dots \otimes l_j \otimes \dots \otimes 1)$;
\item$ B_i \otimes C_j$ to $0$.
\end{itemize}
It is straightforward to verify that this map is well-defined. 

Conversely, suppose (2) holds. Then we have a map $\Psi$ as above. After inverting $y$ we find that
\begin{equation*}
\Psi_y(X \otimes Y) = \Psi_y(yX \otimes \frac{Y}{y}) = \Psi_y(L_e \otimes \frac{Y}{y}) = F_G(e), 
\end{equation*}
and after inverting $x$ we find
\begin{equation*}
\Psi_x(X \otimes Y) = \Psi_x(\frac{X}{x} \otimes xY) = \Psi_x(\frac{X}{x} \otimes L_e) = F_H(e)
\end{equation*}
which implies that $F_H(e) = F_G(e)$. 
\end{proof}

\begin{lemma}\label{lem:matching_and_triviality}
With $\Upsilon$ as above, the following are equivalent:
\begin{enumerate}
\item There exists $(t_G)_{(v,G) \in \pi(\Upsilon)} \in (\ca O_S^\times)^{\pi(\Upsilon)}$ such that for all pairs of relative components $(u,H)$, $(v,G)$ we have that $t_GF_G$ matches up with $t_{H}F_{H}$ (where the multiplication is just composition with the obvious scaling);
\item
There exists an isomorphism $\bigotimes_{(v,G) \in \pi(\Upsilon)} \cl F_G \iso \ca{O}_C$. 
\end{enumerate}
\end{lemma}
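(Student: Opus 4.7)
\emph{Proof plan.} The plan is to pivot off Lemma~\ref{lem:matching_and_trivialisation}, which equates matching-up of a tuple $(F_G)$ with the existence of an isomorphism $\Phi\colon \bigotimes_{(v,G)} \cl F_G \iso \ca O_C$ satisfying an explicit compatibility involving the trivialisations $f_G$. The key observation is that rescaling $f_G$ by $t_G \in R^\times$ replaces $F_G$ by $t_G F_G$, so Lemma~\ref{lem:matching_and_trivialisation} applies just as well to rescaled trivialisations. Direction $(1) \Rightarrow (2)$ is then immediate: if $(t_G F_G)$ match up, applying Lemma~\ref{lem:matching_and_trivialisation} with trivialisations $(t_G f_G)$ produces an isomorphism $\Phi\colon \bigotimes_{(v,G)} \cl F_G \iso \ca O_C$, establishing (2).

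For the converse $(2) \Rightarrow (1)$, suppose we are given an isomorphism $\Phi_0\colon \bigotimes_{(v,G)\in\pi(A)} \cl F_G \iso \ca O_C$, and for each $(v,G) \in \pi(A)$ set
\[
F'_G \coloneqq \left.\left( \Phi_0 \circ \bigotimes_{(u,H) \in \pi(A)} q_H \right)\right|_{C_v \setminus e(v,G)}.
\]
The tuple $(F'_G)$ automatically matches up, being derived from the single isomorphism $\Phi_0$. It therefore suffices to produce units $t_G \in R^\times$ with $F'_G = t_G F_G$; the conclusion then follows by applying Lemma~\ref{lem:matching_and_trivialisation} in the reverse direction to the trivialisations $(t_G f_G)$.

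Both $F_G$ and $F'_G$ are isomorphisms between the same pair of line bundles on $C_v \setminus e(v,G)$, using that $s^*\ca I_{H^c}|_{C_v \setminus e(v,G)}$ is canonically trivial for each $H \neq G$: circuit-connectedness of $A$ forces $V(H^c) \cap C_v \subseteq e(v,G)$, since any vertex $w$ of $H^c$ distinct from $u$ lies in a component of $\Gamma \setminus u$ other than $H$ and so has no edge to $v$ (such an edge would yield a path from $w$ to $v$ in $\Gamma \setminus u$). Hence $F'_G/F_G$ is a section of $\ca O^\times$ on $C_v \setminus e(v,G)$. The main obstacle is to show this ratio extends to $\ca O_{C_v}^\times$: granted this, since $C_v \to S$ is proper and flat with geometrically reduced, connected fibres, we have $H^0(C_v,\ca O^\times) = R^\times$, and the extended unit is the desired constant $t_G$.

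The extension claim, and thus the heart of the argument, is verified by a local computation at each section $e \in e(v,G)$: using Lemma~\ref{lem:enhanced_local_structure_of_pullbacks} together with the invertibility of $\cl F_G$, one shows that $F_G$ and $F'_G$ each send a natural generator near $e$ (of the form $l_e \otimes 1 \otimes \cdots \otimes 1$, cf.\ the discussion preceding Lemma~\ref{lem:matching_and_trivialisation}) to an element of $R^\times$, so that $F'_G/F_G$ has neither zero nor pole at $e$ and thus extends as a unit across $e(v,G)$ as required.
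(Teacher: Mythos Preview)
Your approach is essentially the paper's: both directions go through Lemma~\ref{lem:matching_and_trivialisation}, and for $(2)\Rightarrow(1)$ you build $F'_G$ from $\Phi_0$, note it matches up, and then extract $t_G$ from the comparison of $F'_G$ with $F_G$. The paper simply asserts at this point that ``$F_G$ and $F'_G$ are related by an automorphism of $\ca O_S$ acting on $\ca O_{C_v\setminus e(v,G)}$'' and takes $t_G$ to be that unit; you attempt to supply the missing justification, which is laudable.

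There is, however, a slip in that justification. You write that ``$F_G$ and $F'_G$ are isomorphisms between the same pair of line bundles on $C_v\setminus e(v,G)$'', but the common source $\bigotimes_H s^*\ca I_{H^c}|_{C_v\setminus e(v,G)}$ is \emph{not} a line bundle: the factor with $H=G$ is free of rank $\#e(v,G)$ there (Lemma~\ref{lem:enhanced_local_structure_of_pullbacks}, noting $v\in G^c$). So the ratio $F'_G/F_G$ is not defined as written. The fix is to observe that on $C_v\setminus e(v,G)$ each $q_H$ with $H\neq G$ is already an isomorphism (its kernel $K_H$ is supported on $C_{H^c}$, which is disjoint from $C_v\setminus e(v,G)$), so both $F_G$ and $F'_G$ factor through $\bigotimes_H q_H$ onto the genuine line bundle $\bigotimes_H \cl F_H|_{C_v\setminus e(v,G)}$ and induce isomorphisms there. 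It is the ratio of \emph{these} induced isomorphisms that gives the unit in $\ca O_{C_v\setminus e(v,G)}^\times$; your extension argument (or, more directly, evaluating both maps on $l_e\otimes 1\otimes\cdots\otimes 1$ and using that these values lie in $R^\times$) then shows this unit is constant in $R^\times$.
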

\begin{proof}
Suppose (1) holds. Then (2) follows immediately from $(1 \implies 2)$ of \ref{lem:matching_and_trivialisation}. 

Conversely, suppose (2) holds. Then define $F'_G$ for $(v,G) \in \pi(\Upsilon)$ by composing $\otimes q_G$ with the given isomorphism --- these match up by $(2\implies 1)$ of \ref{lem:matching_and_trivialisation}. Then for each $(v,G)$, the maps $F_G$ and $F'_G$ are related by an automorphism of $\ca{O}_S$ acting on $\ca O_{C_v \setminus e(v,G)}$, and we define $t_G$ such that multiplication by it is that automorphism. 
\end{proof}

Let $(v,G)\in \pi(\Upsilon)$ be a relative component, and define $F_G(e)$ for $e \in e(v,G)$ as above - this depends on the choice of trivialisation $f_G$. If we had chosen a different trivialisation (say $f'_G$) then this would differ from $f_G$ by multiplication by a unit in $R = \ca O_S(S)$, and so the resulting $F_G(e)$ would differ by the same scalar. As such, the free rank 1 $R$-submodule of $\oplus_{e(v,G)}R$ spanned by $(F_G(e))_{e \in e(v,G)}$ is independent of the choice of $f_G$, and we denote it by $L_G$. 

Since we have chosen generators for the labels of edges in $\schart_S$, we obtain a canonical identification of $R$-modules $s^* \ca J_G \iso \oplus_{e(v,G)} R$. The later module has a canonical basis and is thus naturally isomorphic to its dual. In this way we obtain an identification between $\oplus_{e(v,G)} R$ and the dual of $s^* \ca J_G$ (here by `dual' of an $R$-module $N$ we mean $\on{Hom}(N,R)$ as an $R$-module, and we write it $N^\vee$). This construction feels very ad-hoc, but really it was the choice of the generators $l_e$ that was ad-hoc. The construction is justified by the next lemma. 
\begin{lemma}\label{lem:kernels_vs_lines}
Let $u$ be a vertex in $G^c$. Applying $- \mapsto \on{Hom}_{R-mod}(-, R)$ to the exact sequence 
\begin{equation*}
0 \to \bb K_G \to s^*J_G \to \sigma_u^* \cl F_G \to 0
\end{equation*}
yields an exact sequence
\begin{equation*}
0 \to (\sigma_u^* \cl F_G)^\vee \to \oplus_{e(v,G)} R \to \bb K_G^\vee \to 0,
\end{equation*}
and the submodule $(\sigma^* \cl F_G)^\vee \to \oplus_{e(v,G)} R$ is exactly the submodule $L_G$ defined above. 
\end{lemma}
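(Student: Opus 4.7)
The plan is to first establish the dualised short exact sequence by a splitting argument, and then identify the image of $(\sigma^*\cl F_G)^\vee$ by computing $\sigma^*q_G$ in explicit bases.

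Since $\sigma$ passes through the smooth locus of the component $v \in G^c$ where $\cl F_G$ is invertible, $\sigma^*\cl F_G$ is invertible over the local ring $R$, and hence free of rank $1$. Under our hypothesis that all labels vanish on $S$ together with the normal crossings assumption on $\ca M$, a Koszul-type presentation shows that $s^*\ca J_G$ is the free $R$-module with basis $(l_e)_{e\in e(v,G)}$: the Koszul relations $l_i L_j - l_j L_i$ all vanish after base change, since each $l_e$ maps to $0$ in $R$. This also gives the canonical identification $(s^*\ca J_G)^\vee \cong \oplus_{e(v,G)} R$ used in the statement (via self-duality of a free module on a fixed basis). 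Consequently the short exact sequence $0 \to \bb K_G \to s^*\ca J_G \to \sigma^*\cl F_G \to 0$ splits, so applying $\on{Hom}_R(-,R)$ preserves exactness and yields the displayed sequence.

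For the identification of $(\sigma^*\cl F_G)^\vee \hookrightarrow \oplus_{e(v,G)} R$ with $L_G$, I would compute $\sigma^*q_G$ explicitly in compatible trivialisations: use the chosen generators $l_e$ to identify $s^*\ca J_G \cong \oplus_{e(v,G)} R$, and use $\sigma^*f_G$ composed with the canonical trivialisation $\sigma^*\ca O_{C_v}(e(v,G)) \cong R$ (which is available because $\sigma$ avoids the sections in $e(v,G)$) to identify $\sigma^*\cl F_G \cong R$. Unwinding the very definition of $F_G(e)$ as the constant section of $\ca O_{C_v\setminus e(v,G)}$ obtained by applying $f_G\circ q_G$ to $l_e$ (and using that it lies in $\pi_C^{-1}\ca O_S$, so pullback along any section in this locus recovers the same element of $R$), one sees that in these trivialisations $\sigma^*q_G$ sends $l_e \mapsto F_G(e)$.

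Dualising this description, the generator of $(\sigma^*\cl F_G)^\vee = R$ (dual to our chosen trivialisation) maps to the vector $(F_G(e))_{e\in e(v,G)} \in \oplus_{e(v,G)} R$, so its image is exactly the free rank $1$ submodule $L_G$, as required. The main point to verify carefully is that the identification of $\oplus_{e(v,G)} R$ with $(s^*\ca J_G)^\vee$ used in the lemma's statement coincides with the one obtained above; this is immediate as both are induced by the same choice of generators $l_e$ and the canonical self-duality. The only conceptual obstacle is keeping the various trivialisations consistent, but once the bases are aligned the computation is a direct unwinding.
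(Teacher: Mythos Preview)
Your proof is correct and follows essentially the same approach as the paper. The paper's own argument is just two sentences: it notes that each term in the first sequence is free over $R$ (hence dualising preserves exactness), and dismisses the identification of $(\sigma^*\cl F_G)^\vee$ with $L_G$ as a straightforward computation. You have supplied that computation explicitly, which is useful; your splitting argument (via freeness of the quotient $\sigma^*\cl F_G$) is the same mechanism the paper invokes, and your unwinding of $\sigma^*q_G$ in the chosen bases to get $l_e \mapsto F_G(e)$ and hence the dual generator mapping to $(F_G(e))_e$ is exactly the omitted computation.
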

\begin{proof}
Each term in the first sequence is free as an $R$-module, and so dualising preserves exactness. The second assertion is a straightforward computation. 
\end{proof}

We temporarily need a slightly more relaxed notion of Fitting compatibility than that provided by \ref{def:compatibility}; later we will use it to recover the full form of Fitting compatibility. 
\begin{definition}
Suppose we are given a subset of a vertex-enrichment datum which contains all relative components in $\pi(\Upsilon)$. We say this collection of data is $\Upsilon$-Fitting-compatible if the closed immersion
\begin{equation}\label{eq:compatibility}
\Fsupp \frac{s^*\ca{J}_{\Upsilon}}{\sum_{(v,G) \in \pi(\Upsilon)}\bb{K}_G} \to S
\end{equation}
be an isomorphism. 
\end{definition}

\begin{lemma}\label{lem:matching_implies_supp_compatbility}\label{lem:supp-compatibility_implies_matching}
With $\Upsilon$ as above, the following are equivalent:
\begin{enumerate}
\item There exists $(t_G)_{(v,G) \in \pi(\Upsilon)} \in (\ca O_S^\times)^{\pi(\Upsilon)}$ such that for all pairs of relative components $(u,H)$, $(v,G)$ we have that $t_GF_G$ matches up with $t_{H}F_{H}$ (where the multiplication is just composition with the obvious scaling). 
\item The collection of quotients $(q_G\colon s^*\ca I_{G^c} \twoheadrightarrow \cl F_G)_{(v,G)}$ is $\Upsilon$-Fitting-compatible. 
\end{enumerate}
\end{lemma}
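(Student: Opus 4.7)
The plan is to use the duality from \ref{lem:kernels_vs_lines} to reformulate both conditions in terms of the lines $L_G$. Since all labels vanish on $S$, every module $s^*\ca I_B = \bigoplus_{e\in B} R$ and $s^*\ca J_G = \bigoplus_{e\in e(v,G)} R$ is free, each $L_G$ is a rank-one direct summand spanned by the vector $(F_G(e))_{e \in e(v,G)}$, and $\bb K_G$ is its orthogonal complement under the obvious pairing. Invertibility of $\cl F_G$ ensures that at least one entry $F_G(e)$ is a unit in $R$ for every $(v,G) \in \pi(A)$.

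For the direction $(1) \Rightarrow (2)$: after replacing each $F_G$ by $t_G F_G$, we may assume the matching identities hold on the nose, so the values $F_G(e)$ for $e \in e(v,G)$ assemble into a single function $F\colon E(A) \to R$ whose restriction to every $e(v,G)$ is a generator of $L_G$. For any non-empty $B \subseteq E(A)$, the $R$-linear map $\pi_B\colon \bigoplus_{e\in B} R \to R$ sending $A_e \mapsto F(e)$ kills each $f_{B,G}(\bb K_G)$ with $e(v,G) \subseteq B$, by the defining relation $\sum_e F(e) x_e = 0$ on $\bb K_G$. If at least one such relative component exists, then some $F(e)$ with $e \in B$ is a unit, so $\pi_B$ is surjective and realizes $R$ as a quotient of $M_B \coloneqq \bigoplus_{e\in B} R/\sum_G f_{B,G}(\bb K_G)$, giving $\on{Ann}_R(M_B) = 0$; otherwise $M_B$ is already free and the conclusion is trivial.

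For $(2) \Rightarrow (1)$: take $B = E(A)$ in the hypothesis. Applying $\on{Hom}_R(-,R)$ to the presentation of $M = M_{E(A)}$ and using \ref{lem:kernels_vs_lines} identifies $\on{Hom}_R(M, R)$ with the set of linear forms $\phi\colon \bigoplus_{e \in E(A)} R \to R$ whose restriction to each $e(v,G)$ lies in $L_G$. Starting from $\on{Ann}_R(M) = 0$, I would construct such a $\phi$ whose restriction to each $e(v,G)$ is a \emph{generator} of $L_G$; writing $\phi|_{e(v,G)} = t_G F_G$ then yields the required matching data, with $t_G$ automatically a unit. To construct $\phi$, I would fix some $(v_0, G_0)\in\pi(A)$ and an arbitrary generator of $L_{G_0}$, extend along a spanning tree of $A$ by using the rescaling freedom of each new $L_G$ to match on the newly added tree edge, and then invoke supp-compatibility applied to subsets $B$ supported on individual fundamental circuits to verify that the resulting ``monodromy'' around each such circuit is trivial, so the tree-based construction closes up.

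The main obstacle will be the $(2) \Rightarrow (1)$ direction, specifically the step of upgrading the abstract annihilator vanishing to the existence of a generator-valued form $\phi$. The key conceptual input is that matching is a cocycle condition on the circuit-connected graph $A$, so its obstruction lives on a generating set of circuits; for each such circuit, supp-compatibility applied to the corresponding edge subset forces the vanishing of a specific $2\times 2$ minor built from the coordinates of the $F_G$'s, and this is exactly the closed-circuit consistency needed to extend the spanning-tree matching to a global one.
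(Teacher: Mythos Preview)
Your $(1)\Rightarrow(2)$ is fine (and in fact proves more than needed, since $A$-supp-compatibility refers only to the single subset $B=E(A)$, not to all non-empty $B\subseteq E(A)$).

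The gap is in $(2)\Rightarrow(1)$. You plan to build the matching along a spanning tree and then kill the monodromy around each fundamental circuit by ``invoking supp-compatibility applied to subsets $B$ supported on individual fundamental circuits''. But hypothesis~(2) gives you \emph{only} the condition $\on{Ann}_R\bigl(s^*\ca J_A/\sum_{(v,G)\in\pi(A)}\bb K_G\bigr)=0$, i.e.\ the case $B=E(A)$; it says nothing about proper subsets $B\subsetneq E(A)$. Worse, for such a $B$ the sum $\sum_{e(v,G)\subseteq B}\bb K_G$ may well be empty (in a circuit-connected $A$ each $e(v,G)$ is the full star of $v$, which need not lie inside a single circuit), so even if you had the subset condition it would not carry the information you want. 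Your dualisation $\on{Hom}_R(M,R)=\{\phi:\phi|_{e(v,G)}\in L_G\}$ is correct, but from $\on{Ann}_R(M)=0$ alone there is no abstract reason a generator-valued $\phi$ should exist; you must actually compute something about $M$.

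The paper's route supplies exactly this missing computation. Using \ref{lem:kernels_vs_lines} one rewrites the edge-indexed quotient $s^*\ca J_A/\sum\bb K_G$ as a \emph{vertex}-indexed presentation
\[
\frac{\bigoplus_{v\in V(A)} R\langle V\rangle}{\langle F_u(e)V-F_v(e)U:\ e\colon u-v\ \text{in }A\rangle},
\]
and since all $F_G(e)\in R^\times$ one eliminates all generators but a chosen $V_0$, obtaining a cyclic module $R\langle V_0\rangle/\bigl\langle V_0(1-\prod_\gamma F_u(e)/F_v(e)):\gamma\bigr\rangle$. Now the annihilator is visibly the ideal generated by the monodromy obstructions $1-\prod_\gamma F_u(e)/F_v(e)$, so its vanishing forces each of these to be zero, which is precisely~(1). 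In short: the step you flag as the ``main obstacle'' really is one, and is resolved not by passing to subsets but by exhibiting the quotient as cyclic with explicit annihilator.
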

%In the statement (2) the reader can reasonably complain that the option of $\Upsilon$-Fitting-compatibility (just after ref{def:compatibility}) was defined for enrichment data, and the collection $(q_G\colon s^*\ca I_{G^c} \twoheadrightarrow \cl F_G)_{(v,G)}$ is not an enrichment datum. However, $\Upsilon$-Fitting compatibility only depends on those hemispheres whose separating edges are contained in $\Upsilon$, so this abuse of terminology is hopefully forgivable. Note that, since $\Upsilon$ is 2-vertex-connected, $\Upsilon$-Fitting-compatibility is by \ref{fitt_0_in_ann} equivalent to requiring that the closed immersion
%\begin{equation}\label{eq:compatibility}
%\Fsupp \frac{s^*\ca{J}_{\Upsilon}}{\sum_{(v,G) \in \pi(\Upsilon)}\bb{K}_G} \to S
%\end{equation}
%be an isomorphism. 

\begin{proof}
In this proof, when we write $\sum \bb K_G$ the sum runs over $(v,G) \in \pi(\Upsilon)$, and we write $F_v = F_G$ if $(v,G) \in \pi(\Upsilon)$ (note $v$ determines $G$ and vice versa by circuit connectivity of $\Upsilon$). Using \ref{lem:kernels_vs_lines} we can construct a presentation for $\frac{s^*J_\Upsilon}{\sum \bb K_G}$ using the $F_G(e)$; we find
\begin{equation*}
\frac{s^*\ca J_\Upsilon}{\sum \bb K_G} = \frac{\bigoplus_{v \in V(\Upsilon)}R\langle V \rangle}{\langle F_u(e)V - F_v(e)U \text{ for } e\colon u- v \text{ in } \Upsilon\rangle}. 
\end{equation*}
Now pick any $v_0 \in V(\Upsilon)$ as a basepoint. If $\gamma$ is a directed path in $\Upsilon$ from $v_0$ back to $v_0$, then when we write 
\begin{equation*}
\prod_\gamma \frac{F_u(e)}{F_v(e)}
\end{equation*}
we mean the product over all edges $u \stackrel{e}{\to} v$ in $\gamma$. Then we see that 
\begin{equation*}
\frac{s^*\ca J_\Upsilon}{\sum \bb K_G} = \frac{R\langle V_0 \rangle}{\langle V_0 \left(1- \prod_\gamma \frac{F_u(e)}{F_v(e)}\right) :\gamma\text{ from $v_0$ to $v_0$}\rangle}
\end{equation*}
(in particular we see that $\frac{s^*\ca J_\Upsilon}{\sum \bb K_G}$ is cyclic as an $R$-module). Then, from the definition of the Fitting ideal (\ref{def:fitting_ideal}), the canonical closed immersion $\Fsupp \frac{s^*\ca J_\Upsilon}{\sum \bb K_G}  \to S$ is an isomorphism if and only if for every path $\gamma$ as above we have $1- \prod_\gamma \frac{F_u(e)}{F_v(e)} = 0$, which (by a simple combinatorial argument) is equivalent to the `matching up condition' (1). 
\end{proof}

%\begin{lemma}\label{lem:supp-compatibility_implies_matching}
%Consider the following statements:
%\begin{enumerate}
%\item There exists $(t_v)_{v \in V(\Gamma)} \in (\ca O_S^\times)^{V(\Gamma)}$ such that for all pairs of vertices $u$, $v$ we have that $t_vF_v$ matches up with $t_{v'}F_{v'}$ (where the multiplication is just composition with the obvious scaling). 
%\item For every set $A$ of edges of $\Gamma$ which is the set of edges in some circuit-connected component, the invertible vertex enrichment datum $(q_G\colon s^*I_{G^c} \twoheadrightarrow \cl F_G)_{(v,G)}$ is $A$-Fitting-compatible. 
%\end{enumerate}
%Then $(2) \implies (1)$. 
%\end{lemma}
%\begin{proof}
%page 45
%\end{proof}

\begin{proof}[Proof of (1) of \ref{thm:transfer_of_compatibility_when_labels_vanish}]
By \ref{lem:supp-compatibility_implies_matching}, there exists a scaling of the trivialisations which matches up (in the notation of the following two paragraphs), and then by \ref{lem:matching_and_triviality} the invertible vertex-enrichment datum is $\otimes$-compatible. 

%(2): If we start with an enriched structure, we immediately have an invertible vertex enrichment datum (they consist of the same data). By \ref{lem:matching_and_triviality} and \ref{lem:matching_implies_supp_compatbility} the invertible vertex enrichment datum is $\Upsilon$-Fitting-compatible for every circuit-connected component $\Upsilon$. Then by \ref{lem:extending_CIVED_to_hemispheres} there is a unique way to extend it to an invertible enrichment datum which is $\Upsilon$-Fitting-compatible for every circuit-connected component $\Upsilon$. Finally by \ref{lem:2vcc_comp_implies_comp} this invertible enrichment datum is actually compatible. 
\end{proof}

Finally, we show how to reconstruct a compactified enriched structure from a Fitting-compatible vertex enrichment datum. 

\begin{lemma}\label{lem:connecteing_edges_cc}
If $H$ is a hemisphere and $E_H$ the set of separating edges for $H$, then there is a unique circuit-connected component $\Upsilon$ containing $E_H$. 
\end{lemma}
\begin{proof}
Given two separating edges $e$, $e'$ with endpoints $u$, $u' \in H$ and $v$, $v' \in H^c$, choose a minimal path from $u$ to $u'$ in $H$ and from $v$ to $v'$ in $H^c$, then the union of these paths with $e$ and $e'$ is a circuit $\gamma_{e, e'}$. If two circuit-connected subgraphs have an edge in common then their union is also circuit connected (by \ref{lem:circuit_conn_partiton}), hence the union of all the $\gamma_{e, e'}$ as $e$ and $e'$ vary over separating edges is itself circuit-connected. Again by \ref{lem:circuit_conn_partiton}, it is therefore contained in a (unique) circuit-connected component. 
\end{proof}

If we have a Fitting-compatible vertex enrichment datum, we need a way to extend it so an enrichment datum, i.e. to construct an (invertible) quotient for every hemisphere, while preserving compatibility. The next lemma shows that we can do this in a unique way, by constructing the kernel of the quotient map. 

\begin{lemma}\label{lem:extending_CIVED_to_hemispheres}
Suppose $(q_G\colon s^*\ca I_{G^c} \twoheadrightarrow \cl L_G)_{(v,G) }$ is a Fitting-compatible invertible vertex enrichment datum. Let $H$ be a hemisphere of $\Gamma$. Let $\Upsilon$ be the unique circuit-connected component of $\Gamma$ which contains all separating edges of $H$. Then there exists a unique short exact sequence 
\begin{equation*}
0 \to \bb{K}_H \to s^* \ca J_H \to \ca O_S(S) \to 0
\end{equation*}
so that the natural map
\begin{equation*}
\frac{s^*\ca J_\Upsilon}{\sum_{(v,G)\in \pi(\Upsilon)}\bb K_G} \to \frac{s^*\ca J_\Upsilon}{(\sum_{(v,G)\in\pi(\Upsilon)}\bb K_G) + \bb K_H}
\end{equation*}
is an isomorphism. Moreover the corresponding quotient of $s^*\ca I_{H^c}$ is invertible (under the correspondance of \ref{lem:equivalence_of_ED_and_blowup}). 
\end{lemma}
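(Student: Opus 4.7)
The plan is to construct $\bb K_H$ as the kernel of an explicit surjection $\phi\colon s^*\ca J_H \twoheadrightarrow R \coloneqq \ca O_S(S)$. Since all labels vanish on $S$ and $\ca M$ has normal crossings, the $R$-modules $s^*\ca J_H$ and $s^*\ca J_A$ are free on the standard generators $L_e$ indexed by their defining edges, and the natural map $\iota\colon s^*\ca J_H \hookrightarrow s^*\ca J_A$ is a split inclusion (using that every separating edge of $H$ is an edge of $A$). Supp-compatibility of the vertex enrichment datum, combined with \ref{lem:matching_implies_supp_compatbility}, produces an isomorphism $\pi\colon s^*\ca J_A / \sum_{(v,G)\in\pi(A)} \bb K_G \iso R$ via the cyclic presentation appearing in its proof. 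I set $\phi \coloneqq \pi \circ \iota$ and $\bb K_H \coloneqq \ker\phi$. Then $\iota(\bb K_H) \subseteq \ker\pi = \sum \bb K_G$ by construction, which is exactly the condition that the natural surjection $\frac{s^*\ca J_A}{\sum \bb K_G} \to \frac{s^*\ca J_A}{\sum \bb K_G + \bb K_H}$ be an isomorphism.

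The main step is to verify surjectivity of $\phi$. For this I would trace each generator $L_e$ with $e$ a separating edge of $H$ through the cyclic quotient: using \ref{lem:kernels_vs_lines} and the local trivialisations $f_G$ of \ref{lem:matching_and_trivialisation}, the image $q_G(L_e) \in \sigma_v^*\cl L_G$ equals $F_G(e)\,\ell_G$ for a chosen basis element $\ell_G$ of the invertible $R$-module $\sigma_v^*\cl L_G$, and $F_G(e)$ is a unit in $R$ because $\cl L_G$ is itself invertible (cf.\ \ref{lem:enhanced_structure_of_ED}). Supp-compatibility via \ref{lem:matching_implies_supp_compatbility} and \ref{lem:matching_and_triviality} allows one to rescale the $f_G$ so that all the $\ell_G$ descend to a common unit multiple of the cyclic generator $V_0 \in R$, whence $\phi(L_e) \in R^\times$. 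This gives surjectivity of $\phi$, and by \ref{lem:enhanced_structure_of_ED}(4), since each $\phi(L_e)$ is nonzero at the corresponding node $b_e \in B_H$, the torsion free rank one quotient of $s^*\ca I_{H^c}$ obtained via \ref{lem:equivalence_of_ED_and_blowup} is invertible at every node and hence globally invertible.

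For uniqueness, any other submodule $\bb K_H'$ satisfying the hypotheses of the lemma must satisfy $\iota(\bb K_H') \subseteq \sum \bb K_G = \ker\pi$ by the isomorphism condition, giving $\bb K_H' \subseteq \ker\phi = \bb K_H$. Invertibility of the corresponding quotient of $s^*\ca I_{H^c}$ combined with \ref{lem:equivalence_of_ED_and_blowup} makes $s^*\ca J_H/\bb K_H'$ an invertible, hence (over the local ring $R$) free rank-one, $R$-module; the resulting surjection $s^*\ca J_H/\bb K_H' \twoheadrightarrow s^*\ca J_H/\bb K_H \cong R$ of rank-one free modules over $R$ is therefore an isomorphism, forcing $\bb K_H' = \bb K_H$. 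The main obstacle throughout is the bookkeeping in the surjectivity step, where the cyclic presentation of $s^*\ca J_A / \sum \bb K_G$ and the $F$-value data from the chosen trivialisations must be correctly coordinated; everything else is formal manipulation of short exact sequences of free modules over the local ring $R$.
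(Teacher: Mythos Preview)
Your proposal is correct and follows essentially the same route as the paper: define $\phi$ as the composite $s^*\ca J_H \hookrightarrow s^*\ca J_A \twoheadrightarrow s^*\ca J_A/\sum_{(v,G)\in\pi(A)}\bb K_G \cong R$, set $\bb K_H = \ker\phi$, verify surjectivity via the unit values $F_G(e)$, and deduce invertibility of the quotient of $s^*\ca I_{H^c}$ from those same units. One small redundancy: in your uniqueness argument you invoke invertibility of the quotient of $s^*\ca I_{H^c}$ to conclude that $s^*\ca J_H/\bb K_H'$ is free of rank one, but this is already part of the hypothesis of the short exact sequence (the quotient is $\ca O_S(S) = R$), so that appeal is unnecessary and the surjection-of-rank-one-free-modules step goes through directly.
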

Note that in this argument it only matters that the labels of edges in $\Upsilon$ vanish on $S$ - this will be important in \ref{lem:unique_extension}. 
\begin{proof}
Noting above that $s^*\ca J_\Upsilon/\sum_{(v,G)\in\pi(\Upsilon)}\bb K_G$ is cyclic, the $\Upsilon$-Fitting-compatibility condition implies that $s^*\ca J_\Upsilon/\sum_{(v,G)\in\pi(\Upsilon)}\bb K_G \cong \ca O_S(S) =: R$. Then we have a natural map
\begin{equation*}
\phi\colon s^* \ca J_H \to \frac{s^* \ca J_\Upsilon}{\sum_{(v,G)\in\pi(\Upsilon)} \bb K_G} \cong R, 
\end{equation*}
and we see from the presentations in the proof of \ref{lem:matching_implies_supp_compatbility} that it is a surjection, hence the kernel $\bb K_\phi$ is free of rank $\#e(H, H^c)-1$ as an $R$-module. 

In order for the compatibility to hold we must choose $\bb K_H$ to be contained in $\bb K_\phi$, and we must have that $\bb K_H$ is free of rank $\#e(H, H^c)-1$ as an $R$-module. Since also the cokernel of $\bb K_H \to s^*\ca J_H$ is free of rank 1 we see that the only option is to have $\bb K_H = \bb K_\phi$. 

It remains to check the invertibility of the corresponding quotient of $s^*\ca I_{H^c}$. If we construct $F_H(e)\in R$ for $e \in \Upsilon$ in an analogous fashion to the construction of the $F_G(e)$ for $(v,G)$ relative components as above, then invertibility is equivalent to all the $F_H(e)$ being units in $R$. Perhaps the nicest way to see that they are indeed units is to give a direct recipe to build $F_H(e)$ out of the $F_G(e)$. First, scale the $F_G(e)$ so that they match up (cf.\ (2) of \ref{lem:matching_and_triviality}). Then if $e\colon u - v$ is an edge in $\Upsilon$ with $v \in H$ and $u \in H^c$, then we define $F_H(e) = F_v(e) = F_u(e)$. That the $F_H(e)$ so defined are in $R^\times$ is immediate. A small calculation is then required to check that (under the duality as in \ref{lem:kernels_vs_lines}) these $F_H(e)$ really correspond to the $\bb K_H$ defined above, yielding the invertibility. 
\end{proof}

\begin{lemma}\label{lem:2vcc_comp_implies_comp}
Let $\frak E$ be an invertible enrichment datum which is $\Upsilon$-Fitting-compatible for every circuit-connected component $\Upsilon$. Then $\frak E$ is Fitting-compatible. 
\end{lemma}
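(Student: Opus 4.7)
The proof rests on one combinatorial fact combined with the matching produced by the hypothesis.

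First I would observe that for any hemisphere $G$ of $\Gamma$, all separating edges of $G$ lie in a single circuit-connected component. Given two such edges $e_1 = u_1 v_1$ and $e_2 = u_2 v_2$ with $u_1, u_2 \in G$ and $v_1, v_2 \in G^c$, the connectedness of $G$ provides a path from $u_1$ to $u_2$ in $G$, and the connectedness of $G^c$ a path from $v_2$ to $v_1$ in $G^c$; concatenating with $e_2$ and $e_1$ yields a circuit containing both edges, so they lie in a common circuit and hence in the same circuit-connected component.

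Given a non-empty set $A$ of edges, I would split $A = \bigsqcup_i A_i$ with $A_i = A \cap C_i$ for the circuit-connected components $C_i$ of $\Gamma$. By the observation, each hemisphere $G$ contributing to the compatibility sum has its separating edges contained in $A_i$ for a unique $i$. Since all labels $\ell_e$ vanish on $S$, the relations $\ell_e L_{e'} - \ell_{e'} L_e$ in the presentation of $s^*\ca J_A$ are trivial, so $s^*\ca J_A = R^{|A|} = \bigoplus_i R^{|A_i|}$; the image $f_{A, G}(\bb K_G)$ of each contributing hemisphere sits entirely inside the summand $R^{|A_i|}$ associated to its circuit-connected component. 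Consequently $M_A \coloneqq s^*\ca J_A / \sum_G f_{A,G}(\bb K_G)$ splits as $\bigoplus_i M_A^{(i)}$, where $M_A^{(i)}$ is $R^{|A_i|}$ modulo the contributions of hemispheres whose separating edges lie in $A_i$.

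To finish I would show each $M_A^{(i)}$ is supported on all of $S$. By the hypothesis applied to $A = C_i$ together with \ref{lem:supp-compatibility_implies_matching} and \ref{lem:matching_and_triviality}, a suitable rescaling of the trivializations $f_G$ for vertex relative components $(v, G)$ inside $C_i$ produces a function $F$ from the edges of $C_i$ to $R^\times$ such that each such $\bb K_G$ is generated by tuples $(\alpha_e)$ with $\sum_e \alpha_e F(e) = 0$. For a non-vertex hemisphere $H$ inside $C_i$, the uniqueness clause of \ref{lem:extending_CIVED_to_hemispheres} forces $\bb K_H$ to be the kernel of the surjection $s^*\ca J_H \to s^*\ca J_{C_i}/\sum_{(v,G)\in\pi(C_i)}\bb K_G \cong R$, so $\bb K_H$ is likewise spanned by relations of the form $\sum_e \alpha_e F(e) = 0$. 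Therefore the map $R^{|A_i|} \to R$ sending $L_e \mapsto F(e)$ kills every contributing $\bb K_G$, is surjective because the $F(e)$ are units, and hence factors through a surjection $M_A^{(i)} \twoheadrightarrow R$. Thus $\on{Supp}(M_A^{(i)}) = S$ for each $i$, and summing gives $\on{Supp}(M_A) = S$.

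The main technical point is the decomposition $M_A = \bigoplus_i M_A^{(i)}$, which relies crucially on both the combinatorial fact about hemispheres and the vanishing of all labels making $s^*\ca J_A$ a free module that splits as a direct sum; once this is in place, the conclusion is essentially formal from the matching constructed in the previous lemmas.
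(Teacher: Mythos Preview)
Your overall structure---splitting by circuit-connected components via the direct sum decomposition of $s^*\ca J_A$---matches the paper's Claim~2 exactly, and the combinatorial observation about separating edges of a hemisphere lying in a single circuit-connected component is correct and is precisely what makes that decomposition work. The difference is in how you handle each summand $M_A^{(i)}$.

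The paper argues more directly (its Claim~1): the natural map $M_{A_i} \to M_{C_i}$ is surjective because $M_{C_i}$ is a further quotient of the cyclic module $s^*\ca J_{C_i}/\sum_{(v,G)\in\pi(C_i)}\bb K_G$ (cyclic by the presentation in the proof of \ref{lem:matching_implies_supp_compatbility}, independently of any compatibility), hence is generated by the image of any single $L_e$; choosing $e\in A_i$ gives surjectivity. Since $M_{C_i}$ has support $S$ by hypothesis, so does $M_{A_i}$.

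Your route via the matching function $F$ also works, but the step for non-vertex hemispheres is not justified by the lemma you cite. The uniqueness in \ref{lem:extending_CIVED_to_hemispheres} only says there is a unique $\bb K_H$ making a certain map an isomorphism; it does not by itself say that the $\bb K_H$ already present in $\frak E$ is that one. What you actually need is that the image of each $\bb K_H$ in $s^*\ca J_{C_i}$ lies inside $\sum_{(v,G)\in\pi(C_i)}\bb K_G$. This does follow: the vertex-only quotient surjects onto the full quotient $M_{C_i}$, the latter has support $S$ by hypothesis, hence so does the former; both are cyclic, hence both isomorphic to $R$, so the surjection between them is an isomorphism and the two submodules of $s^*\ca J_{C_i}$ coincide. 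The same intermediate step is needed before you can invoke \ref{lem:supp-compatibility_implies_matching} at all, since that lemma concerns supp-compatibility of the \emph{vertex} datum, whereas the hypothesis of the present lemma is supp-compatibility of the full datum. Once these points are filled in your argument is complete, but the paper's Claim~1 reaches the same conclusion without the detour through $F$.
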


\begin{proof}
This is immediate by combining the following two claims with part (4) of \ref{fitt_0_in_ann}. 

\noindent \textbf{Claim 1:} Let $\Upsilon$ be a circuit-conneced component, and $\Upsilon_0 \sub \Upsilon$ a non-empty subset. Then natural map $s^*\ca J_{\Upsilon_0} \to s^*\ca J_\Upsilon$ induces a surjection
\begin{equation*}
\frac{s^* \ca J_{\Upsilon_0} }{\sum_{e(H,H^c)\sub \Upsilon_0} \bb K_H} \to \frac{s^* \ca J_\Upsilon }{\sum_{e(H,H^c)\sub \Upsilon} \bb K_H}. 
\end{equation*}
\noindent \textbf{Proof of claim 1:} That there is an induced map is clear since the $\bb K_H$ we consider for $\Upsilon_0$ are a subset of the ones we consider for $\Upsilon$. The surjectivity follows from the presentation of $\frac{s^* \ca J_\Upsilon }{\sum_H \bb K_H}$ given in the proof of \ref{lem:matching_implies_supp_compatbility}.  This concludes the proof of claim 1. 

\noindent \textbf{Claim 2:}
Let $\Upsilon$, $\Upsilon'$ be two distinct (hence having no common edges) circuit-connected components. Let $\Upsilon_0 \sub \Upsilon$ and $\Upsilon_0' \sub \Upsilon'$. Then the natural isomorphism $ s^* \ca J_{\Upsilon_0} \oplus s^* \ca J_{\Upsilon_0'} \to s^* \ca J_{\Upsilon_0 \cup \Upsilon_0'}$ induces an isomorphism
\begin{equation*}
\frac{s^* \ca J_{\Upsilon_0}}{\sum_{e(H,H^c)\sub \Upsilon_0} \bb K_H} \oplus \frac{s^* \ca J_{\Upsilon_0'}}{\sum_{e(H,H^c)\sub \Upsilon_0'} \bb K_H} \iso \frac{s^* \ca J_{\Upsilon_0 \cup \Upsilon_0'}}{\sum_{e(H,H^c)\sub \Upsilon_0\cup \Upsilon_0'} \bb K_H}. 
\end{equation*}
\noindent \textbf{Proof of claim 2:}
Because ${\Upsilon_0}$ and $\Upsilon_0'$ are contained in distinct circuit-connected components, the $\bb{K}_H$ appearing for ${\Upsilon_0} \cup \Upsilon_0'$ are exactly the union of those appearing for ${\Upsilon_0}$ and those appearing for $\Upsilon_0'$. This concludes the proof of the second claim, and thus the lemma. 
\end{proof}

\begin{proof}[Proof of (2) of \ref{thm:transfer_of_compatibility_when_labels_vanish}]
%(1): By \ref{lem:supp-compatibility_implies_matching}, there exists a scaling of the trivialisations which matches up (in the notation of the following two paragraphs), and then by \ref{lem:matching_and_triviality} the invertible vertex-enrichment datum is $\otimes$-compatible. 

If we start with an enriched structure, we immediately have an invertible vertex enrichment datum (they consist of the same data). By \ref{lem:matching_and_triviality} and \ref{lem:matching_implies_supp_compatbility} the invertible vertex enrichment datum is $\Upsilon$-Fitting-compatible for every circuit-connected component $\Upsilon$. Then by \ref{lem:extending_CIVED_to_hemispheres} there is a unique way to extend it to an invertible enrichment datum which is $\Upsilon$-Fitting-compatible for every circuit-connected component $\Upsilon$. Finally by \ref{lem:2vcc_comp_implies_comp} this invertible enrichment datum is actually compatible. 
\end{proof}
\subsection{Integral base schemes}

We adopt \ref{sit:sscontrolled}. In this section we will prove the analogue of \ref{thm:transfer_of_compatibility_when_labels_vanish} in the case where $S$ is integral. First, two lemmas which hold without restriction on $S$:

\begin{lemma}\label{lem:alignment_from_inv_quotient}
Assume that there exists an invertible enrichment datum on $C/S$. Then $C/S$ is 1-aligned. 
\end{lemma}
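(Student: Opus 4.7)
The plan is to mimic the proof of \ref{lem:ES_implies_1aligned}, observing that its local calculation only invokes the existence of a single invertible quotient of $s^*\ca{I}_{G^c}$ for one pair $(v,G)$ with $v$ a vertex of the graph at a geometric point and $G \in \pi_0(\Gamma_{\bar s}\setminus v)$. Neither the $\otimes$-triviality condition nor any compatibility between different quotients is used there, so it is enough to produce the required invertible quotient from the invertible enrichment datum at hand.

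First I would observe that whenever $v$ is a vertex of $\Gamma_{\bar s}$ and $G\in\pi_0(\Gamma_{\bar s}\setminus v)$, the set $G$ is a hemisphere of $\Gamma_{\bar s}$: $G$ is connected by definition, and its complement $\{v\}\cup\bigsqcup_{G'\neq G}G'$ is connected because $v$ is adjacent to every other component of $\Gamma_{\bar s}\setminus v$. Consequently the given invertible enrichment datum already supplies, for every such pair $(v,G)$, an invertible quotient $q_G\colon s^*\ca{I}_{G^c}\twoheadrightarrow \cl{L}_G$ that we are free to plug into the earlier local argument.

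Next I would pick a geometric point $\bar s\to S$, set $R=\widehat{\ca{O}_{S,\bar s}}$, fix a vertex $v$, a component $G$, and enumerate the edges from $v$ to $G$ as $e_0,\dots,e_n$ with labels $(a),(a_1),\dots,(a_n)$. Restricting $s^*\ca{I}_{G^c}$ to the completed local ring at $e_0$ yields exactly the presentation written down in the proof of \ref{lem:ES_implies_1aligned}; feeding in the invertible quotient $\cl{L}_G$ and running the same two-case analysis (according to whether some $A_i$ or $X$ is a unit in the target) forces $a_i\in(a)$ for every $i$, and by symmetry $(a_i)=(a)$ in $R$. Thus all edges from $v$ into $G$ carry the same label.

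Finally I would promote this local statement to 1-alignment of an arbitrary circuit $\gamma$. For every vertex $v$ lying on $\gamma$, the two edges of $\gamma$ incident to $v$ must have their other endpoints in the same connected component of $\Gamma_{\bar s}\setminus v$; otherwise the remainder of $\gamma$, which is a path in $\Gamma_{\bar s}\setminus v$ joining those two endpoints, could not exist. The previous paragraph therefore gives these two edges the same label, and walking around $\gamma$ propagates the equality to all of its edges, which is precisely 1-alignment at $\bar s$. The only delicate point is simply to verify on inspection that \ref{lem:ES_implies_1aligned}'s proof really only uses a single invertible quotient and no compatibility; once this is confirmed the rest is combinatorics.
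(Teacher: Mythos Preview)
Your proposal is correct and is essentially the paper's own proof, spelled out in more detail. The paper simply notes that the argument of \ref{lem:ES_implies_1aligned} never invokes the $\otimes$-triviality condition, only the existence of an invertible quotient of $s^*\ca I_{H^c}$; your additional observation that every relative component $G$ is already a hemisphere (so the enrichment datum supplies the required invertible quotient) makes explicit the one step the paper leaves implicit.
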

\begin{proof}
This needs only a trivial modification of the proof of \ref{lem:ES_implies_1aligned} --- the proof of the latter never used the condition that the tensor product of the invertible sheaves be trivial, only the existence of an invertible quotient of $s^* \ca I_{H^c}$. 
\end{proof}

\begin{lemma}\label{lem:unique_extension}
Suppose we are given a subset of an enrichment datum which contains a vertex enrichment datum. Fix a hemisphere $H$, and suppose all labels of all separating edges of $H$ vanish on $S$. All separating edges of $H$ are contained in some circuit-connected-component, say $\Upsilon$. Suppose that our given subset of an enrichment datum is $\Upsilon$-Fitting-compatible. Then there is a unique invertible quotient of $s^*\ca I_{H^c}$ which is $\Upsilon$-Fitting-compatible with all the other given quotients. 
\end{lemma}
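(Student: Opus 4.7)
The plan is to reduce to the case where $S$ is local and then invoke \ref{lem:extending_CIVED_to_hemispheres} applied to the vertex enrichment datum sub-collection of our given subset. Since the property of being an invertible quotient of $s^*\ca I_{H^c}$ is representable and $A$-supp-compatibility is a closed condition on $S$, both existence and uniqueness can be checked Zariski-locally on $S$, with local constructions glued together by means of the uniqueness statement. So without loss of generality assume $S = \on{Spec} R$ for a local ring $R$ in which all labels of separating edges of $A$ vanish.

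The first step is to verify that the vertex enrichment datum sub-collection is itself $A$-supp-compatible. If $\sum_G \bb K_G$ denotes the sum over the full given subset (indexed by hemispheres whose separating edges lie in $A$) and $\sum_{(v,G) \in \pi(A)} \bb K_G$ the sub-sum over just vertex components, then there is a canonical surjection
\begin{equation*}
\frac{s^*\ca J_A}{\sum_{(v,G) \in \pi(A)} \bb K_G} \twoheadrightarrow \frac{s^*\ca J_A}{\sum_G \bb K_G}.
\end{equation*}
Both sides are cyclic as $R$-modules by the presentation computed in the proof of \ref{lem:matching_implies_supp_compatbility}, and the right-hand side is isomorphic to $R$ by the hypothesised $A$-supp-compatibility of the full subset. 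A surjection $R \twoheadrightarrow R$ of cyclic $R$-modules is an isomorphism by Nakayama, so the left-hand side is also $R$ and the vertex sub-collection is already $A$-supp-compatible; moreover the two quotients are canonically identified.

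For existence, I would apply \ref{lem:extending_CIVED_to_hemispheres} to the vertex enrichment datum to produce the unique invertible quotient $s^*\ca I_{H^c} \twoheadrightarrow \cl F_H$ whose associated kernel $\bb K_H \sub s^*\ca J_H$ maps to zero in the vertex-only quotient $R$. Because the image of $\bb K_H$ in $s^*\ca J_A/\sum_G \bb K_G$ factors through its image in the vertex-only quotient, which vanishes, adjoining $\bb K_H$ to the full sum does not change that quotient, so $A$-supp-compatibility with the full given data is preserved.

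For uniqueness, suppose $\bb K'_H$ yields another invertible $A$-supp-compatible extension. Then the image of $\bb K'_H$ in the full quotient $R$ must be zero (else supp-compatibility fails, again by cyclicity). Via the canonical identification of the full quotient with the vertex-only quotient established in step one, the image of $\bb K'_H$ in the vertex-only quotient $R$ is likewise zero, and the uniqueness part of \ref{lem:extending_CIVED_to_hemispheres} forces $\bb K'_H = \bb K_H$. The main point to be careful about is the identification of the full and vertex-only cyclic quotients, since without it the vanishing of the image of $\bb K'_H$ in one does not obviously transfer to the other; once this is pinned down by the Nakayama argument, the rest is formal.
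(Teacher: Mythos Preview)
Your approach is correct and takes a somewhat different route from the paper's. The paper's proof is essentially a one-liner: it says the argument of \ref{lem:extending_CIVED_to_hemispheres} carries over verbatim, the only change being that here one does not assume \emph{all} edge-labels vanish but only those in $A$, and this does not affect the construction. In other words, the paper simply re-runs that proof with the larger given collection of $\bb K_G$'s in place of the vertex-only collection, constructing $\bb K_H$ directly as the kernel of $s^*\ca J_H \to s^*\ca J_A/\sum_G \bb K_G \cong R$.

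You instead factor through \ref{lem:extending_CIVED_to_hemispheres} as a black box, by first proving the auxiliary fact that the vertex-only sub-collection is already $A$-supp-compatible. This extra step is a clean observation and makes the reduction modular; once you know the vertex-only and full quotients are canonically identified, both existence and uniqueness transfer for free. A couple of small points: the Nakayama invocation is heavier than needed --- the canonical surjection is the further-quotient map $R/J \twoheadrightarrow R/J'$ with $J\subseteq J'$, and $J'=0$ (zero annihilator) immediately gives $J=0$. Also, to invoke \ref{lem:extending_CIVED_to_hemispheres} outside the ``all labels vanish'' setup of that subsection you must rely on the remark following its proof (that only the labels of edges in $A$ need vanish); you should cite this explicitly. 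Finally, ``separating edges of $A$'' should read ``edges of $A$'', and strictly speaking the hypothesis only gives you that the separating edges of $H$ vanish --- in the paper's application (\ref{cor:reduced_comparison}) the gap is closed by the ambient 1-alignment, which forces all labels in the circuit-connected component $A$ to coincide.
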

\begin{proof}
This is a slight generalisation of \ref{lem:extending_CIVED_to_hemispheres}, and the same proof works. The main difference is that here we only assume the labels of edges in $\Upsilon$ vanish on $S$ (rather than all edges as in \ref{lem:extending_CIVED_to_hemispheres}), but this is not important for the proof. 
\end{proof}

\begin{corollary}[c.f. \ref{thm:transfer_of_compatibility_when_labels_vanish}]\label{cor:reduced_comparison}
Suppose we are in \ref{sit:sscontrolled}. Suppose moreover that $S$ is \emph{reduced}. Then
\begin{enumerate}
\item
If we start with a compatible invertible enrichment datum $\frak E$ (i.e.\ an invertible compactified enriched structure) then the invertible Fitting-compatible vertex-enriched structure $\frak E_{vert}$ obtained by forgetting extra quotients is $\otimes$-compatible; 
\item Suppose also that every connected component of $S$ is irreducible. Then the resulting functor from invertible compactified enriched structures to enriched structures is an equivalence. 
\end{enumerate}
\end{corollary}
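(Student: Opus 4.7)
The plan is to bootstrap from \ref{thm:transfer_of_compatibility_when_labels_vanish} by exploiting that a reduced ring has trivial nilradical, so that support-theoretic conditions promote to genuine equalities.

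For part (1): The claim is Zariski-local on $S$, so we may assume $S = \on{Spec} R$ is reduced and local. By \ref{lem:alignment_from_inv_quotient} the curve $C/S$ is $1$-aligned, so within each circuit-connected component $A$ of the controlling graph $\Gamma$ all edges share a common principal label $(\ell_A) \sub R$. On the open locus $D(\ell_A) \sub S$ the bundles $\cl L_{(v,G)}$ coming from relative components $(v,G)$ supported on $A$ are canonically trivial, hence the $A$-contribution to $\otimes$-compatibility is automatic there. It remains to analyse the closed locus $V(\ell_A)$. Rerunning the matching computation of \ref{lem:supp-compatibility_implies_matching} on this locus, the supp-compatibility of the compactified enrichment datum places each obstruction $1 - \prod_{\gamma} F_u(e)/F_v(e)$ in the annihilator of a cyclic module of full support, hence in the nilradical, which reducedness forces to be zero. \ref{lem:matching_and_triviality} then produces the desired trivialisation. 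Aggregating the contributions across all circuit-connected components yields $\otimes$-compatibility locally on $S$.

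For part (2): Fully faithfulness follows from part (1) combined with the fact that isomorphisms of compactified enriched structures are detected on the vertex-indexed data, together with the uniqueness clause of \ref{lem:unique_extension}. For essential surjectivity, given an enriched structure --- equivalently, a supp-compatible invertible vertex-enrichment datum --- we must extend it to an invertible enrichment datum on all hemispheres. Applying \ref{lem:unique_extension} to each hemisphere $H$ yields a unique extension on the closed locus where all separating labels of $H$ vanish. These stratum-wise extensions must then be glued into a single globally defined datum on $S$; the hypothesis that every connected component of $S$ is irreducible supplies a unique generic point per component, and combined with reducedness and the uniqueness in \ref{lem:unique_extension} this forces compatibility of the extensions across strata. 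Finally, \ref{lem:2vcc_comp_implies_comp} reduces the global compatibility condition of \ref{def:compatibility} to compatibility per circuit-connected component, which holds by construction.

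The main obstacle will be the gluing step in part (2). Different hemispheres have their separating labels vanishing on different closed subschemes of $S$, and \ref{lem:unique_extension} operates strictly on each such subscheme; verifying that the resulting extensions patch together into a coherent global enrichment datum is delicate. The irreducibility hypothesis is precisely what lets this bookkeeping close up, since it supplies a single canonical generic point per component against which all strata can be compared, while reducedness ensures that an equality detected on this generic point propagates to the whole component.
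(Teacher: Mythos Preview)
Your approach to part (1) has a genuine gap. The matching computation in \ref{lem:supp-compatibility_implies_matching} is only set up when all labels of the circuit-connected component $A$ vanish on $S$; the quantities $F_G(e)$ are defined via trivialisations on the components $C_v$, which do not even exist unless the relevant labels are zero. You address this by restricting to $V(\ell_A)$, but then the obstructions $1-\prod_\gamma F_u(e)/F_v(e)$ live in $R/(\ell_A)$, and $R/(\ell_A)$ need not be reduced even when $R$ is, so ``nilradical forces zero'' does not apply. Even if it did, you would only obtain triviality of $\bigotimes \cl L_{(v,G)}$ on $V(\ell_A)$ and on $D(\ell_A)$ separately; these do not form an open cover, so triviality on $S$ does not follow. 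The paper instead uses a one-line fibrewise argument: the locus in $S$ where $\bigotimes \cl L_{(v,G)}$ is fibrewise trivial is closed, contains every point by applying \ref{thm:transfer_of_compatibility_when_labels_vanish} over each residue field, and hence equals $S$ by reducedness.

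For part (2) your stratification-and-gluing is more complicated than needed and the gluing step is not actually carried out; patching data defined on a closed subscheme with data defined on its open complement is not a descent situation, and the ``unique generic point'' heuristic does not produce a sheaf on $S$. The paper avoids all of this with a clean dichotomy: since $C/S$ is $1$-aligned, the labels of the separating edges of a given hemisphere $H$ all generate the same principal ideal $(\ell)$ in $R$. On an integral $S$ either $\ell=0$, so $Z_H = S$ and \ref{lem:unique_extension} applies directly on all of $S$; or $\ell$ is a nonzerodivisor, so $s^*\ca I_{H^c}$ is already invertible on $C_S$ and one simply takes the identity quotient with $K_H=0$. No gluing is required.
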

\begin{proof}
(1): Suppose that $C/S$ is as in the statement and that we are given a compatible invertible enrichment datum. We extract a vertex-enrichment datum $(q_{v,G}\colon s^*\ca I_{G^c} \twoheadrightarrow \cl F_{v,G})_{v,G}$, and consider the line bundle $\bigotimes \cl F_{v,G}$. The locus in $S$ where this bundle is trivial on the fibres is a closed subscheme of $S$ and contains all points of $S$ by \ref{thm:transfer_of_compatibility_when_labels_vanish} (applied in the case when the base is a point), hence is equal to $S$ since $S$ is reduced. 

(2): Unlike for (1) we cannot quite deduce this formally from the case over a field. Note that both sets are empty if $C/S$ is not 1-aligned by \ref{lem:alignment_from_inv_quotient}, so we may assume that $C/S$ is 1-aligned. Fix a hemisphere $H$ of $\Gamma$. Let $z\colon Z_H \tra S$ be the closed subscheme where all the labels of separating edges of $H$ vanish. We consider two cases:

\noindent \textbf{Case 1:} The map $z$ is a surjection. Then we are essentially in the case we considered before (all relevant labels vanishing on $S$), and we are done by \ref{lem:unique_extension}. 

\noindent \textbf{Case 2:} The map $z$ is not a surjection. By alignment, $Z_H$ is cut out by monogenic ideal, and since $z$ is not a surjection we see that $Z_H$ is a Cartier divisor in $S$ (i.e.\ cut out by a regular element). Then $s^* \ca I_{H^c}$ is invertible, so we take $K_H = 0$ and compatibility is clear. 
%Let $Comp$ be the set of relative components $(v,G)$ of $\Gamma$ where $v$ is in $H^c$ and $H \sub G$. Then following \ref{lem:unique_extension} we build a Fitting-compatible kernel $\bb K_H \tra z^*s^*\ca J_{H^c}$ so that the cokernel is invertible on $C_{Z_H}$. {Need an extra step to go from quotient for $\ca J$ to quotient of $\ca I$. Use stuff from`properness' section. }Then by composing{WTF? What are we `composing'? Adjunction goes the other way...} we have a map $z_*\bb K_H \to s^* \ca I_{H^c}$, and we write $\ca{F}_H$ for the cokernel. supp-compatibility of the resulting collection of quotients is clear, we just need to check that $\cl F_H$ is invertible. Clearly it is coherent, and $C_S$ is reduced, so by \cite[ex. II.5.8]{Hartshorne1977Algebraic-Geome} it is enough to check that all fibres have rank 1. We do this in two cases. First, is $p$ is a point lying over a point in $Z_H$. Then the result follows from \ref{lem:unique_extension}. On the other hand, if $p$ does not lie over a point in $Z_H$ then the label of some (hence all, by alignment) separating edge of $H$ is a unit at $p$, so $\bb K_H$ vanishes at $p$ and $s^*\ca I_{G^c}$ is invertible (even canonically trivial) at $p$, so the result follows. 
\end{proof}

\subsection{Deducing the general case}

Recall that $\ca E/\frak M$ is the stack of enriched structures, and write $\overline{\ca E}^{inv}/\frak M$ for the stack of invertible compactified enriched structures - this is evidently an open substack of the stack $\overline{\ca E}$ of compactified enriched structures. Since $\ca E$ is regular (\ref{cor:from_Mtilde}) we know by \ref{cor:reduced_comparison} that $\ca E(\ca E) = \overline{\ca E}^{inv}(\ca E)$, and the former contains the identity, so we obtain a map $\Psi\colon \ca E \to \overline{\ca E}^{inv}$. 

\begin{theorem}
The map $\Psi\colon \ca E \to \overline{\ca E}^{inv}$ is an isomorphism. 
\end{theorem}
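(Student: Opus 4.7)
The plan is to follow the proof that $\bb E\colon \ca M^1 \to \ca E$ is an isomorphism as closely as possible. We first reduce to $\Lambda = \on{Spec}k$ for $k$ an algebraically closed field (using smoothness of $\ca E$ over $\Lambda$ and the fibrewise criterion for flatness), prove the statement there, and then propagate to a general base. Both $\ca E$ and $\bar{\ca E}^{inv}$ are algebraic spaces of finite presentation and separated over $\ca M$. By construction $\Psi$ restricts to the identity on the schematically dense open $\mathring{\ca M} \hookrightarrow \ca E$, so by separatedness of $\bar{\ca E}^{inv} \to \ca M$, any retraction we construct will automatically satisfy $\Phi \circ \Psi = \on{id}_{\ca E}$.

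Over $k$, Corollary \ref{cor:reduced_comparison}(2) applied to spectra of algebraically closed fields gives that $\Psi_k$ is a bijection on geometric points, and in particular quasi-finite. Applying the same corollary \'etale-locally to $(\bar{\ca E}^{inv}_k)_{\mathrm{red}}$ (where connected components are \'etale-locally irreducible) and descending yields a morphism $\Phi_{\mathrm{red}}\colon (\bar{\ca E}^{inv}_k)_{\mathrm{red}} \to \ca E_k$. Since $\ca E_k$ is reduced, $\Psi_k$ factors through $(\bar{\ca E}^{inv}_k)_{\mathrm{red}}$, and $\Phi_{\mathrm{red}} \circ \Psi_k$ is the identity by the density and separatedness arguments above. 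Zariski's Main Theorem (using separatedness and bijectivity on geometric points) then shows $\Psi_k$ is finite and radicial, identifying $\ca E_k$ with $(\bar{\ca E}^{inv}_k)_{\mathrm{red}}$.

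The main obstacle is to upgrade this to an isomorphism $\ca E_k \iso \bar{\ca E}^{inv}_k$, i.e.\ to show that $\Psi_k$ is formally \'etale (equivalently, that $\bar{\ca E}^{inv}_k$ is reduced on the image of $\Psi_k$). Concretely, given a square-zero thickening $A \hookrightarrow A'$ of local Artinian $k$-algebras, an $\on{Spec}A$-point of $\ca E$ and a compatible $\on{Spec}A'$-point of $\bar{\ca E}^{inv}$, we need to produce an $\on{Spec}A'$-point of $\ca E$ extending the given one. For this, we extract the underlying vertex--hemisphere data $(\cl F_{v,G})$ from the compactified enriched structure on $A'$ and show $\bigotimes_{v,G} \cl F_{v,G}$ is trivial on $C_{A'}$. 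This follows by an extension of \ref{thm:transfer_of_compatibility_when_labels_vanish} to the case of nilpotent (rather than zero) labels: the key lemmas \ref{lem:matching_and_triviality} and \ref{lem:matching_implies_supp_compatbility} need to be reproved with local presentations of the form $A'[[x,y]]/(xy-a)$ for $a$ nilpotent (in place of $A[[x,y]]/(xy)$). Since the kernels $\bb K_G$ are preserved under base change by flatness of the torsion-free rank-$1$ quotients, and the relevant quotient of $s^*\ca J_A$ remains cyclic, the ``matching up'' identities continue to force $\otimes$-triviality. Once formal \'etaleness is in hand, the radicial finite $\Psi_k$ is an isomorphism; the passage from $k$ to general $\Lambda$ then proceeds exactly as in the $\bb E$-proof via the fibrewise flatness criterion, combined with smoothness of $\ca E \to \Lambda$.
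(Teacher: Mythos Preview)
Your overall strategy mirrors the paper's proof that $\bb E\colon \ca M^1 \to \ca E$ is an isomorphism, but the paper's own proof of the present theorem takes a different and more economical route at the key step, and your proposed step 4 contains a real gap.

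The problem is the claimed extension of \ref{thm:transfer_of_compatibility_when_labels_vanish} to nilpotent (rather than zero) labels. All of the structural results underpinning that theorem --- \ref{lem:enhanced_local_structure_of_pullbacks}, \ref{lem:enhanced_structure_of_ED}, and the ``matching up'' analysis of \S8.1 --- rest on the decomposition of $C$ as a fibred coproduct of $C_G$ and $C_{G^c}$ along the separating sections, and on Faltings' presentation (\ref{lem:local_str_of_tfr1}) over $R[[x,y]]/(xy)$. When the label $a$ is a nonzero nilpotent, the local ring is $R[[x,y]]/(xy-a)$, the curve does not split into components scheme-theoretically, and neither the restriction maps $s^*\ca I_{G^c}|_{C_G}$ nor the notion of a ``local trivialisation'' $f_G\colon \cl F_G|_{C_v}\iso \ca O_{C_v}(e(v,G))$ make sense as written. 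So the assertion that ``the key lemmas continue to hold'' is not justified, and it is not clear how to repair the argument without substantial new work.

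The paper sidesteps this entirely. Rather than proving formal \'etaleness over arbitrary Artinian thickenings, it shows directly that $\bar{\ca E}^{inv}$ is \emph{regular} by a tangent-space computation, then concludes via the fact that a birational bijection between regular schemes is an isomorphism. The point is that the tangent space decomposes as
\[
0 \to T_{\Psi(p)}\bar{\ca E}^{inv}_m \to T_{\Psi(p)}\bar{\ca E}^{inv} \stackrel{\pi_*}{\to} T_m\ca M,
\]
and the two pieces can be controlled separately: the kernel is the tangent space to the \emph{fibre} over $m$, where all labels vanish identically, so \ref{thm:transfer_of_compatibility_when_labels_vanish} applies as stated and gives $T_{\Psi(p)}\bar{\ca E}^{inv}_m \cong T_p\ca E_m$; the cokernel is bounded below exactly as in \ref{lem:dim_coker}, using only that $\bar{\ca E}^{inv}\to\ca M$ is 1-aligned (\ref{lem:alignment_from_inv_quotient}). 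Feeding these into the computation of \ref{prop:regularity_of_E} yields $\dim T_{\Psi(p)}\bar{\ca E}^{inv}\le d$, hence regularity. This avoids ever needing the equivalence of the two compatibility notions over a base with nonzero nilpotent labels.
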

\begin{proof}
If $m \in \frak{M}$ is a point then the map $\Psi_m\colon \ca E_m \to \overline{\ca E}^{inv}_m$ is an isomorphism by \ref{thm:transfer_of_compatibility_when_labels_vanish}. In particular, $\Psi$ is a bijection on points. 

We may check the theorem after base-change to a regular irreducible scheme $M \to \frak M$, with the property that the pullback of the locus of smooth curves is dense in $M$. Then $\ca E_M$ is irreducible, so the same holds for $\overline{\ca E}^{inv}_M$. 

If $M^0\sub {M}$ denotes the (dense open) locus of smooth curves, then $ \ca E_M \to {M}$ is an isomorphism over ${M}^0$, and the same is true for $\overline{\ca E}^{inv}_{M} \to {M}$. Moreover the pullback of ${M}^0$ to $\ca E_M$ is dense in $\ca E_M$, so its image in $\overline{\ca E}^{inv}_M$ is dense there also. Thus we see that the map $\Psi_M$ is a birational map of integral schemes, and that $\ca E_M$ and $\overline{\ca E}^{inv}_M$ have the same dimension. 

If $p$ is a point of $\ca E$ lying over a point $m \in {M}$, then by \ref{thm:transfer_of_compatibility_when_labels_vanish} we have an isomorphism of tangent spaces
\begin{equation*}
T_p\ca E_m \iso T_{\Psi(p)} \overline{\ca E}^{inv}_m. 
\end{equation*}
The dimension of the former (and hence the latter) tangent space is computed in \ref{prop:regularity_of_E}. Moreover, the image in $T_m{M}$ of $T_{\Psi(p)}\overline{\ca E}^{inv}$ satisfies the same upper bound as that obtained for $T_p\ca E$ in \ref{lem:dim_ker}, since $\overline{\ca E}^{inv} \to {M}$ is 1-aligned by \ref{lem:alignment_from_inv_quotient}. Thus repeating the elementary dimension computation of \ref{prop:regularity_of_E} (and using that $\ca E_M$ and $\overline{\ca E}^{inv}_M$ are irreducible schemes of the same dimension) we see that $\overline{\ca E}^{inv}_M$ is regular. 

Thus we see that $\Psi_M$ is a birational bijection between regular schemes, so by \cite[\href{http://stacks.math.columbia.edu/tag/0AB1}{Lemma 0AB1}]{stacks-project} it is an isomorphism. 
\end{proof}

\appendix
\section{Defining sheaves on a base for a Grothendieck topology}\label{sec:sheaves_on_a_base}

This appendix contains the abstract results which allow us to define a sheaf by specifying its values on a full subcategory.

\begin{definition}[Sheaf with respect to a coverage]
 Let $\ca{C}$ be a category.
 A \emph{coverage} $K$ on $\ca{C}$ is an assignment $A\mapsto K(A)$ for each object $A$ of $\ca{C}$, where $K(A)$ is a collection of families of morphisms to $A$, such that if $\{f_i\colon A_i\to A\}\in K(A)$ and $g\colon A'\to A$ is any morphism in $\ca{C}$, then there is a family $\{f'_j\colon A'_j\to A'\} \in K(A')$ such that each $g\circ f'_j \colon A'_j\to A$ factors via some $f_i$.
 
 The families $\{f_i\colon A_i\to A\}\in K(A)$ are called \emph{covering families} and the pair $(\ca{C},K)$ is called a \emph{site}.
 A \emph{sheaf} on $(\ca{C},K)$ is a presheaf $\ca{F}\colon \ca{C}\op\to\cat{Set}$ such that for each covering family $\{A_i\to A\}$ in $K$, the restriction map $\ca{F}(A)\to\prod_i \ca{F}(A_i)$ is injective with image the set of those tuples $(a_i)_i$ such that, for every commutative square
 \[\begin{tikzcd}
  B \arrow{r}\arrow{d} & A_i\arrow{d} \\
  A_j \arrow{r} & A,
 \end{tikzcd}\]
 the common restrictions of $a_i$ and $a_j$ to $B$ agree.
 The category of presheaves on $\ca{C}$ will be denoted below as $\cat{PSh}(\ca{C})$, and given a coverage $K$ on $\ca{C}$, we denote by $\cat{Sh}(\ca{C}, K)$ the full subcategory of $\cat{PSh}(\ca{C})$ consisting of sheaves on $(\ca{C},K)$.
\end{definition}

\begin{definition}[Grothendieck pretopology]
 If $\ca{C}$ has pullbacks, we say that a coverage on $\ca{C}$ is a \emph{Grothendieck pretopology} if it satisfies the following three extra conditions:
 \begin{enumerate}
 \item (Compatibility with pullback.) For each covering family $\{A_i\to A\}$ and each morphism $A'\to A$ in $\ca{C}$, the pullback family $\{A'\times_A A_i\to A'\}$ is also a covering family.
 \item (Reflexivity.) For each object $A$ of $\ca{C}$, the singleton family $\{\on{id}_A\colon A\to A\}$ is a covering family.
 \item (Transitivity.) If $\{A_i\to A\}$ is a covering family, and for each $i$ we have another covering family $\{A_{ij}\to A_i\}$, then the family of all composites $\{A_{ij}\to A_i\to A\}$ is also a covering family.
 \end{enumerate}
 If $K$ is a Grothendieck pretopology on $\ca{C}$, then a presheaf $\ca{F}\colon \ca{C}\op \to \cat{Set}$ is in $\cat{Sh}(\ca{C},K)$ if and only if for each covering family $\{A_i\to A\}$ in $K$, the usual diagram
 \[\ca{F}(A)\to\prod_i \ca{F}(A_i)\rightrightarrows\prod_{i,j} \ca{F}(A_i \times_A A_j)\]
 is an equalizer.
\end{definition}

\begin{definition}[Base for a coverage]
Let $\ca{C}$ be a category equipped with a coverage, and let $\ca{B}$ be a full subcategory of $\ca{C}$. We say $\ca{B}$ is \emph{a base for $\ca{C}$} if every object in $\ca{C}$ admits a cover by objects in $\ca{B}$.
\end{definition}

\begin{lemma}[Comparison Lemma]\label{lem:comparison}
 Let $\ca{C}$ be a category with pullbacks, and let $K$ be a Grothendieck pretopology on $\ca{C}$ making every representable presheaf a sheaf (i.e.\ the topology is subcanonical).
 Let $\ca{B}$ be a base for $(\ca{C},K)$.
 Then
 \begin{enumerate}
 \item The assignment $K|_\ca{B}\colon A\mapsto K(A)\cap \ca{B}$, sending an object of $\ca{B}$ to the collection of covers of it by more objects of $\ca{B}$, is a coverage on $\ca{B}$.
 \item Let $\ca{F}$ be a presheaf on $\ca{B}$.
 Then $\ca{F}$ is a sheaf on $(\ca{B}, K|_\ca{B})$ if and only if for every cover $\{A_i\to A\}_{i\in I}$ in $K|_\ca{B}$, and for every family of covers $\{A_{ijk}\to A_i\times_A A_j\}_{k\in I_{ij}}$ of the fibered products $A_i\times_A A_j\in\ca{C}$ by objects in $\ca{B}$, the diagram
 \beq\label{eq:sheaf-equalizer}
  \ca{F}(A)\to \prod_{i\in I}\ca{F}(A_i) \rightrightarrows \prod_{\substack{i,j\in I\\k\in I_{ij}}}\ca{F}(A_{ijk})
 \eeq
 is an equalizer.
 \item The restriction functor $\ca{F}\mapsto \ca{F}|_\ca{B}$ is an equivalence $\cat{Sh}(\ca{C},K)\iso\cat{Sh}(\ca{B},K|_\ca{B})$.
 \end{enumerate}
 Thus a presheaf on $\ca{B}$ extends (uniquely) to a sheaf on $(\ca{C},K)$ if and only if diagram (\oref{eq:sheaf-equalizer}) is an equalizer for all such $A$, $A_i$, and $A_{ijk}$.
\end{lemma}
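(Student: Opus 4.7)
My plan is to prove the three parts of the Comparison Lemma in order, noting that the real content lies in part (3).

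For part (1), I would verify the coverage axiom directly. Given $\{A_i\to A\}\in K|_\ca{B}(A)$ and $g\colon A'\to A$ in $\ca{B}$, the pretopology $K$ provides the pullback family $\{A'\times_A A_i\to A'\}$ in $K(A')$, but the objects $A'\times_A A_i$ lie in $\ca{C}$, not necessarily in $\ca{B}$. For each $i$ choose a cover $\{B_{ij}\to A'\times_A A_i\}$ by objects of $\ca{B}$ (using that $\ca{B}$ is a base) and apply transitivity of $K$ to obtain a covering family $\{B_{ij}\to A'\}$ in $K(A')$ whose members lie in $\ca{B}$ and whose composites with $g$ factor through the $A_i$.

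For part (2), I would compare the coverage sheaf condition (injectivity plus compatibility with respect to every commuting square) with the equalizer condition (\oref{eq:sheaf-equalizer}). The coverage sheaf condition amounts to the equalizer condition where the third term uses \emph{all} common refinements in $\ca{B}$ of $A_i$ and $A_j$ over $A$. Since any such refinement factors through some cover $\{A_{ijk}\to A_i\times_A A_j\}$ by transitivity, a family $(a_i)$ is compatible in the coverage sense iff its images agree after restriction to each $A_{ijk}$, giving the claimed equivalence.

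For part (3), the main step, I would construct an inverse to the restriction functor. Given a sheaf $\ca{F}_0$ on $(\ca{B},K|_\ca{B})$, define $\ca{F}(A)$ for a general $A\in\ca{C}$ by choosing a cover $\{A_i\to A\}_{i\in I}$ by objects of $\ca{B}$, covers $\{A_{ijk}\to A_i\times_A A_j\}_{k\in I_{ij}}$ by objects of $\ca{B}$, and setting $\ca{F}(A)$ to be the equalizer
\beq
\ca{F}(A) = \on{eq}\Bigl(\prod_i \ca{F}_0(A_i)\rightrightarrows \prod_{i,j,k}\ca{F}_0(A_{ijk})\Bigr).
\eeq
I would check independence of the choice of cover by showing any two choices admit a common refinement, which is where the sheaf condition for $\ca{F}_0$ on $(\ca{B},K|_\ca{B})$ (as characterized in part (2)) is essential; functoriality in $A$ follows by the same argument applied to pullbacks of covers along morphisms $A'\to A$. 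Verifying that $\ca{F}$ is a sheaf on $(\ca{C},K)$ reduces, for a cover $\{A_\alpha\to A\}$, to refining each $A_\alpha$ by a cover in $\ca{B}$ and invoking transitivity to get a common cover in $\ca{B}$ computing both $\ca{F}(A)$ and, via descent through the $A_\alpha$, the equalizer of the $\ca{F}(A_\alpha)$. Fully faithfulness of restriction is then immediate: natural transformations between sheaves on $\ca{C}$ are determined by their values on any generating cover, so restriction to $\ca{B}$ is injective on morphisms, and any natural transformation $\ca{F}|_\ca{B}\to\ca{G}|_\ca{B}$ extends canonically by the equalizer description.

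The principal obstacle is verifying that the extension $\ca{F}$ is well-defined and functorial independent of the choices of covers, and then that it satisfies the sheaf condition on all of $(\ca{C},K)$; both reduce to careful bookkeeping with refinements, combining the pretopology axioms on $\ca{C}$ with the coverage sheaf condition on $\ca{B}$ established in part (2), and the subcanonical assumption (used implicitly to ensure that Yoneda-like descent arguments behave correctly).
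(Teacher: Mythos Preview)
Your arguments for parts (1) and (2) match the paper's proof essentially line for line: pullback in $\ca{C}$, refine by objects of $\ca{B}$, invoke transitivity; and for (2), compare the coverage-style compatibility condition (all spans $A_i\leftarrow B\to A_j$ in $\ca{B}$) with the equalizer condition over a chosen cover of $A_i\times_A A_j$.

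For part (3) you take a genuinely different route. The paper does not construct the inverse functor by hand; instead it appeals to Corollary~A.4.3 of Mac~Lane--Moerdijk, \emph{Sheaves in Geometry and Logic}, and the only work done is a translation: showing that a sieve $S$ on $B\in\ca{B}$ contains a $K|_\ca{B}$-covering family if and only if the sieve it generates in $\ca{C}$ contains a $K$-covering family. Your approach---defining $\ca{F}(A)$ as the equalizer over a chosen cover, then checking independence of choice, functoriality, and the sheaf condition by common refinement---is the standard direct construction and is perfectly correct; it is longer but self-contained, and in fact does not obviously require the subcanonicality hypothesis at all. (Your remark that subcanonicality is ``used implicitly to ensure that Yoneda-like descent arguments behave correctly'' is vague and, in your own argument, not clearly true; in the paper's approach, by contrast, subcanonicality is explicitly flagged as the hypothesis needed to invoke the cited result.) So: your proof is sound, but the paper trades the bookkeeping you anticipate for a literature reference plus a short sieve-vs-cover compatibility check.
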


\begin{proof}\leavevmode
\begin{enumerate}

\item Let $\{A_i\to A\}_{i\in I}$ be a family in $K|_\ca{B}(A)$ and $g\colon A'\to A$ any morphism in $\ca{B}$.
We must exhibit a covering of $A'$ in $K|_\ca{B}$ such that each composite morphism to $A$ factors via some $A_i\to A$.

First form the pullback family $\{A'\times_A A_i\to A'\}_{i\in I}$ in $\ca{C}$, and for each $i$ choose a cover of $A'\times_A A_i$ by objects $\{A'_{ij}\}_{j\in I_i}$ of $\ca{B}$.
Then because $K$ is a Grothendiek pretopology, the family of all composites $\{A'_{ij} \to A'\otimes_A A_i \to A'\}_{i\in I,j\in I_i}$ is also in $K$, and in fact in $K|_\ca{B}$.
Furthermore, each composite $A'_{ij}\to A'\to A$ factors canonically via $A_i\to A$, as desired.
Thus $K|_\ca{B}$ is a coverage.

\item Let $\{A_i\to A\}_{i\in I}$ be a cover in $K|_\ca{B}$, and for each $i,j\in I$, let $\{A_{ijk}\to A_i\times_A A_j\}_{k\in I_{ij}}$ be a cover by objects in $\ca{B}$; we must show that a tuple $(a_i)_i\in \prod_i \ca{F}(A_i)$ has the same two images in $\prod_{i,j,k}\ca{F}(A_{ijk})$ if and only if for every commutative square formed by an object $B\in\ca{B}$ with morphisms to some $A_i$ and $A_j$, the two restrictions $a_i|_B$ and $a_j|_B$ agree.
Then the sheaf condition for $\ca{F}$ is exactly the same as diagram (\oref{eq:sheaf-equalizer}) being an equalizer.

The ``if'' direction is easy, since composing the map $A_{ijk}\to A_i\times_A A_j$ with the projections to $A_i$ and $A_j$ make the square with $A$ commute.
For the ``only if'' direction, let $(a_i)_i$ be a tuple such that for all $i,j\in I$ and $k\in I_{ij}$, we have $a_i|_{A_{ijk}} = a_j|_{A_{ijk}}$, and let $A_i\leftarrow B \to A_j$ be a span of objects in $\ca{B}$ over $A$. 
We will show that $a_i|_B = a_j|_B$.

In $\ca{C}$, the maps $B\to A_i$ and $B\to A_j$ give a single morphism $B\to A_i\times_A A_j$, and we can therefore form the pullback covering family $\{B \times_{(A_i\times_A A_j)} A_{ijk}\to B\}_{k\in I_{ij}}$ in $\ca{C}$.
For each $k\in I_{ij}$, let $\{B_{ijk\ell}\to B \times_{(A_i\times_A A_j)} A_{ijk}\}_{\ell\in I_{ijk}}$ be a cover of the fibered product in $\ca{C}$ by objects of $\ca{B}$.
Then by transitivity, $\{B_{ijk\ell}\to B\}_{k\in I_{ij},\ell\in I_{ijk}}$ is a covering family in $K|_\ca{B}$.
Thus $\ca{F}(B)\to\prod_{i,j,k,\ell}\ca{F}(B_{ijk\ell})$ is injective, so it suffices to show that the restrictions of $a_i$ and $a_j$ to each $B_{ijk\ell}$ agree.
But this is true, because the morphisms from $B_{ijk\ell}$ to $A_i$ and $A_j$ factor through $A_{ijk}$, where we are assuming $a_i$ and $a_j$ agree.

\item For this part, we appeal to a similar result, Corollary A.4.3 in \cite{Mac-Lane1994Sheaves-in-geom}; this is why we have the subcanonicality hypothesis.
All we need to do is translate their result in the sieve language into our covering family language; this amounts to checking that if $S$ is a sieve on $B\in\ca{B}$---i.e.\ a collection of morphisms to $B$ closed under precomposition by any composable morphism in $\ca{B}$---then $S$ contains a covering family in $K|_\ca{B}$ if and only if the collection $(S)$ of all morphisms to $B\in\ca{C}$ factoring via some morphism in $S$ contains a covering family in $K$.

The ``only if'' direction is trivial: such a covering family in $K|_\ca{B}$ is also the desired covering family in $K$.
Now suppose that $(S)$ contains a $K$-covering family $\{A_i\to B\}_{i\in I}\subset\ca{C}$.
This means that each $A_i\to B$ factors via some $B_i\to B$ in $S$.
Now choose for each $A_i$ a cover $\{B_{ij}\to A_i\}_{j\in I_i}$ by objects in $\ca{B}$. 
Then by transitivity and the sieve condition on $S$, the composite family $\{B_ij\to B\}_{i\in I, j\in I_i}$ is a covering family in $K|_{\ca{B}}$ contained in $S$, as desired.\qedhere
\end{enumerate}
\end{proof}

\begin{remark}\label{rem:calculate-sheaf-from-base}
 \Cref{lem:comparison}(3) only says that a sheaf on $\ca{B}$ extends uniquely (up to unique natural isomorphism) to a sheaf on $\ca{C}$, but in fact we can compute $\ca{F}(A)$ from $\ca{F}|_\ca{B}$ for any object $A\in\ca{C}$.
 First, choose covering families $\{A_i\to A\}_{i\in I}$ and $\{A_{ijk}\to A_i\times_A A_j\}_{k\in I_{ij}}$ with all $A_i$ and $A_{ijk}$ in $\ca{B}$.
 Then \ref{lem:comparison}(2) tells us (by applying the case $\ca{B}=\ca{C}$) that diagram (\oref{eq:sheaf-equalizer}) is an equalizer, and if $\ca{F}|_\ca{B}$ is known then this gives a way to calculate $\ca{F}(A)$.
\end{remark}

\begin{example}
 As a simple example, we obtain the well-known fact that a presheaf on the category of schemes is a sheaf in the \'etale topology if and only if it is a Zariski sheaf and satisfies descent for each \'etale cover of one affine scheme by another.
 For if we apply \cref{lem:comparison} to the full subcategory $\cat{Aff}$ of affine schemes, we find that sheaves on the big Zariski site correspond to presheaves on $\cat{Aff}$ that satisfy the sheaf condition with respect to open covers.
 Sheaves on the big \'etale site correspond to those presheaves on $\cat{Aff}$ that satisfy the sheaf condition with respect to open covers and \'etale surjections.
 Thus to check that a presheaf on the big \'etale site is a sheaf, it suffices to check that it is a Zariski sheaf (thus corresponding to a Zariski sheaf on $\cat{Aff}$) and satisfies descent for affine \'etale surjections (so that it corresponds to an \'etale sheaf on $\cat{Aff}$).
\end{example}

%\begin{example}\label{example:controlling_base}
%For us, the fundamental example of a base is given by: let $C/S$ a prestable curve over a stack, let $\ca{C}$ be the category of schemes over $S$ with any topology at least as fine as the \'etale one. Then the full subcategory $\ca{B}$ consisting of schemes $T/S$ such that $C_T/T$ admits a controlling graph is a base (ref). 
%\end{example}
%
%I think we define the functor on the base of things with graphs. Then to prove representability, we take a smooth cover of S by controlled things. Then on each of those we can make the quot scheme. But then this quot functor restricts on the base to this thing we wrote down, so the sheaves coincide, so we win. 

\bibliographystyle{alpha} %amsplain}
\bibliography{prebib.bib}

\end{document}